\newcommand{\tikzmark}[1]{\tikz[overlay,remember picture] \node (#1) {};}
\newcommand*{\BraceAmplitude}{0.5em}
\newcommand*{\VerticalOffset}{0.5ex}
\newcommand*{\InsertUnderBrace}[4][]{%
    \begin{tikzpicture}[overlay,remember picture]
\draw [decoration={brace,amplitude=\BraceAmplitude},decorate, thick,draw=blue,text=black,#1]
        ($(#3)+(0,-\VerticalOffset)$) -- 
        ($(#2)+(0,-\VerticalOffset)$)
        node [below=\VerticalOffset, midway] {#4};
    \end{tikzpicture}%
}%
\DeclareMathOperator{\Id}{Id}
\DeclareMathOperator{\SYT}{SYT}
\DeclareMathOperator{\Hom}{Hom}
\DeclareMathOperator{\shift}{\mathtt{shift}}
\DeclareMathOperator{\End}{End}
\newcommand{\bt}{\mathbf{t}}
\newcommand{\Z}{\mathbb{Z}}
\newcommand{\M}{\mathbb{M}}
\newcommand{\C}{\mathbb{C}}
\newcommand{\Q}{\mathbb{Q}}
\newcommand{\PP}{\mathbb{P}}
\newcommand{\RR}{\mathbb{R}}
\newcommand{\KK}{\mathbb{K}}
\newcommand{\OO}{\mathbb{O}}
\newcommand{\sln}{\mathfrak{sl}_n}
\newcommand{\gln}{\mathfrak{gl}_n}
\newcommand{\cB}{\mathcal{B}}
\newcommand{\cI}{\mathcal{I}}
\newcommand{\cJ}{\mathcal{J}}
\newcommand{\cL}{\mathcal{L}}
\newcommand{\cO}{\mathcal{O}}
\newcommand{\cP}{\mathcal{P}}
\newcommand{\cS}{\mathcal{S}}
\newcommand{\cT}{\mathcal{T}}
\newcommand{\cX}{\mathcal{X}}
\newcommand{\cY}{\mathcal{Y}}
\newcommand{\comp}[1]{\mathcal{C}_{r}(#1)}
\newcommand{\FXi}{F_\mathcal{X}^{-1}}
\newcommand{\fxi}[1]{\wideparen{{#1}}} 
\newcommand{\FX}{F_\mathcal{X}}
\newcommand{\FY}{F_\mathcal{Y}}
\newcommand{\Pn}{\mathcal{P}(n)}
\newcommand{\Min}{\mathbf{\mathtt{L}}^+_{\mathrm{min}}(n)} 
\newcommand{\Hilb}{\mathrm{Hilb}}
\newcommand{\Hilbk}{\Hilb_k} 
\newcommand{\PHilb}{\mathrm{PHilb}}
\newcommand{\PHilbx}{\PHilb^{x}(C)}
\newcommand{\FHilb}{\mathrm{CPHilb}}
\newcommand{\Spr}{\mathrm{Spr}}
\newcommand{\Fl}{\mathcal{F}l}
\newcommand{\Sym}{\mathrm{Sym}}
\DeclareMathOperator{\rot}{rot}
\DeclareMathOperator{\fl}{fl}
\DeclareMathOperator{\id}{id}
\DeclareMathOperator{\gr}{gr}
\DeclareMathOperator{\triv}{triv}
\DeclareMathOperator{\gen}{gen}
\DeclareMathOperator{\sort}{sort}
\DeclareMathOperator{\Ind}{Ind}
\DeclareMathOperator{\cont}{ct}
\newcommand{\ct}[2]{\cont_{#1}({#2})}
\DeclareMathOperator{\Res}{Res}
\DeclareMathOperator{\rev}{rev}
\DeclareMathOperator{\ev}{ev_0}
\newtheorem{theorem}{Theorem}[section]
\newtheorem{proposition}[theorem]{Proposition}
\newtheorem{corollary}[theorem]{Corollary}
\newtheorem{lemma}[theorem]{Lemma}
\theoremstyle{definition}
\newtheorem{remark}[theorem]{Remark}
\newtheorem{definition}[theorem]{Definition}
\newtheorem{example}[theorem]{Example}
\newcommand{\Gr}{\mathcal{G}r} 
\newcommand{\Lnr}{\mathcal{L}} 
\newcommand{\cA}{\mathcal{A}}
\newcommand{\MV}[1]{{\color{blue} {MV  #1}}}
\newcommand{\MVcomment}[1]{} 
\newcommand{\EG}[1]{{\color{green}  EGors \textsf  #1}}
\newcommand{\affSn}{\widetilde \Sn} 
\newcommand{\affSnCox}{\widehat \Sn} 
\newcommand{\affSnplus}{\affSn^+}
\newcommand{\affSnminus}{\affSn^-}
\newcommand{\ba}{\mathbf{a}}
\newcommand{\bb}{\mathbf{b}}
\newcommand{\bc}{\mathbf{c}}
\newcommand{\bd}{\mathbf{d}}
\newcommand{\boldb}{\mathbf{b}}
\newcommand{\Sn}{\Sk{n}}
\newcommand{\Sk}[1]{\cS_{#1}}
\newcommand{\Sa}{\cS({\ba})} 
\newcommand{\Sd}{\cS({\bd})}
\newcommand{\Srevd}{\cS(\revd)}
\newcommand{\revd}{\bd^{\rev}}
\newcommand{\revdtwist}{\rev_{\bd}}  
\newcommand{\Pinc}{\cP^+(n)}
\newcommand{\Pdec}{\cP^-(n)}
\newcommand{\tav}[1]{\mathtt{t}_{#1}} 
\newcommand{\ta}[1]{\tav{{\mathbf{ #1}}}} 
\newcommand{\wa}[1]{\ww_{\mathbf{#1}}} 
\newcommand{\wb}[1]{\ww_{#1}} 
\newcommand{\ww}{\omega} 
\newcommand{\wt}{\mathtt{w}} 
\newcommand{\tw}{\mathtt{w}} 
\newcommand{\wta}{\wt(\ba)}
\newcommand{\p}{p} 
\newcommand{\perm}{p_m} 
\newcommand{\wdlong}{\ww_0^{\bd}} 
\newcommand{\lex}{>_{\mathtt lex}}
\newcommand{\glex}{\ge_{\mathtt lex}}
\newcommand{\bruhat}{<_{\mathtt B}}
\newcommand{\lebruhat}{\le_{\mathtt B}}
\newcommand{\llex}{<_{\mathtt lex}}
\newcommand{\RCA}{H_{t,c}}
\newcommand{\Htc}{H_{t,c}}
\newcommand{\Hnx}{H_n(\mathbf{x})}
\newcommand{\Hny}{H_n(\mathbf{y})}
\newcommand{\Hnu}{H_n(\mathbf{u})}
\newcommand{\Hd}{H(\bd,\mathbf{u})}
\newcommand{\Hrevd}{H(\revd,\mathbf{u})}
\newcommand{\A}{\mathcal{A}} 
\newcommand{\Lctriv}{L_c(\triv)}
\newcommand{\Mw}{M_{\wt}}
\newcommand{\Wm}{\mathcal{D}} 
\newcommand{\Vnl}{V_{(n-\ell, 1^{\ell})}}  
\newcommand{\Vnlo}{V_{(n-\ell+1, 1^{\ell-1})}} 
\newcommand{\Vnolo}{V_{(n-\ell, 1^{\ell-1})}}
\newcommand{\Hu}[1]{H(\mathbf{{#1}},\mathbf{u})}
\newcommand{\arxiv}[1]{}  
\begin{document}
\title[RCA and Parabolic Hilbert schemes]
{
 From representations of the rational Cherednik algebra to parabolic Hilbert schemes    via the Dunkl-Opdam subalgebra}

\author{E.~Gorsky}
\thanks{E.G. was supported by the NSF grants  DMS-1700814 , DMS-1760329}
\address {Department of Mathematics\\ University of California, Davis\\ One Shields Avenue\\ Davis CA 95616}
\email{egorskiy@math.ucdavis.edu}

\author{J.~Simental}
\address{Max Planck Institut für Mathematik  \\ Vivatsgasse 7 \\
53111 Bonn \\
Germany}
\email{simental@im.unam.mx}
\author{M.~Vazirani} 
\thanks{
M.V. was supported by the Simons Foundation Collaboration Grant for Mathematicians, award number 319233}
\address {Department of Mathematics\\ University of California, Davis\\ One Shields Avenue\\ Davis CA 95616}
\email{vazirani@math.ucdavis.edu}


\date{\today}

\maketitle

\begin{abstract}
In this note we explicitly construct an action of the rational Cherednik algebra $H_{1,m/n}(\Sk{n},\C^n)$ corresponding to the permutation representation of $\Sk{n}$ on the $\C^{*}$-equivariant homology of parabolic Hilbert schemes of points on the plane curve singularity $\{x^{m} = y^{n}\}$ for coprime $m$ and $n$. We use this to construct actions of quantized Gieseker algebras on parabolic Hilbert schemes on the same plane curve singularity, and actions of the Cherednik algebra at $t = 0$ on the equivariant homology of parabolic Hilbert schemes on the non-reduced curve $\{y^{n} = 0\}.$ Our main tool is the study of the combinatorial representation theory of the rational Cherednik algebra via the subalgebra generated by Dunkl-Opdam elements
\end{abstract}

\tableofcontents  
\section{Introduction}

\subsection{Hilbert schemes on singular curves} It is well-known and classical that, 
given a smooth algebraic curve, 
its Hilbert scheme of $k$ points 
is smooth and, in fact, isomorphic to the $k$-th symmetric product of the curve.
On the contrary, much less is known in the case of 
a singular curve. In particular, Maulik \cite{Maulik} proved a conjecture of Oblomkov and Shende \cite{OS} relating the Euler characteristics of Hilbert schemes of points on a plane curve singularity to the HOMFLY-PT polynomial of its link. A more general conjecture of Oblomkov, Rasmussen and Shende \cite{ORS,GORS} relates the homology of these Hilbert schemes to the HOMFLY-PT homology of the link.

One possible approach to understanding  
Hilbert schemes of curves
is by constructing an action of interesting algebras on their homology. Rennemo \cite{Rennemo} constructed an action of the two-dimensional Weyl algebra on the homology of the Hilbert scheme of 
an integral locally planar curve 
(see also \cite{MS,MY}), and Kivinen \cite{Kivinen} generalized this action to reduced locally planar curves with several components. 

In this paper, we relate the geometry of (parabolic) Hilbert schemes on singular curves to the representation theory of the type $A$ rational Cherednik algebra and other related algebras. 

More precisely, consider coprime positive integers $m$ and $n$, and let $C := \{x^{m} = y^{n}\}$ be a plane curve singularity in $\C^2$. We will always work locally near the origin, to simplify the notations we will always use $\cO_C$ for the completion of $\C[C]$ at $(0,0)$.

Note that for every ideal $I \subseteq \cO_C$ we have that $\dim(I/xI) = n$. We consider the \emph{parabolic Hilbert scheme} $\PHilb_{k, n+k}(C)$ that is the following moduli space of flags 
\begin{equation}\label{eq:par hilb}
\PHilb_{k,n+k}(C):=\left\{\cO_C\supset I_k\supset I_{k+1}\supset \cdots\supset I_{k+n}=xI_k\right\}
\end{equation}
\noindent where $I_s$ is an ideal in the ring of functions $\cO_C$ of codimension $s$. Moreover, we set $\PHilbx := \sqcup_{k} \PHilb_{k, n+k}(C)$. The natural $\C^*$ action on $C$ naturally lifts to $\PHilbx$. Since $m$ and $n$ are coprime, the fixed points are precisely the flags of monomial ideals. In particular, the classes of these fixed points form a basis for the localized equivariant cohomology. The first main result of this paper is the following.

\begin{theorem}\label{thm:hilb intro}
There is a geometric action of the rational Cherednik algebra \newline $H_{1,m/n}(\Sk{n}, \C^n)$ on the localized $\C^*$-equivariant homology of $\PHilb^{x}(C)$. Moreover, with this action $H_{*}^{\C^{*}}(\PHilb^{x}(C))$ gets identified with the simple highest weight module $L_{m/n}(\triv)$. 
\end{theorem}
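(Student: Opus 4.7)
The plan is to use the Dunkl-Opdam presentation of $\Lctriv$ promised by the abstract as the bridge between geometry and representation theory. Both sides carry a canonical basis: on the representation side, the simple module $L_{m/n}(\triv)$ has a basis of (generalized) eigenvectors for the commutative subalgebra generated by the Dunkl-Opdam elements $z_1,\dots,z_n$ together with a natural Jucys-Murphy type grading, labeled by a concrete combinatorial set $\Sa$; on the geometric side, the localized $\C^*$-equivariant homology of $\PHilbx$ has the torus fixed point basis. I would first construct a bijection between these index sets, then use it to transport the algebra action.

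\textbf{Step 1 (Fixed points and weight matching).} Since $\gcd(m,n)=1$, the $\C^*$-fixed points of $\PHilbx$ are flags of monomial ideals in $\cO_C$. Each such flag is determined by a sequence recording which monomial is removed at each successive step, so the fixed points are naturally indexed by certain length-$n$ combinatorial sequences. I would identify this indexing set with the labeling set of the Dunkl-Opdam basis of $\Lctriv$, and then compare the equivariant weight of a fixed point with the joint Dunkl-Opdam eigenvalue of the corresponding basis vector. A direct content/residue computation should show they agree up to a uniform affine normalization by $m/n$ and the equivariant parameter.

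\textbf{Step 2 (Geometric operators).} Next I would define, for each generator of $H_{1,m/n}(\Sn,\C^n)$, a geometric endomorphism of $H_*^{\C^*}(\PHilbx)$. The multiplication by $x$ and the insertion/forgetting of one ideal in the flag give natural correspondences between adjacent parabolic Hilbert schemes; convolution with these produces operators that are candidates for the $x_i$ and $y_i$. The simple transpositions $s_i \in \Sn$ should arise from swapping consecutive ideals $I_{k+i-1}, I_{k+i}, I_{k+i+1}$ in the flag, using an appropriate Demazure-style construction on the relevant $\PP^1$-fibration. On the fixed-point basis these operators can be computed explicitly via equivariant localization.

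\textbf{Step 3 (Verifying the Cherednik relations).} This is the main obstacle. Once the operators are in place, one must check that they satisfy the defining relations of $H_{1,m/n}(\Sn,\C^n)$, and in particular the delicate commutator $[y_i,x_j]$ involving transpositions. The Dunkl-Opdam strategy is what makes this tractable: I would verify directly that the geometric Dunkl-Opdam combinations act by the predicted scalars (by Step 1), and that the $\Sn$-action and the shift operators between adjacent $\PHilb_{k,n+k}$ satisfy the intertwining identities with the Dunkl-Opdam subalgebra that are known, from the combinatorial part of the paper, to force all remaining Cherednik relations. Thus the hard part reduces to a finite list of local checks on fixed-point bases for pairs of adjacent boxes.

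\textbf{Step 4 (Identification with $\Lctriv$).} Finally, to identify the module, I would exhibit the class of the minimal monomial flag as a cyclic highest-weight vector: it is annihilated by the lowering operators (no smaller fixed point to flow into) and is a joint Dunkl-Opdam eigenvector of the weight corresponding to $\triv$ at parameter $c=m/n$. Hence $H_*^{\C^*}(\PHilbx)$ is a quotient of $\Dctriv$. Since by Steps 1--2 its character matches that of $\Lctriv$ (same combinatorial basis and same Dunkl-Opdam spectrum), and $\Lctriv$ is the unique simple quotient of $\Dctriv$, the two modules coincide.
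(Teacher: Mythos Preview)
Your overall strategy matches the paper's: bijection between torus fixed points and the Dunkl--Opdam eigenbasis of $L_{m/n}(\triv)$, geometric construction of operators, and identification of the module. However, the paper's execution differs in two ways that make Step~3 much lighter than you anticipate.

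First, the paper does not build geometric operators for the individual $x_i$ and $y_i$. Instead it works with the alternative generating set $u_i,\,s_i,\,\tau = x_1(12\cdots n),\,\lambda = (12\cdots n)^{-1}y_1$. Geometrically, $\tau$ becomes the obvious shift $T:\PHilb_{k,n+k}\to\PHilb_{k+1,n+k+1}$ obtained by dropping $I_k$ and appending $xI_{k+1}$; $\lambda$ is built from a Gysin map associated to a section of $\cL_n^{-1}$; the $u_i$ are Chern classes of tautological line bundles; and $1-s_i$ comes from a push--pull along a $\PP^1$-fibration. These are simpler correspondences than the ones you would need for all $x_i,y_i$.

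Second, and more importantly, the paper never verifies any Cherednik relation geometrically. Instead, having already constructed the eigenbasis $\{\widetilde v_{\ba}\}$ of $L_{m/n}(\triv)$ with explicit formulas for the action of $u_i,s_i,\tau,\lambda$, it computes the matrix elements of the geometric operators $c_1(\cL_i),\,B_i,\,T,\,\Lambda$ in the fixed-point basis via equivariant localization and observes that they \emph{coincide} with the algebraic matrix elements under the bijection of Step~1. The relations then hold automatically, since they already hold in $L_{m/n}(\triv)$. This also makes your Step~4 unnecessary: the matching of matrix elements \emph{is} the module isomorphism, so no separate highest-weight or character argument is needed. Your plan would work, but the paper's shortcut replaces all the local relation checks by a direct comparison of a handful of structure constants.
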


Recall that the rational Cherednik algebra
$H_{t,c} := H_{t,c}(\Sk{n},\C^n)$ contains the trivial idempotent
$e := \frac{1}{n!}\sum_{p \in \Sk{n}} p$, and we can form the spherical
subalgebra $eH_{t,c}e$. As a consequence of Theorem \ref{thm:hilb
intro} we get that the spherical subalgebra acts on the equivariant
homology of the Hilbert scheme $\Hilb(C) := \sqcup_{k}\Hilbk(C)$.

\begin{corollary}\label{cor:hilb intro}
There is an action of the spherical rational Cherednik algebra $eH_{1,m/n}(\Sk{n}, \C^n)e$ on the localized $\C^*$-equivariant homology of $\Hilb(C)$. Moreover, with this action $H_{*}^{\C^*}(\Hilb(C))$ gets identified with $eL_{m/n}(\triv)$.  
\end{corollary}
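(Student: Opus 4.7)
The plan is to obtain the corollary directly from Theorem~\ref{thm:hilb intro} by applying the trivial idempotent $e$. Purely algebraically, for any $H_{1,m/n}$-module $M$ the subspace $eM$ is naturally a module over the spherical subalgebra $eH_{1,m/n}e$. Applying this to the isomorphism of Theorem~\ref{thm:hilb intro} yields an $eH_{1,m/n}e$-module isomorphism
\[
eH_{*}^{\C^{*}}(\PHilbx) \;\cong\; eL_{m/n}(\triv).
\]
It therefore suffices to identify the left-hand side with $H_{*}^{\C^{*}}(\Hilb(C))$.

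For this identification, I would use the forgetful morphism $\pi\colon \PHilb_{k,n+k}(C) \to \Hilbk(C)$ sending a flag $I_k \supset I_{k+1} \supset \cdots \supset I_{k+n} = xI_k$ to its top ideal $I_k$. The fiber $\pi^{-1}(I_k)$ parametrizes complete flags in the $n$-dimensional space $I_k/xI_k$ and so is isomorphic to the flag variety $\Fl(\C^n)$. On $\C^{*}$-fixed points, the monomial ideals of $\cO_C$ index a basis of $H_{*}^{\C^{*}}(\Hilb(C))$, while the monomial flags index a basis of $H_{*}^{\C^{*}}(\PHilbx)$; the monomial flags above a given monomial ideal $I_k$ are in bijection with $\Sk{n}$ via orderings of the standard monomials spanning $I_k/xI_k$. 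Granted that the subalgebra $\C\Sk{n} \subset H_{1,m/n}$ acts on this fixed-point basis by permuting the flags above each monomial ideal, applying $e$ collapses each fiber to a one-dimensional subspace and produces the sought identification
\[
eH_{*}^{\C^{*}}(\PHilbx) \;\cong\; H_{*}^{\C^{*}}(\Hilb(C)).
\]

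The hard part will be verifying this last compatibility, namely that the $\Sk{n}$-action inherited from the Cherednik algebra action of Theorem~\ref{thm:hilb intro} genuinely coincides with the natural permutation of monomial flags over a fixed monomial ideal. I expect this to come out of the explicit Dunkl-Opdam based construction advertised in the abstract: the simple reflections ought to act on the fixed-point basis via intertwiners whose combinatorics match those of standard Young tableaux, equivalently of monomial flags, and should therefore permute consecutive monomials in a flag up to normalizations controlled by the residues of the Dunkl-Opdam elements. Once this is in place, combining the two displayed isomorphisms yields the corollary.
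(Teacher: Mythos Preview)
Your strategy is exactly the paper's: apply $e$ to Theorem~\ref{thm:hilb intro} and then identify $H_{*}^{\C^*}(\PHilbx)^{\Sk{n}}$ with $H_{*}^{\C^*}(\Hilb(C))$ via the projection forgetting the flag (Proposition~\ref{prop:spherical hilb}). However, your description of the fibers contains a substantive error that would invalidate the argument as written.

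The fiber over $I_k$ does \emph{not} consist of all complete flags in $I_k/xI_k$: each $I_s$ in the flag must itself be an ideal, hence stable under multiplication by $y$. Thus the fiber is the \emph{Springer fiber} of $Y$-invariant complete flags for the nilpotent operator $Y$ on $I_k/xI_k$, not the full flag variety. Correspondingly, the monomial flags above a monomial ideal $I_k$ are not in bijection with $\Sk{n}$; there are only $\binom{n}{\lambda_1,\ldots,\lambda_\ell}$ of them, where $\lambda$ is the Jordan type of $Y$ on $I_k/xI_k$ (Lemma~\ref{lem: Jordan blocks}). The paper shows (Lemma~\ref{lemma:induced representation}) that the span of these fixed points carries the Springer action of $\Sk{n}$ and is isomorphic to $\Ind_{\Sk{\lambda_1}\times\cdots\times\Sk{\lambda_\ell}}^{\Sk{n}}\triv$; Frobenius reciprocity then gives that its $\Sk{n}$-invariants are one-dimensional, yielding $eH_{*}^{\C^*}(\PHilbx)\cong H_{*}^{\C^*}(\Hilb(C))$. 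The compatibility you flag as ``the hard part''---that the geometric Springer action agrees with the $\Sk{n}$-action from the Cherednik algebra---is handled in Lemma~\ref{lem: action of s_n}, where the operators $1-s_{n-i}$ are matched with explicit pull-push operators $B_i$ built from projections to partial parabolic Hilbert schemes.
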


\begin{remark}
By \cite{MY,MS} the homology of the Hilbert schemes of singular curves is closely related to the homology of the corresponding compactified Jacobian, equipped with a certain ``perverse'' filtration.  By \cite{GORS,OY,OY2,VV} the latter homology carries an action of the spherical trigonometric Cherednik algebra, \cite{CherednikBook}. Furthermore by  \cite{OY,OY2} the associated graded space admits a natural action of the spherical rational Cherednik algebra corresponding to the {\em reflection representation} of $\Sn$ (also known as spherical rational Cherednik algebra of $\sln$). The construction of this action uses global Springer theory developed by Yun \cite{Yun}.

The main advantage of our proof of Theorem \ref{thm:hilb intro} is that it does not use compactified Jacobians or perverse filtration at all. The generators of  $H_{1,m/n}(\Sk{n}, \C^n)$ are identified with certain explicit operators in the homology of $\PHilb^{x}(C)$.
\end{remark}


We explore some ramifications of this result. In the theory of rational Cherednik algebras there is a ``$t = 0$" and ``$t = 1$" dichotomy, see Section \ref{sec: RCA background}, and in the statement of Theorem \ref{thm:hilb intro} we assume that $t = 1$. While the representation theory of the Cherednik algebra is very sensitive to this dichotomy, we have a version of Theorem \ref{thm:hilb intro} in the $t = 0$ case.

To this end, consider the \emph{non-reduced curve} $C_{0} := \{y^{n} = 0\}$. The punctual Hilbert scheme on $C_{0}$ is the moduli space of finite-codimensional  ideals in the local ring $\cO_{C_{0}, 0} = \C[[x,y]]/(y^n)$, and we may define the parabolic (punctual) Hilbert scheme $\PHilb_{k, n+k}(C_{0})$ analogously to \eqref{eq:par hilb}. Again we set $\PHilb^{x}(C_{0}) := \sqcup_{k} \PHilb_{k, n+k}(C_{0})$. We show the following ``$t = 0$" (or ``$m = \infty$") analogue of Theorem \ref{thm:hilb intro}. 

\begin{theorem}\label{thm:hilb t=0 intro}
There is a geometric action of $H_{0,1}(\Sk{n}, \C^n)$ on the localized $\C^*$-equivariant cohomology of $\PHilb^{x}(C_0)$, where $C_0$ is the non-reduced curve $\{y^{n} = 0\}$. Moreover, with this action $H_{*}^{\C^{*}}(\PHilb^{x}(C_0))$ gets identified with the
polynomial representation $\Delta_{0,1}(\triv)$. 
\end{theorem}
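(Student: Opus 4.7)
The plan is to follow the blueprint established for Theorem \ref{thm:hilb intro}, but with the curve $C = \{x^m = y^n\}$ replaced by the non-reduced curve $C_0 = \{y^n = 0\}$ and the Cherednik parameters $(t,c) = (1,m/n)$ replaced by $(t,c) = (0,1)$. First, I would analyze the $\C^{*}$-fixed locus of $\PHilb^{x}(C_0)$: ideals of $\C[[x,y]]/(y^n)$ fixed by the scaling action are monomial, and codimension-$k$ such ideals correspond to Young diagrams contained in the horizontal half-strip of height $n$. The fixed points of $\PHilb_{k,n+k}(C_0)$ are thus flags of monomial ideals $I_k \supset I_{k+1} \supset \cdots \supset I_{k+n} = xI_k$, which combinatorially amount to peeling off a full column of $n$ boxes. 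Their classes form a basis of $H_{*}^{\C^{*}}(\PHilb^{x}(C_0))$ after localization, and this basis should admit a natural bijection with the monomial basis of the polynomial representation $\Delta_{0,1}(\triv) \cong \C[x_1,\ldots,x_n]$.

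Next, I would construct operators on $H_{*}^{\C^{*}}(\PHilb^{x}(C_0))$ corresponding to the generators $x_i$, $y_i$, and the Coxeter generators of $\Sn$, imitating the Dunkl--Opdam constructions from the proof of Theorem \ref{thm:hilb intro}. The $\Sn$-action is induced by permuting successive one-dimensional quotients in a flag; the operators $x_i$ are multiplication by equivariant first Chern classes of the tautological line bundles $I_{k+i-1}/I_{k+i}$; and the $y_i$ are realized via ``add-a-box'' flag correspondences. These definitions are direct analogues of those from the $t = 1$ case, transported to the degenerate curve $C_0$.

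The technical heart is verifying that these operators satisfy the $t = 0$ relations of $H_{0,1}(\Sn,\C^n)$. The only relation that genuinely differs between the $t = 1$ and $t = 0$ presentations is $[y_i, x_i] = t - c\sum_{j\neq i}s_{ij}$. In the proof of Theorem \ref{thm:hilb intro}, the identity term $t = 1$ on the right-hand side is produced by the nontrivial local geometry at the singular point of $C$, equivalently by the presence of the generator $x^m - y^n$ in the defining ideal. On $C_0 = \{y^n = 0\}$ this piece of local geometry is absent (heuristically, $m$ has been pushed to $\infty$), and a parallel local equivariant-homology computation should show that the corresponding identity term drops out of $[y_i,x_i]$ while all other Cherednik relations are preserved verbatim; in particular the Dunkl--Opdam subalgebra relations, the cross relations with $\Sn$, and the mutual commutativity of the $x_i$ (resp.\ $y_i$) are unchanged.

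Finally, to identify the resulting $H_{0,1}$-module with $\Delta_{0,1}(\triv)$, I would check that the flag of powers of the maximal ideal is a cyclic, $\Sn$-invariant vector killed by all $y_i$, yielding a surjection $\Delta_{0,1}(\triv) \twoheadrightarrow H_{*}^{\C^{*}}(\PHilb^{x}(C_0))$, and then compare $\C^{*}$-graded dimensions of the two sides to upgrade this surjection to an isomorphism. The principal obstacle I expect is the third step, namely a clean geometric justification for the vanishing of the identity piece of $[y_i, x_i]$ on $C_0$: making this precise requires a careful re-examination of the local contributions to the flag correspondences, with normalizations and signs chosen to match the $t = 0$ presentation of the Cherednik algebra exactly.
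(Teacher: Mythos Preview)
Your overall plan—transport the $t=1$ construction to $C_0$—is right, but two specifics diverge from the paper and one of them is a genuine misconception. The Chern classes $c_1(\cL_i)$ do \emph{not} realize $x_i$; they realize the Dunkl--Opdam elements $u_i$ (up to shift). Likewise there is no ``add-a-box'' correspondence giving $y_i$: the geometric operators are the flag-shift maps $T,\Lambda$ corresponding to $\tau = x_1(12\cdots n)$ and $\lambda = (12\cdots n)^{-1}y_1$, together with the Springer-type operators $B_i$ for $s_i$. The whole construction runs through the presentation of Theorem \ref{thm:new presentation} with generators $u_i, s_i, \tau, \lambda$; the $x_i, y_i$ never appear individually as geometric operators. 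With the wrong dictionary your relation check in step three would not even be formulated correctly.

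More importantly, the paper never verifies the Cherednik relations geometrically, so your ``principal obstacle'' is sidestepped. The argument is: (i) on the algebra side, note that for any $c$ the Dunkl--Opdam action on $\Delta_c(\triv)$ is diagonalizable up to degree $\lfloor |c|(n-1)\rfloor$, then rescale and send $c\to\infty$ (equivalently $m\to\infty$) to obtain explicit closed formulas for the $H_{0,1}$-action on $\Delta_{0,1}(\triv)$ in the limiting eigenbasis $\{v_\ba\}$ (Theorem \ref{thm:t=0}); (ii) on the geometric side, take the $\C^*$-action on $C_0$ where $y$ has weight $1$ and $x$ has weight $0$—this specific choice is what makes fixed points isolated and kills the $t$-shift in the line-bundle weights—and compute the matrix elements of $c_1(\cL_i), T, \Lambda, B_i$ in the fixed-point basis exactly as in Section \ref{sec:geometric operators}; (iii) match the two sets of formulas. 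The relations then hold because they hold in $\Delta_{0,1}(\triv)$, and the module identification comes directly from the basis bijection rather than from a cyclic-vector/dimension argument. In short, the vanishing of $t$ is not extracted from a local computation of $[y_i,x_i]$ on $C_0$; it is produced by the $m\to\infty$ limit on the algebra side together with the choice of $\C^*$-weights on the geometry side.
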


Similarly to Corollary \ref{cor:hilb intro} we get an action of the
spherical subalgebra $eH_{0,1}e$ on the equivariant homology of
$\Hilb(C_0)$, and under this action $H_{*}^{\C^*}(\Hilb(C_0))$ gets
identified with the
polynomial representation of $eH_{0,1}e$.

\subsection{Quantized Gieseker varieties}

Another ramification of Theorem \ref{thm:hilb intro} connects parabolic Hilbert schemes to the representation theory of quantized Gieseker varieties. These are quantizations of the moduli space $\mathcal{M}(n, r)$ of rank $r$ torsion-free sheaves on $\mathbb{P}^{2}$ with fixed trivialization at infinity and second Chern class $c_{2} = n$. The quantization, denoted $\cA_{c}(n, r)$, depends on a parameter $c \in \C$, see Section \ref{sect:gieseker} for a precise definition. For example, when $r = 1$, $\mathcal{M}(n, 1)$ is simply the Hilbert scheme of $n$ points in $\C^2$ and  $\cA_{c}(n,1)$ is the spherical rational Cherednik algebra, see \cite{GG}. 

 There is currently no presentation of  the algebra $\cA_{c}(n, r)$ by generators and relations. 
Nevertheless, Losev \cite{L2} managed to classify all finite-dimensional representations for a slightly smaller algebra $\overline{\cA_{c}}(n,r)$ such that
$\cA_{c}(n, r)=\mathcal{D}(\C)\otimes \overline{\cA_{c}}(n,r)$. Namely, if $c=m/n$, $\gcd(m,n)=1$ and $c$ is not in the interval 
$(-r,0)$ then  $\overline{\cA_{c}}(n,r)$ has a unique irreducible finite-dimensional representation $\overline{\Lnr}_{\frac{m}{n}}(n,r)$,
 otherwise there are none.  Furthermore, the action of $GL(r)$ on $\mathcal{M}(n,r)$ induces a quantum comoment
 map $\mathfrak{gl}(r)\to \cA_c(n,r)$ and hence defines an action of $\mathfrak{gl}(r)$ on $\overline{\Lnr}_{\frac{m}{n}}(n,r)$. In \cite{EKLS} Etingof, Krylov, Losev and the second author computed the dimension and graded $\mathfrak{gl}(r)$ character of $\overline{\Lnr}_{\frac{m}{n}}(n,r)$. 

In this paper we give a geometric construction of this representation for $m,n>0$.

Fix an integer $r > 0$, and denote by $\comp{n} \subseteq \Z_{\geq 0}^{r}$ the set of $r$-tuples of non-negative integers that add up to $n$. For $\gamma = (\gamma_1, \dots, \gamma_r) \in \comp{n}$ we consider the following parabolic Hilbert scheme
\begin{multline}
\label{eq:par hilb comp}
\PHilb^{\gamma, x}(C) := \left\{\cO_C \supseteq J^{0} \supseteq J^{1} \supseteq \cdots \supseteq J^{r} = xJ^{0} \, : \right.
\\
\left. \dim(\cO_{C}/J^{0}) < \infty \;
\text{ and} \; \dim(J^{i-1}/J^{i}) = \gamma_{i}\right\}.
\end{multline}
\noindent  For example, $\PHilb^{x}(C) = \PHilb^{(1, \dots, 1),x}(C)$,
where $(1, 1, \dots, 1) \in \mathcal{C}_{n}(n)$ and $\Hilb(C) :=
\sqcup_{k \geq 0}\Hilbk(C)$ is $\PHilb^{(n),x}(C)$, 
where $(n) \in \mathcal{C}_{1}(n)$.
 We define the
\emph{compositional parabolic Hilbert scheme} of $C$ to be
\begin{equation}\label{eqn:comp hilb}
\FHilb^{r, x}(C) := \bigsqcup_{\gamma \in \comp{n}}\PHilb^{\gamma,x}(C).
\end{equation}

\begin{remark}
Note that if $\gamma_{i} \leq 1$ for every $i$ then we have a natural isomorphism $\PHilb^{\gamma,x}(C) = \PHilb^{x}(C)$. In particular, $\PHilb^{x}(C)^{\sqcup \binom{r}{n}} \subseteq \FHilb^{r, x}(C)$. Similarly, if there exists $i$ such that $\gamma_{i} = n$ and $\gamma_{j} = 0$ for $j \neq i$ then we have a natural isomorphism $\PHilb^{\gamma,x}(C) = \Hilb(C)$, so that $\Hilb(C)^{\sqcup r} \subseteq \FHilb^{r, x}(C)$. 
\end{remark}

\begin{remark}\label{rmk:x vs y}
Note that we have chosen one projection to define our parabolic Hilbert schemes. We could have instead chosen the other projection so that, for $\gamma \in \comp{m}$ we have the parabolic Hilbert scheme $\PHilb^{\gamma,y}(C)$, where the condition $J^{r} = xJ^{0}$ in \eqref{eq:par hilb comp} is now replaced by $J^{r} = yJ^{0}$. With this, we have
$$
\FHilb^{r, y}(C) := \bigsqcup_{\gamma \in \comp{m}}\PHilb^{\gamma,y}(C)
$$
\end{remark}






\begin{theorem}\label{thm:gieseker intro}
There is an action of the quantized Gieseker variety $\cA_{m/n}(n, r)$ on the localized $\C^*$-equivariant cohomology of $\FHilb^{r, y}(C)$. Moreover, with this action $H^{\C^{*}}_{*}(\FHilb^{r, y}(C))$ gets identified with the unique irreducible $\cA_{m/n}(n, r)$-module 
$$\Lnr_{\frac{m}{n}}(n,r)= \cO(\C)
\otimes \overline{\Lnr}_{\frac{m}{n}}(n,r)
$$
with Gelfand-Kirillov dimension 1 where elements of negative degree act locally nilpotently. The homology of $\PHilb^{\gamma, y}(C)$ is identified with $\gamma$-weight space for the $\mathfrak{gl}(r)$ action on $\Lnr_{\frac{m}{n}}(n,r)$.
\end{theorem}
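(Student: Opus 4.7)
The plan is to reduce Theorem~\ref{thm:gieseker intro} to the $y$-analogue of Theorem~\ref{thm:hilb intro} and then match the resulting structure with the known description of $\Lnr_{m/n}(n,r)$. Since the curve $C = \{x^m = y^n\}$ is symmetric under interchanging $x \leftrightarrow y$ and $m \leftrightarrow n$, the construction of Theorem~\ref{thm:hilb intro} applied to the $y$-flag provides a geometric action of $H_{1,n/m}(\Sk{m},\C^m)$ on $H^{\C^*}_*(\PHilb^{y}(C))$ that identifies this space with the simple module $L_{n/m}(\triv)$.

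Second, for each composition $\gamma = (\gamma_1, \ldots, \gamma_r) \in \comp{m}$ I would consider the proper $\C^*$-equivariant forgetful map $\pi_\gamma \colon \PHilb^{y}(C) \to \PHilb^{\gamma, y}(C)$ that retains only the ideals $J^0, J^{\gamma_1}, J^{\gamma_1+\gamma_2}, \ldots, J^m = yJ^0$ of a fine $y$-flag. Using $\pi_\gamma$, together with the identification of the $\C^*$-fixed flags in $\PHilb^{y}(C)$ with the combinatorial basis of $L_{n/m}(\triv)$ provided by the Dunkl--Opdam subalgebra, I would prove that
\[
H^{\C^*}_*(\PHilb^{\gamma, y}(C)) \;\cong\; e_\gamma \cdot L_{n/m}(\triv),
\]
where $e_\gamma$ denotes the trivial idempotent of the Young subgroup $\Sk{\gamma_1}\times\cdots\times\Sk{\gamma_r} \subseteq \Sk{m}$. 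Summing over $\gamma$ then yields
\[
H^{\C^*}_*(\FHilb^{r, y}(C)) \;\cong\; \bigoplus_{\gamma \in \comp{m}} e_\gamma \cdot L_{n/m}(\triv).
\]

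Finally, I would equip the right-hand side with an explicit $\cA_{m/n}(n,r)$-action by realizing the generators of $\cA_{m/n}(n,r)$ and of $\mathfrak{gl}(r)$ as operators between the parabolic-invariant summands built from the Dunkl--Opdam subalgebra; the $\mathfrak{gl}(r)$-weight space decomposition will then match the sum over $\gamma$. By the classification of \cite{L2} together with the character formula of \cite{EKLS}, $\Lnr_{m/n}(n,r)$ is the unique irreducible $\cA_{m/n}(n,r)$-module of Gelfand--Kirillov dimension $1$ on which elements of negative degree act locally nilpotently, which forces the identification claimed in the theorem. The main obstacle is the middle step: the $\Sk{m}$-action on $L_{n/m}(\triv)$ is defined algebraically through the Cherednik algebra and is not a geometric automorphism of $\PHilb^{y}(C)$, so matching $\Sk{\gamma}$-invariants in the Cherednik module with the homology of the coarser parabolic Hilbert scheme $\PHilb^{\gamma, y}(C)$ requires careful bookkeeping of $\C^*$-fixed flags and of how their classes transform under the Dunkl--Opdam operators.
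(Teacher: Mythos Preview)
Your overall architecture matches the paper's: apply the $y$-version of Theorem~\ref{thm:hilb intro} to identify $H^{\C^*}_*(\PHilb^{y}(C))$ with $L_{n/m}(\triv)$, pass to partial-flag invariants via the forgetful maps $\pi_\gamma$, and sum over $\gamma$. The paper carries out exactly this, with the Springer-theoretic identification $H^{\C^*}_*(\PHilb^{\gamma,y}(C)) \cong L_{n/m}(\triv)^{\Sk{\gamma^{\rev}}}$ handled by a standard Spaltenstein-variety argument (Lemma~\ref{lemma:parabolic invariants}); your worry about the ``main obstacle'' is thus resolved by classical results, not by Dunkl--Opdam bookkeeping.

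The genuine gap is in your final step. You propose to define the $\cA_{m/n}(n,r)$-action by ``realizing the generators of $\cA_{m/n}(n,r)$ \ldots\ as operators,'' but as the paper emphasizes in the introduction, there is \emph{no known presentation of $\cA_{m/n}(n,r)$ by generators and relations}, so this cannot be done directly. The paper circumvents this by invoking \cite[Theorem~2.17, Corollary~2.18]{EKLS} (stated here as Proposition~\ref{prop:Lmnr}): that result already furnishes an isomorphism of $\cA_{m/n}(n,r)$-modules
\[
\Lnr_{m/n}(n,r)\;\cong\;\bigl(L_{n/m}(\triv)\otimes(\C^r)^{\otimes m}\bigr)^{\Sk{m}},
\]
where the action on the right comes from the spherical subalgebra of the matrix Cherednik algebra $H_{1,n/m}(m,r)$, which is identified with $\cA_{m/n}(n,r)$. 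A Schur--Weyl computation (Lemma~\ref{lemma:schur weyl}) then rewrites the invariants as $\bigoplus_{\gamma}L_{n/m}(\triv)^{\Sk{\gamma}}$ and simultaneously identifies the $\mathfrak{gl}(r)$-weight spaces. Thus the action and the identification with $\Lnr_{m/n}(n,r)$ come \emph{together} from \cite{EKLS}; you do not need to construct the action separately and then appeal to uniqueness and a character match.
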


\begin{remark}
The appearance of the scheme $\FHilb^{r,y}(C)$  in Theorem \ref{thm:gieseker intro} (as opposed to $\FHilb^{r,x}(C)$)
is explained as follows. Note that by Theorem \ref{thm:hilb intro} the algebra $H_{1, n/m}(\Sk{m}, \C^m)$ acts on the equivariant cohomology of $\PHilb^{y}(C)$. The $x$-$y$-switch in Theorem \ref{thm:gieseker intro} is then a geometric incarnation of the $n$-$m$-switch that appears in \cite[Corollary 2.18]{EKLS}. See Section \ref{sect:gieseker} for more details.
\end{remark}

\begin{example}
When $r = 1$, $\comp{m} = \{(m)\}$ and $\FHilb^{1, y}(C) = \Hilb(C)$. Since in this case $\cA_{m/n}(n, 1) = eH_{1,m/n}(\Sk{n}, \C^n)e$, we see that Corollary \ref{cor:hilb intro} is a special case of Theorem \ref{thm:gieseker intro}. 
\end{example}

\begin{example}
When $n=1$, the curve $C$ is smooth, and all the spaces $\PHilb^{\gamma}(C)$ are disjoint unions of $\Z_{\ge 0}$ copies of contractible spaces (labeled by $\dim \cO_C/J^0$). Therefore the homology of $\FHilb^{r, y}(C)$ can be naturally identified with
$$
H^{\C^{*}}_{*}(\FHilb^{r, y}(C))=H^{\C^{*}}_{*}(\mathrm{pt})\otimes \bigoplus_{\comp{m}} \C[X]\simeq H^{\C^{*}}_{*}(\mathrm{pt})\otimes S^m(\C^r)\otimes \C[X].
$$
On the other hand, $\overline{\cA_c}(1,r)$ is isomorphic to a certain quotient of $\mathcal{U}(\mathfrak{sl}(r))$, and $\overline{\Lnr}_{\frac{m}{n}}(n,r)\simeq S^m(\C^r).$
\end{example}

\subsection{Coulomb branches and generalized affine Springer fibers} From the action of a reductive group $G$ on a vector space $N$, Braverman, Finkelberg and Nakajima \cite{BFN} construct an associative algebra called the Coulomb branch algebra, which is modeled after the equivariant homology of the affine grassmannian of $G$, where multiplication is given by convolution. This algebra admits a natural quantization that appears when we take the loop rotation into account for the equivariance. Webster in \cite{W Koszul} generalized their construction by introducing a category of line defects, where the BFN quantized Coulomb branch algebra appears as the endomorphism algebra of an object. Roughly speaking, a line defect consists of the choice of a parahoric subgroup $P \subseteq G_{\KK}$ and a subspace $L \subseteq N_{\KK}$ preserved by $P$. The BFN quantized Coulomb branch algebra corresponds to the choice $P = G_{\OO}$ and $L = N_{\OO}$. It turns out that all of the algebras we work with in this paper appear as BFN quantized Coulomb branches or their generalizations:

\begin{itemize}
\item The spherical Cherednik algebra $eH_{1,c}(\Sk{n}, \C^n)e$ is the BFN quantized Coulomb branch for $G = \operatorname{GL}_{n}$ and $N = \C^n \oplus \mathfrak{gl}_{n}$, \cite{KN, W}. 
\item The full Cherednik algebra $H_{1,c}(\Sk{n}, \C^n)$ appears in the same setting as above, but choosing a nontrivial line defect associated to $P = I$, the standard Iwahori subgroup, and $L = \OO^{n} \oplus \mathfrak{i}$, where $\mathfrak{i}$ is the Lie algebra of the standard Iwahori, \cite{W, LW}.
\item The quantized Gieseker variety $\cA_{c}(n, r)$ is the BFN quantized Coulomb branch for $G = \operatorname{GL}_{n}^{\times r}$ and $N = \C^n \oplus \mathfrak{gl}_{n}^{\oplus r}$. This follows from results of \cite{NT} and \cite{L}. This is an example of {\em symplectic duality}  \cite{W Koszul} since $\cA_{c}(n, r)$ appears both as the quantized Higgs branch for the Jordan quiver and the quantized Coulomb branch for the cyclic quiver with $r$ nodes.
\end{itemize}

The recent paper \cite{HKW} constructs an action of the quantized Coulomb branch in the cohomology of generalized affine Springer fibers in the sense of \cite{GKM}, again by certain convolution diagrams. This has been extended to the parahoric setting in \cite{GK}. We identify the different parabolic Hilbert schemes we consider with generalized affine Springer fibers.

\begin{itemize}
\item For $\Hilb(C)$, this is \cite[Theorem 3.5]{GK}.
\item For $\PHilb^{x}(C)$, see Proposition \ref{prop:gasf}. 
\item For $\FHilb^{r,y}(C)$, see Proposition \ref{prop:gieseker gasf}. 
\end{itemize}

While we take this as a motivation for Theorems \ref{thm:hilb intro} and \ref{thm:gieseker intro}, our proofs do not use any of these technologies, in particular we do not obtain the action via convolution diagrams. The proofs of Theorems \ref{thm:hilb intro} and \ref{thm:hilb t=0 intro} are based on the study of the combinatorics of the various Hilbert schemes we consider, as well as the combinatorial representation theory of the rational Cherednik algebra. The development of this depends on a suitable presentation of this algebra, and we use work of Webster \cite{W}, and more recent work of LePage-Webster \cite{LW} to verify, in the case of the scheme $\PHilb^{x}(C)$, that our action coincides with the one constructed in \cite{GK} via convolution diagrams, see Section \ref{sec: comparison to GK}. 

 On the contrary, there is no known set of generators and relations for the algebra $\cA_{c}(n, r)$. However, we use Theorem \ref{thm:hilb intro} together with \cite[Theorem 2.17]{EKLS} that constructs representations of $\cA_{m/n}(n, r)$ starting from representations of $H_{n/m}(m)$ to prove Theorem \ref{thm:gieseker intro}.





\subsection{Rational Cherednik algebras} The main idea behind the proof
of Theorem \ref{thm:hilb intro} is to identify a basis in
$L_{m/n}(\triv)$ that corresponds to the fixed-point basis in
$H^{\C^{*}}_{*}(\sqcup_{k}\PHilb_{k, n+k}(C))$.  Our main tool to
construct this basis is a presentation of the rational Cherednik
algebra $H_{t,c}(\Sn,\C^n)$ that is better-suited for this purpose than
the usual presentation.
To lighten notation, we write $H_c = H_{t,c}(\Sn,\C^n)$ below.
 Recall that, in its usual presentation, the
algebra $H_c$ has generators $x_i$, $y_i$ ($i = 1, \dots, n$) and
$\Sn$. It is naturally graded, with $x_i$ of degree 1, $y_i$ of degree
$-1$ and $\Sn$ in degree zero. Dunkl and Opdam \cite{DO} constructed a
family of commuting operators $u_1,\ldots,u_n$ of degree 0 in $H_c$.
The algebra $H_c$ is, in fact, generated by $u_i$, the group algebra of
$\Sn$ and two additional generators 
$$
\tau := x_{1}(12\cdots n), \lambda := (12\cdots n)^{-1}y_{1}.
$$
It is clear that $\tau$, $\lambda$ and $\Sn$ already generate the algebra since one can obtain $x_1$ and $y_1$ (and hence all $x_i$ and $y_i$) using them. In Theorem \ref{thm:new presentation} we give a complete list of relations between $\tau, \lambda, u_i$ and the generators of $\Sn$.  This presentation of the algebra $H_{c}$ has already appeared in the more complicated cyclotomic setting in the work of Griffeth \cite{g2} and Webster \cite{W}.
 Since some relations become more transparent in the type $A$ setting, we present it in detail. The generators $u_i$ can be, in principle, eliminated, and the remaining relations 
are listed in Proposition \ref{prob: eliminate u}.

We use the presentation of the algebra $H_{c}$ via the Dunkl-Opdam operators to, in the case where $c$ is a rational number with denominator precisely $n$, simultaneously diagonalize the operators $u_{i}$ on the polynomial representation $\Delta_{c}(\triv)$ and give an explicit combinatorial description of the eigenvalues. We prove that the action of the operators $\tau$ and $\lambda$ sends an eigenvector to a multiple of another eigenvector, and describe the action of $\Sn$ on an eigenbasis explicitly. We remark that this has already appeared in work of Griffeth,  see \cite{griffeth comb, griffeth, g2} and Remark \ref{remark:griffeth}, but we reprove these results with combinatorics that are more amenable to our geometric goal. 


\begin{theorem}
\label{thm: intro 1}
Let $c=m/n,$ where $m,n \in \Z_{> 0}, \gcd(m,n)=1$.
Then the following holds:
\begin{itemize}
\item[(a)] $\Delta_c(\triv)$ has a basis $v_{\ba}$ labeled by  sequences $\ba=(a_1,\ldots,a_n)$ of nonnegative integers. 
The action of $u_i,\tau$ and $\lambda$ in this basis is given by
$$
u_i v_{\ba}=(a_{i} - (g_{\ba}(i) - 1)c)v_{\ba},\ \tau v_{\ba}=v_{\pi\cdot \ba},\ \lambda v_{\ba} = (a_{1} - (g_{\ba}(1) - 1)c)v_{\pi^{-1}\cdot \ba}
$$
where $\pi(a_1,\ldots,a_n)=(a_n+1,a_1,\ldots,a_{n-1})$ and $g_\ba$ is the minimal length permutation sorting the sequence $\ba$ to
be non-decreasing. 
\item[(b)] $L_c(\triv)$ has a basis $v_{\ba}$ labeled by sequences $(a_1,\ldots,a_n)$ such that $|a_i-a_j|\le m$ for all $i,j$ and
if $a_i-a_j=m$ then $i<j$. 
\end{itemize}
The action of $\Sn$ in the basis $v_\ba$ is given in Theorem \ref{thm:action}.
\end{theorem}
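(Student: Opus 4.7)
The plan is to work in the polynomial representation $\Dctriv = \C[x_1,\ldots,x_n]$, whose monomial basis $\{x^{\ba}\}_{\ba \in \Z_{\ge 0}^n}$ serves as the starting point. A direct calculation shows that, with respect to a suitable partial order on sequences (dominance, refined by the sorting permutation $g_{\ba}$), the Dunkl--Opdam operators act upper triangularly:
\[
u_i x^{\ba} = (a_i - (g_{\ba}(i)-1)c)\, x^{\ba} + (\text{strictly lower terms}),
\]
the diagonal entry arising from the $x_i y_i$ piece of $u_i$ (contributing $a_i$) and from the transposition sum (contributing $-(g_{\ba}(i)-1)c$). Since $c = m/n$ with $\gcd(m,n) = 1$, the fractional parts of $(g_{\ba}(i)-1)c$ cycle through all residues mod $n$, so the tuple of $u_i$-eigenvalues determines both $g_{\ba}$ and $\ba$. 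Together with the commutativity of the $u_i$ from Theorem \ref{thm:new presentation}, this lets me define $v_{\ba}$ inductively as the unique simultaneous eigenvector of the form $x^{\ba} + (\text{lower})$ with leading coefficient $1$; this yields part (a) together with the eigenvalue formula.

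Next, I would exploit the commutation relations from the new presentation, which take the form $\tau u_i = u_{i+1}\tau$ for $i < n$ and $\tau u_n = (u_1 + 1)\tau$, with dual identities for $\lambda$. These force $\tau v_{\ba}$ and $\lambda v_{\ba}$ to be simultaneous $u$-eigenvectors with spectra corresponding to $\pi\cdot\ba$ and $\pi^{-1}\cdot\ba$, respectively. By one-dimensionality of the joint eigenspaces, $\tau v_{\ba} = \alpha_{\ba} v_{\pi\cdot\ba}$ and $\lambda v_{\ba} = \beta_{\ba} v_{\pi^{-1}\cdot\ba}$. Comparing leading monomials with the direct identity $\tau x^{\ba} = x^{\pi\cdot\ba}$ in $\Dctriv$ pins down $\alpha_{\ba} = 1$. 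The coefficient $\beta_{\ba} = a_1 - (g_{\ba}(1)-1)c$ is then recovered from a relation of the form $\tau\lambda = y_1 x_1 + (\text{correction})$ in the new presentation, which, applied to $v_{\ba}$, reduces to the already-known $u_1$-eigenvalue.

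For part (b), the strategy is to identify the maximal proper submodule $N \subset \Dctriv$ as the span of $v_{\ba}$ for $\ba$ outside the admissible set, so that $\Lctriv$ inherits the claimed basis. Stability under $u_i$ is automatic. A purely combinatorial check verifies that $\pi$ preserves admissibility (examining the constraints $|a_i - a_j| \le m$ and the tiebreaker on indices when $a_i - a_j = m$), and consequently $\pi^{-1}$ does so on its natural domain; hence $\tau$ preserves $N$. The operator $\lambda$ sends $v_{\ba}$ either to zero (exactly when $a_1 - (g_{\ba}(1)-1)c$ vanishes, which under the coprimality hypothesis forces $a_1 = 0$ and $g_{\ba}(1) = 1$) or to a scalar multiple of $v_{\pi^{-1}\cdot\ba}$, which is again outside the admissible set. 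Stability of $N$ under $\Sn$ invokes the action in Theorem \ref{thm:action}, whose transition coefficients vanish precisely on the boundary of the admissible region. Irreducibility of $\Dctriv/N$ then follows because $\tau$ and $\lambda$ can be used to connect any admissible $v_{\ba}$ to $v_{(0,\ldots,0)}$ modulo $N$, so any nonzero submodule of the quotient is the whole thing. I expect the main obstacle to be precisely this final $\Sn$-stability check: one must verify that the explicit coefficients in the $\Sn$-action cancel exactly at the boundary sequences that separate the admissible from the inadmissible regime, which is a delicate case analysis rather than a formal manipulation.
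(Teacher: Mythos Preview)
Your overall strategy is sound and largely parallels the paper's secondary proof via the Mackey filtration (Section~\ref{sec-mackey}), rather than its primary proof in Section~\ref{sect:pol rep}, which builds the $v_{\ba}$ inductively from $v_{(0,\dots,0)}$ by applying $\tau$ and the intertwining operators $\sigma_i = s_i - \frac{c}{u_i - u_{i+1}}$, checking at each step (via the order~$\prec$) that the intertwiner does not kill the vector. Your triangularity route is legitimate, but two points need tightening.

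First, the order. ``Dominance refined by $g_{\ba}$'' is too vague to carry the argument: the off-diagonal contribution of $-c\sum_{j<i}(ij)$ to $u_i x^{\ba}$ produces $x^{s_{ij}\ba}$ with $\sort(s_{ij}\ba)=\sort(\ba)$, and whether this is ``lower'' depends delicately on cancellations against the divided-difference terms in $x_i y_i$. The paper resolves this by working not with an ad hoc refinement of dominance but with the order $\prec$ of Section~\ref{sect:combinatorics}, shown in Lemma~\ref{lem:compare to lex} to coincide with lex order on the window notation of $\wa{a}$; triangularity then follows from the commutation $u_i\,\FXi(\omega) = \FXi(\omega)\,u_{\omega^{-1}(i)} + (\text{Bruhat-lower terms})$ together with Lemma~\ref{lemma:lex vs bruhat}. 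You also need $\pi$ to be order-preserving (Lemma~\ref{lemma:po}(1)) to conclude $\tau v_{\ba}=v_{\pi\cdot\ba}$ from the leading-monomial comparison. Two small formula slips: the relation is $\tau u_n = (u_1 - 1)\tau$, not $(u_1+1)\tau$; and $\tau\lambda = u_1 = x_1 y_1$ exactly, with no correction term.

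Second, and more substantively, your irreducibility argument for $\Dctriv/N$ has a gap. The operators $\tau$ and $\lambda$ alone move only within a single $\pi$-orbit: from $v_{\ba}$ with $a_1=0$ you cannot reach a lower degree, and applying $\tau$ then $\lambda$ just rescales. The paper's proof (the Corollary following Proposition~\ref{prop:radical}) observes that if $a_1=0$ and $\ba\in\cT$ then all $a_i<m$, hence $\wt_i-\wt_j\neq\pm c$, so every $\sigma_i$ is defined and nonzero on $v_{\ba}$; one uses these intertwiners to move inside the $\Sn$-orbit until some coordinate lands in the first slot, and then applies $\lambda$. Your sketch of the $\Sn$-stability of $N$ and the identification of the boundary vanishing is correct and matches Proposition~\ref{prop:radical}, but you will need the intertwiners (equivalently, the $\Sn$-action) for the simplicity of the quotient, not just $\tau$ and $\lambda$.
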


\begin{remark}
Note that $\pi^{-1}\cdot \ba$ is well defined unless $a_1=0$. In this case $a_{1} - (g_{\ba}(1) - 1)c=0$, so $\lambda\cdot v_{\ba}$ is well defined.
\end{remark}

The proof of Theorem \ref{thm:hilb intro} is based, roughly speaking, on the comparison of the basis of fixed points in $H^{*}(\sqcup_{k}\PHilb_{k, n+k}(C))$ with the basis given by Theorem \ref{thm: intro 1}(b). 

The proof of  Theorem \ref{thm: intro 1} uses a Mackey-type result for the algebra $H_{c}$. The algebra $\Hnu$ generated by $u_{1}, \dots, u_{n}$ and $\Sn$ is isomorphic to the degenerate affine Hecke algebra of rank $n$. 
In Theorem \ref{theorem-mackey} we construct a filtration of $\Res^{H_{c}}_{\Hnu}\Delta_{c}(\mu)$ by $\Hnu$-modules and explicitly describe the subquotients. As a consequence, we are able to give a combinatorial basis of all standard modules $\Delta_{c}(\mu)$.


\begin{remark}
\label{remark:griffeth}
In \cite[Theorem 5.1]{griffeth} Griffeth constructs, for generic values of the parameter $(t,c)$  an eigenbasis of every standard module, and in \cite{griffeth comb} he considers the case of the polynomial representation.  Both Theorem \ref{thm: intro 1} and the construction of a combinatorial basis for standard modules are a consequence of this 
and \cite[Theorem 7.5]{griffeth}
after specializing parameters. Our proof and construction of eigenbasis, using a Mackey-type formalism, is more conceptual and its combinatorics seem better-suited for geometric applications.
\end{remark}

As a further application of the combinatorics of the Dunkl-Opdam
presentation of the algebra $H_{c}$ we are able to give an explicit
combinatorial construction of all the maps appearing in the BGG
resolution of the module $L_{c}(\triv)$
for $c = m/n$,
and we show that the complex
formed by these maps is indeed exact. In particular, we give a new
construction of this resolution that avoids appealing to the
representation theory of finite Hecke algebras at roots of unity via
the Knizhnik-Zamolodchikov functor, which uses techniques of complex
analysis. Moreover, we are able to give a combinatorial basis in the
spirit of that of Theorem \ref{thm: intro 1} for \emph{every} simple
module $L_{c}(\mu)$, see Corollary \ref{cor:weights of simples}. 

\begin{remark}
More concretely, the standard modules $\Delta_c(n-\ell,1^{\ell})$ and
$\Delta_c(n-\ell+1,1^{\ell-1})$ have bases labeled by pairs $(\ba,T)$
and $(\ba',T')$ where $T$ and $T'$ are standard tableaux of the
corresponding hook shapes. We explicitly compute matrix elements of the
map between standard modules in this basis in the case $c=m/n$. As a consequence, we give
two labelings of the basis in  $L_c(n-\ell,1^{\ell})$ presented either
as a simple quotient of $\Delta_c(1^{\ell},n-\ell)$, or as the radical
of $\Delta_c(n-\ell-1,1^{\ell+1})$, and an explicit bijection between
them.  \end{remark}

\subsection{Relation to other work} Finally, we would like to comment on the relations of our work to the existing literature. As we have mentioned above, the Dunkl-Opdam presentation of the Cherednik algebra has already appeared in work of Griffeth and Webster, \cite{griffeth, g2, W}, where it has been used for different purposes. In particular, Griffeth \cite{griffeth, g2} uses the fact that the operators $u_{i}$ are self-adjoint with respect to the Shapovalov form to compute the norm of elements in standard modules, see also \cite{dunkl griffeth}, while Webster \cite{W} uses the Dunkl-Opdam subalgebra to give a concrete equivalence between the category $\cO$ and modules over the quiver Hecke algebra.

By \cite{MY,MS} the homology of the Hilbert schemes of singular curves is closely related to the homology of the corresponding compactified Jacobian which is  isomorphic to the 
 affine Springer fiber in the affine Grassmannian. One would expect a similar connection between our parabolic Hilbert schemes and affine Springer fibers in the affine flag variety.
 These affine Springer fibers do admit affine pavings, and the combinatorics of the affine cells was studied in detail in \cite{LS,ORS,GMV}.

The (co)homology of the  affine Springer fibers in affine flag variety was studied  in \cite{GORS,OY,OY2,VV} where it was proved that it carries an action of the trigonometric Cherednik algebra. Furthermore, this (co)homology has certain ``perverse'' filtration, and the associated graded space admits a natural action of the rational Cherednik algebra corresponding to the {\em reflection representation} of $\Sn$ (also known as rational Cherednik algebra of $\sln$). The construction of this action uses global Springer theory developed by Yun \cite{Yun}.  The combinatorics of finite dimensional representations of the rational Cherednik algebra for $\sln$ was studied by Shin \cite{gicheol}.

On the contrary, we find our construction to be more elementary than \cite{OY,OY2}. Indeed, in our construction of geometric operators $\tau$ and $\lambda$ we use neither perverse filtration nor global Springer theory. The combinatorial presentation of the algebra is easier in the $\gln$ setup. Still, we make an explicit comparison with the results of \cite{GMV} in Section \ref{sect:radical triv}, see Remark \ref{rmk:GMV}.




\subsection{Structure of the paper} The main body of the paper follows a reverse structure from the introduction. First we study the representation theory of rational Cherednik algebras and then we move on to Hilbert schemes. In Section \ref{sec: affSn} we look at the combinatorics of the affine symmetric group that will appear in the representation theory of the rational Cherednik algebra. We study the structure of the Cherednik algebra in Section \ref{sec: RCA background} and, in particular, we give its alternative presentation that is better suited for geometry. Sections \ref{sec-mackey}, \ref{sec:rep theory} and \ref{sect:pol rep} are devoted to the study of modules in category $\mathcal{O}$ of the Cherednik algebra. In Section \ref{sec-mackey} we prove a Mackey-type formula for induced representations of the Cherednik algebra and apply it to study standard modules in Section \ref{sec:rep theory}.  In particular we construct the BGG resolution of \cite{BEG} in a purely combinatorial manner. Since the polynomial representation occupies a special place in geometric applications, we specialize the results of Section \ref{sec:rep theory} to the polynomial representation in Section \ref{sect:pol rep}. 

We turn to Hilbert schemes in Section \ref{sect:geometry}. First, we examine the case of the reduced curve $C = \{x^{m} = y^{n}\}$ and prove Theorem \ref{thm:hilb intro}, see Theorem \ref{thm: geometric action}.
 In this section, we also compare the parabolic Hilbert scheme to generalized affine Springer fibers, in particular proving that they admit a paving by affine cells, see Section \ref{sec:gasf}. Section \ref{sect:gieseker} is devoted to the compositional parabolic Hilbert scheme $\FHilb^{r,y}(C)$.  We prove Theorem \ref{thm:gieseker intro} as Theorem \ref{thm:gieseker action} and also realize this Hilbert scheme as a generalized affine Springer fiber. Finally, we study the case of the non-reduced curve $C = \{y^{n} = 0\}$ in Section \ref{sect:m to infinity} where we prove Theorem \ref{thm:hilb t=0 intro}, see Theorem \ref{thm:hilb t=0}.

\section*{Acknowledgments}

We would like to thank Tudor Dimofte, Niklas Garner, Joel Kamnitzer, Oscar Kivinen, Ivan Losev, Alexei Oblomkov and Ben Webster for useful discussions. We would also like to thank Stephen Griffeth for comments on the relationship of this paper with some of his previous work. We are grateful to Sean Griffin for spotting a gap in a previous proof of Proposition \ref{prop:gasf}.
We would also like to thank the anonymous referees for their helpful suggestions on improving the organization and exposition of this paper.

\section{Affine permutations}\label{sec: affSn}

\subsection{The extended affine symmetric group}\label{sect:affine sn}

In this section, we study the combinatorics of the extended affine symmetric group $\affSn$. For more details, see \cite{green, Macdonald}. 

\begin{definition} The {\em extended affine symmetric group}
 is defined by the relations\footnote{We drop the first relation when $n=2$.}
$$\affSn = \left\langle \pi, s_i, \,\,  1\le i < n \quad\Big|\quad
\begin{array}{ll}
s_is_{i+1}s_i = s_{i+1}s_is_{i+1} &\textrm{for $1 \le i < n-1$},\\
s_is_j=s_js_i &\textrm{for $j \neq i \pm 1 $},\\
\pi s_i = s_{i+1}\pi &\textrm{for $1 \le i < n-1$,}
\\
s_i^2  = 1 &\textrm{for $i\in\Z/n\Z$}
\end{array}\right\rangle.$$
\end{definition}

Letting $s_0 = \pi^{-1} s_1 \pi$, we could consider generators
$s_i$ for $i \in \Z/n\Z$. In this case
$\affSn $  has as a subgroup the affine symmetric group
$\affSnCox  = \langle s_i \mid i \in \Z/n\Z \rangle$.
However for the purposes of this paper, we rarely take this
point of view.
Further, we will refer to elements $\p \in \affSn$ as affine
permutations, dropping the adjective ``extended."

We recall that $\affSn $ acts faithfully on $\Z$ by $n$-periodic permutations,
i.e. bijections $\p:\Z\to\Z$ such that $\p(i+n) =
\p(i)+n$.  For this action $\pi(i) = i+1.$
 It also acts on the set $\C^n$ via:
\begin{align}
s_i\cdot(\wt_1,\ldots,\wt_i,\wt_{i+1},\ldots \wt_n) &= (\wt_1, \ldots, \wt_{i+1}, \wt_i, \ldots, \wt_n)\nonumber
\\
\pi \cdot (\wt_1, \ldots, \wt_n) &= (\wt_n +t, \wt_1, \wt_2, \ldots, \wt_{n-1})
\label{AffSymmOnCn}
\end{align}
for a fixed parameter $t \in \C$.
It is  convenient to extend
an $n$-tuple to having coordinates indexed by all of $\Z$
via $\wt_{i+kn} = \wt_i - kt$. Then we may align the two actions, writing
$\p \cdot (\wt_1, \ldots, \wt_n) = (\wt_{\p^{-1}(1)}, \wt_{\p^{-1}(2)},  \ldots, \wt_{\p^{-1}(n)})$.
This is consistent with our
 conventions taken in Remark \ref{remark-ui for i in Z} below.

Just as with the finite symmetric group, it is convenient to
use window notation for affine permutations.
The window notation of $\p$ is given by $[p(1), p(2), \ldots, p(n)]$,
which determines $\p$ by periodicity.

We will follow the usual convention to extend Coxeter length $\ell$ from $\affSnCox$ to $\affSn$ by setting $\ell(\pi)= 0$.  Then length still counts the number of affine inversions, that is $\ell(\p) = \# \{(i,j) \in \Z^2 \mid 1 \le i \le n, i< j, \p(i) > \p(j) \}$.
\begin{definition}
Let us define the {\em degree}  of $\p \in \affSn$ to be
$\frac 1n  \sum_{i=1}^n (\p(i) -  i  )$.
\\
Let 
$\affSnplus$ denote the submonoid of affine permutations
$\p$ such that $i > 0 \implies \p(i) > 0$,
i.e., the entries of $\p$ in window notation are all positive.
\end{definition}

Note  $\p \in \affSnCox$ iff it has degree 0.
The only permutations in $\affSnplus$ of degree $0$ are those
in the finite symmetric group $\Sn$.

Let $\ta{a} \in \affSn$ denote {\em translation} by ${\ba} \in \Z^n$.
In other words, its window notation  is
$\ta{a} = [1+n a_1, 2+n a_2, \ldots, n+n a_n]$.
It is well-known that the
subgroup $\{ \ta{a} \mid \ \ba \in \Z^n\}$ generated by translations is normal in $\affSn$ and it is isomorphic to $\Z^n$. Moreover, there is a semi-direct product decomposition $\affSn \simeq \Z^n \rtimes \Sn$. This gives us the first part of the following Lemma, while the second assertion is obvious from the definitions.

\begin{lemma}
\label{lem:coset positive}
Any permutation $\ww\in \affSn$ can be uniquely written as $\ww=\ta{a} g$ for $g\in \Sn$, $\ba \in \Z^n$.
Furthermore, $\ww\in \affSnplus$ if and only if  $\ww=\ta{a} g$ and $a_i\ge 0$ for all $i$.
\end{lemma}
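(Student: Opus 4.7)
The plan is to reduce everything to window notation, since $\affSn$ is described earlier as a group of $n$-periodic permutations of $\Z$, and both $\ta{a}$ and $g \in \Sn$ have explicit window notations. Concretely, any $\ww \in \affSn$ is uniquely determined by its window $[\ww(1), \ldots, \ww(n)]$, and this window is characterized by the property that the entries form a complete set of residues modulo $n$ (this follows from the fact that, being $n$-periodic, $\ww$ descends to a bijection of $\Z/n\Z$).

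For existence and uniqueness of $\ww = \ta{a} g$, first I would compute $(\ta{a} g)(i) = \ta{a}(g(i)) = g(i) + n a_{g(i)}$, using that the window of $\ta{a}$ is $[1+na_1, \ldots, n+na_n]$. So if such a decomposition exists, then $g(i)$ is forced to be the unique element of $\{1, \ldots, n\}$ congruent to $\ww(i) \bmod n$, and $a_{g(i)} = (\ww(i) - g(i))/n$. Conversely, defining $g$ and $\ba$ by these formulas, the fact that $\ww(1), \ldots, \ww(n)$ are distinct mod $n$ guarantees that $g$ is a genuine element of $\Sn$, and one verifies directly that $\ta{a} g$ has the same window as $\ww$, hence equals $\ww$.

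For the characterization of $\affSnplus$, the forward direction is immediate: if all $a_i \ge 0$, then each window entry $(\ta{a}g)(i) = g(i) + n a_{g(i)} \ge g(i) \ge 1 > 0$, so $\ta{a} g \in \affSnplus$. For the converse, given $\ww \in \affSnplus$, each entry $\ww(i)$ is a positive integer congruent to $g(i) \in \{1, \ldots, n\}$ modulo $n$; the smallest such positive integer is $g(i)$ itself, so $\ww(i) \ge g(i)$ and hence $a_{g(i)} = (\ww(i) - g(i))/n \ge 0$. Since $g$ is a bijection of $\{1, \ldots, n\}$, this forces $a_j \ge 0$ for every $j$.

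There is no real obstacle here: the argument is a direct bookkeeping exercise in window notation. The only mild subtlety is keeping track of the composition convention (so that the index of $a$ in $(\ta{a} g)(i) = g(i) + n a_{g(i)}$ is $g(i)$, not $i$), and the elementary observation that if $w \equiv j \pmod{n}$ with $j \in \{1, \ldots, n\}$ and $w > 0$, then automatically $w \ge j$.
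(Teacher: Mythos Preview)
Your proof is correct and follows essentially the same approach as the paper: both compute the window of $\ta{a}g$ as $[g(1)+na_{g(1)},\ldots,g(n)+na_{g(n)}]$, recover $g(i)$ and $a_{g(i)}$ from the residue and quotient of $\ww(i)$ upon division by $n$, and observe that positivity of the window entries is equivalent to nonnegativity of the $a_j$. The paper is somewhat terser, simply asserting ``$\ww(i)>0$ if and only if $a_{g(i)}\ge 0$,'' whereas you spell out both directions explicitly.
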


Let $\sort(\ba)$ denote the non-decreasing ordering of $\ba$, and $g_{\ba} \in \Sn$ the shortest element such that $g_{\ba} \cdot \ba = \sort(\ba)$. Note that the element $g_{\ba}$ is given by
\begin{equation}
\label{eq: def ga}
g_{\ba}(i) = \sharp\{j : a_{j} < a_{i}\} + \sharp\{j : j \leq i \, \text{and} \, a_{i} = a_{j}\}.
\end{equation}
We denote 
 $$
\wa{a}:=\ta{a} g_{\ba}^{-1}.
$$

\begin{remark}
Note that the element $g_{\ba} \in \Sn$ is uniquely specified by the requirement that the window notation of $\ta{a}g_{\ba}^{-1}$ is increasing. 
\end{remark}

The following proposition follows easily from the semi-direct product decomposition of $\affSn$. In other words, we have a  projection $\affSn \to \Z^n$ and the assignment $\ba \to \wa{\ba}$ is a right inverse to this map. 
\begin{proposition} 
$\wa{a}=\wa{b}$ if and only if  $\ba=\boldb$.
\end{proposition}

In fact, the following stronger statement holds. 

\begin{lemma}\label{lemma:w is very injective}
Assume $\wa{a}(i) = \wa{b}(i)$ for some $i \in \{1, \dots, n\}$. Then $g_{\ba}^{-1}(i) = g_{\boldb}^{-1}(i)$ and $a_{g_{\ba}^{-1}(i)} = b_{g_{\boldb}^{-1}(i)}$.
\end{lemma}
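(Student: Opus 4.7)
The plan is to unwind the definition of $\wa{a}$ and read off both the index $g_{\ba}^{-1}(i)$ and the value $a_{g_{\ba}^{-1}(i)}$ directly from the integer $\wa{a}(i)$, observing that the division-with-remainder of $\wa{a}(i)$ by $n$ is uniquely determined by a representative in $\{1,\dots,n\}$.

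More precisely, set $j := g_{\ba}^{-1}(i)$ and $k := g_{\boldb}^{-1}(i)$. Since $g_{\ba}, g_{\boldb} \in \Sn$, we have $1 \le j,k \le n$. From $\wa{a} = \ta{a} g_{\ba}^{-1}$ and the window notation $\ta{a} = [1 + n a_1, \dots, n + n a_n]$ recalled just before the lemma, we compute
$$
\wa{a}(i) = \ta{a}(j) = j + n a_{j}, \qquad \wa{b}(i) = \ta{b}(k) = k + n b_{k}.
$$
The hypothesis $\wa{a}(i) = \wa{b}(i)$ therefore gives $j + n a_j = k + n b_k$.

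Reducing modulo $n$ and using $1 \le j, k \le n$ forces $j = k$, and then cancelling yields $a_j = b_k$. Substituting back the definitions of $j$ and $k$ gives the two desired equalities $g_{\ba}^{-1}(i) = g_{\boldb}^{-1}(i)$ and $a_{g_{\ba}^{-1}(i)} = b_{g_{\boldb}^{-1}(i)}$.

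There is no real obstacle here: the statement is essentially a restatement of the uniqueness half of Lemma \ref{lem:coset positive} applied to the $i$-th entry of the window. The only thing to be careful about is keeping track of which factor $g_{\ba}^{-1}$ acts on (the argument versus the value), but once one writes $\wa{a}(i) = \ta{a}(g_{\ba}^{-1}(i))$ the computation is immediate.
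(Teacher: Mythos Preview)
Your proof is correct and is essentially the same as the paper's: both compute $\wa{a}(i)=g_{\ba}^{-1}(i)+n\,a_{g_{\ba}^{-1}(i)}$, equate with the analogous expression for $\boldb$, and use that two elements of $\{1,\dots,n\}$ congruent modulo $n$ must be equal. The paper phrases this last step as ``their difference is divisible by $n$ only if it is $0$,'' which is the same observation you make by reducing modulo $n$.
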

\begin{proof}
If $\wa{a}(i) = \wa{b}(i)$ then $g_{\ba}^{-1}(i) - g_{\boldb}^{-1}(i) = n(b_{g_{\boldb}^{-1}(i)} - a_{g_{\ba}^{-1}(i)})$. But $g_{\ba}^{-1}(i)$
and $ g_{\boldb}^{-1}(i) \in \{1, \dots, n\}$ so their difference is only divisible by $n$ if it is in fact $0$. The result follows. 
\end{proof}

Let $\Min$ denote the set $\Min = \{ \wa{a} \in \affSn \mid \ba \in \Z_{\ge 0}^n \}$.
Then $\ww = \wa{a} \in \affSnplus$ is a minimal length (left) coset representative
of $\affSn / \Sn$, i.e.,  we have $0 < \ww(1) < \ww(2) < \cdots < \ww (n)$.
Further note that the degree of $\wa{a}$ as well as that of $\ta{a}$
agrees with $||\ba|| := \sum a_{i}$.

It is easy to see the following holds.
\begin{lemma}
\label{claim:coset}
Let $\ww \in \Min$ be of degree $r > 0$.
Then there is a unique expression of the form
$$\ww =  
 (s_{\nu_r} \cdots s_2 s_1) \pi \cdots
 (s_{\nu_2} \cdots s_2 s_1) \pi
 (s_{\nu_1} \cdots s_2 s_1) \pi,$$
where $ 0\le \nu_{i+1}  \le\nu_i.$  In other words $\nu$ is a
partition with $\nu_1 < n$ and $r$ parts,
and we allow parts to be zero.
\end{lemma}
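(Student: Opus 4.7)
The approach is induction on the degree $r$, using the combinatorial statistic $\kappa(\ww) := \max\{k \ge 0 : \ww(i) = i \text{ for all } 1 \le i \le k\}$ attached to $\ww \in \Min$. I plan to show that the leftmost parameter $\nu_r$ in any valid factorization must equal $\kappa(\ww)$, which delivers both existence and uniqueness.

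For the base case $r = 1$, the constraints $\sum_i (w_i - i) = n$, positivity, strict monotonicity, and distinct residues mod $n$ force any $\ww \in \Min$ of degree $1$ to have window $[1, 2, \ldots, j, j+2, \ldots, n, n+j+1]$ for a unique $j \in \{0, \ldots, n-1\}$. A direct computation verifies that $(s_j s_{j-1} \cdots s_1)\pi$ has exactly this window, so $\nu_1 = j$ is the unique valid choice.

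For the inductive step ($r \ge 2$), given $\ww \in \Min$ of degree $r$, I set $\nu_r := \kappa(\ww)$ (so $0 \le \nu_r \le n-1$ since $\ww \ne \id$) and $u := (s_{\nu_r}\cdots s_1)\pi$. I then need $\ww' := u^{-1}\ww$ to lie in $\Min$, to have degree $r-1$, and to satisfy $\kappa(\ww') \ge \nu_r$. Degree multiplicativity handles degrees, and since $u^{-1}$ fixes every integer whose residue mod $n$ lies in $\{1,\ldots,\nu_r\}$, the relations $\ww(i) = i$ for $i \le \nu_r$ descend to $\ww'(i) = i$, giving $\kappa(\ww') \ge \nu_r$. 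The technical crux is monotonicity and positivity of the window of $\ww'$: the map $u^{-1}$ subtracts $\nu_r + 1$ from values of residue $\nu_r + 1$ and subtracts $1$ from values of residue in $\{\nu_r + 2, \ldots, n\}$; the main input is that the unique entry of $\ww$ of residue $\nu_r + 1 \pmod n$ necessarily satisfies $\ww(j) \ge n + \nu_r + 1$, because otherwise $\ww(j) = \nu_r + 1$, which either contradicts the defining maximality of $\kappa(\ww) = \nu_r$ (if $j = \nu_r + 1$) or violates strict monotonicity $\ww(j) > \ww(\nu_r+1) \ge \nu_r + 2$ (if $j > \nu_r + 1$). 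A short case analysis using this bound then confirms the window remains in $\Min$.

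Applying the inductive hypothesis to $\ww'$ produces the unique decomposition $\ww' = (s_{\nu_{r-1}} \cdots s_1)\pi \cdots (s_{\nu_1}\cdots s_1)\pi$ with $\nu_1 \ge \cdots \ge \nu_{r-1} \ge 0$, and moreover the induction identifies $\nu_{r-1}$ with $\kappa(\ww') \ge \nu_r$, so the combined sequence is a valid partition. For uniqueness, the same inductive identification shows that in any valid factorization the leftmost parameter equals $\kappa$ of the total, so $\nu_r$ is pinned down by $\ww$, and uniqueness of the remaining parameters follows from induction applied to $\ww'$. The principal obstacle throughout is the residue-based case analysis verifying $u^{-1}\ww \in \Min$; the monotonicity and positivity checks both funnel through that step.
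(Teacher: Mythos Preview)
Your proof is correct and uses the same inductive scheme as the paper (induction on the degree $r$, peeling off one factor at a time), but you peel from the \emph{opposite end}. The paper strips off the \emph{rightmost} factor $(s_{\nu_1}\cdots s_1)\pi$: it shows $\ww\pi^{-1}s_1\cdots s_k \in \Min$ where $k$ is the largest index with $\ww(k) < \ww(n)-n$, so $\nu_1=k$ is the largest part, and then verifies $k \ge \nu_2$ by a position comparison in the window. You instead strip off the \emph{leftmost} factor $(s_{\nu_r}\cdots s_1)\pi$ using the fixed-point statistic $\kappa(\ww)$, which directly identifies the smallest part $\nu_r$.

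Your route makes uniqueness particularly clean, since $\kappa$ is an invariant of $\ww$ that any valid factorization must match. One small point worth making explicit: your uniqueness step implicitly uses that in \emph{any} valid factorization the tail $W := (s_{\nu_{r-1}}\cdots s_1)\pi\cdots(s_{\nu_1}\cdots s_1)\pi$ satisfies $W(i)=i$ for $i \le \nu_{r-1}$ (so that $\kappa(\ww)\ge \nu_r$) and $W\in\affSnplus$ (so that $W(\nu_r+1)>0$, giving $\kappa(\ww)\le \nu_r$). Both follow from an easy separate induction on the number of factors, independent of $W\in\Min$; stating this removes any appearance of circularity. The paper's right-peeling avoids this auxiliary step but trades it for the inequality check $\nu_1\ge\nu_2$ at the end.
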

\begin{proof}
Let us induct on $r$, noting we exclude the case $r=0$ from the
hypotheses. 
This corresponds to $\ww = \id$.
For $r=1$ consider the window notation $\ww = [\ww(1), \cdots , \ww(n)]$. Recall $0 < \ww(1) < \ww(2) < \cdots < \ww(n)$.
In particular $0 \le \ww(i) -i$ but $n < \ww(n) $ since $\ww \neq \id$.
Since the degree of $\ww$ is $1$, $n = \sum_{i=1}^n (\ww(i) - i)$
which forces $\ww(n) \le 2n$ and hence $0 < \ww(n)-n \le n$.
Then $\ww \pi^{-1} =[\ww(n)-n, \ww(1), \cdots, \ww({n-1})] \in 
\affSnplus$ has degree $0$ and so $\ww \pi^{-1} \in \Sn$.
Let $k$ be maximal such that 
$\ww(k) < \ww(n)-n$ and $0$ otherwise, in which
case we have $\ww = \pi$. 
Then $\ww \pi^{-1} s_1 s_2 \cdots s_k \in \Sn \cap \Min = \{\id\}$
which implies $\ww = s_k \cdots s_2 s_1 \pi$. This proves the
base case. 

Next assume the claim holds for all affine permutations in
$\Min$ of degree $< r$.  Suppose $\ww$ has degree $r$.
Choose $k$ exactly as above, and note
 $\p = \ww \pi^{-1} s_1 s_2 \cdots s_k
\in \Min$ is of degree $r-1$. By the inductive hypothesis, the
claim holds for $\p$ with respect to a partition with $r-1$ parts
we renumber as $n > \nu_2 \ge \nu_3 \ge \cdots \ge \nu_r \ge 0$.
Thus 
$$\ww = \underbrace{(s_{\nu_r} \cdots s_2 s_1) \pi \cdots
 (s_{\nu_2} \cdots s_2 s_1) \pi}_{\p}
 (s_{k} \cdots s_2 s_1) \pi.$$
We need only show $k \ge \nu_2$ and then set $\nu_1 = k$.
Recall $\nu_2$ is maximal such that $\p({\nu_2}) < \p(n)-n$ and
recall $k$ is maximal such that $\ww({k}) < \ww(n)-n$.
By choice of $k$ we have $\p(k) = \ww(k) < \ww(n)-n$ and
$\p({k+1}) = \ww(n)-n$.
If $\nu_2 = k+1$ then $\p({\nu_2}) = \p({k+1}) = \ww(n)-n \ge \p(n)-n$
and if 
$\nu_2 > k+1$ then $\p({\nu_2}) = \ww({\nu_2 - 1}) > \ww(n)-n \ge \p_n-n$,
both of which are contradictions.
\end{proof}

\begin{remark}
Given $\ww \in \Min$, the partition $\nu$ can easily
be obtained from the inversions of $\ww$ as follows.
For the transposed partition $\nu^T$ which has $n-1$ parts,
$\nu^T_{i} = \#\{k < 1 \mid \ww(k) > i    \}$.
Observe the length $\ell(\ww) = |\nu|$. 
\end{remark}
\begin{example}
Let $n=5$, $\ba = (0,2,0,0,1)$. Thus $g_{\ba}=[1,5,2,3,4]$, $g_{\ba}^{-1}=[1,3,4,5,2]$, $\ta{a}=[1,12,3,4,10]$ and $\wa{a}=\ta{a}g_{\ba}^{-1} = [1,3,4,10,12]
$ which has reduced word  $ s_1 \pi s_3 s_2 s_1 \pi  s_3 s_2 s_1 \pi$, and so
$\nu = (3,3,1)$, $\nu^T = (3,2,2)$.
Note  $\ww(0)=7, \ww(-1)=5, \ww(-5)=2$ and
\begin{align*}
&
\{k < 1 \mid \ww(k) > 1    \} &&= \{0,-1,-5\} & &\nu^T_1 = 3&
\\
&
\{k < 1 \mid \ww(k) > 2    \} &&= \{0,-1\}&  &\nu^T_2 = 2&
\\
&
\{k < 1 \mid \ww(k) > 3    \} &&= \{0,-1\}& &\nu^T_3 = 2&
\\
&
\{k < 1 \mid \ww(k) > 4    \} &&= \emptyset& &\nu^T_4 = 0&
\\
&
\{k < 1 \mid \ww(k) > 5=n    \} &&= \emptyset& &\nu^T_5 = 0.&
\end{align*}
There are other ways to obtain $\nu$ from $\ba$, 
but discussing them is beyond the scope of this paper.
We will merely mention without proof one such  way.
Given $\ba$ construct its $n$-abacus (with beads at heights
determined by $\ba$) and then its corresponding $n$-core
partition. Next, following Lapointe-Morse \cite{LM}, 
remove all boxes from the $n$-core with hooklength $> n$
and left-justify the remaining boxes.
For the $\ba$ given above its $5$-core is $(4,3,1)$, from which
we remove its box in the upper left corner with hooklength $6$
leaving us with $\nu = (3,3,1)$.
\end{example}

\subsection{\texorpdfstring{$m$}{m}-stable and \texorpdfstring{$m$}{m}-restricted permutations}
\label{section: m-stable}

Here we recall some facts on $m$-stable and $m$-restricted affine
permutations from \cite{GMV}.

\begin{definition}(\cite{GMV})
We call an affine permutation $\omega$ $m$-{\em stable} if the inequality $\omega(x+m)>\omega(x)$ holds for all $x$.
We call an affine permutation $\omega$ $m$-{\em restricted} if for all $j<i$ one has $\omega(j)-\omega(i)\neq m$.
\end{definition}

It is clear that $\omega$ is $m$-stable if and only if $\omega^{-1}$ is $m$-restricted. Also, $\omega$ is $m$-stable if and only if
$$
\omega(\omega^{-1}(i)+m)>i\ \textrm{for}\ i=1,\ldots,n.
$$

\begin{definition}
We call a subset $\M\subset \Z$ $(m,n)$-{\em invariant} if $\M+n\subset \M$ and $\M+m\subset \M$.
\end{definition}

If $\omega \in \affSn$ is an $m$-stable permutation then for all $i$ the set 
$$
\M_{\omega}^{i}=\{x\in \Z: \omega(x)\ge i\}=\omega^{-1}[i,+\infty).
$$
is $(m,n)$-invariant. Indeed, if $\omega(x)\ge i$ then $\omega(x+n)=\omega(x)+n>i$ by definition of affine permutation and $\omega(x+m)>\omega(x)\ge i$ because $\omega$ is $m$-stable. 

Clearly, $\M_{\omega}^{i+n}=\M_{\omega}^{i}+n$ and $\omega$ is $m$-stable if and only if $\M_{\omega}^{i}$ is $(m,n)$-invariant for all $i$. This implies the following useful  proposition.

\begin{proposition}
\label{prop: m stable}
An affine permutation $\omega$ is $m$-stable if and only if the sets $\M_{\omega}^{i}$ are $(m,n)$-invariant for $i=1,\ldots,n$. 
\end{proposition}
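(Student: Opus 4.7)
The forward direction is immediate from the discussion preceding the statement: if $\omega$ is $m$-stable, then $\M_\omega^i$ is $(m,n)$-invariant for every $i \in \Z$, and in particular for $i = 1, \dots, n$. So the real content is the converse, and my plan is to reduce the general statement to the restricted range $i = 1, \dots, n$ using the periodicity identity $\M_\omega^{i+n} = \M_\omega^i + n$ that is noted right before the proposition and which is a direct consequence of $\omega(x+n) = \omega(x) + n$.

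The key auxiliary observation is that the property of being $(m,n)$-invariant is preserved under translation by $\pm n$. Concretely, if $S \subseteq \Z$ satisfies $S + n \subseteq S$ and $S + m \subseteq S$, then:
\begin{itemize}
\item $(S \pm n) + m = (S+m) \pm n \subseteq S \pm n$, since addition commutes with translation;
\item $(S+n) + n = S + 2n \subseteq S + n$ follows from $S + n \subseteq S$;
\item $(S-n) + n = S \subseteq S - n$ is equivalent to $S + n \subseteq S$, which is given.
\end{itemize}
Hence both $S+n$ and $S-n$ are again $(m,n)$-invariant.

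Now I would combine this with the identity $\M_\omega^{i+n} = \M_\omega^i + n$. Assuming $\M_\omega^i$ is $(m,n)$-invariant for $i = 1, \dots, n$, a straightforward induction on $|k|$ shows that $\M_\omega^{i+kn} = \M_\omega^i + kn$ is $(m,n)$-invariant for every $k \in \Z$ and every $i \in \{1, \dots, n\}$. Since every integer $j$ can be uniquely written in the form $i + kn$ with $i \in \{1, \dots, n\}$ and $k \in \Z$, we conclude that $\M_\omega^j$ is $(m,n)$-invariant for all $j \in \Z$, which by the characterization recalled just before the proposition is equivalent to $\omega$ being $m$-stable.

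There is no serious obstacle here — the content is really just the observation that checking $(m,n)$-invariance only needs to be done on one fundamental domain for the $\Z$-action $i \mapsto i+n$ on the family of sets $\{\M_\omega^i\}$, together with the fact that $(m,n)$-invariance is preserved under the translation that generates this action.
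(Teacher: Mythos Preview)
Your proof is correct and follows exactly the route the paper intends. The paper does not spell out a separate proof but simply notes the periodicity $\M_\omega^{i+n}=\M_\omega^i+n$ and says ``This implies the following useful proposition''; your argument that $(m,n)$-invariance is preserved under translation by $\pm n$ is precisely the content needed to turn that remark into a proof.
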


Next, we would like to characterize $m$-stable and $m$-restricted permutations using window notation, assuming $\gcd(m,n)=1$. As in \cite{GMV}, we use the affine permutation
$$
\perm:=[0,m,\ldots,(n-1)m].
$$

\begin{lemma}
\label{lem: omega omegam}
Let $\omega\perm=[x_1,\ldots,x_n]$. Then $\omega$ is $m$-stable if and only if
$$
x_1\le x_2\le \cdots \le x_n\le x_1+mn.
$$
\end{lemma}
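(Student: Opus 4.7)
The plan is to translate the $m$-stability condition $\omega(x+m) > \omega(x)$ for all $x\in\Z$ into a finite set of inequalities by evaluating at the points $x = (i-1)m$, and then use that $\{0, m, 2m, \ldots, (n-1)m\}$ is a complete residue system modulo $n$ (which is exactly where $\gcd(m,n)=1$ enters) to recover the condition on all of $\Z$ from these finitely many cases.

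First I would unpack the window notation: since $\perm(i) = (i-1)m$ for $1 \le i \le n$, we have $x_i = (\omega\perm)(i) = \omega((i-1)m)$. The term $x_1 + mn$ can be rewritten using the $n$-periodicity $\omega(z+kn) = \omega(z)+kn$ as $x_1 + mn = \omega(0) + mn = \omega(nm)$. Therefore the chain $x_1 \le x_2 \le \cdots \le x_n \le x_1 + mn$ is equivalent to
\[
\omega(0) < \omega(m) < \omega(2m) < \cdots < \omega((n-1)m) < \omega(nm),
\]
where strict inequalities are automatic because $\omega\perm$ is a bijection $\Z\to\Z$.

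The forward direction is then immediate: if $\omega$ is $m$-stable, specializing $\omega(x+m) > \omega(x)$ to $x \in \{0, m, 2m, \ldots, (n-1)m\}$ gives exactly the displayed chain. For the converse, I would take an arbitrary $y \in \Z$ and use coprimality of $m$ and $n$ to write $y = (i-1)m + kn$ uniquely with $1 \le i \le n$ and $k \in \Z$. Applying $n$-periodicity twice,
\[
\omega(y+m) - \omega(y) = \omega(im + kn) - \omega((i-1)m + kn) = \omega(im) - \omega((i-1)m),
\]
which is positive by the chain above (with the case $i=n$ handled by $\omega(nm) = x_1 + mn$).

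I do not expect a real obstacle here; the only subtle point is verifying that the wrap-around inequality $x_n \le x_1 + mn$ correctly encodes $m$-stability at $y = (n-1)m$, and that $\gcd(m,n)=1$ allows a single residue-class argument to recover the condition at all $y \in \Z$ from the $n$ sample points.
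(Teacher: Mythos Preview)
Your proof is correct and follows essentially the same approach as the paper: both reduce $m$-stability to checking $\omega(x+m)>\omega(x)$ at the representatives $x=0,m,\ldots,(n-1)m$ (using $\gcd(m,n)=1$), identify $x_i=\omega((i-1)m)$, and handle the wrap-around via $\omega(nm)=\omega(0)+mn=x_1+mn$. The paper's version is more terse, but your added detail on the converse direction and the remark that the weak inequalities are automatically strict are both correct and helpful.
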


\begin{proof}
It is sufficient to check the condition $\omega(x+m)>\omega(x)$ for a single choice of $x$ in each remainder modulo $n$, in particular, for $x=0,m,2m,\ldots,(n-1)m$. Now for $1\le i\le n$ we have
$x_i=\omega(\perm(i))=\omega((i-1)m)$, so $\omega$ is $m$-stable if $x_1<\ldots<x_n$ and 
$$
x_n=\omega((n-1)m)<\omega(nm)=\omega(0)+nm=x_1+mn.
$$
\end{proof}

The condition $x_1<\ldots<x_n$ implies that  we can write
$$
\omega\perm=\ta{a}g_{\ba}^{-1},\ \omega=\ta{a}g_{\ba}^{-1}\perm^{-1},\ \omega^{-1}=\perm g_{\ba}\ta{-a}
$$
for some vector $\ba\in \Z^n$, and $g_{\ba}$ as above. 
We can write 
$$
\omega^{-1}(g^{-1}_{\ba}(i))=\perm(-na_{g^{-1}_{\ba}(i)}+i)=-na_{g^{-1}_{\ba}(i)}+m(i-1),
$$
so
\begin{equation}
\label{eq: omega alpha beta}
\omega^{-1}(i)=-na_i+m(g_{\ba}(i)-1),\ i=1,\ldots,n.
\end{equation} 
Hence, in window notation $\ww^{-1} =
[-n a_1 +m(g_{\ba}(1)-1), \ldots, -n a_n +m(g_{\ba}(n)-1)]$.



We get the following result:

\begin{lemma}
\label{lem: m stable in window notation}
Let $\gcd(m,n)=1$. A permutation $\omega$ is $m$-stable if and only if $\omega^{-1}$ can be written in the form \eqref{eq: omega alpha beta} for some vector $\ba \in \Z^n$ such that:
\begin{itemize}
\item $a_i-a_j\le m$ for all $i,j$
\item if $a_i-a_j=m$ then $i<j$.
\end{itemize}
\end{lemma}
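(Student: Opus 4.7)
My plan is to translate the $m$-stability condition (equivalently, $\omega^{-1}$ being $m$-restricted) into the stated integer conditions on $\ba$, using $\gcd(m,n)=1$ to reduce the congruence analysis to only two cases. The discussion immediately preceding the lemma already produces $\omega^{-1}$ in the form \eqref{eq: omega alpha beta} for a unique $\ba$ whenever $\omega$ satisfies $\omega(0) < \omega(m) < \cdots < \omega((n-1)m)$, and Lemma \ref{lem: omega omegam} shows this chain of inequalities always holds for $m$-stable $\omega$. Conversely, given any $\ba \in \Z^n$, formula \eqref{eq: omega alpha beta} produces a well-defined $\omega^{-1}$ (the residues mod $n$ are distinct because $\gcd(m,n)=1$), so we may freely translate between $\omega$ and $\ba$ and need only pin down when the resulting $\omega$ is $m$-stable.

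Since $\omega$ is $m$-stable iff $\omega^{-1}$ is $m$-restricted, I ask: is there a pair of integers $j < i$ with $\omega^{-1}(j) - \omega^{-1}(i) = m$? By $n$-periodicity of $\omega^{-1}$ I may fix $i \in \{1,\ldots,n\}$ and write $j = j' + kn$ with $j' \in \{1,\ldots,n\}$ and $k \in \Z$. Plugging \eqref{eq: omega alpha beta} into $\omega^{-1}(j) - \omega^{-1}(i) = m$ gives
$$
n(k + a_i - a_{j'}) = m\bigl(g_{\ba}(i) - g_{\ba}(j') + 1\bigr).
$$
Since $\gcd(m,n) = 1$, $n$ must divide $g_{\ba}(i) - g_{\ba}(j') + 1 \in \{2-n, \ldots, n\}$, leaving exactly two cases: Case A, $g_{\ba}(j') = g_{\ba}(i) + 1$ and $k = a_{j'} - a_i$; or Case B, $g_{\ba}(i) = n$, $g_{\ba}(j') = 1$, and $k = a_{j'} - a_i + m$.

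In Case A, the definition of $g_{\ba}$ forces either $a_{j'} > a_i$ (so $k \ge 1$ and $j = j' + kn > n \ge i$) or $a_{j'} = a_i$ with $j' > i$ by the tie-breaking rule (so $k = 0$ and $j = j' > i$); in either situation $j > i$, so Case A never violates $m$-stability. Thus $m$-stability reduces to the absence of bad Case-B pairs: letting $i^* := g_{\ba}^{-1}(n)$ be the ``last maximum'' index and $j^* := g_{\ba}^{-1}(1)$ be the ``first minimum'' index, direct computation shows $\omega$ is $m$-stable iff $a_{i^*} - a_{j^*} \le m$, with equality forcing $i^* < j^*$. The last step is the equivalence of this extremal condition with the stated condition for all pairs: the extremal bound is exactly $\max \ba - \min \ba \le m$, which gives $a_i - a_j \le m$ for all $i,j$; and any pair $(i,j)$ with $a_i - a_j = m$ must satisfy $a_i = \max \ba$ and $a_j = \min \ba$, so $i \le i^* < j^* \le j$ yields $i < j$. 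The main obstacle I anticipate is clean bookkeeping for the tie-breaking convention in Case A; the rest is routine.
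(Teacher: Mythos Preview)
Your proof is correct, but it takes a different route from the paper's. The paper's argument goes through Lemma~\ref{lem: omega omegam}: since $\omega\perm=\ta{a}g_{\ba}^{-1}$ already has increasing window notation, $m$-stability is equivalent to the single inequality $x_n<x_1+mn$, and the paper then reads this off as the extremal condition on $a_{g_{\ba}^{-1}(1)}$ and $a_{g_{\ba}^{-1}(n)}$. You instead bypass Lemma~\ref{lem: omega omegam} entirely and work directly with the $m$-restricted condition on $\omega^{-1}$, solving $\omega^{-1}(j)-\omega^{-1}(i)=m$ via the congruence $n\mid g_{\ba}(i)-g_{\ba}(j')+1$ and your Case~A/Case~B split. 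Your Case~A elimination (using the tie-breaking rule in \eqref{eq: def ga}) is clean, and Case~B lands on exactly the same extremal pair $(i^*,j^*)=(g_{\ba}^{-1}(n),g_{\ba}^{-1}(1))$ that the paper reaches. What the paper's route buys is brevity: one inequality instead of a two-case congruence analysis. What your route buys is self-containment: you never need the auxiliary permutation $\perm$ or the window characterization of $m$-stability, and the argument makes transparent why the coprimality hypothesis is essential.
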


\begin{proof}
Since $\omega\perm=\ta{a}g_{\ba}^{-1}=[x_1,\ldots,x_n]$, we get $x_1<\ldots<x_n$. We need to check the last condition $x_n<x_1+mn$ in terms of the vector $\ba$. 

Observe $x_i=na_{g_{\ba}^{-1}(i)}+g_{\ba}^{-1}(i)$,  so
 $x_n<x_1+mn$ if and only if either $a_{g_{\ba}^{-1}(1)}+m>a_{g_{\ba}^{-1}(n)}$ or  $a_{g_{\ba}^{-1}(1)}+m=a_{g_{\ba}^{-1}(n)}$ and $g_{\ba}^{-1}(1)>g_{\ba}^{-1}(n)$. 

Now $a_{g_{\ba}^{-1}(1)}=\min(\ba), a_{g_{\ba}^{-1}(n)}=\max(\ba)$, so either $\max(\ba)-\min(\ba)<m$ or
$\max(\ba)-\min(\ba)=m$ and all appearances of $\max(\ba)$ are to the left of all appearances of $\min(\ba)$ in $\ba$.
\end{proof}

\begin{remark}
\label{rem: balancing}
The above results were stated in \cite{GMV} for the affine symmetric group $\affSnCox$ (as opposed to extended affine $\affSn$), 
but are equivalent to them after imposing the balancing condition for all affine permutations. In particular,
$\perm$ should be replaced by the degree $0$ affine permutation
 $\widehat{\p}_m=[0-\kappa,m-\kappa,\ldots,(n-1)m-\kappa]$ where $\kappa=\frac{1}{2}(mn-m-n-1)$.
In particular, Lemma \ref{lem: omega omegam} can be rephrased by saying that $\widehat{\p}_m$ establishes a bijection between the set of $m$-stable affine permutations and the dilated fundamental alcove.
\end{remark}

\begin{example}
Let $n=5, m=3, \ba=(0,1,0,0,2)$.
Thus $ g_\ba^{-1} = [1,3,4,2,5]$, $\wa{a} = [1,3,4,7,15]$,
with inverses
$\wa{a}^{-1} = [1,-1,2,3,-5]$ and $g_\ba=[1,4,2,3,5]$ .
Note 
$$
\omega^{-1}=\perm\wa{a}^{-1}=[0,3,6,9,12] \circ [1,-1,2,3,-5] = [0,4,3,6,2]
$$
is $3$-restricted.
Using \eqref{eq: omega alpha beta} we can also check
$
\omega^{-1}(i)=-5a_i+3(g_{\ba}(i)-1)$ as
$$
(0,4,3,6,2)
=-5(0,1,0,0,2) + 3(0,3,1,2,4)
=$$ $$-5(0,1,0,0,2) + 3\left( (1,4,2,3,5)-(1,1,1,1,1)\right).
$$
\end{example}

\subsection{Lexicographic ordering and combinatorics of integer sequences}\label{sect:combinatorics}

Recall that for $\ba \in \Z_{\geq 0}^{n}$, we denote $||\ba|| := \sum_{i} a_{i}$. As in Section \ref{sect:affine sn}, we denote by $g_{\ba} \in \Sn$ the shortest element such that $g_{\ba}\cdot \ba = \sort(\ba)$.

\begin{lemma}\label{lemma:g vs pi}
For every $\ba \in \Z^{n}_{\geq 0}$ we have $g_{\pi\cdot \ba} = g_{\ba}(12\cdots n)^{-1}$. If $a_{i} \neq a_{i+1}$,  then we have $g_{s_{i}\cdot \ba} = g_{\ba}s_{i}$.
\end{lemma}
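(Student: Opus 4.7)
The plan is to verify both identities by direct computation using the explicit formula \eqref{eq: def ga}, namely $g_\ba(i) = \sharp\{j : a_j < a_i\} + \sharp\{j \leq i : a_j = a_i\}$. Recall that $(12\cdots n)^{-1}(1) = n$ and $(12\cdots n)^{-1}(k) = k-1$ for $k \geq 2$, so the first claim reduces to the two equalities $g_{\pi\cdot\ba}(1) = g_\ba(n)$ and $g_{\pi\cdot\ba}(k) = g_\ba(k-1)$ for $k \geq 2$.

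For the first identity, I would set $\ba' := \pi\cdot\ba$, so that $a'_1 = a_n + 1$ and $a'_k = a_{k-1}$ for $k \geq 2$, and compute $g_{\ba'}(k)$ case by case. For $k = 1$, the strict inequality $a_n + 1 > a_n$ combined with integrality gives $g_{\ba'}(1) = \sharp\{j \geq 2 : a'_j \leq a_n\} + 1 = \sharp\{i \leq n-1 : a_i \leq a_n\} + 1 = \sharp\{i : a_i \leq a_n\} = g_\ba(n)$. For $k \geq 2$, I would separate out the contribution of $a'_1 = a_n+1$ to each of the two counts defining $g_{\ba'}(k)$; a short case analysis on whether $a_n < a_{k-1}-1$, $a_n = a_{k-1}-1$, or $a_n \geq a_{k-1}$ shows that the modification from $a_n$ to $a_n+1$ produces compensating changes in the strict-inequality and equality counts, yielding $g_{\ba'}(k) = g_\ba(k-1)$.

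For the second identity, set $\ba'' := s_i\cdot\ba$. For $k \notin \{i, i+1\}$, the indices $i, i+1$ lie on the same side of $k$, so swapping $a_i$ and $a_{i+1}$ preserves both the multiset $\{j : a_j < a_k\}$ and the tie count $\sharp\{j \leq k : a_j = a_k\}$; hence $g_{\ba''}(k) = g_\ba(k) = (g_\ba s_i)(k)$. For $k \in \{i, i+1\}$, a direct expansion of the formula gives $g_{\ba''}(i) = g_\ba(i+1)$ and $g_{\ba''}(i+1) = g_\ba(i)$. Here the hypothesis $a_i \neq a_{i+1}$ is precisely what is needed: it forces the tie indicator $[a_i = a_{i+1}]$ to vanish, which produces the exact unit shift in the equality count converting $g_\ba(i)$ into $g_\ba(i+1)$ and vice versa.

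I do not anticipate any serious obstacle: the argument is bookkeeping with the explicit formula. The only subtlety is tracking how the modified entry (the promotion $a_n \mapsto a_n+1$ in part one, the swap in part two) interacts with the strict and equality counts, and observing that the two effects cancel. It is worth noting that the hypothesis $a_i \neq a_{i+1}$ in the second identity is genuinely necessary: when $a_i = a_{i+1}$ one has $s_i\cdot\ba = \ba$, so $g_{s_i\cdot\ba} = g_\ba$, whereas $g_\ba s_i$ fails to be a stable sort of $\ba$.
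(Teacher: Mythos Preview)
Your proposal is correct and follows essentially the same approach as the paper: both arguments compute $g_{\pi\cdot\ba}(k)$ directly from the explicit formula \eqref{eq: def ga} by tracking how the shifted entry $a'_1 = a_n+1$ contributes to the strict and equality counts. The only organizational difference is that the paper verifies the case $k\ge 2$ and then infers $g_{\pi\cdot\ba}(1)=g_\ba(n)$ from the fact that both sides are permutations, whereas you compute the $k=1$ case directly; for the second identity the paper simply declares it ``clear,'' and your spelled-out verification is consistent with that.
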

\begin{proof}
We use the explicit equation   \eqref{eq: def ga} for $g_{\ba}$.
Assume $i \neq 1$. Denote $X_{\pi} := \{j : (\pi\cdot \ba)_{j} < (\pi\cdot \ba)_{i}\}$ and $Y_{\pi} := \{j : (\pi\cdot \ba)_{j} = (\pi\cdot \ba)_{i} \; \text{and} \; j \leq i\}$. Similarly, denote $X := \{j : a_{j} < a_{i-1}\}$ and $Y := \{j : a_{j} = a_{i-1} \; \text{and} \; j \leq i-1\}$, so that $g_{\pi\cdot \ba}(i) = \sharp X_{\pi} + \sharp Y_{\pi}$ and $g_{\ba}(i-1) = \sharp X + \sharp Y$. Note that, if $j \neq 1$, then $j \in X_{\pi}$ (resp. $j \in Y_{\pi}$) if and only if $j-1 \in X$ (resp. $j-1 \in Y$). Note also that we cannot have $n \in Y$ because $i - 1 < n$. Moreover, we have that $1 \in X_{\pi} \cup Y_{\pi}$ if and only if $n \in X$ and, by the previous sentence, this happens if and only if $n \in X \cup Y$. This shows that $g_{\pi\cdot \ba}(i) = g_{\ba}(i-1)$. Note that this forces $g_{\pi\cdot \ba}(1) = g_{\ba}(n)$. So $g_{\pi\cdot \ba} = g_{\ba}(12\cdots n)^{-1}$, as needed. The other equality is clear.
\end{proof}

It is easy to see that the assignment $\ba \mapsto \omega_\ba$ gives a bijection between $\Z^n_{\geq 0}$ and the set $\Min$. More precisely, let us denote by $\mathcal{P}_{k}(n)$ the set $\{\ba \in \Z^n_{\geq 0} : ||\ba|| = k\}$. Inside, we have the sets

$$
\mathcal{P}_{k}^{\circ}(n) := \{\ba \in \mathcal{P}_{k}(n) : a_{1} \neq 0\}, \qquad \mathcal{P}'_{k}(n) := \{\ba \in \mathcal{P}_{k}(n) : a_{1} = 0\},
$$

\noindent so that $\cP_{k}(n) = \cP^{\circ}_{k}(n)\sqcup \cP'_{k}(n)$. Then, we have the following result (see also Lemma \ref{lemma:w is very injective}).

\begin{lemma}\label{lemma:w is injective}
The assignment $\ba \mapsto \wa{a}$ gives a bijection between $\Z^{n}_{\geq 0}$ and the set $\Min$. 
Moreover, 

\begin{itemize}
\item[(a)] the set $\mathcal{P}_{k}(n)$ gets identified with
$$
\Min_{k} := \{\omega \in \Min : \deg\omega = k\}.
$$
\item[(b)] $\mathcal{P}'_{k}$ gets identified with $\left\{\omega \in \Min_{k} : \omega(1) = 1\right\}$.
\item[(c)] $\mathcal{P}^{\circ}_{k}$ with $\left\{\omega \in \Min_{k} : \omega(1) > 1\right\}$. 
\end{itemize}
\end{lemma}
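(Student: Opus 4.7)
The plan is to observe that this lemma is largely a bookkeeping statement that unpacks the definitions of $\wa{a}$, $\Min$, and the partial-order decomposition $\cP_k = \cP_k^{\circ} \sqcup \cP_k'$, so the proof should be short. I would break it into three small pieces, one for the bijection and one each for identifying the subsets.

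For the bijection $\ba \mapsto \wa{a} = \ta{a} g_\ba^{-1}$, I would appeal directly to Lemma \ref{lem:coset positive}, which provides the unique factorization $\ww = \ta{a} g$ for every $\ww \in \affSnplus$, together with the characterization of $\Min$ as the $\ww \in \affSnplus$ whose window notation is strictly increasing. The requirement that $\wa{a}(1) < \cdots < \wa{a}(n)$ pins down $g = g_\ba^{-1}$ uniquely (as noted in the remark following the definition of $\wa{a}$), so the map is injective; surjectivity is tautological from the very definition of $\Min$. Alternatively, injectivity is already spelled out in the Corollary following Lemma \ref{lemma:w is very injective}.

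For part (a), the key is additivity of the degree. From the defining formula, for any $\ww \in \affSn$ and $g \in \Sn$ we have $\deg(\ww g) = \deg(\ww)$ since $g$ is a degree-zero permutation of $\{1,\ldots,n\}$. A direct calculation
\begin{equation*}
\deg(\ta{a} g_\ba^{-1}) = \tfrac{1}{n}\sum_{i=1}^n \bigl(g_\ba^{-1}(i) + n a_{g_\ba^{-1}(i)} - i\bigr) = \|\ba\|
\end{equation*}
then gives $\deg \wa{a} = \|\ba\|$, so $\cP_k(n)$ maps onto $\Min_k$.

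For parts (b) and (c), I would use the explicit formula $\wa{a}(i) = g_\ba^{-1}(i) + n\, a_{g_\ba^{-1}(i)}$, together with the fact that the window notation of $\wa{a}$ is increasing, to write
\begin{equation*}
\wa{a}(1) = \min_{1 \le i \le n} (i + n a_i).
\end{equation*}
Since $a_i \ge 0$ and $1 \le i \le n$, this minimum is at least $1$, with equality if and only if $i = 1$ and $a_1 = 0$. Thus $\wa{a}(1) = 1 \iff a_1 = 0$ and $\wa{a}(1) > 1 \iff a_1 > 0$, which together with the definition of $\cP^{\circ}_k$ (respectively $\cP'_k$) as the set of $\ba \in \cP_k(n)$ with $a_1 \ne 0$ (respectively $a_1 = 0$) yields (b) and (c). There is no real obstacle: the only subtle point is making sure the increasing window condition really identifies $g_\ba$ as the inverse of the sorting permutation, which is exactly the content of the remark immediately following Lemma \ref{lemma:w is very injective}.
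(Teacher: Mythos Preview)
Your proposal is correct and matches the paper's intent: the paper does not give a proof at all, simply stating that the result is ``easy to see (see also Lemma \ref{lemma:w is very injective}),'' and your argument supplies exactly the expected details by unpacking the definitions via Lemma \ref{lem:coset positive}, the degree formula $\deg \wa{a} = \|\ba\|$ (already noted in the paper), and the observation that $\wa{a}(1) = \min_i(i + na_i)$. One tiny cross-reference slip: the remark pinning down $g_\ba$ via the increasing-window condition appears just \emph{before} Lemma \ref{lemma:w is very injective}, not after it.
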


For affine permutations $\omega, \omega' \in \affSn$, we say that $\omega \lex \omega'$ if the window notation of $\omega$ is greater than that of $\omega'$ in lexicographic ordering. More explicitly, $\omega \lex \omega'$ if there exists $i \in \{1, \dots, n\}$ such that $\omega(j) = \omega'(j)$ for $j = 1, \dots, i -1$ and $\omega(i) > \omega'(i)$. 

We would like to study this partial order in more detail. In particular, we will see how it translates to $\Z^{n}_{\geq 0}$ under the bijection $\ba \mapsto \omega_{\ba}$. 
In order to do this, let us define a partial order on $\cP_{k}(n)$ inductively. For $n = 2$ and even $k = 2\ell$, we have
$$
(\ell, \ell) \prec (\ell+1, \ell-1) \prec (\ell-1, \ell+1) \prec \cdots \prec (2\ell, 0) \prec (0, 2\ell)
$$

\noindent and for $k = 2\ell + 1$ odd we have
$$
(\ell+1, \ell) \prec (\ell, \ell+1) \prec (\ell+2, \ell-1) \prec (\ell-1, \ell+2) \prec \cdots \prec (2\ell+1, 0) \prec (0, 2\ell+1).
$$

Now assume we have defined partial orders on $\cP_{k'}(n)$ for every $k'$. Let us define partial orders on $\cP_{k}(n+1)$. The set $\cP_{0}(n+1)$ is a singleton so there is nothing to do. On $\cP_{1}(n+1)$ we have $(1, 0, \dots, 0) \prec (0, 1, \dots, 0) \prec \cdots \prec (0, 0, \dots, 1)$. Assume that we have defined a partial order on $\cP_{k}(n+1)$. To define a partial order on $\cP_{k+1}(n+1)$, recall that we have a decomposition 
$$
\cP_{k+1}(n+1) := \cP_{k+1}^{\circ}(n+1) \sqcup \cP_{k+1}'(n+1).
$$

\noindent The map $\pi$ gives a bijection $\pi:
\cP_{k}(n+1) \to \cP^{\circ}_{k+1}(n+1)$, and this gives a partial
order on the set $\cP_{k+1}^{\circ}(n+1)$. By forgetting $a_{1} = 0$,
$\cP_{k+1}'(n+1)$  is identified with $\cP_{k+1}(n)$, and this gives a
partial order on the set $\cP_{k+1}'(n+1)$. Finally, we declare every
element in $\cP_{k+1}^{\circ}(n+1)$ to be smaller than every element
of $\cP_{k+1}'(n+1)$ This gives a partial order on $\cP_{k+1}(n+1)$.
Figure \ref{fig:order} below 
gives some examples on how these partial orders look when
$n = 3$. For each $\ba \in \Z_{\geq 0}^{3}$ listed,
we also include $\wa{a}$,
both in window notation and in its decomposition given by Lemma
\ref{claim:coset} for reference. 

\MVcomment{
\begin{figure}[ht!]
\begin{alignat*}{9}
	&\id \\
        & [1,2,3]\\
k = 0 \qquad & (0,0,0) \\
\\
& \pi && s_1 \pi && s_2 s_1 \pi \\
&   [2,3,4] && [1,3,5] && [1,2,6]            \\
k = 1 \qquad & \tikzmark{StartB1} (1,0,0) \tikzmark{EndB1}& \prec & 
\tikzmark{StartC1}(0,1,0) 
& \prec &
(0,0,1) \tikzmark{EndC1}\\
\\
	& \pi^2 && \pi s_1 \pi &&  \pi s_2 s_1 \pi && 
s_1 \pi s_1 \pi &&  s_1 \pi  s_2 s_1 \pi && 
 s_2 s_1 \pi s_2 s_1 \pi \\
	& [3,4,5] && [2,4,6] && [2,3,7] && [1,5,6] && [1,3,8] && [1,2,9] \\
k = 2 \qquad & \tikzmark{StartB2} (1,1,0) &\prec &(1,0,1) &\prec &(2,0,0) \tikzmark{EndB2} & \prec &
\tikzmark{StartC2} (0, 1, 1) &\prec &(0, 2, 0) &\prec & (0, 0, 2)
\tikzmark{EndC2} \\
\\
	& \pi^3 && \pi^2 s_1 \pi &&  \pi^2 s_2 s_1 \pi && 
\pi s_1 \pi s_1 \pi && \pi s_1 \pi  s_2 s_1 \pi && 
\pi s_2 s_1 \pi s_2 s_1 \pi \\
	& [4,5,6] && [3,5,7] && [3,4,8] && [2,6,7] && [2,4,9] && [2,3,10] \\
k = 3 \qquad & \tikzmark{StartB3}(1,1,1) &\prec
	&(2, 1, 0) &\prec &(1, 2, 0) &\prec &(2, 0, 1) &\prec
&(1, 0, 2) &\prec &(3, 0, 0) \tikzmark{EndB3}   \\
\\
&&& s_1 \pi s_1 \pi s_1 \pi && s_1 \pi s_1 \pi  s_2 s_1 \pi && 
 s_1 \pi s_2 s_1 \pi s_2 s_1 \pi &&  s_2s_1 \pi  s_2s_1 \pi  s_2 s_1 \pi \\
&&&[1,6,8] && [1,5,9] && [1,3,11] && [1,2,12] \\
&&\prec &\tikzmark{StartC3} (0,2,1) &\prec &(0, 1, 2) &\prec &(0, 3, 0) &\prec &(0, 0, 3) \tikzmark{EndC3}
\end{alignat*}
\InsertUnderBrace[draw=blue,text=blue]{StartB1}{EndB1}{$\cP^{\circ}$}
\InsertUnderBrace[draw=blue,text=blue]{StartB2}{EndB2}{$\cP^{\circ}$}
\InsertUnderBrace[draw=blue,text=blue]{StartB3}{EndB3}{$\cP^{\circ}$}
\InsertUnderBrace[draw=blue,text=blue]{StartC1}{EndC1}{$\cP'$}
\InsertUnderBrace[draw=blue,text=blue]{StartC2}{EndC2}{$\cP'$}
\InsertUnderBrace[draw=blue,text=blue]{StartC3}{EndC3}{$\cP'$}
\caption{The partial order on $\ba \in \cP_k(3)$
for degrees $k \le 3$. 
So that one may compare $\prec$ to $\lex$ and to Bruhat order,
above each $\ba$ is the corresponding $\wa{a}$ both in
window notation  and it expression from Lemma \ref{claim:coset}.
} \label{fig:order}
\end{figure}
} 

\begin{figure}[ht!]
\begin{alignat*}{9}
        &\id \\
        & [1,2,3]\\
k = 0 \qquad & (0,0,0) \\
\\
& \pi && s_1 \pi && s_2 s_1 \pi \\
&   [2,3,4] && [1,3,5] && [1,2,6]            \\
k = 1 \qquad & \tikzmark{StartB1} (1,0,0) \tikzmark{EndB1}&
                \;  \prec \; & 
\tikzmark{StartC1}(0,1,0) 
& \prec &
(0,0,1) \tikzmark{EndC1}\\
\\
        & \pi^2 && \pi s_1 \pi &&  \pi s_2 s_1 \pi && 
s_1 \pi s_1 \pi &&  s_1 \pi  s_2 s_1 \pi && 
 s_2 s_1 \pi s_2 s_1 \pi \\
        & [3,4,5] && [2,4,6] && [2,3,7] && [1,5,6] && [1,3,8] && [1,2,9] \\
k = 2 \qquad & \tikzmark{StartB2} (1,1,0) &\prec &(1,0,1) &\prec &(2,0,0) \tikzmark{EndB2} & \;\;  \prec \;\; &
\tikzmark{StartC2} (0, 1, 1) &\prec &(0, 2, 0) &\prec & (0, 0, 2)
\tikzmark{EndC2} \\
\end{alignat*}
\begin{alignat*}{9}
        & \pi^3 && \pi^2 s_1 \pi &&  \pi^2 s_2 s_1 \pi && 
\pi s_1 \pi s_1 \pi && \pi s_1 \pi  s_2 s_1 \pi && 
\pi s_2 s_1 \pi s_2 s_1 \pi \\
        & [4,5,6] && [3,5,7] && [3,4,8] && [2,6,7] && [2,4,9] && [2,3,10] \\
k = 3 \qquad & \tikzmark{StartB3}(1,1,1) &\prec
        &(2, 1, 0) &\prec &(1, 2, 0) &\prec &(2, 0, 1) &\prec
&(1, 0, 2) &\prec &(3, 0, 0) \tikzmark{EndB3}   
\end{alignat*}
\begin{alignat*}{9}
\qquad &\qquad \qquad  && s_1 \pi s_1 \pi s_1 \pi && s_1 \pi s_1 \pi  s_2 s_1 \pi && 
 s_1 \pi s_2 s_1 \pi s_2 s_1 \pi &&  s_2s_1 \pi  s_2s_1 \pi  s_2 s_1 \pi \\
&&&[1,6,8] && [1,5,9] && [1,3,11] && [1,2,12] \\
&&\prec &\tikzmark{StartC3} (0,2,1) &\prec &(0, 1, 2) &\prec &(0, 3, 0) &\prec &(0, 0, 3) \tikzmark{EndC3}
\end{alignat*}
\InsertUnderBrace[draw=blue,text=blue]{StartB1}{EndB1}{$\cP^{\circ}$}
\InsertUnderBrace[draw=blue,text=blue]{StartB2}{EndB2}{$\cP^{\circ}$}
\InsertUnderBrace[draw=blue,text=blue]{StartB3}{EndB3}{$\cP^{\circ}$}
\InsertUnderBrace[draw=blue,text=blue]{StartC1}{EndC1}{$\cP'$}
\InsertUnderBrace[draw=blue,text=blue]{StartC2}{EndC2}{$\cP'$}
\InsertUnderBrace[draw=blue,text=blue]{StartC3}{EndC3}{$\cP'$}
\caption{The partial order on $\ba \in \cP_k(3)$
for degrees $k \le 3$. 
So that one may compare $\prec$ to $\lex$ and to Bruhat order,
above each $\ba$ is the corresponding $\wa{a}$ both in
window notation  and it expression from Lemma \ref{claim:coset}.
} \label{fig:order}
\end{figure}

 For another example, when $n = 4$, $k = 2$ we have $(1,1,0,0) \prec (1, 0, 1, 0) \prec (1, 0, 0, 1) \prec (2, 0, 0, 0) \prec$ $(0, 1, 1, 0) \prec (0, 1, 0, 1)$ $\prec (0, 2, 0, 0) \prec (0, 0, 1, 1)$ $\prec (0, 0, 2, 0) \prec (0, 0, 0, 2)$. 

 The following lemma gives properties of this partial order that will be important for us.

\begin{lemma}\label{lemma:po}
With the partial order defined above, $\cP_{k}(n)$ is linearly ordered. Moreover, the following properties are satisfied.
\begin{enumerate}
\item If $\ba \prec \boldb$ in $\cP_{k}(n)$, then
$\pi\cdot \ba \prec \pi\cdot \boldb$
in $\cP_{k+1}(n)$.
\item If $a_{n} \geq a_{1} > 0$, then $(a_{1}, a_{2}, \dots, a_{n-1}, a_{n}) \prec (a_{n}+1, a_{2}, \dots, a_{n-1}, a_{1}-1)$, 
\item If $a_{i} > a_{i+1}$, then $\ba \prec s_{i}\cdot \ba$.
\item If $a_{i} > a_{i+1}$ and $\boldb \prec \ba$, then $s_{i}\cdot \boldb
\prec s_{i}\cdot \ba$.  \end{enumerate}
\end{lemma}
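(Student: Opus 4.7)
The plan is to prove Lemma \ref{lemma:po} by strong induction on $n + k$. The base cases $n = 1$ or $k = 0$ are trivial since $\cP_k(n)$ is then a singleton. For the inductive step I assume all four properties hold in $\cP_{k'}(n')$ whenever $n' + k' < n + k$; in particular for $\cP_{k-1}(n)$, $\cP_{k-2}(n)$, and $\cP_k(n-1)$. Linearity of the order on $\cP_k(n)$ follows from the inductive construction, which glues the linearly ordered $\cP_k^{\circ}(n) \cong \cP_{k-1}(n)$ (via $\pi^{-1}$) and $\cP_k'(n) \cong \cP_k(n-1)$ (by forgetting $a_1 = 0$), with $\cP_k^{\circ}(n)$ placed entirely below $\cP_k'(n)$. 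Property (1) is immediate from the definition of the order on $\cP_{k+1}^{\circ}(n)$.

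For (2), both $\ba$ and $\ba' := (a_n + 1, a_2, \ldots, a_{n-1}, a_1 - 1)$ lie in $\cP_k^{\circ}(n)$, and $\pi^{-1}\ba'$ is obtained from $\pi^{-1}\ba = (a_2, \ldots, a_n, a_1 - 1)$ by the $s_{n-1}$-swap of the last two coordinates. Since $a_n \geq a_1 > a_1 - 1$, property (3) at $i = n - 1$ in $\cP_{k-1}(n)$ gives $\pi^{-1}\ba \prec \pi^{-1}\ba'$, which (1) lifts to $\ba \prec \ba'$. For (3), I split on $i$. When $i > 1$, subcase analysis on $a_1$ reduces via $\pi^{-1}$ to (3) at $i - 1$ in $\cP_{k-1}(n)$ (if $a_1 > 0$) or via the identification $\cP_k'(n) \cong \cP_k(n-1)$ to (3) at $i - 1$ in $\cP_k(n-1)$ (if $a_1 = 0$). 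When $i = 1$ and $a_2 = 0$, we have $\ba \in \cP_k^{\circ}(n)$ while $s_1\ba \in \cP_k'(n)$, so $\ba \prec s_1\ba$ by definition; when $a_2 > 0$, both lie in $\cP_k^{\circ}(n)$ and $\pi^{-1}(s_1\ba) = (a_1, a_3, \ldots, a_n, a_2 - 1)$ is precisely the image of $\pi^{-1}\ba$ under the operation in (2), whose hypotheses $a_1 - 1 \geq a_2 > 0$ are met, so (2) for $\cP_{k-1}(n)$ and (1) finish.

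For (4), the cases $b_i \leq b_{i+1}$ reduce to (3): if $b_i = b_{i+1}$ then $s_i\boldb = \boldb$, and if $b_i < b_{i+1}$ then (3) applied to $s_i\boldb$ gives $s_i\boldb \prec \boldb$; in either subcase, chaining with $\boldb \prec \ba \prec s_i\ba$ completes the argument. Assume then $b_i > b_{i+1}$. When $i > 1$, subcase analysis on $(a_1, b_1)$ reduces to (4) at $i - 1$ in $\cP_{k-1}(n)$ (both positive, via $\pi^{-1}$), to (4) at $i - 1$ in $\cP_k(n-1)$ (both zero), or to immediate comparison using $\cP_k^{\circ}(n) < \cP_k'(n)$; the mixed case $a_1 > 0, b_1 = 0$ is vacuous since it contradicts $\boldb \prec \ba$. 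When $i = 1$, analogous analysis on $(a_2, b_2)$ dispatches the cases where at least one vanishes (in particular the mixed case $a_2 > 0, b_2 = 0$ is vacuous).

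The substantive remaining case is $i = 1$ with $a_2, b_2 > 0$, where all of $\ba, \boldb, s_1\ba, s_1\boldb$ lie in $\cP_k^{\circ}(n)$ and $\pi^{-2}$ is defined. The extended affine relation $s_1\pi^2 = \pi^2 s_{n-1}$ of \eqref{si tau 2} gives $\pi^{-2}(s_1\ba) = s_{n-1}\pi^{-2}\ba$ and analogously for $\boldb$. Since $\pi^{-2}\ba = (a_3, \ldots, a_n, a_1 - 1, a_2 - 1)$ has $(n-1)$-st coordinate $a_1 - 1$ strictly greater than its last coordinate $a_2 - 1$, and the same holds for $\pi^{-2}\boldb$, and since $\pi^{-2}\boldb \prec \pi^{-2}\ba$ by two applications of (1), the inductive instance of (4) at $i = n - 1$ for $\cP_{k-2}(n)$ yields $s_{n-1}\pi^{-2}\boldb \prec s_{n-1}\pi^{-2}\ba$; applying $\pi^2$ via (1) recovers $s_1\boldb \prec s_1\ba$. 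This is the main obstacle: since $s_1$ acts at the boundary between $\cP_k^{\circ}(n)$ and $\cP_k'(n)$ via $\pi$, a single $\pi^{-1}$ does not reduce it to a known case of (4), and the detour through $\pi^{-2}$ to relocate the swap to position $n - 1$ is essential.
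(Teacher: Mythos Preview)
Your argument is correct and follows essentially the same route as the paper's: both induct to strictly smaller $\cP_{k'}(n')$, both interlock (2) and (3) one level down, and both resolve the key subcase $i=1$, $a_2,b_2>0$ of (4) via the $\pi^{-2}$ trick (equivalently $s_1\pi^2=\pi^2 s_{n-1}$) and (4) at $i=n-1$ in $\cP_{k-2}(n)$. The one place you are too terse is the subcase $i=1$, $a_2=b_2=0$ of (4): this is not truly ``analogous'' to the $i>1$ cases, since here $s_1\ba,s_1\boldb\in\cP_k'(n)$ and one must unwind $\boldb\prec\ba$ through $\cP_{k-1}'(n)\cong\cP_{k-1}(n-1)$ and then apply $\pi$ (hence property (1)) in $n-1$ coordinates to get $(b_1,b_3,\dots,b_n)\prec(a_1,a_3,\dots,a_n)$ in $\cP_k(n-1)$ --- the paper writes this out explicitly and does not invoke (4) at all here.
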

\begin{proof}
By induction, it follows easily that $\cP_{k}(n)$ is linearly ordered, as it is defined to be the concatenation of two linearly ordered sets. Property (1) is obvious from the definition. It remains to show (2), (3) and (4). Note that when $n = 2$ or $k = 0, 1$, (2), (3) and (4) are easy to check from the explicit definition of the partial order on $\cP_{k}(2)$ or on $\cP_{1}(n)$. So we may use an inductive procedure. We assume that (2), (3) and (4) are valid for $\cP_{k'}(n)$ for every $k'$ and for $\cP_{0}(n+1), \dots, \cP_{k}(n+1)$, and we show that they are valid for $\cP_{k+1}(n+1)$. Recall that  $\cP^{\circ}_{k+1}(n+1) = \pi(\cP_{k}(n+1))$ and $\cP'_{k+1}(n+1) = \cP_{k+1}(n+1) \setminus \cP^{\circ}_{k+1}(n+1)$.

 We start with (2). Note that if $\ba$ is as in (2), then $\ba \in \cP_{k+1}^{\circ}(n+1)$. Then (2) happens if and only if in $\cP_{k}(n+1)$ we have $(a_{2}, \dots, a_{n}, a_{n+1}, a_{1} - 1) \prec (a_{2}, \dots, a_{n}, a_{1} - 1, a_{n+1})$. But this is clear because $\cP_{k}(n+1)$ satisfies (3).

  Now we move on to (3). Thanks to (1) and our inductive assumption, the only problem can arise with $s_{1}$: indeed, for $i > 1$ we can either go to $\cP_{k}(n)$, if $a_{1} > 0$; or to $\cP_{k+1}(n-1)$ if $a_{1} = 0$ and the result follows by induction. So we assume $i = 1$. If $\ba \in \cP'_{k+1}(n+1)$, then we can never have $a_{1} > a_{2}$, so we may assume that $\ba \in \cP^{\circ}_{k+1}(n+1)$.  If $a_{2} = 0$, then $s_{1}\cdot \ba \in \cP'_{k+1}(n+1)$, and we have $\ba \prec s_{1}\cdot \ba$ by definition. Otherwise, we may assume that $a_{1} > a_{2} \geq 1$. Then $s_{1}\cdot \ba \succ \ba$ is equivalent to, in $\cP_{k}(n+1)$, having $(a_{2},  \dots, a_{n+1}, a_{1}-1) \prec (a_{1}, a_{3}, \dots, a_{n+1}, a_{2} -1)$. This is clear because $\cP_{k}(n+1)$ satisfies (2). 

Finally, we check (4). Note that we canot have $i = 1$ and $\ba \in
\cP'_{k+1}(n+1)$ simultaneously. We also cannot have $\ba \in
\cP^{\circ}_{k+1}(n+1)$ and $\boldb \in \cP'_{k+1}(n+1)$
simultaneously. If both $\ba$ and $\boldb$ belong to $\cP'_{k+1}(n+1)$,
the result follows by forgetting the initial $0$ and using induction. If $\ba \in \cP'_{k+1}$ and $\boldb \in
\cP^{\circ}_{k+1}$
then, since $i \neq 1$, $s_{i}$ preserves both $\cP'_{k+1}(n+1)$ and
$\cP^{\circ}(n+1)$ so the result is also clear. The result is also
clear if both $\ba, \boldb \in \cP^{\circ}_{k+1}(n+1)$ and $i \neq 1$.
So it remains to check the case $\ba, \boldb \in \cP^{\circ}$, $i =
1$. If $a_{2} = 0$, $b_{2} \neq 0$, the result is clear. 

If $a_{2}, b_{2} = 0$, then we have that $\ba \succ \boldb$ if and only if $(0, a_{3}, \dots, a_{n+1}, a_{1} - 1) \succ (0, b_{3}, \dots, b_{n+1}, b_{1} - 1)$. This happens if and only if in $\cP_{k}(n)$ we have $(a_{3}, \dots, a_{n+1}, a_{1} - 1) \succ (b_{3}, \dots, b_{n+1}, b_{1} - 1)$. By (1), this implies 
$(a_{1}, a_{3}, \dots, a_{n}) \succ (b_{1}, b_{3}, \dots, b_{n})$ in $\cP_{k+1}(n)$. But then $(0, a_{1}, a_{3}, \dots, a_{n}) \succ (0, b_{1}, b_{3}, \dots, b_{n})$, which is what we wanted to show.

If $a_{2}, b_{2} \neq 0$, we need to show that $(a_{1}, \dots, a_{n+1}, a_{2} - 1) \succ (b_{1}, \dots, b_{n+1}, b_{2} - 1)$. This happens if and only if $(a_{3}, \dots, a_{n+1}, a_{2} - 1, a_{1} - 1) \succ (b_{3}, \dots, b_{2} - 1, b_{1} -1)$. But by assumption,  $(a_{3}, \dots, a_{n+1}, a_{1} - 1, a_{2} - 1) \succ (b_{3}, \dots, b_{1} - 1, b_{2} -1)$ and $a_{1} - 1 > a_{2} - 1$. Since $\cP_{k-1}(n+1)$ satisfies (4), the result follows. 
\end{proof}


We now compare the order $\prec$ to the lexicographic ordering via the bijection $\omega: \Z_{\geq 0}^{n} \rightarrow \Min$. First, we have the following result, compare with Lemma \ref{lemma:po}(1).


\begin{lemma}\label{lemma:pi and lex}
Let $\ba, \boldb \in \mathbb{Z}^{n}_{\geq 0}$ and assume $\wa{a} \lex \wa{b}$. Then, $\wb{\pi \cdot \ba} \lex \wb{\pi \cdot \boldb}$. 
\end{lemma}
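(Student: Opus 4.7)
The plan is to show the sharper statement that shifting by $\pi$ on sequences corresponds to the left shift $\omega \mapsto \pi\omega$ on affine permutations in window notation. Concretely, I will prove that for every $\ba \in \Z_{\ge 0}^n$ and every $i \in \{1,\dots,n\}$,
\[
\wb{\pi\cdot\ba}(i) \;=\; \wa{a}(i) + 1.
\]
Granting this identity, the lemma is immediate: if $\wa{a} \lex \wa{b}$, then there is a smallest index $i$ where the window notations of $\wa{a}$ and $\wa{b}$ differ, with $\wa{a}(i) > \wa{b}(i)$; applying the identity to both $\ba$ and $\boldb$ simultaneously translates every window entry by $+1$, so the window notations of $\wb{\pi\cdot\ba}$ and $\wb{\pi\cdot\boldb}$ agree for $j < i$ and satisfy $\wb{\pi\cdot\ba}(i) = \wa{a}(i)+1 > \wa{b}(i)+1 = \wb{\pi\cdot\boldb}(i)$.

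For the identity itself, I will combine the definitions $\wa{a} = \ta{a}\, g_{\ba}^{-1}$ with Lemma \ref{lemma:g vs pi}, which gives $g_{\pi\cdot\ba} = g_{\ba}(12\cdots n)^{-1}$ and hence $g_{\pi\cdot\ba}^{-1} = (12\cdots n)\, g_{\ba}^{-1}$. Writing $c = (12\cdots n)$ for brevity and using the explicit formula $\ta{a}(j) = j + n a_j$, I will check directly that $\ta{\pi\cdot\ba}(c(j)) = \ta{a}(j) + 1$ for all $j$: for $1 \le j \le n-1$ this reads $(j+1) + n a_j = (j + na_j) + 1$, and for $j = n$ it reads $1 + n(a_n+1) = (n + na_n) + 1$, using $(\pi\cdot\ba)_1 = a_n+1$ and $(\pi\cdot\ba)_{j+1} = a_j$. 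Evaluating $\wb{\pi\cdot\ba}(i) = \ta{\pi\cdot\ba}(c(g_\ba^{-1}(i)))$ then yields the claimed identity.

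There is no real obstacle; the main thing to be careful about is the bookkeeping between the cyclic shift $c = (12\cdots n)$ acting on the index set $\{1,\dots,n\}$ and the translation shift $\pi$ acting on sequences via $\pi\cdot(a_1,\dots,a_n) = (a_n+1, a_1,\dots,a_{n-1})$, in particular handling the index $j = n$ where the $+1$ on the first entry of $\pi\cdot\ba$ conspires with the $+n$ coming from $c(n) = 1$ wrapping around. Once this bookkeeping is laid out cleanly, the proof collapses to the two-line case check above.
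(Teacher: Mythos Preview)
Your proposal is correct and takes essentially the same approach as the paper: both establish the identity $\wb{\pi\cdot\ba}(i) = \wa{a}(i) + 1$ (equivalently, $\wb{\pi\cdot\ba} = \pi\,\wa{a}$), from which preservation of the lexicographic order is immediate. The only cosmetic difference is that the paper verifies the identity by comparing the multisets of window entries (both sides being increasing tuples), whereas you verify it position-by-position via Lemma~\ref{lemma:g vs pi} and the explicit formula for $\ta{a}$; the two verifications are equivalent.
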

\begin{proof}
If the window notation of $\ww$ is 
$[ \ww(1), \ww(2), \cdots \ww(n)]$ then the window notation
of $\pi \ww$ is 
$[ \ww(1)+1, \ww(2)+1, \cdots \ww(n)+1]$.
Hence it is obvious that
 $\ww \lex \ww' \implies \pi \ww \lex \pi \ww'$.
Next, observe $\wb{\pi \cdot \ba} =\pi\wa{a} $ 
from which the lemma follows.  Indeed the entries of
the window notation for $\wa{a}$ sort  those of 
$\ta{a}$, and these have the form $i+n a_i$.
On the other hand, the entries of $\tav{\pi \cdot \ba}$ are
$i + n a_{\pi^{-1}(i)}= i + n a_{i-1}$
which we may reindex as $i+1 + n a_i$ as well as $1+n(a_n+1) = 
n+1 +n a_n$.




\end{proof}

The next result tells us that, even though the partial order $\prec$ looks complicated, it is in fact very natural when transported via the map $\ba \mapsto \wa{a}$.

\begin{lemma}\label{lem:compare to lex}
Let $\ba, \boldb \in \Z_{\geq 0}^{n}$ be such that $||\ba|| = ||\boldb||$. Then, $\ba \prec \boldb$ if and only if $\wa{a} \lex \wa{b}$.
\end{lemma}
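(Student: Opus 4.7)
The plan is to induct on $n+k$, using the recursive decomposition $\cP_{k+1}(n+1) = \cP^{\circ}_{k+1}(n+1) \sqcup \cP'_{k+1}(n+1)$ from Section \ref{sect:combinatorics}. The base cases $n=1$ and $k=0$ are trivial, as $\cP_{k}(n)$ is then a singleton.

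For the cross-comparison between the two pieces, first observe: if $\ba \in \cP'$ then $a_1 = 0$ forces $g_{\ba}(1) = 1$ via \eqref{eq: def ga}, whence $\wa{a}(1) = \ta{a}(1) = 1$; while if $\ba \in \cP^{\circ}$, writing $\ba = \pi \cdot \ba^{-}$ with $\ba^{-} = (a_2, \dots, a_{n+1}, a_1 - 1) \in \cP_k(n+1)$, the proof of Lemma \ref{lemma:pi and lex} gives $\wa{a} = \pi \, \wa{\ba^{-}}$, so $\wa{a}(1) = \wa{\ba^{-}}(1) + 1 \geq 2$. Hence every element of $\cP^{\circ}$ is lex-greater in window notation than every element of $\cP'$, matching the cross-comparison in $\prec$. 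Within $\cP^{\circ}_{k+1}(n+1) = \pi \cdot \cP_{k}(n+1)$, the map $\pi$ preserves $\prec$ by Lemma \ref{lemma:po}(1) and preserves $\lex$ in both directions (since left multiplication by $\pi$ adds one to every window entry), so the claim on $\cP^{\circ}$ reduces to the inductive hypothesis on $\cP_k(n+1)$.

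It remains to handle $\cP'_{k+1}(n+1)$. The forgetful map $\ba \mapsto \ba^{\flat} := (a_2, \dots, a_{n+1})$ is an order isomorphism to $\cP_{k+1}(n)$ for $\prec$ by construction; the task is to show it is also an order isomorphism for $\lex$ on window notation. The key observation is that for $\ba \in \cP'$, setting $\sigma(i) := g_{\ba}^{-1}(i)$ for $i \in \{2, \dots, n+1\}$, one has $\wa{a}(i) = \sigma(i) + (n+1) a_{\sigma(i)}$, while $\wa{\ba^{\flat}}(i-1) = (\sigma(i) - 1) + n a_{\sigma(i)}$ (since $g_{\ba^\flat}$ agrees with $g_{\ba}$ after the evident shift by one). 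The same permutation $\sigma$ sorts both multisets $\{j + (n+1)a_j\}_{j=2}^{n+1}$ and $\{(j-1) + n a_j\}_{j=2}^{n+1}$: in both expressions the coefficient on $a_j$ strictly dominates $|j - j'| \leq n-1$, so both are ordered by the lex rule on the pair $(a_j, j)$. A short bookkeeping argument then yields $\wa{a}(i) = \wa{a'}(i) \iff \wa{\ba^{\flat}}(i-1) = \wa{\ba'^{\flat}}(i-1)$ (both encoding $\sigma(i) = \sigma'(i)$ and $a_{\sigma(i)} = a'_{\sigma'(i)}$), and at the first differing slot the strict inequality is governed on both sides by the lex comparison of $(a_{\sigma(i)}, \sigma(i))$. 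The equivalence on $\cP'$ thus reduces to that on $\cP_{k+1}(n)$ by induction. The main obstacle is precisely this $\cP'$ step: although $\ba$ and $\ba^{\flat}$ differ only by a leading zero, the corresponding affine permutations live in affine symmetric groups of different ranks and the window entries are not mere translates, so one must argue carefully that the different dominating coefficients $n$ versus $n+1$ do not disturb lex comparison.
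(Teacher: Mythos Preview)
Your proof is correct and follows essentially the same inductive scheme as the paper: split $\cP_{k+1}(n+1)$ into $\cP^{\circ}$ and $\cP'$, handle the cross-comparison via $\wa{a}(1)=1$ versus $\wa{a}(1)\ge 2$, reduce $\cP^{\circ}$ to $\cP_k(n+1)$ via $\pi$, and reduce $\cP'$ to $\cP_{k+1}(n)$ via the forgetful map. The only difference is in the $\cP'$ bookkeeping: the paper writes $\wa{a}(i+1)=\wa{\overline{a}}(i)+1+a_{g_{\ba}^{-1}(i+1)}$ and invokes Lemma~\ref{lemma:w is very injective}, while you observe directly that both window entries are ordered by lex on the pair $(a_{\sigma(i)},\sigma(i))$ since the coefficients $n+1$ and $n$ each dominate $|\sigma(i)-\sigma'(i)|\le n-1$; this is a slightly cleaner repackaging of the same computation.
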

\begin{proof}
Since for fixed degree we are dealing with linear orderings,
by Lemma \ref{lemma:w is injective} we only need to check $\ba \prec \boldb$ implies $\wa{a} \lex \wa{b}$. Let us denote $k := ||\ba|| = ||\boldb||$. The case when $\ba \in \mathcal{P}^{\circ}_{k}$ and $\boldb \in \mathcal{P}'_{k}$ follows from Lemma \ref{lemma:w is injective} (b) and (c). The case when $\ba, \boldb \in \mathcal{P}^{\circ}_{k}$ follows from Lemma \ref{lemma:pi and lex} and an inductive argument on $k$. 

Finally, assume $\ba, \boldb \in \mathcal{P}'_{k}$ so that $a_{1} =
b_{1} = 0$ and therefore $\wa{a}(1) = \wa{b}(1) = 1$. We have to
consider $\overline{\ba} = (a_{2}, \dots, a_{n})$, $\overline{\boldb} =
(b_{2}, \dots, b_{n})$. By induction on $n$ we may assume
$\wa{\overline{a}} \lex \wa{\overline{b}}$. Note that we have
$g_{\ba}^{-1}(i+1) = g_{\overline{\ba}}^{-1}(i) +1$ and that
$\overline{a}_{i} = a_{i+1}$ for $i = 1, \dots, n-1$, and $$
\wa{a} = \left[1, \wa{\overline{a}}(1) + 1 + a_{g_{\ba}^{-1}(2)}, \dots, \wa{\overline{a}}(n-1) + 1 + a_{g_{\ba}^{-1}(n)}\right]
$$

\noindent and similarly for $g_{\boldb}$, $\wa{b}$. By assumption, $\wa{\overline{a}} \lex \wa{\overline{b}}$. So there exists $i_{0} \in \{1, \dots, n-1\}$ such that $\wa{\overline{a}}(i) = \wa{\overline{b}}(i)$ for $i < i_{0}$ and $\wa{\overline{a}}(i_{0}) > \wa{\overline{b}}(i_{0})$. If $i < i_{0}$ then by Lemma \ref{lemma:w is very injective} we get $\wa{a}(i+1) = \wa{b}(i+1)$. Now, $\wa{\overline{a}}(i_{0}) > \wa{\overline{b}}(i_{0})$ implies that $(n-1)(a_{g_{\ba}^{-1}(i_{0} + 1)} - b_{g_{\boldb}^{-1}(i_{0} + 1)}) + g_{\overline{\ba}}^{-1}(i_{0}) - g_{\overline{\boldb}}^{-1}(i_{0}) > 0$, from where we deduce that $a_{g_{\ba}^{-1}(i_{0} + 1)} \geq b_{g_{\boldb}^{-1}(i_{0} + 1)}$. Finally,
\begin{gather*}
\wa{a}(i_{0} + 1) = \wa{\overline{a}}(i_{0}) + 1 + a_{g_{\ba}^{-1}(i_{0} + 1)} > \wa{\overline{b}}(i_{0}) + 1 + b_{g_{\boldb}^{-1}(i_{0} + 1)} = \wa{b}(i_{0} + 1)
\end{gather*}

\noindent and we conclude that $\wa{a} \lex \wa{b}$. 
\end{proof}

One can check Figure \ref{fig:order} to see  examples of 
the structure described in  both
Lemmas \ref{lemma:pi and lex} and \ref{lem:compare to lex},
as well as Lemma \ref{lemma:lex vs bruhat} below.

Now let $\ww \in \Min$. Recall that by Lemma \ref{claim:coset}, that we may express $\ww$ in the form
$$
\ww = (s_{\nu_r}\cdots s_2s_1)\pi \cdots (s_{\nu_2}\cdots s_2s_1)\pi(s_{\nu_1}\cdots s_2s_1)\pi
$$

\noindent where $0 \leq \nu_r \leq \cdots \leq \nu_1 < n$. We select $\ell \leq r$ and $j \leq \nu_{\ell}$ and consider the affine permutation
$$
\widehat{\ww} := (s_{\nu_r}\cdots s_2s_1)\pi \cdots (s_{\nu_{\ell}}\cdots s_{j+1}\widehat{s_{j}}s_{j-1}\cdots s_1)\pi\cdots (s_{\nu_1}\cdots s_2s_1)\pi
$$

\noindent where a hat over $s_{j}$ means that we omit that transposition. Clearly, $\widehat{\ww}$ belongs to the monoid $\affSnplus$, and $\deg(\ww) = \deg(\widehat{\ww})$. Let us denote by $\omega' \in \Min$ the permutation whose window notation is the increasing arrangement of the window notation of $\widehat{\ww}$. 

\begin{lemma}\label{lemma:lex vs bruhat}
We have $\ww' \lex \ww$. 
\end{lemma}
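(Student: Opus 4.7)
The strategy is to compute explicitly how the window notation of $\ww$ changes under removal of a single letter $s_j$ from the $\ell$-th factor of its decomposition, and then to invoke an elementary sorting observation. Setting $u_k := (s_{\nu_k}\cdots s_1)\pi$, write $\ww = w u_\ell v$ with $w = u_r \cdots u_{\ell+1}$ and $v = u_{\ell-1}\cdots u_1$, so that $\widehat{\ww} = w u_\ell'' v$, where $u_\ell''$ denotes $u_\ell$ with $s_j$ omitted. An iterative calculation shows that the window of $u_\ell$ is $[1,2,\ldots,\nu_\ell,\nu_\ell+2,\ldots,n,n+\nu_\ell+1]$ and that the window of $u_\ell''$ agrees with it except at positions $j$ and $n$, where the values $j$ and $n+\nu_\ell+1$ are replaced by $\nu_\ell+1$ and $n+j$ respectively. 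Since $v\in\affSnplus$ is an $n$-periodic bijection of $\Z$, there are unique indices $k_j, k_0\in\{1,\ldots,n\}$ with $v(k_j)\equiv j$ and $v(k_0)\equiv 0\pmod n$, and $\ww$ and $\widehat{\ww}$ agree at every position $k\notin\{k_j,k_0\}$.

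The crucial combinatorial observation is that because $j\le\nu_\ell\le\nu_i$ for every $i<\ell$ (by the weakly decreasing condition on $\nu$), each factor $u_i$ of $v$ fixes $j$, whence $v(j)=j$; therefore $k_j=j$, and writing $v(k_0)=nq_0$ we have $q_0\ge 1$ since $v\in\affSnplus$. A short computation then yields
\[
\ww(k_j) = w(j), \quad \widehat{\ww}(k_j) = w(\nu_\ell+1), \quad \ww(k_0) = w(\nu_\ell+1)+nq_0, \quad \widehat{\ww}(k_0) = w(j)+nq_0.
\]
Moreover $w = u_r\cdots u_{\ell+1}$ is itself of the form described by Lemma~\ref{claim:coset} and hence lies in $\Min$, so it is strictly increasing on $\{1,\ldots,n\}$; combined with $j<\nu_\ell+1\le n$ this yields $w(j)<w(\nu_\ell+1)$.

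To conclude, I would apply the elementary fact that if two elements $x<y$ of a sorted tuple are replaced by $x',y'$ satisfying $x+y=x'+y'$ and $x<x'\le y'<y$, then the re-sorted tuple becomes strictly greater in lex order, because the first position of disagreement sees the smaller value $x$ replaced by something strictly larger. In our situation the two multisets $\{\ww(k_j),\ww(k_0)\}$ and $\{\widehat{\ww}(k_j),\widehat{\ww}(k_0)\}$ have equal sum, and
\[
w(j) = \min(\ww(k_j),\ww(k_0)) < \min(w(\nu_\ell+1),\,w(j)+nq_0) = \min(\widehat{\ww}(k_j),\widehat{\ww}(k_0)),
\]
so the hypothesis $x<x'$ holds. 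This yields $\ww'\lex\ww$. The main technical point of the argument is the careful verification that $\ww$ and $\widehat{\ww}$ differ at exactly the two window positions $k_j$ and $k_0$ and the identification of the affected values; once this is pinned down, the lex comparison is immediate.
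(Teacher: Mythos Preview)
Your proof is correct and follows essentially the same approach as the paper's own proof. Your $w$, $u_\ell$, $v$ are exactly the paper's $\alpha$, $\beta$, $\gamma$; your $k_0$ is the paper's $p$ and your $q_0$ is the paper's $z$; and your verification that $v(j)=j$ via the factors $u_i$ fixing $j$ is the same as the paper's observation that $\gamma$ fixes $1,\ldots,\nu_\ell$. The final ``sorting observation'' is just a repackaging of the paper's direct argument that both altered window entries exceed $\ww(j)$, so the first position of disagreement in the sorted tuple increases.
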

\begin{proof}
Let us start with the easy observation that, in window notation:
\begin{equation}\label{eq:cycle+pi}
s_{k}\cdots s_{1}\pi = [1, 2, \dots, k, k+2, \dots, n, n+k+1].
\end{equation}

Now let us split $\ww$ as follows:
$$
\ww = \underbrace{(s_{\nu_r}\cdots s_1)\pi \cdots (s_{\nu_{\ell + 1}}\cdots s_{1})\pi}_{\alpha}\, \underbrace{(s_{\nu_{\ell}} \cdots s_1)\pi}_{\beta} \, \underbrace{(s_{\nu_{\ell - 1}}\cdots s_{1})\pi \cdots (s_{\nu_1}\cdots s_1)\pi}_{\gamma}
$$

\noindent Note that $\alpha, \beta, \gamma \in \Min$. Moreover,
letting $k:=\nu_{\ell}$ since $\nu_{\ell} \leq \nu_{\ell - 1} \leq
\cdots \leq \nu_{1}$ it follows from \eqref{eq:cycle+pi} that $\gamma
= [1, 2, \dots, k, \gamma({k+1}), \dots, \gamma({n})]$, where $k <
\gamma({k+1}) < \cdots < \gamma({n})$. If we write $\alpha =
[\alpha(1), \dots, \alpha(n)]$ we then have that
$$\ww = [\alpha(1), \dots, \stackbin{j}{\alpha(j)}, \dots,
 \alpha(k), \ww({k+1}), \dots, \stackbin[= \alpha(k+1) + zn]{p}{\ww(p),} \dots,  \ww({n})],$$ 
where we let $1 \le p \le n$ be such that $\gamma(p) = zn$. 
Observe $z  > 0$.

Let us now compute $\widehat{\ww}$. Let $\widehat{\beta} :=
s_{\nu_{\ell}}\cdots s_{j+1}\widehat{s_{j}}s_{j-1}\cdots s_{1}\pi$,
so that $\widehat{\ww} = \alpha\widehat{\beta}\gamma$.  A
straightforward computation shows that, in window notation,
$\widehat{\beta} = [1, 2, \dots, j-1, k+1, j+1, \dots, k, k+2, \dots,
n, n+j]$. Then, $\widehat{\ww} = $
\begin{gather*}
[\alpha(1) \dots \alpha({j\!-\!1}),
\stackbin[=\ww(p)-zn]{j}{\alpha({k\!+\!1})},
\alpha({j\!+\!1}) \dots, \alpha({k}), {\ww}({k+1}), \dots,
\stackbin[=\ww(j)+zn]{p}{\alpha(j) + zn} \dots
{\ww}({n})],
\end{gather*}  
and in particular the window notation for $\ww$ and $\widehat{\ww}$
agree except for in the $j$th and $p$th entries. 
Since $\ww$ is already sorted so its entries increase, to
show $\ww' \lex \ww$ it suffices to show
$\widehat{\ww}(j) > \ww(j)$
and 
$\widehat{\ww}(p) > \ww(j)$. This will ensure that
the first $j-1$ entries of $\ww$ and the sorted
$\ww'$ agree, but the $j$th
entry of $\ww'$ will be strictly larger than $\ww(j)$.
We compute
$\widehat{\ww}(j) = \alpha(k+1) > \alpha(j) = \ww(j)$ since $\alpha
\in \Min$, and 
$\widehat{\ww}(p) =  \alpha(j) + zn > \alpha(j) = \ww(j)$.

\MVcomment{ 
\noindent We claim that $\alpha({j-1}) < \widehat{\ww}({i})$ for $i =
j, \dots, n$. This is clear for $i = j, \dots, k$.
For $i > k$, let $\gamma({i}) = a_{i} + nb_{i}$,
\MV{can i rewrite this? in particular i might write
$g_i + n a_i$ or change it to be more like the 
argument in lex-bruhat-proof.pdf . ??}
 with $1 \leq a_{i} \leq n$. Note that
we must have $a_{i} \in \{k+1, \dots, n\}$. If $a_{i} = n$ then we
have $\widehat{\ww}({i}) = \alpha(j) + n > \alpha(j-1)$. Else, we
have $\widehat{\ww}({i}) = \ww({a_{i}}) > \ww({k}) > \alpha({j-1})$.
It follows that in window notation $$
\ww' = [\alpha({1}), \dots, \alpha({j-1}), \ww'({j}), \ww'({j+1}), \dots, \ww'({n})]
$$

\noindent with $\ww'({j}) \geq \alpha({k+1}) > \alpha({k}) = \ww({k}) \geq \ww({j})$.   Since $\ww'({i}) = \alpha({i}) = \ww({i})$ for $i < j$, the result follows.
} 

\end{proof}


\begin{remark}

From Lemma \ref{lemma:lex vs bruhat} we see that for
$\ww,\p \in \Min$ of the same degree
that
\begin{gather}
\label{eq: bruhat-lex}
\p \bruhat \ww  \text{ implies } \p \lex \ww
\end{gather}
in window notation.
We could also deduce this from the characterization of Bruhat
order for $\affSnCox$
given in Bj\"orner-Brenti \cite[Theorem 8.3.7]{BB}
(which one must extend appropriately to $\affSn$; this is easy if
only comparing permutations of the same degree).
In particular, they characterize
$\p \lebruhat \ww$ for $\p, \ww \in \affSnCox$
if $\p[i,j] \le \ww[i,j]$ for all $i, j \in \Z$
where
$$ \p[i,j] = \# \{a \le i \mid \p(a) \ge j \}.$$
We can prove \eqref{eq: bruhat-lex} as follows.

Let 
 $\p \neq \ww \in \Min$  be of the same degree, which means 
$\sum_{i=1}^n \p(i) = \sum_{i=1}^n \ww(i) $. 
(This sum is $n(n+1)/2$ in the case $\p, \ww \in \affSnCox$.)
Suppose
that $\p \not\lex \ww$
which means $\p \llex \ww$. Hence for some
$1 \le \ell \le n$ we have $\p(1)=\ww(1), \p(2) = \ww(2),
\cdots, \p(\ell-1) = \ww(\ell-1)$ but $\p(\ell) < \ww(\ell)$.
Since $\sum_{i=1}^n \p(i) = \sum_{i=1}^n \ww(i) $,
there must be some $\ell \le i \le n$ such that $\p(i) > \ww(i)$.
Let $i \le n$ be the largest such $i$.
In other words for $n \ge r > i$ we have $\p(r) \le \ww(r)$.
Let $j:=\p(i)$. Let us compare $\p[i,j]$ and $\ww[i,j]$.

Since $\ww \in \Min$ we have $\ww(1) < \ww(2) < \cdots < \ww(n)$
and so given $a$ such that $\ww(a) \ge j = \p(i) > \ww(i)$
and $a \le i$ then
$i+1 < a +kn < n$ for some $k> 0$.
In particular $\p(a) = \p(a+kn) - kn \ge \ww(a+kn)-kn = \ww(a) \ge j$.
Hence $\{a \le i \mid \ww(a) \ge j \} \subseteq
\{a \le i \mid \p(a) \ge j \}$. Further as $\ww(i) < \p(i) =j$
the element $i$ does not belong to the first set but does to the
second. This shows $\ww[i,j] < \p[i,j]$ and so $\p \not\bruhat \ww$.
This proves \eqref{eq: bruhat-lex}.


\end{remark}



We remark that even though $\lex$ is a total order on $\affSn$, 
we only relate it to Bruhat order for two affine permutations
of the same degree and  that are both in $\Min$.

\section{Rational Cherednik algebras}
Throughout the rest of the paper
we take $m, n \in \Z_{>0}$ and $\gcd(m,n)=1$ unless otherwise stated.

\subsection{Definition and basic properties}
\label{sec: RCA background}

We work with the rational Cherednik algebra $H_{t, c}:= H_{t,c}(\Sn,\C^n)$ of $\Sn$ acting on $\C^n$ by permuting the coordinates. Let us recall that this is the quotient of the semidirect product algebra $\C\langle x_{1}, \dots, x_{n}, y_{1}, \dots, y_{n}\rangle \rtimes \Sn$ by the relations
\begin{align*}
[x_{i}, x_{j}] = 0, \qquad & [y_{i}, y_{j}] = 0, \\
[y_{i}, x_{j}] = c(ij), \qquad  & [y_{i}, x_{i}] = t - c\sum_{j \neq i}(ij)
\end{align*}
Here $t$ and $c$ are complex parameters. Clearly, for a nonzero complex number $a \in \C^{*}$, $H_{at, ac} \cong H_{t,c}$. So we have the dichotomy $t = 0$ or $t = 1$. For most of the paper, we will assume that $t = 1$ and write $H_{c} := H_{1,c}$.

We recall some basic facts about $H_{t,c}$ following \cite{BEG}. The algebra $H_{t,c}$ is graded, with $x_i$ of degree $1$, $y_i$ of degree $(-1)$, and $\Sn$ in degree zero. When $t = 1$, the grading on $H_{c}$ is internal
 and defined by the Euler element
$$
h=\frac{1}{2}\sum_{i}(x_iy_i+y_ix_i)=\sum_{i}x_iy_i+ \frac{n}{2} -c\sum_{i<j}(ij).
$$

\noindent Let us emphasize that the grading on $H_{0, c}$ is not internal. The algebra $H_{t,c}$ is also filtered, with $x_i$ and $y_i$ of filtration level 1 and $\Sn$ of filtration level 0. An important {\em PBW theorem} states that
$$
\gr H_{t,c}=\C[x_1,\ldots,x_n,y_1,\ldots,y_n]\rtimes \Sn,
$$
where $\gr$ denotes associated graded with respect to this filtration.
This implies that a basis of $H_{t,c}$ as a $\C$-vector space is given by
${x}^{\ba}\ww {y}^{\boldb}$, where $\ww \in \Sn$,
${x}^{\ba} := x_{1}^{a_1}\cdots x_{n}^{a_n}$,
$\ba \in \Z_{\geq 0}^{n}$ and similarly for ${y}^{\boldb}$,
$\boldb \in \Z_{\geq 0}^{n}$. We will refer to this basis as the
\emph{PBW basis} of $H_{t,c}$. Another easy consequence of the PBW
theorem is that $H_{t,c}$ contains the following subalgebras:
$$
\Hnx := \C[x_1, \dots, x_n] \rtimes \Sn, \qquad \Hny := \C[y_1, \dots, y_n]\rtimes \Sn.
$$

Next we need to consider some modules for $H_{t,c}$. We have the {\em standard modules} 
$$
\Delta_{t,c}(\mu)=\Ind_{\Hny}^{H_{t,c}}V_{\mu}\simeq V_{\mu}\otimes_{\C}\C[x_1,\ldots,x_n],
$$
where $V_{\mu}$ is an irreducible representation of $\Sn$ corresponding to the  Young diagram $\mu$ of size $n$, and $y_i$ annihilate $V_{\mu}$.
In particular, for
$\mu=(n)$ the representation $V_{\mu}$ is trivial, and we get the {\em
polynomial representation} $\Delta_{t,c}(\triv)\simeq
\C[x_1,\ldots,x_n]$.

When $t = 1$, it is not hard to see using the Euler element $h$ that $\Delta_{c}(\mu) := \Delta_{1, c}(\mu)$ has a unique irreducible quotient that we denote by $L_{c}(\mu)$. In fact, these are the simple objects of the category $\cO_{c}$, which is defined as the category of $H_{c}$-modules which are finitely generated over $\C[x_1,\ldots,x_n]$ and where $y_i$ act locally nilpotently. For example, the standard modules $\Delta_{c}(\mu)$ belong to $\cO_{c}$. We have the following facts about the category $\cO_{c}$.



\begin{theorem}[\cite{BEG}] a) If $c\in \C\setminus \Q$ then the category $\cO_{c}$ is semisimple, and all standard modules
$\Delta_c(\mu)$ are irreducible. The same is true if $c$ is rational but its denominator is greater than $n$.

b) Suppose that $c=m/n$ where $m,n \in \Z_{>0}, \gcd(m,n)=1$. Then $\Delta_c(\mu)$ is irreducible, unless $\mu$ is a hook.

c) Suppose that $c=m/n$ where $m,n \in \Z_{>0}, \gcd(m,n)=1$,
and  let $\mu_{\ell}=(n-\ell,1^{\ell})$ be a hook partition. Then the morphisms between standard modules have the following form:
$$
\Hom_{H_c}(\Delta_c(\mu),\Delta_c(\mu'))=\begin{cases}
\C &   \mu = \mu', \\ \C &   \mu=\mu_{\ell},\mu'=\mu_{\ell-1}\ \text{for some}\ \ell,\\
0 & \text{otherwise}.
\end{cases}
$$
\end{theorem}

This theorem is proved in \cite{BEG} using the {\em Knizhnik-Zamolodchikov functor} which compares the representation theory of $H_{c}$ to that of the type $A$ finite Hecke algebra. In this paper we give an alternative combinatorial proof.  The ``otherwise" case of (b) is proved in Lemma \ref{lem:not hooks}, and 
Lemma \ref{lem:far hooks}, while the interesting morphism $\Delta_c(\mu_{\ell})\to \Delta_c(\mu_{\ell-1})$ is constructed in Proposition \ref{prop:bgg}. Part (b) easily follows from (c), see Corollary \ref{cor:not hook}.


The representation theory of the algebra $H_{0, 1}$ is very different from that of $H_{c}$. For example, it is no longer true that the standard module $\Delta_{0, 1}(\mu)$ has a unique irreducible quotient, moreover, the algebra $H_{0,1}$ is finite over its center so every irreducible $H_{0,1}$-module is finite-dimensional, \cite{EG}. Still, in Section \ref{sect:m to infinity} we will use our results in the $t = 1$ case and a limiting procedure to give a combinatorial basis of $\Delta_{0, 1}(\mu)$. 

We will also consider the spherical subalgebra of $H_{t, c}$. Let $e := \frac{1}{n!}\sum_{p \in \Sk{n}}p \in \C\Sk{n} \subseteq H_{t,c}$ be the trivial idempotent for $\Sk{n}$. The spherical rational Cherednik
algebra is the corner algebra $eH_{t,c}e$. We have an obvious functor $H_{t,c}\text{-mod} \to eH_{t,c}e\text{-mod}$, given by $M \mapsto eM = M^{\Sk{n}}$. When $t = 0$ or  $t=1$ and $c$ is not a negative real number, this functor is known to be an equivalence.



\subsection{An alternative presentation of \texorpdfstring{$H_{t,c}$}{Hc}}\label{sec: new presentation}

 It will be convenient to work with a trigonometric presentation of the algebra $H_{t, c}$ that has already appeared in the work of Griffeth and Webster in the cyclotomic setting, \cite{g2, W}. Since some relations become more transparent in the type $A$ setting, we recall this presentation in detail.
First, we have the Dunkl-Opdam elements in $H_{t, c}$:
$$
u_{i} := x_{i}y_{i} - c\sum_{j < i}(ij).
$$
\begin{lemma}
The Dunkl-Opdam elements generate a polynomial subalgebra of $H_{t, c}$.
\end{lemma}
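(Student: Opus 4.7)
The plan is to split the lemma into two claims: the Dunkl-Opdam elements pairwise commute, and they are algebraically independent. For the second claim I will pass to the associated graded algebra with respect to the PBW filtration of $H_{t,c}$. Under that filtration $u_i$ has filtration degree $2$ with principal symbol $x_iy_i \in \C[x_1,\ldots,x_n,y_1,\ldots,y_n] \rtimes \Sn$, and since $x_1y_1,\ldots,x_ny_n$ are algebraically independent in the polynomial ring $\C[x,y]$, no nontrivial polynomial in the $u_i$ can vanish in $H_{t,c}$.

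For pairwise commutativity, the plan is a direct calculation. Fix $i < j$, expand $u_i$ and $u_j$ from the definition, and write
\[
[u_i, u_j] = [x_iy_i, x_jy_j] - c\bigl[x_iy_i, \textstyle\sum_{\ell<j}(\ell j)\bigr] - c\bigl[\textstyle\sum_{k<i}(ki), x_jy_j\bigr] + c^2\bigl[\textstyle\sum_{k<i}(ki),\textstyle\sum_{\ell<j}(\ell j)\bigr].
\]
For the first bracket, the relations $[y_a, x_b] = c(ab)$ for $a \neq b$ together with the conjugation identities $(ab)x_a = x_b(ab)$ and $(ab)y_a = y_b(ab)$ give $[x_iy_i, x_jy_j] = c\bigl(x_iy_i - x_jy_j\bigr)(ij)$. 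Note that the $\ell = i$ summand of the second bracket already produces $-c(x_iy_i - x_jy_j)(ij)$ by the same conjugation identities, cancelling the first bracket. For the remaining summands of the two cross brackets I will split according to whether the running index equals $i$ or $j$; the genuinely delicate case, where the running index is $i$ in the bracket with $x_jy_j$ or $j$ in the bracket with $x_iy_i$, brings in the diagonal relation $[y_a, x_a] = t - c\sum_{k \neq a}(ka)$, whereas the remaining cases use only the conjugation formulas and $[y_a, x_b] = c(ab)$. The fourth piece is an ordinary commutator inside $\C\Sn$ which unfolds into a signed sum of products of transpositions. Assembling these contributions, the terms pair up across the four pieces and cancel, yielding $[u_i, u_j] = 0$.

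The main obstacle is the bookkeeping: the transpositions $(ki)$ and $(\ell j)$ do not commute uniformly with $x_jy_j$ and $x_iy_i$, and one must separately track the sub-cases $k \in \{i, j\}$, $\ell \in \{i, j\}$, and $k = \ell$. Once these cases are organized, though, the only surviving terms match across the four pieces of the commutator and cancel in pairs, completing the proof that the $u_i$ generate a polynomial subalgebra.
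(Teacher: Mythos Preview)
Your proposal is correct and follows essentially the same approach as the paper: verify commutativity directly, then pass to the associated graded via the PBW filtration, where the symbols $x_iy_i$ are manifestly algebraically independent. The paper's proof simply asserts that $u_iu_j = u_ju_i$ is ``straightforward to see'' and gives the same associated-graded argument in one sentence; your version supplies the explicit bookkeeping for the commutator that the paper omits.
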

\begin{proof}
It is straightforward to see that $u_iu_j=u_ju_i$. Since the leading term of $u_i$ is $x_iy_i$, 
the leading terms of $u_i$ in $\gr H_{t,c}$ are algebraically independent, and hence $u_i$ are algebraically independent in $H_{t,c}$.
\end{proof}
We will denote this polynomial subalgebra by $\A$.

We remark that we have the following relations where, as usual, $s_{i} = (i, i+1) \in \Sn$ is a simple transposition.
\begin{align}
\label{si ui} s_{i}u_{i}  & = u_{i+1}s_{i}  + c \\
s_{j}u_{i} & = u_{i}s_{j} \; \text{if} \; j \neq i, i -1
\end{align}
\begin{remark}
These equations imply that $u_i$ and $s_j$ form a subalgebra in $H_{t,c}$ isomorphic to the degenerate affine Hecke algebra. We will denote this algebra by $\Hnu$.
\end{remark}

We will also need the following shift 
operators. Let $\tau :=
x_{1}(12\cdots n)$, $\lambda := (12\cdots n)^{-1}y_{1}$. Note that,
for every $i$, we have that $\tau = (1\cdots i)x_{i}(i \cdots n)$ and  $\lambda
= (n\cdots i)y_{i}(i\cdots n)$. 
 The following relations are
straightforward to check. 
\begin{align}
\label{tau ui} \tau u_{i} & = u_{i+1}\tau, i \neq n \\
\label{u n+1} \tau u_{n} & = (u_{1} - t)\tau \\
\lambda u_{i} & = u_{i-1}\lambda, i \neq 1  \\
\label{u0} \lambda u_{1} &  = (u_{n} + t)\lambda \\
\label{si tau 1} s_{i}\tau & = \tau s_{i-1}, i \neq 1 \\
\label{si tau 2} s_{1}\tau^{2} & = \tau^{2} s_{n-1} \\
\label{si lambda 1} s_{i}\lambda & = \lambda s_{i+1}, i \neq n-1 \\
\label{si lambda 2} s_{n-1}\lambda^{2} & = \lambda^{2}s_{1} \\
\label{tau lambda} \tau\lambda & = u_{1} \\
\label{lambda tau} \lambda\tau & = u_{n} + t \\
\label{s1 tau lambda} \lambda s_{1}\tau & = \tau s_{n-1}\lambda + c
\end{align}
It is clear that the elements $s_{1}, \dots, s_{n-1}, \tau$ and $\lambda$ generate the algebra $H_{t,c}$. It turns out that, together with the $u_{i}$, they give a presentation of this algebra.

\begin{remark}
\label{remark-ui for i in Z}
Given relations \eqref{u0} and \eqref{u n+1}, it is convenient to
define $u_i$ for $i \in \Z$ by setting $u_{i+nk} = u_i -kt$
for $1 \le i \le n$.
\end{remark}

The following theorem appears in work of Griffeth \cite[Theorem 3.1]{g2} and Webster \cite[Theorem 2.3]{W} in the more complicated cyclotomic setting. 

\begin{theorem}\label{thm:new presentation}
Let ${\sf H}_{t,c}$ be the algebra generated by elements $u_{1}, \dots, u_{n}, \tau, \lambda$ and the symmetric group $\Sk{n}$, subject to the relations that $[u_{i}, u_{j}] = 0$ and \eqref{si ui}--\eqref{s1 tau lambda}. Then, ${\sf H}_{t,c} \cong H_{t,c}$. 
\end{theorem}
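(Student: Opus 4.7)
The plan is to construct a surjection $\Phi: {\sf H}_{t,c} \to H_{t,c}$ and then establish injectivity via a PBW-style spanning argument. For surjectivity, I would send $u_i, \tau, \lambda, s_j$ to the elements of $H_{t,c}$ denoted by the same symbols, check that all the relations \eqref{si ui}--\eqref{s1 tau lambda} are satisfied in $H_{t,c}$ (the author has already indicated these are straightforward verifications using the Dunkl-Opdam definition of $u_i$ and the cyclic formulas for $\tau,\lambda$), and then note that $x_1 = \tau(1\cdots n)^{-1}$ and $y_1 = (1\cdots n)\lambda$ lie in the image, hence so do all $x_i, y_i$ by conjugating with $\Sn$. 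So $\Phi$ is surjective.

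For injectivity, I would exhibit a spanning set of ${\sf H}_{t,c}$ that maps to a linearly independent family in $H_{t,c}$. The natural candidate is
\[
\mathcal{B} = \{\tau^{a}\, w\, p(u_1,\dots,u_n)\, \lambda^{b} : a,b \in \Z_{\ge 0},\ ab=0,\ w \in \Sn,\ p \in \A\}.
\]
To prove $\mathcal{B}$ spans, I would show that any monomial in the generators $\tau,\lambda,u_i,s_j$ can be rewritten in this form. The strategy is: first use \eqref{si tau 1}, \eqref{si tau 2}, \eqref{si lambda 1}, \eqref{si lambda 2} to move powers of $\tau$ to the left and powers of $\lambda$ to the right past elements of $\Sn$; use \eqref{tau ui}--\eqref{u0} to commute $u_i$'s past $\tau$ and $\lambda$ (at the cost of reindexing); collapse any occurrence of $\lambda\tau$ or $\tau\lambda$ sandwiched between higher powers using \eqref{tau lambda} and \eqref{lambda tau} to turn it into a polynomial in $u_i$, reducing total degree; and use the degenerate-affine-Hecke relations \eqref{si ui} to put polynomials in $u_i$ and group elements into the canonical form $w\,p(u)$ (using that $\Hnu$ has a known PBW basis $\{w\,p(u)\}$). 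The remaining mixed term $\lambda s_1 \tau$ is handled by \eqref{s1 tau lambda}, which allows one to trade it for $\tau s_{n-1}\lambda$ plus a scalar, again enabling collapse under the above rules.

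To finish, I would verify that $\Phi(\mathcal{B})$ is linearly independent in $H_{t,c}$. The image $\Phi(\tau^{a}w\,p(u)\lambda^{b})$ has leading symbol in $\gr H_{t,c} = \C[x,y]\rtimes \Sn$ equal to $x_1^a\cdots$ (something determined by $w$) $\cdots y_1^b\cdots$ times a monomial in the $x_iy_i$'s coming from $p$, and distinct triples $(a,b,w,p)$ produce distinct leading symbols by the PBW theorem. Combined with surjectivity, this shows $\Phi$ is an isomorphism and $\mathcal{B}$ is a basis of ${\sf H}_{t,c}$.

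The main obstacle is the spanning step: the relations are numerous and intertwined, and one must give a genuine terminating rewriting procedure rather than just a list of moves. Concretely, one has to set up an induction (say on the total number of $\tau$'s plus $\lambda$'s, with ties broken by length of the $\Sn$-word and degree in $u$) under which each application of \eqref{tau lambda}, \eqref{lambda tau}, or \eqref{s1 tau lambda} strictly decreases the measure while the commutation relations preserve it. Once this termination is in place the rest is bookkeeping, and the isomorphism follows from the PBW theorem for $H_{t,c}$.
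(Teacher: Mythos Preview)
Your proposed spanning set $\mathcal{B} = \{\tau^{a}\,w\,p(u)\,\lambda^{b} : ab=0\}$ is too small, and the rewriting procedure you outline cannot terminate at elements of this form.  The relations \eqref{si tau 1}--\eqref{si tau 2} let you move $s_i$ past $\tau$ for $i\neq 1$, and $s_1$ past $\tau^2$, but there is \emph{no} relation that moves $s_1$ past a single $\tau$.  Concretely, $s_1\tau$ cannot be rewritten as $\tau w'$ for any $w'\in\Sn$.  A clean way to see that $\mathcal{B}$ is too small: in Euler degree $2$ and filtration $\le 2$, the PBW basis of $H_{t,c}$ has $\binom{n+1}{2}n!$ elements $x_ix_jw$, but your set contributes only $\tau^2 w$ for $w\in\Sn$, whose images $x_1x_2(12\cdots n)^2 w$ all have $x$-part $x_1x_2$ and so span only an $n!$-dimensional subspace.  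The correct normal form on the positive side interleaves the $\tau$'s with simple reflections, e.g.\ $(s_{\nu_r}\cdots s_1)\tau\cdots(s_{\nu_1}\cdots s_1)\tau$; equivalently, the monoid generated by $\tau$ and the $s_i$ is isomorphic to the positive part $\affSnplus$ of the extended affine symmetric group, not to $\Z_{\ge 0}\times\Sn$.

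The paper avoids this whole spanning/rewriting business by a much more direct route: it defines candidate elements $\x_i := s_{i-1}\cdots s_1\tau s_{n-1}\cdots s_i$ and $\y_i := s_i\cdots s_{n-1}\lambda s_1\cdots s_{i-1}$ in ${\sf H}_{t,c}$ and checks, by explicit computation using only \eqref{si ui}--\eqref{s1 tau lambda}, that the $\x_i,\y_i,s_j$ satisfy the defining relations of $H_{t,c}$.  This produces an inverse homomorphism $H_{t,c}\to{\sf H}_{t,c}$, so both maps are isomorphisms.  If you want to salvage your approach, you would need to replace $\mathcal{B}$ by the genuine PBW-type basis $F_{\cX}^{-1}(\omega_1)\,g\,F_{\cY}^{-1}(\omega_2^{-1})$ indexed by pairs of minimal coset representatives in $\affSnplus/\Sn$ and $g\in\Sn$ (which is exactly what the paper does later in Lemma~\ref{lemma-algebra-basis}), but at that point the inverse-map argument is considerably shorter.
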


We refer to the aforementioned \cite{g2, W} for a proof of this result. Here, we just remark that to recover $x_{i}$ and $y_{i}$ from $\tau, \lambda$ and $\Sk{n}$ we set

$$
x_i = s_{i-1}\cdots s_{1}\tau s_{n-1}\cdots s_{i}, \quad y_{i} = s_{i}\cdots s_{n-1}\lambda s_{1}\cdots s_{i-1}.
$$

Note that we can also eliminate the Dunkl-Opdam elements $u_i$ from this presentation:
\begin{proposition}
\label{prob: eliminate u}
The algebra $H_{t,c}$ is generated by $s_i$, $\lambda$ and $\tau$ subject to the  equations \eqref{si tau 1}, \eqref{si tau 2},         \eqref{si lambda 1}, \eqref{si lambda 2}, \eqref{s1 tau lambda} and one more equation
\begin{equation}
 \lambda\tau   = t+ s_1\cdots s_{n-1}\tau\lambda s_{n-1}\cdots s_1  -c \sum_{i=1}^{n-1}s_1\cdots s_i\cdots s_1 
\end{equation}
\end{proposition}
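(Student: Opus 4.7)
The plan is to deduce this presentation from Theorem~\ref{thm:new presentation} by eliminating the Dunkl--Opdam generators $u_i$. The key observation is that, in $H_{t,c}$, relation~\eqref{tau lambda} gives $u_1 = \tau\lambda$ and the rearrangement $u_{i+1} = s_i u_i s_i - c s_i$ of~\eqref{si ui} expresses every subsequent $u_i$ recursively in terms of $\tau$, $\lambda$, and the simple reflections. Thus the $u_i$ are redundant generators, and the task is to identify which relations of Theorem~\ref{thm:new presentation} produce something new once the $u_i$ are eliminated.

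First I would verify that the new relation holds in $H_{t,c}$. Iterating $u_{i+1} = s_i u_i s_i - c s_i$ from $i=1$ up to $i=n-1$, a direct computation (tracking the accumulated correction terms conjugating the initial $-cs_1$) yields
$$ u_n \;=\; (s_{n-1}\cdots s_1)\, u_1\, (s_1\cdots s_{n-1}) \;-\; c\sum_{i=1}^{n-1} s_1\cdots s_i\cdots s_1, $$
where each $s_1\cdots s_i\cdots s_1$ is the transposition $(1,i+1)$ produced by braid moves. Substituting $u_1 = \tau\lambda$ from~\eqref{tau lambda} and $u_n = \lambda\tau - t$ from~\eqref{lambda tau}, and using the cyclic relations to rewrite the conjugation if needed, produces the stated relation.

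Conversely, let ${\sf H}'_{t,c}$ denote the algebra presented by $s_i,\tau,\lambda$ subject to~\eqref{si tau 1},~\eqref{si tau 2},~\eqref{si lambda 1},~\eqref{si lambda 2},~\eqref{s1 tau lambda}, and the new relation. Since each of these holds in $H_{t,c}$, there is a surjection ${\sf H}'_{t,c}\twoheadrightarrow H_{t,c}$. To invert it I would define $U_1 := \tau\lambda\in {\sf H}'_{t,c}$ and inductively $U_{i+1} := s_i U_i s_i - c s_i$, and check that the assignment $u_i\mapsto U_i$, together with the identity on $s_j,\tau,\lambda$, defines a map $H_{t,c}\to {\sf H}'_{t,c}$ by verifying every relation of Theorem~\ref{thm:new presentation}. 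The relation $s_i u_i = u_{i+1}s_i + c$ is built into the definition; the commutations $s_j u_i = u_i s_j$ for $j\ne i,i-1$ and the shift relations $\tau u_i = u_{i+1}\tau$ (for $i\ne n$) and $\lambda u_i = u_{i-1}\lambda$ (for $i\ne 1$) follow by induction on $i$ from the braid-type relations~\eqref{si tau 1}--\eqref{si lambda 2}; the identity $\tau\lambda = u_1$ is a definition; and the ``wrap-around'' relations $\tau u_n = (u_1-t)\tau$ and $\lambda u_1 = (u_n+t)\lambda$ follow from the new relation, using~\eqref{s1 tau lambda} to move the mixed generators past $s_1$ and $s_{n-1}$. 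Finally, $[U_i,U_j]=0$ propagates from the trivial $[U_1,U_1]=0$ by conjugating with $\tau$ and $\lambda$ and invoking the shift relations just derived.

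The main obstacle is the careful bookkeeping in this second step, most notably the pairwise commutativity $[U_i,U_j]=0$ and the derivation of the wrap-around relations, which crucially require the mixed relation~\eqref{s1 tau lambda}. Once these verifications are complete, the two maps ${\sf H}'_{t,c}\rightleftarrows H_{t,c}$ are mutually inverse on generators, establishing the isomorphism.
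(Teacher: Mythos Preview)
Your approach is the same as the paper's: define $U_1:=\tau\lambda$, build the remaining $U_k$ from~\eqref{si ui}, and verify the relations of Theorem~\ref{thm:new presentation} inside the smaller algebra. One bookkeeping point deserves correction, however. You claim the shift relations $\tau U_i=U_{i+1}\tau$ follow by induction from~\eqref{si tau 1}--\eqref{si lambda 2} alone, but the base case $\tau U_1=U_2\tau$ already requires~\eqref{s1 tau lambda}: with $U_2=s_1\tau\lambda s_1-cs_1$ one computes
\[
U_2\tau=s_1\tau(\lambda s_1\tau)-cs_1\tau=s_1\tau(\tau s_{n-1}\lambda+c)-cs_1\tau=s_1\tau^2 s_{n-1}\lambda=\tau^2\lambda=\tau U_1,
\]
using~\eqref{s1 tau lambda} and then~\eqref{si tau 2}. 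Conversely, the wrap-around relations are easier than you suggest: the new relation is literally $\lambda\tau=U_n+t$ once you unpack the closed formula for $U_n$, so $\tau U_n=\tau(\lambda\tau-t)=(\tau\lambda-t)\tau=(U_1-t)\tau$ and $\lambda U_1=\lambda\tau\lambda=(U_n+t)\lambda$ follow immediately, with no further appeal to~\eqref{s1 tau lambda}. With this swap of attributions, your outline matches the paper's proof.
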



The following lemma relates the nonnegative part of $H_{t,c}$ to affine permutations.

\begin{lemma}
\label{lem:monoids x}
Let $\cX$ denote the monoid of monomials in $s_i$ and $\tau$ (or, equivalently, in $s_i$ and $x_j$).
Then there is an isomorphism of monoids $F_{\cX}:\cX\to \affSnplus$ such that 
$$
F_{\cX}(s_i)=s_i,\ F_{\cX}(\tau)=\pi,\ F_{\cX}(x_1^{a_1}\cdots x_n^{a_n})=\ta{a}
$$
for $a_i \ge 0$.
\end{lemma}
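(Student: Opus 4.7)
The plan is to build the isomorphism by going through a normal form on each side: on the $\cX$ side, every monomial in $s_i$'s and $x_j$'s reduces to $x^{\ba}g$ with $\ba \in \Z_{\geq 0}^n$ and $g \in \Sn$ (unique by PBW); on the $\affSnplus$ side, every element is uniquely $\ta{a}g$ with $\ba \in \Z_{\geq 0}^n$ and $g \in \Sn$ (Lemma \ref{lem:coset positive}). So the candidate map $F_\cX \colon x^{\ba}g \mapsto \ta{a}g$ is already a bijection of sets.

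First I would show that $\cX$ coincides with the set of elements of $H_{t,c}$ of the form $x^{\ba}g$. In $H_{t,c}$ we have $gx_j = x_{g(j)}g$ for $g \in \Sn$, so every word in $s_i$'s and $x_j$'s can be rewritten as $x^{\ba}g$ by moving symmetric group elements to the right. Uniqueness of this expression is immediate from the PBW theorem. Since $\tau = x_1 s_1 s_2 \cdots s_{n-1}$ and $\x_i = s_{i-1} \cdots s_1 \tau s_{n-1} \cdots s_i$, the monoid of monomials in $s_i,\tau$ coincides with the monoid of monomials in $s_i,x_j$, so the description of $\cX$ is unambiguous.

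Next I would check that $F_\cX$ is a monoid homomorphism. The computation in $H_{t,c}$ gives
\[
(x^{\ba}g)(x^{\bb}h) = x^{\ba}x^{g\cdot\bb}gh = x^{\ba+g\cdot\bb}gh, \qquad (g\cdot\bb)_j = b_{g^{-1}(j)}.
\]
An identical formula holds in $\affSn$: a direct window-notation calculation shows $g\ta{b}g^{-1}=\ta{g\cdot\bb}$, hence $\ta{a}g\ta{b}h = \ta{\ba+g\cdot\bb}gh$. Therefore $F_\cX\bigl((x^{\ba}g)(x^{\bb}h)\bigr) = \ta{\ba+g\cdot\bb}gh = F_\cX(x^{\ba}g)F_\cX(x^{\bb}h)$. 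Finally I would verify the stated values on generators: $F_\cX(s_i) = \ta{0}s_i = s_i$, and $F_\cX(\tau) = F_\cX(x_1 s_1\cdots s_{n-1}) = \ta{e_1}(s_1\cdots s_{n-1})$, whose window notation is $[2,3,\dots,n,n+1] = \pi$; from this the formula $F_\cX(x_1^{a_1}\cdots x_n^{a_n}) = \ta{a}$ is just the $g = \id$ case.

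The main obstacle is a bookkeeping check: verifying that the $\Sn$-action on monomials $x^{\ba}$ inside $H_{t,c}$ is compatible under $F_\cX$ with conjugation of translations $\ta{a}$ inside $\affSn$. Once this compatibility is recorded, everything else follows from the two uniqueness statements (PBW and Lemma \ref{lem:coset positive}) and the definition of $\affSnplus$ in terms of window notation.
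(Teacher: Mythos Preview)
Your proof is correct and uses essentially the same ingredients as the paper: the PBW normal form $x^{\ba}g$ for $\cX$, the decomposition $\ta{a}g$ for $\affSnplus$ from Lemma~\ref{lem:coset positive}, and the computation that $x_i \leftrightarrow \tav{e_i}$. The only difference is organizational: the paper defines $F_\cX$ on generators $s_i \mapsto s_i$, $\tau \mapsto \pi$, checks that the relations \eqref{si tau 1}, \eqref{si tau 2} hold among $s_i,\pi$ in $\affSn$ (so the map descends), and then invokes the two normal forms for bijectivity; you instead define $F_\cX$ directly on normal forms (making bijectivity immediate) and verify compatibility with multiplication via $g\,\ta{b}\,g^{-1}=\ta{g\cdot\bb}$. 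Your route sidesteps the implicit check that \eqref{si tau 1}, \eqref{si tau 2} together with the $\Sn$ relations generate all relations in $\cX$, at the cost of a small direct calculation; both are fine.
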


\begin{proof}
We can define $F_{\cX}$ by $F_{\cX}(s_i)=s_i,\ F_{\cX}(\tau)=\pi$. Since the relations $s_i\pi=\pi s_{i-1}$
and $s_1\pi^2=\pi^2s_{n-1}$ hold in $\affSn$, $F_{\cX}$ is a homomorphism. 
Considering the window notation for $s_i$ and $\pi$, it is easy
to see the image is $\affSnplus$.

Now 
$$
F_{\cX}(x_i)=s_{i-1}\cdots s_{1}\pi s_{n-1}\cdots s_{i}=[1,\ldots,i-1,i+n,i+1,\ldots,n]=\tav{(0,\ldots,0,1,0,\ldots,0)}
$$
so $F_{\cX}(x_1^{a_1},\cdots,x_n^{a_n})=\ta{a}$ for all $\ba
\in \Z_{\ge 0}^n$.

Finally, by Lemma \ref{lem:coset positive} any element of $\affSnplus$
can be uniquely written as $\ww=\ta{a}g$ for $g\in \Sn$,
$\ba \in \Z_{\ge 0}^n$, while any
element of $\cX$ can be uniquely written as $x_1^{a_1}\cdots
x_n^{a_n}g$ for $g\in \Sn$.  Therefore $F_{\cX}$ is a bijection.
\end{proof}

\begin{corollary}
The monoid $\affSnplus$ is generated by $s_i,\pi$ modulo relations in $\Sn$ and
$$
s_i\pi=\pi s_{i-1},\  s_1\pi^2=\pi^2s_{n-1}.
$$
\end{corollary}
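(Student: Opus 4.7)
The plan is to let $\cM$ denote the abstract monoid defined by the stated presentation. Since every listed relation holds in $\affSnplus$, the assignment $s_i \mapsto s_i$, $\pi \mapsto \pi$ yields a well-defined surjective monoid homomorphism $F : \cM \twoheadrightarrow \affSnplus$ (surjective because $s_i$ and $\pi$ already generate $\affSnplus$ as a monoid). The entire content of the corollary is then that $F$ is injective, and my plan is to establish this by producing inside $\cM$ the same $(\ba, g) \leftrightarrow \ta{a} g$ normal form that Lemma \ref{lem:coset positive} produces in $\affSnplus$, and then comparing images.

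Concretely, I would first define $x_i := s_{i-1}\cdots s_1 \pi s_{n-1}\cdots s_i \in \cM$ for $1 \leq i \leq n$, and verify three families of identities inside $\cM$: (i) $x_i x_j = x_j x_i$; (ii) $s_i x_i = x_{i+1} s_i$ for $1 \leq i \leq n-1$; and (iii) $s_j x_i = x_i s_j$ for $j \neq i-1, i$. Crucially, (i) is exactly the identity $\x_i \x_j = \x_j \x_i$ whose verification is carried out explicitly in the proof of Theorem \ref{thm:new presentation}, and that verification uses only the $\Sn$ relations together with \eqref{si tau 1} and \eqref{si tau 2} --- that is, only the defining relations of $\cM$. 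Items (ii) and (iii) are analogous but more direct manipulations with the same ingredients.

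With (i)--(iii) in hand, observe that $\pi = x_1 s_1 s_2 \cdots s_{n-1}$ in $\cM$, since $x_1 = \pi s_{n-1}\cdots s_1$ and $s_i^2 = 1$. Hence any element of $\cM$ admits an expression as a word in the $x_i$'s and $s_j$'s. Using (ii) and (iii) to push every $s$ to the right of every $x$, and then (i) to sort the $x$'s, any element of $\cM$ can be brought into the form $x_1^{a_1}\cdots x_n^{a_n} g$ with $\ba \in \Z_{\geq 0}^n$ and $g \in \Sn$. Under $F$ this element maps to $\ta{a} g \in \affSnplus$, and by Lemma \ref{lem:coset positive} the assignment $(\ba, g) \mapsto \ta{a} g$ is a bijection $\Z_{\geq 0}^n \times \Sn \to \affSnplus$. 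So distinct normal forms in $\cM$ have distinct images in $\affSnplus$, and $F$ is injective.

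The only delicate point is making sure the commutation identities (i)--(iii) really are consequences of the presentation of $\cM$ alone, and not of any auxiliary relations in $H_{t,c}$ involving $y_i$, $u_i$, or $\lambda$. For (i) this is guaranteed by rereading the calculation in Theorem \ref{thm:new presentation} and noting which relations it invokes; for (ii) and (iii) direct computation using $s_j \pi = \pi s_{j-1}$ (for $j \neq 1$) together with $\Sn$ braid moves suffices, with no subtle combinatorics involved.
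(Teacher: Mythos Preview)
Your proof is correct. The paper derives the corollary from Lemma \ref{lem:monoids x}, which identifies $\affSnplus$ with the submonoid $\cX \subset H_{t,c}$ generated by $s_i,\tau$; the normal form $x^{\ba}g$ in $\cX$ then comes for free from the PBW theorem for $H_{t,c}$, and Lemma \ref{lem:coset positive} matches it with $\ta{a}g$ on the $\affSnplus$ side. Your route is the same in substance but bypasses $H_{t,c}$ entirely: you establish the normal form $x_1^{a_1}\cdots x_n^{a_n}\,g$ directly in the abstract monoid $\cM$, correctly observing that the key commutation $\x_i\x_j = \x_j\x_i$ is already carried out in the proof of Theorem \ref{thm:new presentation} using nothing beyond \eqref{si tau 1}, \eqref{si tau 2}, and $\Sn$ braid moves, and then you check distinctness of normal forms via Lemma \ref{lem:coset positive} rather than via PBW. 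This is slightly more self-contained than the paper's version, at the cost of having to verify (ii) and (iii) by hand; the paper effectively gets those from the embedding into $H_{t,c}$.
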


Similarly, we have the following.

\begin{lemma}
\label{lem:monoids y}
Let $\cY$ denote the monoid of monomials in $s_i$ and $\lambda$ (or, equivalently, in $s_i$ and $y_j$).
Let $\affSnminus$ be the monoid generated by inverses of elements in $\affSnplus$.
Then there is an isomorphism of monoids $F_{\cY}:\cY\to \affSnminus$ such that 
$$
F_{\cY}(s_i)=s_i,\ F_{\cY}(\lambda)=\pi^{-1},\ F_{\cY}(y_1^{a_1}\cdots y_n^{a_n})=\ta{-a}.
$$
\end{lemma}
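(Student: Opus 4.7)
The argument parallels Lemma \ref{lem:monoids x}, with the generator $\pi^{-1}$ playing the role previously played by $\pi$. First, I would define $F_\cY$ on generators by $F_\cY(s_i) = s_i$ and $F_\cY(\lambda) = \pi^{-1}$, and verify this extends to a monoid homomorphism. Inverting the defining relations $s_i \pi = \pi s_{i-1}$ and $s_1 \pi^2 = \pi^2 s_{n-1}$ of $\affSn$ yields $s_{i-1}\pi^{-1} = \pi^{-1}s_i$ and $\pi^{-2}s_1 = s_{n-1}\pi^{-2}$. After relabeling $i \mapsto i+1$ in the first, these are precisely the images under $\lambda \leftrightarrow \pi^{-1}$ of relations \eqref{si lambda 1} and \eqref{si lambda 2}. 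Since the $\Sn$-relations are trivially preserved, $F_\cY$ is a well-defined monoid homomorphism, and its image is by construction the submonoid of $\affSn$ generated by $\Sn$ and $\pi^{-1}$, which is precisely $\affSnminus$.

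Next, I would identify $F_\cY(y_1^{a_1}\cdots y_n^{a_n})$ via the anti-involution $\phi$ of $H_{t,c}$ defined on generators by $\phi(x_i) = y_i$, $\phi(y_i) = x_i$, and $\phi(g) = g^{-1}$ for $g \in \Sn$. Routine checks show $\phi$ respects the defining relations of $H_{t,c}$, and the computation $\phi(\tau) = \phi(x_1(12\cdots n)) = (12\cdots n)^{-1} y_1 = \lambda$ is direct. Hence $\phi$ restricts to an anti-isomorphism of monoids $\cX \to \cY$, and, since the $x_i$ (respectively $y_i$) commute, $\phi(x_1^{a_1}\cdots x_n^{a_n}) = y_1^{a_1}\cdots y_n^{a_n}$. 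On the other hand, inversion is an anti-isomorphism $\affSnplus \to \affSnminus$ sending $\pi \mapsto \pi^{-1}$ and $s_i \mapsto s_i$. The equality $F_\cY \circ \phi = (\cdot)^{-1} \circ F_\cX$ holds on the generators $s_i$ and $\tau$ of $\cX$, and since both sides are anti-homomorphisms it holds throughout $\cX$. Applying this to $x_1^{a_1}\cdots x_n^{a_n}$ and using $F_\cX(x_1^{a_1}\cdots x_n^{a_n}) = \ta{a}$ from Lemma \ref{lem:monoids x} yields $F_\cY(y_1^{a_1}\cdots y_n^{a_n}) = \ta{a}^{-1} = \ta{-a}$, as required.

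Finally, for bijectivity I would compare parametrizations as in the proof of Lemma \ref{lem:monoids x}. The PBW theorem shows that every element of $\cY$ has a unique expression $g \cdot y_1^{a_1}\cdots y_n^{a_n}$ with $g \in \Sn$ and $a_i \geq 0$. Applying inversion to Lemma \ref{lem:coset positive}, every element of $\affSnminus$ has a unique expression $g \cdot \ta{-a}$ with $g \in \Sn$ and $a_i \geq 0$. Under $F_\cY$ the first parametrization maps bijectively to the second, so $F_\cY$ is a monoid isomorphism $\cY \to \affSnminus$.

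The only step demanding real attention is checking that the anti-involution $\phi$ is well-defined on $H_{t,c}$, i.e., compatible with each defining relation; this is mechanical but not automatic, the only slightly delicate verification being $\phi([y_i, x_i]) = [y_i, x_i]$, which holds because the right-hand side $t - c\sum_{j\neq i}(ij)$ is fixed by $\phi$. Once $\phi$ is in hand, the proof is a formal transport of Lemma \ref{lem:monoids x} along $\phi$ and inversion, and no further obstacle arises.
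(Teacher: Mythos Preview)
Your proof is correct and follows essentially the same route as the paper, which simply says ``Similarly, we have the following'' and leaves the reader to rerun the argument of Lemma~\ref{lem:monoids x} with $\lambda,\pi^{-1},y_i$ in place of $\tau,\pi,x_i$. The one point where you deviate is in identifying $F_\cY(y_1^{a_1}\cdots y_n^{a_n})$: rather than computing $F_\cY(y_i)=s_i\cdots s_{n-1}\pi^{-1}s_1\cdots s_{i-1}=\tav{-e_i}$ directly (the analogue of the computation in Lemma~\ref{lem:monoids x}), you invoke the anti-involution $\phi$ to transport the result from the $\cX$ side. This is a perfectly valid and arguably cleaner alternative---it replaces a window-notation calculation with a formal check that $\phi$ is well-defined---and both approaches yield the same conclusion with comparable effort.
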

\begin{remark} \label{remark:FX vs FY}
The two isomorphisms $\FX$ and $\FY$  are not compatible 
in the group $\affSn$ in the sense that 
relations between elements in the two monoids may not 
hold for their preimages in $\RCA$.
For instance
$\pi \pi^{-1} = \id = \pi^{-1} \pi$ in $\affSn$
whereas  $\tau \lambda \neq \lambda \tau$.
See equations \eqref{tau lambda} and \eqref{lambda tau}.
\end{remark}

\section{A Mackey formula for \texorpdfstring{$H_{t,c}$}{Ht,c}}
\label{sec-mackey}

\subsection{Basis in \texorpdfstring{$H_{t,c}$}{Htc}}

In this section we present a basis in the algebra $H_{t,c}$ using the generators from Section \ref{sec: new presentation}. It is an analogue of the PBW basis
from Section \ref{sec: RCA background}. Recall that $\Hny$ is the subalgebra generated by $\Sn$ and $y_i$
(or, equivalently, by $\Sn$ and $\lambda$), and that $\Hnu$ denotes the subalgebra generated by $\Sn$ and $u_i$.

\begin{lemma}
\label{lemma-algebra-basis}
(a) The algebra $\Hny$ has a basis 
$$
g\lambda (s_1s_2\cdots s_{\mu_1}) \cdot \lambda (s_1s_2\cdots s_{\mu_2}) \cdots
 \lambda (s_1s_2\cdots s_{\mu_{r'}}) 
$$
for $g\in \Sn$ and $0\le \mu_{r'}\le \ldots \le \mu_1$.

(b) The algebra $H_{t,c}$ has a basis 
$$
(s_{\nu_r} \cdots s_2 s_1) \tau \cdots
 (s_{\nu_1} \cdots s_2 s_1) \tau \cdot g \cdot \lambda (s_1s_2\cdots s_{\mu_1}) \cdot \lambda (s_1s_2\cdots s_{\mu_2}) \cdots
 \lambda (s_1s_2\cdots s_{\mu_{r'}}),
$$
for $g\in \Sn$ and $0\le \mu_{r'}\le \ldots \le\mu_1, 0\le \nu_{r}\le\cdots \le \nu_1$.


(c) The algebra $H_{t,c}$ is free as a right $\Hny$-module
 with the basis
$$
(s_{\nu_r} \cdots s_2 s_1) \tau \cdots
 (s_{\nu_1} \cdots s_2 s_1) \tau.
$$ 
\end{lemma}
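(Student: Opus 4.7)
The plan is to prove (a) first using the monoid isomorphism $F_{\cY}$ of Lemma \ref{lem:monoids y} together with the associated graded of $\Hny$; part (b) will then follow by combining (a) with the analogous $F_{\cX}$-statement for the $\tau$-side; and (c) is immediate from (b).

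For (a), recall that PBW gives the standard basis $\{g y^{\bb} : g \in \Sn,\, \bb \in \Z_{\geq 0}^n\}$ of $\Hny \cong \C[y_1,\ldots,y_n] \rtimes \Sn$. The approach is to match cardinalities and then establish linear independence via the associated graded. Applying $F_{\cY}$ to the proposed elements, the product $\lambda(s_1\cdots s_{\mu_1})\cdots \lambda(s_1\cdots s_{\mu_{r'}})$ becomes $\pi^{-1}(s_1\cdots s_{\mu_1})\cdots \pi^{-1}(s_1\cdots s_{\mu_{r'}})$; taking its inverse and applying Lemma \ref{claim:coset}, varying the partition $(\mu_1\ge\cdots\ge\mu_{r'})$ traverses exactly $\Min^{-1}$. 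Combined with left multiplication by $g\in\Sn$ and the dualized version of Lemma \ref{lem:coset positive} (which gives $\affSnminus=\Sn\cdot\Min^{-1}$), the proposed set is in bijection with $\affSnminus$, which has cardinality $|\Sn|\cdot|\Z_{\geq 0}^n|$ as required.

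To establish linear independence, I will filter $\Hny$ by $y$-degree (placing $\lambda$ in degree $1$), so that $\gr\Hny\cong\C[y]\rtimes\Sn$. In the associated graded $\lambda$ becomes $(12\cdots n)^{-1}y_1$; after commuting all $y$'s to the right via the semidirect product relations, each proposed basis element reduces to a single PBW monomial $g' y^{\bb'}$. A direct computation using the identity $(12\cdots n)^{-1}\ta{-e_1}=\pi^{-1}$ in $\affSn$ (and its iterates) identifies this map ``proposed basis $\to$ PBW monomials in $\gr\Hny$'' with $F_{\cY}$ followed by the bijection $\affSnminus\leftrightarrow \Sn\times\Z_{\geq 0}^n$. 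Since $F_{\cY}$ is a bijection, distinct proposed elements produce distinct PBW leading terms in $\gr\Hny$, so they are linearly independent; combined with the matching cardinality they form a basis of $\Hny$.

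For (b), the plan is to apply PBW in the form $x^{\ba} g y^{\bb}$, rewrite $x^{\ba} g'$ via Lemma \ref{lem:monoids x} and Lemma \ref{claim:coset} as $(s_{\nu_r}\cdots s_1)\tau\cdots (s_{\nu_1}\cdots s_1)\tau\cdot g''$, and then apply (a) to the remaining $\Hny$-factor; linear independence follows from the analogous associated-graded argument, now using the bifiltration $\gr H_{t,c}\cong \C[x,y]\rtimes\Sn$. Part (c) is then immediate: the unique expression in (b) separates a $\tau$-monomial on the left from a basis vector of $\Hny$ on the right, displaying $H_{t,c}$ as a free right $\Hny$-module on the $\tau$-basis as stated. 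The technical crux throughout is the compatibility check in (a)---identifying how $(12\cdots n)^{-1}y_1$ lifts $\pi^{-1}$ to $\gr\Hny$ and tracking the iterated $y$-exponents---but this reduces to a direct $\affSn$-computation once the semidirect product relations are unpacked.
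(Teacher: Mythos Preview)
Your proposal is correct and follows essentially the same route as the paper: both use the monoid isomorphisms $F_{\cX}, F_{\cY}$ together with Lemma~\ref{claim:coset} to identify the proposed elements with the PBW basis $x^{\ba}g'y^{\bb}$. Your detour through the associated graded is harmless but unnecessary, since the monoid relations hold \emph{exactly} in $H_{t,c}$ (each proposed element is literally equal to a PBW monomial, not merely congruent modulo lower-order terms), so the proposed set is simply a reindexing of the PBW basis and linear independence is automatic.
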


\begin{proof}
By Lemma \ref{lem:monoids x} any monomial in $x_i$
of degree $r$
 can be written as $F_{\cX}^{-1}(\ww)$ for  $\ww\in \affSnplus$
also of degree $r$.
By Lemma \ref{claim:coset} we can write 
$$
\ww=(s_{\nu_r} \cdots s_2 s_1) \pi \cdots (s_{\nu_1} \cdots s_2 s_1) \pi \cdot g,\ 0\le \nu_r\le\cdots \le \nu_1.
$$
so that
$$
F_{\cX}^{-1}(\ww)=(s_{\nu_r} \cdots s_2 s_1) \tau \cdots (s_{\nu_1} \cdots s_2 s_1) \tau \cdot g.
$$
Similarly we can write
 $$
F_{\cY}^{-1}(\ww^{-1})=g^{-1}\lambda (s_1s_2\cdots s_{\nu_1}) \cdot \lambda (s_1s_2\cdots s_{\nu_2}) \cdots
 \lambda (s_1s_2\cdots s_{\nu_r}).
$$
The algebra $\Htc$ has PBW basis $x_1^{a_1}\cdots x_n^{a_n}g'y_1^{b_1}\cdots y_n^{b_n}$,
and we can rewrite the monomials  $x_1^{a_1}\cdots x_n^{a_n}$ and $y_1^{b_1}\cdots y_n^{b_n}$ as above independently.
Finally, (c) is obvious from (a) and (b).
\end{proof}

\begin{remark}
We can  write this basis in more compact form  $F_{\cX}^{-1}(\ww_1)gF_{\cY}^{-1}(\ww_2^{-1}),$
where $\ww_1$ and $\ww_2$ are minimal length coset representatives in
$\Min$. 
\end{remark}

\begin{corollary}
\label{cor: basis in induced module}
Let $V$ be a finite dimensional representation of $\Sn$ with basis
$v_T$, $T \in \cT$. We can regard it as a representation 
of $\Hny$ where $y_i$ 
act by 0. Then the induced representation $\Delta_{t,c}(V) := \Ind_{\Hny}^{H_{t,c}}(V)$
has the basis
$$
v_T(\ww):= F_{\cX}^{-1}(\ww) v_T,
$$
where $\ww$ is a minimal length coset representative in
$\Min$ and $T\in \cT$. 
\end{corollary}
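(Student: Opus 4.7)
The plan is to deduce this immediately from part (c) of Lemma \ref{lemma-algebra-basis}. Recall that part (c) says $H_{t,c}$ is free as a right $\Hny$-module with basis given by elements of the form $(s_{\nu_r}\cdots s_2 s_1)\tau\cdots(s_{\nu_1}\cdots s_2 s_1)\tau$ indexed by sequences $0\le \nu_r\le\cdots\le\nu_1<n$. First I would observe that, by Lemma \ref{claim:coset} together with Lemma \ref{lem:monoids x}, this set of basis elements is exactly $\{F_{\cX}^{-1}(\ww) : \ww\in\Min\}$; in other words, the preimages under $F_{\cX}$ of the minimal length coset representatives.

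Next, by definition of induction, $\Delta_{t,c}(V) = H_{t,c}\otimes_{\Hny} V$, and since $H_{t,c}$ is free as a right $\Hny$-module with basis $\{F_{\cX}^{-1}(\ww) : \ww\in\Min\}$, there is a canonical vector space isomorphism
\[
H_{t,c}\otimes_{\Hny} V \;\cong\; \bigoplus_{\ww\in\Min} F_{\cX}^{-1}(\ww)\otimes V.
\]
Combining this with the chosen basis $\{v_T : T\in\cT\}$ of $V$ yields the vector space basis $\{F_{\cX}^{-1}(\ww)\otimes v_T\}$ of $\Delta_{t,c}(V)$, which under the identification $m\otimes v \mapsto m\cdot v$ in the induced module becomes exactly $v_T(\ww) = F_{\cX}^{-1}(\ww)v_T$.

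There is no real obstacle here: the only verification is the bijection in the first paragraph, and this was already set up in Lemmas \ref{claim:coset} and \ref{lem:monoids x}.
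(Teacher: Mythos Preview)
Your proof is correct and follows exactly the approach intended by the paper: the corollary is stated without proof precisely because it is immediate from Lemma~\ref{lemma-algebra-basis}(c), together with the identification (via Lemmas~\ref{claim:coset} and~\ref{lem:monoids x}) of the basis elements $(s_{\nu_r}\cdots s_1)\tau\cdots(s_{\nu_1}\cdots s_1)\tau$ with $\{F_{\cX}^{-1}(\ww):\ww\in\Min\}$. There is nothing to add.
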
 

We define a partial order on the basis elements of $\Delta_{t,c}(V)$ in Corollary \ref{cor: basis in induced module} as follows: $v_{T}(\ww) < v_{T'}(\ww')$ if $\deg\ww = \deg \ww'$ and $\ww \lex \ww'$.



\begin{example}
If $V$ is the trivial representation of $\Sn$ then $\Delta_{t,c}(V)$ is just the polynomial representation. By Lemma \ref{lem:monoids x} the basis $v_T(\ww)$ matches the monomial basis in $\C[x_1,\ldots,x_n]$ and by Lemma \ref{lem:compare to lex} the partial order we have defined coincides with the partial order $\prec$ defined in Section \ref{sect:combinatorics}. 
\end{example}

Next, we want to understand the action of the degenerate affine Hecke algeba $\Hnu$ in this basis.
Via the homomorphism $\ev: \Hnu \to \Sn$ 
that sends $u_1 \mapsto 0$ and $s_i \mapsto s_i$,
the $u_i$ act on $V$ as Jucys-Murphy elements,  and they
 can be simultaneously diagonalized. 

\begin{lemma}\label{lemma:dAHA}
Suppose that $v_{T} \in V$ has weight $\wt$, i.e., it
 is a common eigenvector for the $u_i$  with eigenvalues $\wt_i$. 
Then 
\begin{align*}
u_i(v_{T}(\ww))&=(\ww\cdot\wt)_{i} v_{T}(\ww)+ \mathrm{\ell.o.t}\\
	&= \wt_{\ww^{-1}(i)} v_{T}(\ww) + \mathrm{\ell.o.t}
\end{align*}
where $\ww$ is a a minimal length coset representative in
$\Min$, 
$\ww\cdot\wt$ is defined using the action \eqref{AffSymmOnCn} and
$\mathrm{\ell.o.t}$ denotes lower order terms.  
\end{lemma}

\begin{proof}
As in Remark \ref{remark-ui for i in Z},
to simplify notation, we define $u_i$ for all $i\in \Z$ by
$u_{i+n}=u_i-t$, and likewise for $\wt_{i+n}$.
 Now the relations between $u_i, s_j$ and $\tau$ get
the following form: $$
s_{j}u_{i}=u_{s_j(i)}s_j +
\begin{cases}
    c &  i\equiv j\bmod n, \\
    -c  &  i\equiv j+1\bmod n, \\
    0 & \text{otherwise},
    \end{cases}
$$
and
$$
\tau u_i= u_{\pi(i)}\tau.
$$
Overall, we can write 
$$
F_{\cX}^{-1}(\ww) u_i=u_{\ww(i)} F_{\cX}^{-1}(\ww) +\ldots
$$
so
$$
u_iF_{\cX}^{-1}(\ww) v_{T}=F_{\cX}^{-1}(\ww) u_{\ww^{-1}(i)}v_{T}+\ldots=
(\ww \cdot \wt)_{i}F_{\cX}^{-1}(\ww)v_{T}+\ldots,
$$
and by Lemma \ref{lemma:lex vs bruhat} all extra terms are less than  $F_{\cX}^{-1}(\ww) v_{T}$ in our order.  


\end{proof}

\begin{corollary}
\label{cor: jm eigenvalues}
The generalized eigenvalues of $u_i$ on $\Delta_{t,c}(V)$ are expressed as  $\wt=\ww \cdot \kappa_{T}$
where $\kappa_{T}$ are eigenvalues of Jucys-Murphy operators for the basis $v_{T}$ and $\ww \in \Min$.
\end{corollary}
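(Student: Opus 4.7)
The plan is to derive Corollary \ref{cor: jm eigenvalues} as a direct triangularity consequence of the preceding lemma. First, choose the basis $\{v_T : T \in \cT\}$ of $V$ to be a simultaneous eigenbasis for the Jucys--Murphy elements acting via $\ev : \Hnu \to \Sn$; such a basis exists for any finite-dimensional $\Sn$-representation (e.g.\ the Young seminormal basis on each irreducible constituent). Write $u_i v_T = (\kappa_T)_i v_T$, so that $\kappa_T \in \C^n$ is the Jucys--Murphy weight of $v_T$.

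By Corollary \ref{cor: basis in induced module}, the set $\{v_T(\ww) : T \in \cT,\, \ww \in \Min\}$ is a basis of $\Delta_{t,c}(V)$. The preceding lemma computed
\[
u_i\, v_T(\ww) = (\ww \cdot \kappa_T)_i\, v_T(\ww) + (\text{lower order terms}),
\]
where ``lower order'' is with respect to the partial order $v_{T'}(\ww') < v_T(\ww)$ requiring $\deg \ww' = \deg \ww$ and $\ww' \lex \ww$. Refining this partial order to any total order on the basis (say, by picking a linear extension and grouping basis vectors with a common $\ww$ arbitrarily), each operator $u_i$ acts by an upper triangular matrix whose diagonal entry in position $v_T(\ww)$ is exactly $(\ww \cdot \kappa_T)_i$.

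Since the $u_i$ pairwise commute, they are simultaneously upper triangular in the chosen ordered basis, and hence the list of $n$-tuples of diagonal entries, read off at each basis vector, gives precisely the multiset of generalized joint eigenvalues of $(u_1,\ldots,u_n)$ on $\Delta_{t,c}(V)$. That multiset is $\{\ww \cdot \kappa_T : T \in \cT,\ \ww \in \Min\}$, which is the statement.

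The only step requiring care is the transition from partial to total order: one must check that the lower-order terms produced by the preceding lemma remain strictly lower in any linear extension, which is automatic since the partial order refines to any chosen total order by definition. No deeper obstacle arises; the corollary is essentially a book-keeping consequence of the previous lemma once the basis of $V$ is chosen to diagonalize the Jucys--Murphy action.
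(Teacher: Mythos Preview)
Your proof is correct and matches the paper's approach: the corollary is stated without proof there, being treated as an immediate consequence of the triangularity established in the preceding lemma. Your explicit linear-algebra argument (choose a Jucys--Murphy eigenbasis of $V$, refine the partial order to a total one, read off diagonal entries) is exactly the intended reasoning.
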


\begin{remark}
Note that Lemma \ref{lemma:dAHA} relates the standard modules to the \emph{integrable} modules of the trigonometric (aka degenerate) double affine Hecke algebra (\cite{CherednikBook}) via Suzuki's localization, cf. \cite{Suzuki, Suzuki2, Vasserot}
\end{remark}

\subsection{Decomposition into \texorpdfstring{$\Hnu$}{Hn(u)}-modules}

In this section we give a more precise presentation of induced modules.  First, we recall a useful construction of 
$\Hnu$-modules which are induced from parabolic subgroups.

Let
\begin{align*}
\Pinc &= \{ \ba \in \Z_{\ge 0}^n \mid  a_1 \le a_2\le \cdots \le a_n \}
\\
\Pdec &= \{ \bd \in \Z_{\ge 0}^n \mid  d_1 \ge d_2\ge \cdots \ge d_n \}.
\end{align*}
Let $\ba \in \Z^n$ and let $\Sa$ be its stabilizer in $\Sn$.
In the special case $\bd \in \Pdec $, the stabilizer
$\Sd$ is a standard parabolic subgroup.
If $(k_1,\ldots,k_s)$ is the  composition of $n$ that
gives the multiplicities of the entries of $\bd$, then
$\Sd = \langle s_i \mid d_i = d_{i+1}\rangle \simeq 
 \Sk{k_1}\times \cdots \times \Sk{k_n}$.
Note that this subgroup is conjugate to any such parabolic
with the $k_i$ reordered.
Recall $\ww_0= [n,\ldots, 2,1]$ is the longest element of $\Sn$.
Let
$$ \revd = \ww_0 \cdot \bd = (d_n, \ldots, d_2, d_1)$$
so in particular $\Sd$ and $\Srevd$ are conjugate;
and $\bd \in \Pdec \iff \revd \in \Pinc$.
Let $\wdlong$ be the longest element of $\Sd$.
Conjugation by $\ww_0 \wdlong $ induces an isomorphism 
$ \Sd \to  \Srevd$ we will denote $\revdtwist$.
Observe $\ww_0 \wdlong = g_{\bd}$ as it sorts $\bd$ to $\revd
= \sort(\bd)$.
In fact, we would have produced the same isomorphism
conjugating by $\wa{\bd}^{-1}$, where we recall $\wa{\bd} \in \Min
\subseteq \affSn$. 
Similarly, we have a corresponding parabolic subalgebra of $\Hnu$
we will denote 
$$\Hd := \langle s_i, u_j \mid d_i=d_{i+1}, 1\le j
\le n \rangle = \C[u_1,\ldots,u_n]\rtimes \Sd.$$
Just as we have an algebra automorphism $\shift:\Hnu \to \Hnu$ that
sends $s_i \mapsto s_i$ but does a constant shift 
$u_i \mapsto u_i + t$, $\Hd$ has a ``finer" automorphism that 
is the identity on $\Sd$ and
shifts $u_i \mapsto u_i + d_i t$. Using this $\shift$ map,
we can extend 
\begin{align*}
\revdtwist: \Hd  &\to \Hrevd \\
	s_i &\mapsto s_{ g_{\bd}(i)} = g_{\bd} s_i  g_{\bd}^{-1} \\
	u_j &\mapsto u_{g_{\bd}(j)} + d_{j} t.
\end{align*}
Given an $\Hrevd$-module $M$, via the above algebra isomorphism
we can turn it into the ``twisted" 
$\Hd$-module we denote $M^{\revdtwist}$.
Note that $\revdtwist(s_i) = \wa{d}^{-1} s_i \wa{d}$
and $\revdtwist(u_j) = u_{\wa{d}^{-1}(j)}$, when we extend
the notion of the $u_j = u_{j+kn} + kt$ to be indexed by $j \in \Z$
 as in Remark \ref{remark-ui for i in Z}.
However, the map $\revdtwist$ does not agree with 
conjugation by $\wa{d}^{-1}$, 
but under some lens it does up to ``lower
order terms" in a sense that will be made more precise 
in Remark \ref{rem-classical Mackey} below. 
\MVcomment{
staying blue while still a draft; still working on it: 
planning to put it in the remark.... (right now in monicastuff.tex)
Conjugation by $\wa{\bd}^{-1}$ or by $\wa{d} \in \Min$
 does not make sense in $\Hnu$ 
as $\pi \not\in \Hnu$, nor does conjugation by $\FXi(\wa{d})$
as $\tau$ is not invertible. However, $\Sn$ acts on $\Hnu$
by inner automorphisms, and
...  We let $\pi$ act by the outer
automorphism  on $\A$ ... 
} 
This is consistent with the observation
that in $\Hnu$ we have
$ u_j \FXi(\ww) = \FXi(\ww) u_{\ww^{-1}(j)} + \mathrm{\ell.o.t}$,
where the latter are $\FXi$ applied to
 terms lower than $\ww \in \affSn$ in Bruhat order.

\begin{example}
\label{example-twist}
Let $n=5$, $\bd = (2,2,0,0,0)$, so $\revd = (0,0,0,2,2)$.
Note $\wa{\bd} = [3,4,5,11,12]$,
$\wa{\bd}^{-1} = [-6,-5,1,2,3]$,
$\Sd \simeq  \Sk{2} \times \Sk{3}$ and 
$\Srevd \simeq  \Sk{3} \times \Sk{2}$.

The permutations $\ww_0 = [5,4,3,2,1], \,
\wdlong = [2,1,5,4,3], \,
\ww_0 \wdlong = [4,5,1,2,3] = g_{\bd}$. 
The restricted module
$\Res_{\Hrevd} \triv = M \boxtimes N$ where $M, N$ are
one-dimensional spanned by weight vectors with $u$-weight
$(0,-c,-2c)$ and $(-3c,-4c)$ respectively. 
The twisted $\Hd$-module  $[\Res_{\Hrevd} \triv]^{\revdtwist}$
is one-dimensional, now spanned by weight vectors with $u$-weight
$(-3c +2t,-4c+2t)$ and $(0,-c,-2c)$ repectively. 
It has the form $N^{{\shift \times 2}} \boxtimes M$.
The map $\revdtwist$ sends
\begin{align*}
s_1 &\mapsto s_4 &	u_1 &\mapsto u_{-6} = u_4 + 2t \\
s_3 &\mapsto s_1&	u_2 &\mapsto u_{-5} = u_5 + 2t \\
s_4 &\mapsto s_2&	u_3 &\mapsto u_{1}  \\
		&&u_4 &\mapsto u_{2} \\
		&&u_5 &\mapsto u_{3} 
\end{align*}
\end{example}

Just as the minimal length left coset representatives
 $\{\wa{a} \mid \ba \in \Pn\} = \Min \subseteq \affSnplus$ are those affine permutations
whose window
notation have positive increasing entries, 
the minimal length {\em double} coset representatives
with respect to $\Sn$
are those $\wa{a}$ whose inverses' window notation have
increasing entries. These are exactly the $\wa{d}$ for $\bd \in
\Pdec$.  See Example \ref{example-twist}. 

\begin{example}
\label{example-double coset}
Let $n=3$, $\bd = (4,1,1)$, so $\wa{d} = [5,6,13]$ 
is a minimal length double coset representative as
$\bd \in \Pdec$.
Note the double coset decomposes into left cosets as
\begin{alignat*}{4}
\Sk{3} [5,6,13] \Sk{3} &= &[5,6,13] \Sk{3} &\; \sqcup \;&
 [4,6,14] \Sk{3} & \;\sqcup \;& [4,5,15] \Sk{3} \\
\text{i.e., } \Sk{3} \wa{d} \Sk{3} &=  &\wa{d} \Sk{3} & \;\sqcup \;&  \wb{s_1 \bd} \Sk{3}
        & \;\sqcup \;& \wb{s_2 s_1 \bd} \Sk{3} \\
&= &\wb{(4,1,1)} \Sk{3} & \;\sqcup \;&  \wb{(1,4,1)} \Sk{3} & \;\sqcup \;&
        \wb{(1,1,4)} \Sk{3} .
\end{alignat*}
$\Sk{3}/\Sd$ has minimal length left coset representatives
$\{\id, s_1, s_2 s_1\}$.

\end{example}

It is well known that $\C \Sn$ is a free right module over $\C \Sd$ of rank $\frac{n!}{k_1!\cdots k_s!}$.
Given a representation $M$ of $\Sd$, we can consider the induced
representation $\Ind_{\Sd}^{\Sn}M$ which has dimension
$\frac{n!}{k_1!\cdots k_s!}\dim M$. Note that if $M$ is a
$\Hd$-module, then 
$\Ind_{\Sd}^{\Sn}M$ naturally has a structure of $\Hnu$-module,
which agrees with  
$\Ind_{\Hd}^{\Hnu}M$.


\begin{theorem}
\label{theorem-mackey}
Let $V$ be a representation of $\Sn$, inflated to a representation
of $\Hny$ by setting $y_i$ to act as $0$.
The induced module $\Delta_{t,c}(V)
=\Ind_{\Hny}^{H_{t,c}}(V)$ 
has a filtration such that subquotients are
isomorphic  as $\Hnu$-modules to the induced representations 
$$
V_{\bd}:=\Ind_{\Hd}^{\Hnu}\left[\Res^{\Hnu}_{\Hrevd}V\right ]^{\revdtwist},\ \bd \in \Pdec,
$$
where here we inflate $V$  along the homomorphism $\ev$.
\end{theorem}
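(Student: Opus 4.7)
The plan is to use the basis $\{v_T(\ww) : T \in \cT,\, \ww \in \Min\}$ of $\Delta_{t,c}(V) = \Ind_{\Hny}^{H_{t,c}} V$ from Corollary \ref{cor: basis in induced module} and construct a filtration of $\Hnu$-submodules whose successive quotients realize the $V_\bd$. The key combinatorial input is that the minimal length double coset representatives for $\Sn \backslash \affSn / \Sn$ are precisely the $\wa{d}$ with $\bd \in \Pdec$, so that $\Min$ partitions as $\Min = \bigsqcup_{\bd \in \Pdec}\{\wa{a} : \ba \in \Sn \cdot \bd\}$, each piece having cardinality $[\Sn:\Sd]$. Since $\Hnu$ preserves the $x$-grading on $\Delta_{t,c}(V)$, I will carry out the arguments degree-by-degree.

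First, I would fix a total order on each fixed-degree subset of $\Pdec$ refining the Bruhat order on the minimal length representatives $\wa{d}$, and define
$$F_{\le \bd} := \mathrm{span}\{v_T(\wa{a}) : T \in \cT,\, \ba \in \Sn \cdot \bd' \text{ for some } \bd' \le \bd\}.$$
Stability under $\Sn$ is immediate, since $s_j \cdot v_T(\wa{a}) = \FXi(s_j \wa{a}) v_T$ and $s_j \wa{a} \in \Sn \wa{a} \Sn$. For stability under $u_i$, commute $u_i$ to the right of $\FXi(\wa{a})$ using \eqref{si ui}, \eqref{tau ui}, and \eqref{u n+1}: the main term $\FXi(\wa{a})\, u_{\wa{a}^{-1}(i)} v_T$ lies in the span of $\{v_{T'}(\wa{a})\}_{T'}$, while each correction has the form $\pm c \cdot \FXi(\ww') v_T$, where $\ww'$ is obtained from the reduced expression of $\wa{a}$ in Lemma \ref{claim:coset} by dropping one transposition, hence is strictly Bruhat-below $\wa{a}$ of the same degree. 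Writing $\ww' = \ww'' h$ with $\ww'' \in \Min$ and $h \in \Sn$, so that $\FXi(\ww') v_T = \FXi(\ww'')\,(h \cdot v_T)$, the descent property of Bruhat order on parabolic double cosets places $\ww''$ in a double coset strictly Bruhat-below $\bd$, so the correction lies in $F_{<\bd}$.

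To identify the subquotient $F_{\le \bd}/F_{<\bd}$ with $V_\bd$, I would invoke the universal property of induction. Define $\phi_0 : [\Res^{\Hnu}_{\Hrevd} V]^{\revdtwist} \to F_{\le \bd}/F_{<\bd}$ by $v_T \mapsto v_T(\wa{d})$. For $s_j \in \Sd$, the identity $\ta{d} s_j = s_j \ta{d}$ (valid since $d_j = d_{j+1}$) gives $s_j \wa{d} = \wa{d}\, s_{g_\bd(j)}$ in $\affSn$, so $s_j \cdot v_T(\wa{d}) = \FXi(\wa{d})\, s_{g_\bd(j)}\, v_T$, matching $\revdtwist(s_j) = s_{g_\bd(j)}$. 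For $u_i$, the identity $\wa{d}^{-1}(i) = g_\bd(i) - n d_i$ combined with the convention $u_{j+kn} = u_j - kt$ gives $u_{\wa{d}^{-1}(i)} = u_{g_\bd(i)} + d_i t$ acting on $v_T$, matching $\revdtwist(u_i)$. Hence $\phi_0$ is $\Hd$-equivariant modulo $F_{<\bd}$ and extends to an $\Hnu$-module map $\phi : V_\bd \to F_{\le \bd}/F_{<\bd}$. Both sides have dimension $[\Sn:\Sd]\dim V$, and the image contains $v_T(\wa{d})$ and is $\Sn$-stable; since every $v_T(\wa{a})$ with $\ba \in \Sn \cdot \bd$ can be written as an $\Sn$-translate of something in the image, using the identity $h\wa{d} = \wa{a}\, g$ for appropriate $h, g \in \Sn$, $\phi$ is surjective, hence an isomorphism.

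The main obstacle I anticipate is the descent property invoked in the filtration step: if $\ww' \lebruhat \wa{a}$ in $\affSn$, then the minimal length representative of the double coset of $\ww'$ is Bruhat-below $\wa{d}$. While this is a standard feature of Coxeter systems, in the extended affine setting $\affSn = \affSnCox \rtimes \langle \pi \rangle$ it must be verified with appropriate care, for example using the window-notation description of minimal double coset representatives (namely, those $\wa{a}$ whose inverses also have increasing window notation) together with the implication $\lebruhat \Rightarrow \lex$ recorded in \eqref{eq: bruhat-lex}.
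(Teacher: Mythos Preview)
Your approach is essentially the same as the paper's: filter the basis $\{v_T(\ww)\}$ by double cosets of $\Sn$ in $\affSnplus$, check $\Hnu$-stability by commuting $u_i$ through $\FXi(\wa{a})$, and identify the subquotient by evaluating on the minimal representative $\wa{d}$ and invoking induction.  The main cosmetic difference is that the paper orders $\Pdec$ by the lexicographic order on window notation (appealing directly to Lemma~\ref{lemma:lex vs bruhat}), whereas you refine Bruhat order; since Bruhat-below implies lex-greater on $\Min$ (equation~\eqref{eq: bruhat-lex}), the two orderings are interchangeable here.

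One small inaccuracy: when you act by $u_i$ on $v_T(\wa{a})$ for a \emph{general} $\ba\in\Sn\cdot\bd$, the correction terms need not land in a double coset \emph{strictly} below $\bd$ --- they may well stay in the same double coset (e.g.\ for $n=3$, $\bd=(2,1,0)$, two of the three corrections from $\wa{(0,2,1)}=[1,6,8]$ land back in the $(2,1,0)$-coset).  What the Coxeter descent property actually gives you is $\wa{d'}\lebruhat\wa{d}$, i.e.\ the correction lies in $F_{\le\bd}$, which is exactly what you need for $\Hnu$-stability of the filtration.  The strict inequality you want does hold in the one place it matters: when checking $\Hd$-equivariance of $\phi_0$ on $v_T(\wa{d})$ itself, any correction is strictly Bruhat-below the minimal element $\wa{d}$ and hence leaves the double coset, so it dies in $F_{\le\bd}/F_{<\bd}$.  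With that adjustment your argument goes through.
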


\begin{proof}
\MVcomment{
By definition, $\Delta(V)$ is spanned by $x^a\otimes V$, for all $x^a=x_1^{a_1}\cdots x_n^{a_n}$. For fixed 
$b=(b_1,\ldots,b_n),b_1\le \ldots\le b_n$, we can consider the subspace
$$
V_b=\bigoplus_{\sort(a)=b} x^a\otimes V.
$$
Let $V_{\le b}=\bigoplus_{b'\preceq b}V_b$ \EG{in which order?}, and $V_{<b}=\bigoplus_{b'\prec b}V_b$.
To prove the theorem, we need to establish the following claims:
\begin{itemize}
\item[(a)] $V_b$ is preserved by $\Sn$ action
\item[(b)] As $\Sn$ representation, $V_b\simeq \Ind_{G_b}^{\Sn}\left[\Res^{\Sn}_{G_b}V\right]$
\item[(c)] The action of $u_i$ sends $V_b$ to $V_{\le b}$.
\item[(d)] The associated graded action  $u_i: V_{b}\to V_{\le b}/V_{<b}\simeq V_{b}$ matches the action 
of $u_i$ on $M_b$.
\end{itemize}

Claim (a) is obvious. Claim (b) is also clear since the vectors $a$ such that $\sort(a)=b$ match the cosets in 
$\Sn/G_b$. \EG{To prove claims (c) and (d) we need to understand better the combinatorics of the action of $u_i$ above....}
} 

Let $\bd \in \Pdec$.
By Lemma \ref{lemma-algebra-basis} 
and Lemma \ref{claim:coset}
$\RCA$ has  filtrations
\begin{align*}
&\cB_{\le \bd} = \bigoplus_{\substack{\ba \in \Pdec \\
||\ba|| \le ||\bd|| \\
||\ba|| = ||\bd|| \implies \ba \glex \bd 
 }}  \C \Sn F_{\cX}^{-1}(\wa{a}) \Hny \\
&\cB_{< \bd} = \bigoplus_{\substack{\ba \in \Pdec \\
||\ba|| \le ||\bd|| \\
||\ba|| = ||\bd|| \implies \ba \lex \bd 
 }}  \C \Sn F_{\cX}^{-1}(\wa{a}) \Hny
\end{align*}
clearly preserved by $\C \Sn$. 
By Lemma \ref{lemma:lex vs bruhat}
the filtrations are
also preserved by $\Hnu$.
These induce filtrations on $\Delta(V)$
with subquotients
\MVcomment{
\begin{gather*}
V_{\bd} = \bigoplus_{\substack{\ba \in \Pdec \\
||\ba|| = ||\bd|| \text{ and } \ba \glex \bd 
 }}  \C \Sn \wa{a} \otimes_{\C \Sn} V
\quad /
\bigoplus_{\substack{\ba \in \Pdec \\
||\ba|| = ||\bd|| \text{ and } \ba \lex \bd 
 }}  \C \Sn \wa{a} \otimes_{\C \Sn} V.
\end{gather*}
} 
\begin{gather*}
V_{\bd} =  \cB_{\le \bd} \Delta_{t,c}(V)
\; /  \;  
 \cB_{< \bd} \Delta_{t,c}(V).
\end{gather*}
\MVcomment{  
Because
$\Sn \wa{d} \Sn
= \bigsqcup\limits_{g \in \Sn/\Sd} g\wb{ \bd} \Sn
= \bigsqcup\limits_{g \in \Sn/\Sd} \wb{g\cdot \bd} \Sn$,
the following spaces are isomorphic
not just as  vector spaces, but as $\C \Sn$-modules,
$V_{\bd} \simeq \C \Sn F_{\cX}^{-1}(\wa{d}) \otimes_{\C \Sn} V$.
In particular, as a $\C \Sn$-module, $V_{\bd}$ is generated by 
$F_{\cX}^{-1}(\wa{d}) \otimes V$, and
is spanned by the independent spaces
$F_{\cX}^{-1}(\wb{g \cdot \bd}) \otimes_{\C} V$, for $g \in \Sn/\Sd$.
Note that if $s_i \in \Sd$ then
$\FXi(s_i \wa{d}) \otimes V =
\FXi(\wa{d} (\wa{d}^{-1} s_i \wa{d} ))\otimes V
=\FXi(\wa{d}  \revdtwist(s_i) ) \otimes V
=\FXi(\wa{d} ) \otimes  \revdtwist(s_i)V.$
Further as 
$u_j \wa{d} =
=(\wa{d}  \revdtwist(u_j) + \mathrm{\ell.o.t})$ , we have 
$u_j \FXi(\wa{d}) \otimes V =
(\FXi(\wa{d})  \revdtwist(u_j) + \mathrm{\ell.o.t})  \otimes V
\equiv \FXi(\wa{d})   \otimes  \revdtwist(u_j) V$
since the lower order terms here involve $\wa{a}$ with
$\ba \lex \bd$
by Lemma \ref{lemma:lex vs bruhat},
 and these are killed in $V_{\bd}$.
Thus we see that as an $\Hnu$-module
$V_{\bd}\simeq \Ind_{\Hd}^{\Hnu}\left[\Res^{\Hnu}_{\Hrevd}V
\right]^{\revdtwist}.$
} 
In the following argument we lighten notation, 
writing $\fxi{p}$ for $\FXi(p)$, so for instance
the above expressions would become
 $ \C \Sn \fxi{\wa{a}} \Hny$.

Because
$\Sn \wa{d} \Sn
= \bigsqcup\limits_{g \in \Sn/\Sd} g\wb{ \bd} \Sn
= \bigsqcup\limits_{g \in \Sn/\Sd} \wb{g\cdot \bd} \Sn$,
the following spaces are isomorphic
not just as  vector spaces, but as $\C \Sn$-modules,
$V_{\bd} \simeq \C \Sn \fxi{\wa{d}} \otimes_{\C \Sn} V$.
In particular, as a $\C \Sn$-module, $V_{\bd}$ is generated by 
$\fxi{\wa{d}} \otimes V$, and
is spanned by the independent spaces
$\fxi{\wb{g \cdot \bd }}\otimes_{\C} V$, for $g \in \Sn/\Sd$.
Note that if $s_i \in \Sd$ then
$\fxi{s_i \wa{d}} \otimes V =
\fxi{\wa{d} (\wa{d}^{-1} s_i \wa{d} )}\otimes V
=\fxi{\wa{d}  \revdtwist(s_i) } \otimes V
=\fxi{\wa{d} } \otimes  \revdtwist(s_i)V.$
Further as $u_j \fxi{\wa{d}} 
=(\fxi{\wa{d}}  \revdtwist(u_j) + \mathrm{\ell.o.t})$ , we have 
$u_j \fxi{\wa{d}} \otimes V =
(\fxi{\wa{d}}  \revdtwist(u_j) + \mathrm{\ell.o.t})  \otimes V
\equiv \fxi{\wa{d}}   \otimes  \revdtwist(u_j) V$
since the lower order terms here involve $\fxi{\wa{a}}$ with
$\ba \lex \bd$
by Lemma \ref{lemma:lex vs bruhat},
 and these are killed in $V_{\bd}$.
Thus we see that as an $\Hnu$-module
$V_{\bd}\simeq \Ind_{\Hd}^{\Hnu}\left[\Res^{\Hnu}_{\Hrevd}V
\right]^{\revdtwist}.$

\end{proof}

\begin{remark}
\label{rem-classical Mackey}
One can regard this theorem as a version of the classical Mackey formula:
\begin{multline*}
\Res_{K}\Ind_{H}^{G}(\rho)
=\bigoplus_{\ww\in K\backslash G/H}\Ind^{K}_{\ww H\ww ^{-1}\cap K}(\rho^\ww) 
\\
=\bigoplus_{\ww\in K\backslash G/H}\Ind^{K}_{\ww H\ww^{-1}\cap K}
(\Res^{H}_{H\cap \ww^{-1}K\ww}\rho)^\ww,
\end{multline*}
where $G$ is a finite group, $H,K$ are its subgroups, $\rho$  is a representation of $H$ and 
$\rho^\ww(x)=\rho(\ww^{-1}x\ww)$.

Our setting shares many features with classical Mackey
for the case $G=\affSn$, $H=K=\Sn$, where the minimal
length double coset representatives are
$\{ \wa{d} \mid \bd \in \Z^n, d_1 \ge \cdots \ge d_n\}$.
In that case $\Sd = \wa{d} \Sn \wa{d}^{-1} \cap \Sn$,
$\Srevd =  \Sn \cap \wa{d}^{-1} \Sn \wa{d}$ and one computes
the action on an induced module via
$p \left( \wa{d} \otimes V\right)
= \wa{d} \otimes (\wa{d}^{-1} p \wa{d}) V
= \wa{d} \otimes p V^{\wa{d}}$, which is also equal to
$ \wa{d} \otimes p V^{\revdtwist} 
 = \wa{d} \otimes \revdtwist(p) V $
for $p \in \Sd$.

In our setting
$\Hd$ plays the role
of $\wa{d} H \wa{d}^{-1} \cap K$;
$\Hrevd$ the role of $H \cap \wa{d}^{-1} K \wa{d}$.
$\FXi(\wa{d}^{-1} p \wa{d})=:\fxi{\wa{d}^{-1} p \wa{d}}$
makes sense for $p \in \Sd$.
On the other hand, $\wa{d}^{-1} u_i \wa{d}$ is problematic
on many levels. 
This is in part why we must work with the isomorphism
$\revdtwist$ above.

While conjugation by $\wa{d}^{-1}$ or by $g_{\bd}$ gives
us in isomorphism from $\Sd$ to $\Srevd$ when working
inside of $\affSn$, 
the most  natural way to extend the notion of 
conjugation by $\wa{d}^{-1}$ to $\RCA$ does {\em not}
send $\Hd$ to $\Hrevd$. 
While $\FXi(\wa{d}) =:\fxi{\wa{d}}$ is not invertible, 
this is not the main obstruction;
one can localize and invert the $x_i$
(as one does with the trigonometric Cherednik
algebra \cite{Suzuki}).
This essentially replaces $\tau$ with $\pi$ and adjoins $\pi^{-1}$,
so would enlarge our algebra 
and embed a copy of $\affSn$.
We can define $\pi u_i \pi^{-1} = u_{\pi(i)}= u_{i+1}$ and
$\pi^{-1} u_i \pi = u_{i-1}$ using the convention in Remark
\ref{remark-ui for i in Z}, and this is compatible with
relation \eqref{tau ui}.
This allows us to define conjugation by $\wa{d}^{-1}$.
 It will still send $\Sd \to \Srevd$
but will not send $\Hd \to \Hrevd$ as conjugation
by $\Sn$ does not preserve $\A$ (even though conjugation
by $\pi$ does preserve $\A$).
For $g\in \Sn$ recall that $\Hnu \ni g^{-1} u_i g
= u_{g^{-1}(i)} + \mathrm{\ell.o.t}$,
where here lower is with respect to $u$ degree. 
More specifically, $u_i g = g u_{g^{-1}(i)} + $ terms
$ \bruhat g$ in Bruhat order. 
These are the lower order terms we throw away when considering
$V_{\bd}$ or $  \cB_{\le \bd} / \cB_{< \bd}$.  
Throwing away these lower order terms agrees with
replacing conjugation by $\wa{d}^{-1}$ with the
isomorphism $\revdtwist : \Hd \to \Hrevd$
when describing the Mackey filtration.

\end{remark}

\MVcomment{
\begin{remark}
One can regard this theorem as a version of the classical Mackey formula:
$$
\Res_{K}\Ind_{H}^{G}(\rho)
=\bigoplus_{s\in K\backslash G/H}\Ind^{K}_{sHs^{-1}\cap K}(\rho^s)
=\bigoplus_{s\in K\backslash G/H}\Ind^{K}_{sHs^{-1}\cap K}
(\Res^{H}_{H\cap s^{-1}Ks}\rho)^s,
$$
where $G$ is a finite group, $H,K$ are its subgroups, $\rho$  is a representation of $H$ and 
$\rho^s(x)=\rho(s^{-1}xs)$.
\end{remark}
} 

As a corollary to Theorem \ref{theorem-mackey} we have the following.
\begin{corollary}
\label{cor-mackey}
Let $V$ be a $\C \Sn$-module such that when inflated along
$\ev$ to be an $\Hnu$-module it has $u$-weight basis
$\{ v_T \mid T \in \cT \}.$ Let $\wt_T$ denote the
weight of $v_T$.  If we assume $t \neq 0$, then the $\RCA$-module
$\Ind_{\Hny}^{\RCA} V$ has finite dimensional generalized
 $u$-weight spaces 
 and a generalized $u$-weight basis
indexed by $\Pn \times \cT$. Its generalized weights are
\begin{gather*}
\{ \wa{a}\cdot \wt_T \mid \ba \in \Pn, T \in \cT\}
= \{ g \wa{d}\cdot \wt_T \mid \bd \in \Pdec, g \in \Sn/\Sd, T \in \cT\}.
\end{gather*}

When $t = 0$, the weights are still given by the formula above but the $u$-weight spaces are no longer finite-dimensional. We study this case in detail in Section \ref{sect:m to infinity}. 
\end{corollary}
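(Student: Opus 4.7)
The strategy is to combine the filtration of Theorem \ref{theorem-mackey} with a careful weight-tracking calculation through the twist $\revdtwist$.

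First I would compute the $\Hd$-weights of the twisted restriction $[\Res^{\Hnu}_{\Hrevd}V]^{\revdtwist}$. Under $\revdtwist$ we have $u_j \mapsto u_{g_{\bd}(j)} + d_j t$, so a weight vector $v_T \in V$ with original weight $\wt_T$ becomes, as an $\Hd$-module element, a weight vector with weight
\[
\left( \wt_{T, g_{\bd}(1)} + d_1 t, \ldots, \wt_{T, g_{\bd}(n)} + d_n t\right).
\]
The point to verify is that this is exactly $\wa{\bd}\cdot \wt_T$ under the action \eqref{AffSymmOnCn}. Using $\wa{\bd} = \ta{\bd} g_{\bd}^{-1}$ (recall $g_{\bd} = \id$ since $\bd$ is already sorted, so $\wa{\bd} = \ta{\bd}$ only after the $g_{\bd}^{-1}$ twist is absorbed into weight combinatorics; more precisely, $g_{\bd}^{-1}\cdot \wt_T$ permutes coordinates by the inverse, and $\ta{\bd}$ acts by $\wt \mapsto \wt + t\bd$), this matches the formula above.

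Next, inducing from $\Hd$ to $\Hnu$ produces a $\C \Sn / \Sd$-worth of weight vectors obtained by applying minimal-length left coset representatives $g \in \Sn/\Sd$. Thus the generalized weights appearing in $V_{\bd}$ are precisely $\{g \wa{\bd}\cdot \wt_T : g \in \Sn/\Sd,\; T \in \cT\}$. Assembling this over all $\bd \in \Pdec$ and using the bijection (from Lemma \ref{lem:coset positive} and the discussion around $\wa{a}$) which identifies $\bigsqcup_{\bd \in \Pdec} (\Sn/\Sd) \times \{\wa{\bd}\}$ with $\{\wa{a} : \ba \in \Pn\} = \Min$ via $g \wa{\bd} \longleftrightarrow \wa{g \cdot \bd}$, gives both stated descriptions of the weight set.

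For the basis indexed by $\Pn \times \cT$, I would invoke Corollary \ref{cor: basis in induced module}, which produces the vectors $v_T(\wa{a}) = \FXi(\wa{a})v_T$, and observe that the triangularity computed just before Corollary \ref{cor: jm eigenvalues} shows $v_T(\wa{a})$ lies in the generalized $u$-weight space with weight $\wa{a}\cdot \wt_T$. The main remaining step, and the only place $t \neq 0$ is used, is finite-dimensionality of generalized weight spaces. Here I would observe that
\[
\sum_{i=1}^n (\wa{a}\cdot \wt_T)_i = \sum_{i=1}^n \wt_{T,i} + t \, ||\ba||,
\]
since each application of $\pi$ in the $\affSn$-action on $\C^n$ increases the coordinate sum by $t$. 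Since $V$ is finite-dimensional, the scalars $\sum_i \wt_{T,i}$ take only finitely many values as $T$ varies (they are constant on irreducible $\Sn$-components, as $\ev(\sum u_i) = -c\sum_{i<j}(ij)$ is central in $\C\Sn$). Consequently, fixing a target weight $\mu$ determines $||\ba||$ up to finitely many possibilities when $t \neq 0$, and for each such value there are only finitely many $\ba \in \Pn$. Hence each generalized weight space is finite-dimensional, completing the proof. The main obstacle is the weight-matching calculation for the twist $\revdtwist$; once that is pinned down, the rest follows cleanly from Theorem \ref{theorem-mackey} and the parametrization of $\Min$.
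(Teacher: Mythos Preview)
Your proof is correct and follows the same route the paper intends: the corollary is stated immediately after Theorem~\ref{theorem-mackey} with no separate proof, so you are simply filling in the natural details---computing the weights on each Mackey subquotient via $\revdtwist$, assembling over $\bd\in\Pdec$, and using the sum-of-coordinates to get finite-dimensionality when $t\neq 0$.

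One slip to fix: your parenthetical ``recall $g_{\bd}=\id$ since $\bd$ is already sorted'' is wrong. For $\bd\in\Pdec$ the vector is \emph{decreasingly} sorted, so $g_{\bd}=\omega_0\omega_0^{\bd}\neq\id$ in general (cf.\ Example~\ref{example-twist}, where $g_{\bd}=[4,5,1,2,3]$). Fortunately your ``more precisely'' clause immediately gives the correct computation $(\wa{\bd}\cdot\wt_T)_i=\wt_{T,g_{\bd}(i)}+d_it$, so the argument survives; just delete the misleading aside. For the bijection $g\wa{\bd}\leftrightarrow\wa{g\cdot\bd}$ you invoke, note this is an actual equality of affine permutations (not merely of left cosets) when $g$ is the minimal-length representative in $\Sn/\Sd$: since $\wa{\bd}$ is a minimal double-coset representative, $g\wa{\bd}$ is already reduced and hence lies in $\Min$. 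Finally, your finite-dimensionality argument tacitly assumes $V$ is finite-dimensional (or at least has finite-dimensional weight spaces); this is implicit in the paper's usage as well, but worth stating.
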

It is worth noting that given fixed $\wt = (\wt_1,  \ldots,
\wt_n) \in \C^n$, $\bd = (d_1, d_2, \ldots, d_n) \in \Pdec$ the set
\begin{gather*}
 \{ g \wa{d}\!\cdot \!\wt \mid  g \in \Sn/\Sd \} =
	\{f\!\cdot\! (\wt_1 \!+ \!d_n t, \wt_2 \!+ \!d_{n-1} t, \ldots, \wt_n \! +\! d_1 t)
\mid f \in \Sn/\Srevd \}.
\end{gather*}


\begin{example}
\label{example-22}
Let us consider the Mackey formula for $M=\Delta_c(\triv)$
in the case $n=2, c=2, t=1$.
As we shall see, it is not $\A$-semisimple.
For weights of the form
$\wt = (d,d)$, $\dim M_{(d,d)}^{\gen} = 2 > 1 = \dim M_{(d,d)}$.
For all other weights $\wt = (\wt_1, \wt_2)$
with $ \wt_1 \neq \wt_2$
that occur,
 its generalized weight spaces $\Mw^{\gen} = \Mw$
are $1$-dimensional.

We have a single $T =$ 
\begin{tikzpicture}[scale=0.5]
\draw[step=1] (0,0) grid (2,1);
\draw (0.5,0.5) node[black] {$1$};
\draw (1.5,0.5) node[black] {$2$};
\end{tikzpicture}
and $v_T$ has weight $\wt = (0,-c) = (0,-2)$.
For $\bd \in \Pdec$ we split into two cases according to $\Sd$.

{\it Case 1.}
$\bd = (d,d) = \revd$, $\Sd = \Sk{2}$. 
Thus our induction and restriction functors are trivial
and
$\Ind_{\Hd}^{H_2(\mathbf{u})}[\Res_{\Hrevd}^{H_2(\mathbf{u})}
\triv]^{\revdtwist} = \triv^{\revdtwist}$.
The module 
$ \triv^{\revdtwist}$ still carries the trivial action of  $\Sk{2}$,
but $u_1 = 0+d, u_2 = -c + d$.
In other words it corresponds to a weight vector
$ v_{\bd} = v_{(d, d)} \in \Delta_c(\triv)$ of weight
$\wt = (d, d-2)$.
Recall we require $d \ge 0$.

{\it Case 2.}
$\bd = (d_1 > d_2)$,   $\revd= (d_2, d_1)$, $g_{\bd} = s_1$,
and $\Sd = \{ \id \} = \Srevd$.
Now
$\Res_{\Hrevd}^{H_2(\mathbf{u})} \triv
=\Res_{\A}^{H_2(\mathbf{u})} \triv = (0)\boxtimes (-c)$
where we write the one-dimensional
 $\A = \C[u_1]\otimes \C[u_2]$-module on which
$u_1-\alpha$ and $u_2-\beta$ vanish as $(\alpha) \boxtimes (\beta)$.
The twisted module is
$$[(0)\boxtimes (-c)]^{\revdtwist}
= (-c+d_1) \boxtimes (0+d_2) = (d_1-2) \boxtimes (d_2).$$
Finally
\begin{gather*}
\Ind_{\Hd}^{H_2(\mathbf{u})}[\Res_{\Hrevd}^{H_2(\mathbf{u})}
\triv]^{\revdtwist} = 
\Ind_{\A}^{H_2(\mathbf{u})} (d_1-2) \boxtimes (d_2).
\end{gather*}
This is an irreducible $2$-dimensional $H_2(\mathbf{u})$-module.
\\
In the special case $d_2 = d_1-2$ it is not $A$-semisimple.
In other words the $u_i$ act with Jordan blocks of size $2$.
The generalized $\wt =(d_1-2,d_2) = (d_2, d_2)$-weight space
is $2$-dimensional and
corresponds to the basis vectors in $\Delta_c(\triv) $ which 
by abuse of notation we can still  call
$ v_{\bd} = v_{(d_2+2,d_2)}$ and $v_{s_1 \bd} = v_{(d_2, d_2+2)}$.
\\
When $d_2 \neq d_1-2$ we get one-dimensional weight spaces
spanned by
$ v_{\bd} = v_{(d_1, d_2)}$ of weight $\wt = (d_1-2,d_2)$
 and $v_{s_1 \bd} = v_{(d_2, d_1)} $ of weight $s_1 \cdot \wt =
(d_2, d_1-2)$. 
Because these (generalized) weights occur with multiplicity one,
the Mackey filtration tells us these generalized weight spaces
are true weight spaces.

\end{example}

\section{Representation theory of \texorpdfstring{$H_{t,c}$}{Htc}}\label{sec:rep theory}

\subsection{Generalized eigenspaces and intertwining operators}

As above, we will denote by $\A \subseteq H_{t,c}$ the polynomial subalgebra generated by the Dunkl-Opdam elements $u_{1}, \dots, u_{n}$. 

For an $H_{t,c}$-module $M$ and $\tw \in \C^n$, let $M_{\tw}^{\gen}$ denote the generalized eigenspace with weight $\wt$, that is, $(u_{i} - \wt_{i})$ acts locally nilpotently on $M_{\tw}^{\gen}$ for every $i$. We also denote by $M_{\tw} \subseteq M_{\tw}^{\gen}$ the subspace of honest simultaneous
eigenvectors. At $t = 1$ the Euler element is $h = \sum u_{i} + n/2$ and it is therefore easy to see that every module in category $\cO_{c}$ is locally finite for the $\A$-action, so that it decomposes as the direct sum of its generalized weight spaces, and each such space is finite-dimensional. Note that this also follows easily from Theorem \ref{theorem-mackey}.

We are interested in the spectrum of $\A$ on standard modules.
To study it, we will make use of the following intertwining operators, cf. \cite{CherednikIntertwining, griffeth, KnopSahi}
$$
\sigma_{i} := s_i - \frac{c}{u_{i} - u_{i+1}}, i = 1, \dots, n-1,\qquad \tau = x_{1}(12\cdots n).
$$

Note that $\tau \in H_{t,c}$, while the $\sigma_{i}$ are elements of the localization $H_{t,c}[(u_{i} - u_{j})^{-1} : i \neq j]$. Alternatively, given a representation $M$, we may think of $\tau$ as an operator which is defined globally on $M$, while $\sigma_{i}$ is only defined on those generalized eigenspaces $M^{\gen}_{\tw}$ for which $\tw_{i} - \tw_{i+1} \neq 0$, i.e., 
$s_i \cdot \wt \neq \wt$.

\begin{lemma}\label{lem:braid}\cite[(4.13)]{griffeth}
We have $\sigma_{i}\sigma_{i+1}\sigma_{i} = \sigma_{i+1}\sigma_{i}\sigma_{i+1}$ and, if $i < n$, $\tau\sigma_{i} = \sigma_{i+1}\tau$. Furthermore, 
\begin{equation}
\label{eq:sigma square}
\sigma_{i}^{2} = \frac{(u_{i} - u_{i+1} - c)(u_{i} - u_{i+1} + c)}{(u_{i} - u_{i+1})^{2}}.
\end{equation}
and $\lambda \sigma_{i} = \sigma_{i-1} \lambda$ if $i>1$,
also $\lambda \sigma_1 \tau =  \tau \sigma_{n-1} \lambda - 
\frac{u_0}{u_0-u_1}c = \tau \sigma_{n-1} \lambda - 
\frac{u_n +t}{u_n+t-u_1}c $.
\end{lemma}


It is not hard to see that we have
$$
\sigma_{i}: M^{\gen}_{\tw} \to M^{\gen}_{s_{i}\cdot \tw}, \qquad \tau: M^{\gen}_{\tw} \to M^{\gen}_{\pi\cdot \tw}
$$

\noindent where the symmetric group $\Sk{n}$ acts on $\C^n$ by
permuting the coordinates, and
$\pi\cdot (\wt_{1}, \dots, \wt_{n})
= (\wt_{n}+t, \wt_{1}, \dots, \wt_{n-1})$
as in Equation \eqref{AffSymmOnCn}.
In the first case we assume $s_i \cdot \wt \neq \wt$ so $\sigma_i$
is well-defined.
\begin{remark}
\label{remark: intertwiners vanish}
Note that, if $\sigma_{i}|_{M_{\tw}^{\gen}} = 0$, then $\tw_{i} -
\tw_{i+1} = \pm c$.   If $M$ is free as a $\C[x_{1}, \dots,
x_{n}]$-module (for example, a standard module) then
$\tau|_{M_{\tw}^{\gen}} \neq 0$ provided $M_{\tw}^{\gen} \neq 0$.
\end{remark}
\begin{remark}
\label{remark: renormalize intertwiners}
Note that $\sigma_i = (s_i u_i - u_i s_i)/(u_i - u_{i+1})$. 
These operators are well-defined on any simple $\RCA$-module
on which $\A$ acts semisimply. 
It is sometimes convenient to instead consider
$$\tilde \sigma_i :=  (s_i u_i - u_i s_i)/(u_i - u_{i+1} + c).$$ 
These also satisy the braid relations and their 
quadratic relation becomes $\tilde \sigma_i^2 = 1$.
We will see below (see Section \ref{sect:renormalization}) that these operators are well-defined
on $\Lctriv$.
\end{remark}

Because the intertwiners satisfy the braid relations, $\sigma_\omega$ and $\widetilde{\sigma}_\omega$ are well-defined for $\omega \in \affSn$, if one takes the convention $\sigma_\pi = \widetilde{\sigma}_\pi = \pi$. 

Using intertwiners to construct and parameterize an
 $\mathcal{A}$-weight basis
for an $\mathcal{A}$-semisimple (or calibrated) module,
as well as giving the action of generators on that basis,
follows ideas developed by  Ram in \cite{Ram} or Cherednik
in \cite{Cherednik}.
In \cite{Ram} the role of $\mathcal{A}$ was instead
played by  an appropriate commutative subalgebra of
the affine Hecke algebra, but the constructions apply in our
context as well.

\subsection{The standard modules}\label{sect:other standards}
From now on, unless otherwise explicitly stated, we will assume that the parameter $c$ has the form $c = m/n > 0$ with $\gcd(m,n) = 1$ and $t = 1$. In this section, we will analyze the action of the Dunkl-Opdam subalgebra on a standard module $\Delta_{c}(\mu)$. We will denote by $\SYT(\mu)$ the set of standard tableaux on $\mu$. For $T \in \SYT(\mu)$, $T_{i}$ denotes the box of $\mu$ labeled by $i$ under $T$, and 
$\ct{T}{i}$ is the content of this box. 

\begin{definition}\label{def: weight labels}
Let $(\ba, T) \in \Z_{\geq 0}^{n} \times \SYT(\mu)$. Denote by $\tw(\ba, T) \in \C^n$ the weight whose $i$-th component is $\tw_i(\ba, T) = a_{i} - \ct{T}{g_{\ba}(i)}c$ where, recall, $g_{\ba} \in \Sk{n}$ is the minimal permutation that sorts $\ba$ increasingly. 
\end{definition}

From Lemma \ref{lemma:g vs pi}, it is clear that we have that the intertwining operators send $\tau: \Delta_c(\mu)_{\wt(\ba, T)} \to \Delta_c(\mu)_{\wt(\pi\cdot \ba, T)}$ and, if $a_{i} \neq a_{i+1}$, $\sigma_{i}: \Delta_c(\mu)_{\wt(\ba, T)} \to \Delta_c(\mu)_{\wt(s_{i}\ba, T)}$. 

The following result was proved in \cite[Theorem 5.1]{griffeth}.

\begin{theorem}\label{thm:vermas}
Let $c = m/n > 0$ with $\gcd(m,n) = 1$ and $M = \Delta_c(\mu)$. Then, for any $(\ba, T) \in \Z_{\geq 0}^{n} \times \SYT(\mu)$ we have $M_{\wt(\ba, T)} = M_{\wt(\ba, T)}^{\gen}$ is $1$-dimensional. Moreover, if $a_{i} > a_{i+1}$, then $\sigma_{i}|_{M_{\wt(\ba, T)}} \neq 0$, and the action of the Dunkl-Opdam subalgebra on $M$ is diagonalizable with eigenvalues given by $\wt(\ba, T)$.
\end{theorem}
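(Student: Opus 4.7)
The plan is to prove Theorem \ref{thm:vermas} by combining the Mackey-type decomposition (Theorem \ref{theorem-mackey}) with an inductive construction of weight vectors generalizing the proof of Theorem \ref{thm:1}. I work throughout with the Young seminormal basis $\{v_T : T \in \SYT(\mu)\}$ of $V_\mu$, which diagonalizes the Jucys-Murphy elements, so that upon inflation along $\ev$ we have $u_i v_T = -c\,\ct{T}{i} v_T$.

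First, I would apply Corollary \ref{cor-mackey} with $V = V_\mu$: this identifies the set of generalized $\A$-weights on $\Delta_c(\mu)$ with $\{\wa{a}\cdot \wt_T : \ba \in \Z_{\geq 0}^n,\, T \in \SYT(\mu)\}$. A direct computation using $\wa{a} = \ta{a} g_\ba^{-1}$ and $t = 1$ yields $(\wa{a}\cdot \wt_T)_i = a_i - c\,\ct{T}{g_\ba(i)} = \wt_i(\ba, T)$, matching the weights predicted by the theorem. The key step is to verify that these weights are pairwise distinct. If $\wt(\ba, T) = \wt(\ba', T')$, then $a_i - a_i' = c\bigl(\ct{T}{g_\ba(i)} - \ct{T'}{g_{\ba'}(i)}\bigr)$; since the LHS is an integer while $c = m/n$ with $\gcd(m,n) = 1$, $n$ must divide the content difference. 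Contents of a partition of $n$ lie in $[1-\ell(\mu), \mu_1-1] \subseteq [-(n-1), n-1]$, so the only multiple of $n$ attainable as a content difference is zero; this forces $\ba = \ba'$ and $\ct{T}{j} = \ct{T'}{j}$ for every $j$, from which $T = T'$ since a standard tableau is uniquely determined by its content sequence. Combined with the Mackey filtration, this forces each $M_{\wt(\ba, T)}^{\gen}$ to be one-dimensional. A one-dimensional generalized eigenspace for the commuting family $\{u_i\}$ is automatically a true eigenspace, so $M_{\wt(\ba, T)} = M^{\gen}_{\wt(\ba, T)}$ and $\A$ acts diagonalizably on $\Delta_c(\mu)$.

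The main obstacle is the nonvanishing of $\sigma_i|_{M_{\wt(\ba, T)}}$ when $a_i > a_{i+1}$. To handle it, I would construct weight vectors $v_{\ba, T}$ inductively on $||\ba||$: set $v_{0, T} := 1 \otimes v_T$, and for $||\ba|| > 0$ with $i_0 := \min\{i : a_i > 0\}$ and $\ba^* := (a_1, \ldots, \widehat{a_{i_0}}, \ldots, a_n, a_{i_0} - 1)$, put $v_{\ba, T} := \sigma_{i_0 - 1} \cdots \sigma_1 \tau v_{\ba^*, T}$. The distinctness of weights just established guarantees that each $\sigma_j$ in the chain is well-defined. A leading-term analysis in the tensor-monomial basis $\{x^{\ba'} \otimes v_{T'}\}$, using Lemma \ref{lemma:po}(1), (3), (4) exactly as in the proof of Theorem \ref{thm:1}, shows inductively that $v_{\ba, T}$ has leading monomial $x^\ba \otimes v'$ (under the partial order $\prec$ on the $x$-exponents) with $v' \in V_\mu$ nonzero, and all other terms of the form $x^{\ba'} \otimes v''$ with $\ba' \prec \ba$. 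Now if $a_i > a_{i+1}$, Lemma \ref{lemma:po}(3) gives $s_i \cdot \ba \succ \ba$, so $s_i v_{\ba, T}$ contains a nonzero contribution $x^{s_i \ba} \otimes s_i v'$ (the latter nonzero since $s_i$ is invertible on $V_\mu$) lying outside the monomial support of $v_{\ba, T}$. Hence $v_{\ba, T}$ cannot be an $s_i$-eigenvector; the identity $\sigma_i = s_i - c/(u_i - u_{i+1})$ together with one-dimensionality of the weight space then forces $\sigma_i v_{\ba, T} \neq 0$, completing the proof.
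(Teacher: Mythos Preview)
Your proof is correct and follows essentially the same route as the paper: apply Corollary~\ref{cor-mackey} to identify the generalized $\A$-weights, prove they are pairwise distinct via the content-difference divisibility argument (this is exactly Lemma~\ref{lemma:injective} in the paper), and conclude one-dimensionality and diagonalizability. The paper's proof block is in fact terser than yours---it stops at ``Now the theorem follows from Lemma~\ref{lemma:injective}''---and does not spell out the nonvanishing of $\sigma_i$ when $a_i>a_{i+1}$; your inductive leading-term argument, using Lemma~\ref{lemma:po}(3),(4) to show that $s_i v_{\ba,T}$ has a contribution on $x^{s_i\ba}$ outside the monomial support of $v_{\ba,T}$, is precisely the mechanism the paper invokes implicitly when it later writes the action formulas ``as in the proof of Theorem~\ref{thm:action}''.
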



\begin{proof}
The operators $u_i$ act on $V_{\mu}$ as classical Jucys-Murphy
operators, and have spectrum $-\ct{T}{i}c$ for $T\in \SYT(\mu)$.
In other words the vector $v_T$ has weight $\wt_T =
(-\ct{T}{1}c, \ldots, -\ct{T}{n}c)$.
By Corollary \ref{cor-mackey} the generalized $u$-weights of $u_i$ on
$\Delta_c(\mu)$ are given by $ \wa{a} \wt_T =\wt(\ba,T)$. Now the
theorem follows from Lemma \ref{lemma:injective} below. 
\end{proof}

\begin{lemma}\label{lemma:injective}
Let $(\ba, T), (\bb, S) \in \Z_{\geq 0}^{n} \times \SYT(\mu)$. If $\wt(\ba, T) = \wt(\bb, S)$ then $\ba = \bb$ and $T = S$. 
\end{lemma}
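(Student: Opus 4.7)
The plan is to extract the identity $a_i - b_i = \bigl(\ct{T}{g_{\ba}(i)} - \ct{S}{g_{\bb}(i)}\bigr)c$ from the assumption $\wt(\ba,T) = \wt(\bb,S)$, and then exploit the arithmetic of $c = m/n$ together with the fact that a standard tableau is determined by its content sequence.

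First, I will compare components: for each $i \in \{1,\dots,n\}$ the equality $\wt_i(\ba,T) = \wt_i(\bb,S)$ gives
\[
a_i - b_i = \bigl(\ct{T}{g_{\ba}(i)} - \ct{S}{g_{\bb}(i)}\bigr)\tfrac{m}{n}.
\]
The left-hand side is an integer, and since $\gcd(m,n) = 1$ the integer $n$ must divide $\ct{T}{g_{\ba}(i)} - \ct{S}{g_{\bb}(i)}$. The key numerical input is that for any partition $\mu$ of $n$ the contents of boxes of $\mu$ lie in the interval $[-(\mu'_1-1), \mu_1-1]$, and since $\mu_1 + \mu'_1 \le n+1$ this interval has length at most $n-1$. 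Hence any two contents in $\mu$ differ by at most $n-1$ in absolute value, so the only multiple of $n$ available is zero. This forces $\ct{T}{g_{\ba}(i)} = \ct{S}{g_{\bb}(i)}$ and therefore $a_i = b_i$ for every $i$, i.e.\ $\ba = \bb$.

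Once $\ba = \bb$, we have $g_{\ba} = g_{\bb}$, and the previous step reads $\ct{T}{j} = \ct{S}{j}$ for every $j = g_{\ba}(i) \in \{1,\dots,n\}$. To finish it remains to note the classical fact that the content sequence $(\ct{T}{1}, \dots, \ct{T}{n})$ of a standard Young tableau determines the tableau: the box containing $k$ must sit at an addable corner of the subshape filled by $1,\dots,k-1$, and the addable corners of any Young diagram have pairwise distinct contents, so an easy induction on $k$ reconstructs $T$ uniquely from its contents. Applying this to both $T$ and $S$ yields $T = S$, completing the argument.

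The only step that requires any real input is the bound on contents; the rest is arithmetic and the standard corner-content inductive reconstruction of a tableau.
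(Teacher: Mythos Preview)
Your argument is correct and follows the same route as the paper: compare components to get $a_i - b_i = c\bigl(\ct{T}{g_{\ba}(i)} - \ct{S}{g_{\bb}(i)}\bigr)$, use that content differences in a shape of size $n$ lie in $\{-(n-1),\dots,n-1\}$ together with $c=m/n$, $\gcd(m,n)=1$, to force $\ba=\bb$ and equality of content sequences, and then recover $T=S$. You spell out the bound $\mu_1+\mu_1'\le n+1$ and the corner-content reconstruction of a tableau more explicitly than the paper does, but the strategy is identical.
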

\begin{proof}
Assume $\wt(\ba, T) = \wt(\bb, S)$. Then, for every $i = 1, \dots, n$, 
$$
a_{i} - b_{i} = c(\ct{T}{g_{\ba}(i)} - \ct{S}{g_{\bb}(i)}).
$$ 
But $T$ and $S$ have the same shape $\mu$, so
$$
\ct{T}{g_{\ba}(i)} - \ct{S}{g_{\bb}(i)} \in \{-n+1, -n+1, \dots, 0, \dots, n-2, n-1\}
$$ 
so, by our assumption on $c = m/n$, we must have $a_{i} - b_{i} = 0$. From here, we have $\ba = \bb$ and $\ct{S}{i} = \ct{T}{i}$ for every $i = 1, \dots, n$, which implies $S = T$. 
\end{proof}

\begin{remark}
\label{rem:generalized eigenvalues}
For arbitrary $t, c$, Corollary \ref{cor-mackey} still applies and the same proof shows that the generalized eigenvalues of $u_i$ on $\Delta_{t,c}(\mu)$
are given by $\wt(\ba,T)$.
\end{remark}




\subsection{Recovering the action of \texorpdfstring{$H_{c}$}{Hc}}\label{subsect:action} Let us now see that for $c=m/n,\gcd(m,n)=1$ the action of $H_{c}$ on the standard module $\Delta_c(\mu)$ is completely determined by the spectrum of the Dunkl-Opdam subalgebra. Fix an eigenbasis $\{v_{T} : T \in \SYT(\mu)\}$ of $V_{\mu}$ for the Jucys-Murphy operators. Note that we have the basis $v_{T}(\omega)$, $\omega \in \Min$ of $\Delta_c(\mu)$, cf. Corollary \ref{cor: basis in induced module}.  For each $\ba \in \mathbb{Z}^{n}_{\geq 0}$ and every standard Young tableau $T$ on $\mu$, denote by $v(\ba, T) \in \Delta_c(\mu)_{\wt(\ba, T)}$ a nonzero vector, normalized so that $v_{T}(\wa{a})$ appears with coefficient $1$ in $v(\ba, T)$. 
The action of $H_{c}$ on $\Delta_{c}(\mu)$ is given by the following formulas:
\begin{theorem}\label{thm:action standard}
The module $\Delta_c(\mu)$ has a basis given by $\{v(\ba, T) : \ba \in \Z^n_{\geq 0}, T \in \SYT(\mu)\}$, and the action of the algebra $H_{c}$ on $\Delta_c(\mu)$ is given by the following operators:

\begin{align*}
u_{i} \, v(\ba, T) & =\wt_{i}v(\ba, T) \notag \\
\tau \, v(\ba, T) &  = v(\pi\cdot \ba, T) \notag \\
\lambda \, v(\ba, T) & = \wt_{1}v(\pi^{-1}\cdot \ba, T) \\
\end{align*}
where $\wt=\wt(\ba,T)$. The action of $s_i$ with respect to this basis falls 
into the following five cases.
Let $j = g_{\ba}(i)$. Then
\newline
$s_{i}v(\ba, T) =$
\begin{gather*}
\begin{cases} 
v(s_{i}\cdot\ba, T) + \frac{c}{\wt_{i} - \wt_{i+1}}v(\ba, T) 
     & a_{i} > a_{i+1}, \\ 
\frac{(\wt_{i+1} - \wt_{i} - c)(\wt_{i+1} - \wt_{i} + c)}{(\wt_{i} - \wt_{i+1})^{2}}v(s_{i}\cdot \ba, T) +
\\ \qquad \qquad \frac{c}{\wt_{i+1}- \wt_{i}}v(\ba, T) 
     & a_{i} < a_{i+1}, \\
 v(\ba, T) 
     & a_{i} = a_{i+1} \text{ and $j, j\!+\!1$ in same row of $T$}, \\
 -v(\ba, T) 
     & a_{i} = a_{i+1}  \text{ and $j, j\!+\!1 $ in same column of $T$},
 \\
  v\left(\ba, s_{j}(T)\right) + \frac{c}{\wt_{i} - \wt_{i+1}}v(\ba, T)
      & a_{i} = a_{i+1}, \;  \; s_{j}T  \in \SYT(\mu)
\end{cases}
\end{gather*}
\end{theorem}
\MVcomment{
 \begin{gather*}  
s_{i} \, v(\ba, T) =
\begin{cases} 
v(s_{i}\cdot\ba, T) + \frac{c}{\wt_{i} - \wt_{i+1}}v(\ba, T) 
     & a_{i} > a_{i+1} \\ 
\frac{(\wt_{i+1} - \wt_{i} - c)(\wt_{i+1} - \wt_{i} + c)}{(\wt_{i} - \wt_{i+1})^{2}}v(s_{i}\cdot \ba, T) +
\\ \qquad \qquad \frac{c}{\wt_{i+1}- \wt_{i}}v(\ba, T) 
     & a_{i} < a_{i+1}  \\
 v(\ba, T) 
     & a_{i} = a_{i+1},  \qquad
     \begin{tikzpicture}[scale=0.6]
\draw (0,0) grid  (2,1);
\draw (0.5,0.5) node {\tiny $j$};
\draw (1.5,0.5) node {\tiny $j\!+\!1$};
\draw (3,0.5) node {$ \in T$};
\end{tikzpicture}  \\
 -v(\ba, T) 
     & a_{i} = a_{i+1},  \qquad
     \begin{tikzpicture}[scale=0.6]
\draw (0,0) grid  (1,2);
\draw (0.5,1.5) node {\tiny $j$};
\draw (0.5,0.5) node {\tiny $j\!+\!1$};
\draw (2,1) node {$ \in T$};
\end{tikzpicture}
 \\
  v\left(\ba, s_{j}(T)\right) + \frac{c}{\wt_{i} - \wt_{i+1}}v(\ba, T)
      & a_{i} = a_{i+1}, \qquad s_{j}T  \in \SYT(\mu)
\end{cases}
\end{gather*}
\MV{since  $g_{\ba}(i+1) = g_{\ba}(i)+1$ do  $j, j+1$  thought it looked cleaner to do that subst}
 } 


\begin{proof}
Thanks to our normalization, we have that if $a_{i} > a_{i+1}$, then $\sigma_{i}v(\ba, T) = v(s_{i}\ba, T)$. From here and \eqref{eq:sigma square} one can deduce the first two cases. 

Finding $s_{i}v(\ba, T)$ when $a_{i} = a_{i+1}$ is subtler. The weight of $s_{i}v(\ba, T)$ is $s_{i}\cdot\wt$. Note that, in this case, $g_{\ba}(i+1) = g_{\ba}(i) + 1 = j+1$. 
Let us denote by $s_{j}(T)$ the tableau that is obtained from $T$ by permuting the entries $j$ and $j+1$. Note that $s_{j}(T)$ may not be standard, and this is the case precisely when in the tableau $T$ we have
\begin{itemize}
\item[(1)] $T_{j} = (R, C)$ and $T_{j + 1} = (R, C+1)$ for some box $(R, C)$ in $\mu$, i.e., we see \begin{tikzpicture}[scale=0.6]
\draw (0,0) grid  (2,1);
\draw (0.5,0.5) node {\tiny $j$};
\draw (1.5,0.5) node {\tiny $j\!+\!1$};
\draw (3,0.5) node {$ \in T$ or};
\end{tikzpicture}  
\item[(2)] $T_{j} = (R, C)$ and $T_{j + 1} = (R + 1, C)$ for some box $(R, C)$ in $\mu$, i.e., we see \begin{tikzpicture}[scale=0.6]
\draw (0,0) grid  (1,2);
\draw (0.5,1.5) node {\tiny $j$};
\draw (0.5,0.5) node {\tiny $j\!+\!1$};
\draw (2,1) node {$ \in T$.};
\end{tikzpicture} 
\end{itemize}

In these cases, $s_{i}\cdot\wt(a,T)$ is not of the form $\wt(\ba', T')$ for a standard tableau $T'$, so we must have $\sigma_{i}v(\ba, T) = 0$ and therefore $s_{i}v(\ba, T) = \pm v(\ba, T)$. 
Using the explicit formula $\sigma_{i} = s_{i} - \frac{c}{u_{i} - u_{i+1}}$ we 
recover the third and fourth cases.
Finally, if $s_{j}T$ 
is a standard tableau, then $\sigma_{i}v(\ba, T) = v(\ba, s_{j}(T))$ and we  have the fifth and final case. 

Note that for the original weight basis $v_T$ of the $\Sn$-module indexed by $\mu$, we have normalized them to agree with the third, fouth, and fifth cases above at $\ba = {\mathbf 0}$.
Thus we have recovered the action of $H_{c}$ on $\Delta_c(\mu)$. 
\end{proof}
\subsection{Maps between standards} In this section and the next one, we study maps between standard modules. Note that similar results were obtained in \cite{balagovic} for the case of trigonometric Cherednik algebras and in \cite{Ram, GNP} for the case of affine Hecke algebras.

\begin{lemma}
\label{lem:weights mod n}
Suppose that $c=m/n,\gcd(m,n)=1$.
Let $d_i(\mu)$ be the number of boxes in $\mu$ with content $i\bmod n$. Then for all
 $(\ba,T)\in \Z^n_{\ge 0}\times \SYT(\mu)$ one has  
 $$
 \sharp \{j:\ n\wt_j(\ba,T)\equiv -mi\bmod n \}=d_i(\mu).
 $$
\end{lemma}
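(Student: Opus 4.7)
The plan is to reduce the statement to a straightforward counting argument by unpacking the definition of $\wt_j(\ba,T)$ and exploiting that $g_{\ba}$ is a bijection on $\{1,\ldots,n\}$.

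First, I would substitute the definition $\wt_j(\ba,T) = a_j - \ct{T}{g_{\ba}(j)} c$ with $c = m/n$ into the expression $n\wt_j(\ba,T)$, obtaining
$$
n\wt_j(\ba,T) = na_j - m\cdot \ct{T}{g_{\ba}(j)}.
$$
Reducing modulo $n$, the term $na_j$ vanishes, so the congruence $n\wt_j(\ba,T) \equiv -mi \pmod{n}$ becomes $m\cdot \ct{T}{g_{\ba}(j)} \equiv mi \pmod{n}$. Since $\gcd(m,n)=1$, $m$ is invertible modulo $n$, and this congruence is equivalent to $\ct{T}{g_{\ba}(j)} \equiv i \pmod{n}$.

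Next, I would observe that $g_{\ba}\in \Sn$ is a bijection of $\{1,\ldots,n\}$, so as $j$ ranges over $\{1,\ldots,n\}$, the index $k = g_{\ba}(j)$ also ranges bijectively over $\{1,\ldots,n\}$. Therefore
$$
\sharp\{j : n\wt_j(\ba,T) \equiv -mi \bmod n\} = \sharp\{k \in \{1,\ldots,n\} : \ct{T}{k} \equiv i \bmod n\}.
$$
Since $T$ is a standard tableau on $\mu$, the multiset $\{\ct{T}{k} : 1 \le k \le n\}$ is precisely the multiset of contents of the boxes of $\mu$, so the right-hand side counts boxes of $\mu$ with content congruent to $i$ modulo $n$, which is $d_i(\mu)$ by definition. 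This completes the proof.

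There is no real obstacle here; the lemma is essentially a direct computation, and the only substantive input is the invertibility of $m$ modulo $n$ together with the fact that $g_{\ba}$ permutes $\{1,\ldots,n\}$ and therefore does not affect the multiset of contents appearing.
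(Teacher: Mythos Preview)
Your proof is correct and follows essentially the same approach as the paper's own argument: compute $n\wt_j(\ba,T)$ modulo $n$, use that $g_{\ba}$ is a bijection so the multiset of contents is unchanged, and count. You are slightly more explicit than the paper in invoking the invertibility of $m$ modulo $n$ to cancel it from both sides, but the structure of the argument is identical.
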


\begin{proof}
We have 
$$
n\wt_j(\ba,T)=na_j-m\ct{T}{g_{\ba}(j)}\equiv
-m\ct{T}{g_{\ba}(j)}\bmod n.
$$ 
Since $g_{\ba}$ is a permutation, $T_{g_{\ba}(j)}$ runs over all boxes in $\mu$ and 
the vector $\ct{T}{g_{\ba}(j)}$ has exactly $d_i(\mu)$ entries equal to $i\bmod n$.
\end{proof}

\begin{remark}
A similar argument and Remark \ref{rem:generalized eigenvalues} show that for $c=m/\ell,$ $ \gcd(m,\ell)=1$ one has
$$
\sharp \{j:\ \ell \wt_j(\ba,T)\equiv -mi\bmod \ell \}=d_i^{(\ell)}(\mu),
$$
where $d_i^{(\ell)}(\mu)$ is the number of boxes in $\mu$ with content $i\bmod \ell$.
\end{remark}

\begin{lemma}
\label{lem:not hooks}
Suppose that $c=m/n,\gcd(m,n)=1$.
Let $\mu\neq \mu'$ be two partitions of $n$. Then 
$\Hom_{H_c}(\Delta_c(\mu),\Delta_c(\mu'))=0$ unless both $\mu$ and $\mu'$ are hook partitions.
\end{lemma}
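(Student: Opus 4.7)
The plan is to show that a nonzero hom forces the two standard modules to share at least one generalized $\mathcal{A}$-eigenvalue, and then exploit Lemma \ref{lem:weights mod n} to deduce a constraint on the shapes of $\mu$ and $\mu'$.

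Suppose $\phi: \Delta_c(\mu) \to \Delta_c(\mu')$ is a nonzero $H_c$-module homomorphism. Since $\phi$ is in particular $\mathcal{A}$-equivariant, it sends generalized $\mathcal{A}$-weight spaces into generalized $\mathcal{A}$-weight spaces of the same weight. Choosing a weight $\wt$ with $\phi(\Delta_c(\mu)^{\gen}_{\wt})\neq 0$, we obtain a $\wt$ that is a generalized $\mathcal{A}$-eigenvalue on both standard modules. By Lemma \ref{lem:weights mod n} (together with Remark \ref{rem:generalized eigenvalues}, which handles the non-semisimple case), the multiset $\{n\wt_{j} \bmod n\}_{j=1}^n$ encodes $(d_0(\mu),\ldots,d_{n-1}(\mu))$ on the $\Delta_c(\mu)$ side and $(d_0(\mu'),\ldots,d_{n-1}(\mu'))$ on the $\Delta_c(\mu')$ side. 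Hence $d_i(\mu)=d_i(\mu')$ for all $i\in\Z/n\Z$.

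It remains to translate this numerical coincidence into a constraint on $\mu$ and $\mu'$. The distribution $(d_0,\ldots,d_{n-1})$ depends only on the $n$-core of a partition: via the abacus model, adding or removing an $n$-ribbon does not change any $d_i$, while distinct $n$-cores are distinguished by their content multisets. Thus $\mu$ and $\mu'$ share a common $n$-core $\nu$, whose size $k := |\nu|$ satisfies $n \mid (n-k)$, so $k=0$ or $k=n$. If $k=n$ then $\mu=\nu=\mu'$, contradicting $\mu\neq\mu'$. Therefore $k=0$, and both $\mu$ and $\mu'$ are obtained from the empty partition by attaching a single $n$-ribbon; such partitions are precisely the hooks of size $n$, as desired.

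The only mildly nontrivial step is the last one, where we must invoke (or briefly reprove) the classical fact that the content distribution modulo $n$ detects the $n$-core; this is the main technical obstacle, but it is standard and can be handled using the beta-number / abacus description of partitions, or checked directly by induction on the number of $n$-ribbons that can be stripped from $\mu$.
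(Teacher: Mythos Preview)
Your proof is correct and follows essentially the same route as the paper's: both extract a shared $\mathcal{A}$-weight from a nonzero hom, invoke Lemma~\ref{lem:weights mod n} to conclude $d_i(\mu)=d_i(\mu')$ for all $i$, deduce that $\mu$ and $\mu'$ have the same $n$-core, and finish with the size dichotomy (empty core versus $\mu$ being its own core). The paper simply cites \cite{JK} for the step ``equal $d_i$ implies equal $n$-core,'' whereas you sketch the abacus justification; otherwise the arguments are identical.
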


\begin{proof}
Suppose that $\Hom_{H_c}(\Delta_c(\mu),\Delta_c(\mu'))\neq 0$, then $\wt(\ba,T)=\wt(\ba',T')$ for some $(\ba,T)\in \Z_{\ge 0}\times \SYT(\mu)$ and $(\ba',T')\in \Z^n_{\ge 0}\times \SYT(\mu')$. By Lemma \ref{lem:weights mod n} we get
$d_i(\mu)=d_i(\mu')$ for all $i$, which implies that $\mu$ and $\mu'$ have the same $n$-core \cite{JK}. 

Since $\mu$ has size $n$, either its $n$-core is empty and $\mu$ is a hook, or $\mu$ is an $n$-core itself.
The same applies to $\mu'$, so they can share an $n$-core only if both partitions are hooks.
\end{proof}

\begin{remark}
A similar argument shows that for $c=m/\ell,\ \gcd(m,\ell)=1$ one could possibly have
$\Hom_{H_c}(\Delta_c(\mu),\Delta_c(\mu'))\neq 0$ only if $\mu,\mu'$ have the same $\ell$-core.
 This is known via the KZ functor, cf. \cite{BEG} and we have obtained a purely combinatorial proof. 
\end{remark}

\begin{corollary}
\label{cor:not hook}
Let $c=m/n,\gcd(m,n)=1$.
If $\mu$ is not a hook partition then $\Delta_c(\mu)$ is irreducible. 
\end{corollary}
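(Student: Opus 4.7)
My plan is to show that any nonzero submodule $N \subseteq \Delta_c(\mu)$ must equal $\Delta_c(\mu)$. The strategy is the standard ``highest weight vector'' argument in category $\cO_c$: I will produce a nonzero $H_c$-module homomorphism from some standard module $\Delta_c(\nu)$ into $\Delta_c(\mu)$, and then apply Lemma \ref{lem:not hooks} to force $\nu=\mu$, which I will then turn into the claim $N=\Delta_c(\mu)$.

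First I would exploit the Euler element $h\in H_c$. Since $h$ acts on $V_\mu\subseteq \Delta_c(\mu)$ as a scalar $h_\mu$ and each $x_i$ raises $h$-weight by $1$, the spectrum of $h$ on $\Delta_c(\mu)\simeq V_\mu\otimes\C[x_1,\dots,x_n]$ is $\{h_\mu,h_\mu+1,h_\mu+2,\dots\}$, with lowest eigenspace exactly $V_\mu$. Any nonzero submodule $N$ is $h$-stable and inherits this decomposition; let $\lambda$ be the smallest $h$-eigenvalue occurring in $N$ and let $N_\lambda$ be the corresponding eigenspace. Because $y_i$ lowers $h$-weight by $1$ and $N_{\lambda-1}=0$, every $y_i$ annihilates $N_\lambda$.

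Now $N_\lambda$ is a finite-dimensional $\Sn$-module on which all $y_i$ vanish, so for each irreducible $\Sn$-summand $V_\nu\subseteq N_\lambda$ the universal property of induction from $\Hny$ produces a nonzero $H_c$-module homomorphism $\Delta_c(\nu)\to \Delta_c(\mu)$ whose restriction to $V_\nu\subseteq \Delta_c(\nu)$ is the inclusion $V_\nu\hookrightarrow N_\lambda\subseteq \Delta_c(\mu)$. By Lemma \ref{lem:not hooks}, since $\mu$ is not a hook, such a nonzero homomorphism forces $\nu=\mu$. In particular $V_\mu$ appears in $N_\lambda$; comparing $h$-eigenvalues gives $\lambda=h_\mu$, so $V_\mu\subseteq N$. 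Since $V_\mu$ generates $\Delta_c(\mu)$ as an $H_c$-module (via the action of $\C[x_1,\dots,x_n]$), we conclude $N=\Delta_c(\mu)$.

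I do not expect any serious obstacle: the essential content has already been established in Lemma \ref{lem:not hooks}, and what remains is the routine category $\cO_c$ observation that $h$-boundedness, together with uniqueness of $V_\mu$ as the minimal $h$-weight space, turns any proper submodule into a nonzero homomorphism $\Delta_c(\nu)\to \Delta_c(\mu)$ with $\nu$ an $\Sn$-constituent of the bottom $h$-layer. The only subtlety worth stating carefully is the grading argument that pins down $\lambda=h_\mu$ after $\nu=\mu$ is forced; this is what then yields $V_\mu\subseteq N$ and hence $N=\Delta_c(\mu)$.
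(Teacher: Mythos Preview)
Your proof is correct and follows essentially the same route as the paper's: both arguments locate inside a nonzero submodule a copy of some irreducible $V_\nu$ annihilated by all $y_i$, use Frobenius reciprocity to obtain a nonzero map $\Delta_c(\nu)\to\Delta_c(\mu)$, and then invoke Lemma~\ref{lem:not hooks} to force $\nu=\mu$. The only cosmetic difference is that you use the Euler element $h$ to pick out the lowest graded piece of $N$, whereas the paper appeals directly to local nilpotence of the $y_i$; your version has the minor advantage of spelling out the final step (pinning down $\lambda=h_\mu$ and concluding $V_\mu\subseteq N$) more explicitly than the paper does.
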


\begin{proof}
The proof is standard but we include it here for completeness. If $R$ is a submodule of $\Delta(\mu)$, then (since the action of $y_1, \dots, y_n$ is locally nilpotent) there is a vector $v\in R$ such that $y_1v = \cdots = y_nv =0$. It spans a finite-dimensional subspace $U$ under the action of $\Sn$, and $\lambda(U)=0$, it contains an irreducible representation of $\Sn$ isomorphic to $V_{\mu'}$. Then there is a notrivial morphism $H_c$-modules
$\Delta(\mu')\to \Delta(\mu)$ which sends  $V_{\mu'}$ to this subspace.
\end{proof}

We determine the morphisms between $\Delta_c(\mu)$ for hook partitions $\mu$ in the next subsection.

\subsection{The BGG resolution}\label{sect:BGG} Throughout this section we assume $c=m/n,\gcd(m,n)=1$.

 Let us denote by $V_{\mu_{\ell}} := \wedge^{\ell}\C^{n-1}$ the hook
representation of $\Sk{n}$, so that $\mu_{\ell}$ is the partition $(n-
\ell, 1^{\ell})$, $\ell = 0, \dots, n-1$.    In particular, $V_{\mu_{0}}$ is the trivial representation and $V_{\mu_{n-1}}$ the sign representation. 
 It is known
\cite{BEG} that the representation $L_{m/n} := L_{m/n}(\triv)$ admits
a resolution
\begin{equation}\label{eqn:resolution}
0 \to \Delta_c(\mu_{n-1}) \to \cdots \to \Delta_c(\mu_{1}) \to \Delta_c(\mu_{0}) \to 0
\end{equation}

\noindent that in fact coincides with the Koszul resolution of $L_{m/n}$ when considering a standard module as a $\C[x_{1}, \dots, x_{n}]$-module. In this section, we will construct the resolution \eqref{eqn:resolution} in a purely combinatorial manner. 
We remark that this has been recently generalized in \cite{GFM} to some other BGG resolutions.

Let us set up some notation. For each collection $1 < i_{1} < \dots < i_{\ell} \leq n$, let $T_{i_{1} < i_{2} < \cdots < i_{\ell}}$ be the tableau on $\mu_{\ell}$ that has the numbers $1, i_{1}, \dots, i_{\ell}$ on its leg. Clearly, every tableau on $\mu_{\ell}$ is of this form. 

Recall that for each element $(\ba, T) \in \Z_{\geq 0}^{n} \times \SYT(\mu)$ we have a nonzero vector $v(\ba, T) \in \Delta_c(\mu)_{\wt(\ba, T)}$. Clearly, every map on $\Delta_c(\mu)$ is completely determined by the image of the vectors $v_{T} = v(0, T)$, $T \in \SYT(\mu)$. 

\begin{lemma}
\label{lem:far hooks}
Suppose that $\ell\neq j,j+1$. Then $\Hom_{H_c}(\Delta_c(\mu_{\ell}),\Delta_c(\mu_j))=0$.
\end{lemma}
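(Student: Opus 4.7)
The plan is to argue that any nonzero morphism $\varphi\colon \Delta_c(\mu_\ell)\to \Delta_c(\mu_j)$ restricts to an injective $\Sn$-equivariant map on $V_{\mu_\ell}\subset \Delta_c(\mu_\ell)$ (since $V_{\mu_\ell}$ is $\Sn$-irreducible) whose image is annihilated by every $y_i$. Consequently each $\A$-weight $\wt(0,T)$ of $V_{\mu_\ell}$, for $T\in \SYT(\mu_\ell)$, must occur among the weights $\{\wt(\ba,S):(\ba,S)\in \Z_{\geq 0}^n\times \SYT(\mu_j)\}$ of $\Delta_c(\mu_j)$ provided by Theorem~\ref{thm:vermas}. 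Hence it will suffice to exhibit a single tableau $T$ whose weight does not occur in $\Delta_c(\mu_j)$.

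For $\ell<j$ I will use the Euler element $h$: it acts on $V_{\mu_\ell}\subset \Delta_c(\mu_\ell)$ by the scalar $h_\ell = n/2 - c\,c(\mu_\ell)$, and a direct computation of the content sum gives $c(\mu_\ell)=\binom{n-\ell}{2}-\binom{\ell+1}{2}=\binom{n}{2}-n\ell$, so $h_\ell-h_j = m(\ell-j)<0$. Since all $h$-eigenvalues on $\Delta_c(\mu_j)$ are of the form $h_j+d$ with $d\in\Z_{\geq 0}$ and $m>0$, no $h$-eigenvector of eigenvalue $h_\ell$ exists in $\Delta_c(\mu_j)$, and the morphism must vanish.

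The substantive case is $\ell>j+1$, where I plan to pick the tableau $T\in\SYT(\mu_\ell)$ with leg labels $n-\ell+1<n-\ell+2<\cdots<n$. If $\wt(\ba,S)=\wt(0,T)$ for some $(\ba,S)\in\Z_{\geq 0}^n\times \SYT(\mu_j)$, then $a_i=(\ct{S}{g_\ba(i)}-\ct{T}{i})m/n\in\Z_{\geq 0}$; as all contents lie in $(-n,n)$ and $\gcd(m,n)=1$, the quantity in parentheses must be $0$ or $n$, so $a_i\in\{0,m\}$. The indices with $a_i=m$ are exactly those where $\ct{T}{i}\in\{-\ell,\ldots,-j-1\}$, which for this $T$ are $i\in\{n-\ell+j+1,\ldots,n\}$; the resulting $\ba$ is already non-decreasing, so $g_\ba=\id$. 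The remaining content equations then force label $n-\ell+j+k$ of $S$ to lie at the box of $\mu_j$ with content $n-j-k$, namely $(1,n-j-k+1)$, for $k=1,\ldots,\ell-j$. As $k$ grows the labels increase while the column indices decrease, so the last $\ell-j$ labels along row~$1$ of $S$ are strictly decreasing from left to right; for $\ell-j\geq 2$ this violates the standardness of $S$, yielding the desired contradiction.

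The main obstacle is the combinatorial bookkeeping in the second case: tracking $g_\ba$, matching contents, and locating labels in $S$. However, the key simplification is that choosing the tableau $T$ with the largest labels on the leg collapses $g_\ba$ to the identity, after which the contradiction is immediate from inspecting row $1$ of $S$. No machinery is needed beyond Theorem~\ref{thm:vermas} and the explicit formula for the weights $\wt(\ba,T)$.
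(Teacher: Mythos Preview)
Your proof is correct and follows essentially the same strategy as the paper: show that the lowest-weight $\A$-eigenvalues $\wt(0,T)$ of $\Delta_c(\mu_\ell)$ cannot occur among the weights $\wt(\ba,S)$ of $\Delta_c(\mu_j)$ described by Theorem~\ref{thm:vermas}. Your execution differs in two streamlining choices. For $\ell<j$ you invoke the Euler element directly, whereas the paper stays within the weight analysis and counts how many coordinates $n\wt_i(0,T)$ are negative to force $\mu_j$ to have at least $n-\ell-1$ boxes of positive content, hence $\ell\ge j$. For $\ell\ge j+2$ the paper carries out the content-matching for an arbitrary tableau $T=T_{i_1<\cdots<i_\ell}$ and then has to identify $g_\ba(i_{j+1}),\dots,g_\ba(i_\ell)$; your choice of the specific tableau with leg labels $n-\ell+1,\dots,n$ collapses $g_\ba$ to the identity and makes the contradiction in row~$1$ of $S$ immediate. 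Both arguments land on the same contradiction (the last $\ell-j$ entries of row~$1$ appear in decreasing order), and since a single tableau $T$ with $\wt(0,T)\notin\{\wt(\ba,S)\}$ suffices, your simplification loses nothing---in particular it still supports Remark~\ref{rmk:comp factors}.
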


\begin{proof}
Let $T=T_{i_{1} < i_{2} < \cdots < i_{\ell}}$  be a  standard tableau of shape $\mu_{\ell}$, we have
$$
n\wt_{i_1}(0,T)=m,\ldots, n\wt_{i_{\ell}}(0,T)=m\ell,\ n\wt_i(0,T)<0\ \text{for}\ i\neq 1,i_1,\ldots,i_{\ell}.
$$
Now suppose that $\wt(0,T)=\wt(\ba,T')$ for some $(\ba,T')\in \Z^n_{\ge 0}\times \SYT(\mu_j)$. One has
$$
0>n\wt_i(\ba,T')=na_i-m\cdot \ct{T'}{g_{\ba}(i)}\ge -m\cdot \ct{T'}{g_{\ba}(i)} \ \text{for}\ i\neq 1,i_1,\ldots,i_{\ell},
$$
so $\mu_j$ has at least $n-\ell-1$ boxes with positive contents, and $\ell\ge j$.

Suppose that $\ell\ge j+2$. It is easy to see that the equation 
$$
-m\ct Ti=na_{i}-m\ct{T'}{g_{\ba}(i)},
$$ 
implies 
$$
\begin{cases}
a_i=m \text{ and } \ct{T}{i}+n=\ct{T'}{g_{\ba}(i)} &  i=i_{j+1},\ldots,i_{\ell},\\
a_i=0 \text{ and }   \ct{T}{i}=\ct{T'}{g_{\ba}(i)} & \text{otherwise}.\\
\end{cases}
$$
By definition, this implies $g_{\ba}(i_{j+1})=n-\ell+1,\ldots,g_{\ba}(i_{\ell})=n,$
so
$$
\ct{T'}{n-\ell+1}=n-(j+1),\ldots, \ct{T'}{n}=n-\ell.
$$ 
But this means that the first row of $T'$ contains the numbers $n,n-1,\ldots,n-\ell+1$ in decreasing order, contradiction. 
\end{proof}

\begin{remark}
\label{rmk:comp factors}
Note that if a simple $L_c(\mu)$ appears as a composition factor inside a standard module $\Delta_c(\mu')$ then all weights $\wt(0, T)$ have to appear as weights of $\Delta_c(\mu')$, where $T$ is a standard Young tableau on $\mu$. Thus, the proof of Lemma \ref{lem:far hooks} shows that the only composition factors of $\Delta_c(\mu_{j})$ can be $L_c(\mu_{j})$ and $L_c(\mu_{j+1})$. 

Moreover, the multiplicity of $L_c(\mu)$ as a composition factor of $\Delta_c(\mu')$ is bounded above by the dimension of the (generalized) weight space $\Delta_c(\mu')_{\wt(0, T)}$ where $T$ is any standard Young tableau on $\mu$. Thus, $[\Delta_c(\mu_{j}):L_c(\mu_{j+1})] \leq 1$. We will see in the next proposition that this multiplicity is always equal to $1$.
\end{remark}

\begin{proposition}
\label{prop:bgg}
For $\ell = 1, \dots, n-1$, the 
space $\Hom_{H_{c}}(\Delta_c(\mu_{\ell}), \Delta_c(\mu_{\ell - 1}))$ is $1$-dimensional. Up to a nonzero scalar, the unique homomorphism $\phi_{\ell}: \Delta_c(\mu_{\ell}) \to \Delta_c(\mu_{\ell - 1})$ is determined by $\phi_{\ell}(v(0, T_{i_{1} < \dots < i_{\ell}})) = v(me_{i_{\ell}}, T_{i_{1} < \cdots < i_{\ell - 1}})$. 
\end{proposition}

To prove Proposition \ref{prop:bgg} we will construct the maps $\phi_{\ell}$ using the results in Section \ref{sec-mackey}.


\begin{lemma}
\label{lemma- hook one}
Let $\Wm = \tau (\sigma_{n-1} \cdots \sigma_2 \sigma_1 \tau)^{m-1}$.
Then for $1<i<n$ $\sigma_i \Wm = \Wm \sigma_{i-1}$ 
and $u_i \Wm = \Wm u_{i-1}$, but 
$u_1 \Wm = \Wm (u_{n} +mt)$.
\end{lemma}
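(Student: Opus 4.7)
The plan is to first analyze the building block $w := \sigma_{n-1}\sigma_{n-2}\cdots\sigma_2\sigma_1\tau$, so that $\Wm = \tau w^{m-1}$, and then deduce all the intertwining relations by iteration.

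\textbf{Step 1: Intertwining relations for $w$ with the $u_i$.} I will first show that $u_i w = w u_i$ for $i \in \{1,\ldots,n-1\}$, while $u_n w = w(u_n + t)$. This follows by moving $u_i$ to the right through the product. Using the relations $\sigma_j u_j = u_{j+1}\sigma_j$, $\sigma_j u_{j+1} = u_j \sigma_j$, and $[\sigma_j, u_k]=0$ for $k\neq j, j+1$, a telescoping shows
\[
u_i (\sigma_{n-1}\sigma_{n-2}\cdots\sigma_1) = (\sigma_{n-1}\sigma_{n-2}\cdots\sigma_1) u_{i+1} \quad (i<n), \qquad u_n(\sigma_{n-1}\cdots\sigma_1) = (\sigma_{n-1}\cdots\sigma_1) u_1.
\]
Then combining with $\tau u_{i-1} = u_i \tau$ for $i\neq n+1$ (which is a restatement of \eqref{tau ui}) and with $u_1 \tau = \tau(u_n + t)$ (a restatement of \eqref{u n+1}) gives the claim.

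\textbf{Step 2: Commutation of $w$ with $\sigma_j$ for $1\le j\le n-2$.} Using the braid relations and far commutations for the $\sigma_j$ (Lemma \ref{lem:braid}), I will verify the identity
\[
\sigma_j (\sigma_{n-1}\sigma_{n-2}\cdots\sigma_1) = (\sigma_{n-1}\sigma_{n-2}\cdots\sigma_1)\sigma_{j+1} \qquad (1\le j\le n-2),
\]
exactly as one does for $s_j$ and the long cycle $s_{n-1}\cdots s_1$: $\sigma_j$ commutes with $\sigma_{n-1},\dots,\sigma_{j+2}$, then one applies the braid relation $\sigma_j\sigma_{j+1}\sigma_j=\sigma_{j+1}\sigma_j\sigma_{j+1}$, then $\sigma_{j+1}$ commutes past $\sigma_{j-1},\dots,\sigma_1$. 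Then I move $\sigma_{j+1}$ past $\tau$: for $j\le n-3$ one uses $\sigma_{j+1}\tau = \tau\sigma_j$, and for the boundary case $j=n-2$ one uses exactly the same relation $\sigma_{n-1}\tau = \tau\sigma_{n-2}$ from \eqref{si tau 1} (applied to intertwiners). In both cases $\sigma_j w = w\sigma_j$.

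\textbf{Step 3: Iterate and apply $\tau$.} From Step 1, iterating gives $u_i w^{m-1} = w^{m-1}u_i$ for $i<n$ and $u_n w^{m-1} = w^{m-1}(u_n + (m-1)t)$; from Step 2, $\sigma_j w^{m-1} = w^{m-1}\sigma_j$ for $1\le j\le n-2$. Now I compute:
\begin{align*}
u_i \Wm &= u_i\tau w^{m-1} = \tau u_{i-1} w^{m-1} = \tau w^{m-1} u_{i-1} = \Wm u_{i-1}, \qquad 1<i<n,\\
u_1 \Wm &= u_1\tau w^{m-1} = \tau(u_n + t) w^{m-1} = \tau w^{m-1}(u_n + mt) = \Wm(u_n + mt),\\
\sigma_i \Wm &= \sigma_i\tau w^{m-1} = \tau\sigma_{i-1} w^{m-1} = \tau w^{m-1}\sigma_{i-1} = \Wm\sigma_{i-1}, \qquad 1<i<n,
\end{align*}
where in the first line I used $\tau u_{i-1}=u_i\tau$ (valid since $i-1\neq n$), and in the third line $\sigma_i \tau = \tau\sigma_{i-1}$ (valid since $i-1<n-1$) together with Step 2 applied to $\sigma_{i-1}$ with $i-1\in\{1,\dots,n-2\}$.

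The main technical point, and the only place some care is needed, is Step 2 in the boundary case $j=n-2$: here one cannot finish the argument by moving $\sigma_{j+1}=\sigma_{n-1}$ past $\tau$ via the \emph{generic} relation $\sigma_{k+1}\tau=\tau\sigma_k$ with $k<n-1$, and one must instead recognize that this same identity holds for $k=n-2$ by direct application of \eqref{si tau 1} at its boundary. All remaining manipulations are routine telescopings of the Cherednik-algebra relations collected in Section~\ref{sec: new presentation} and Lemma~\ref{lem:braid}.
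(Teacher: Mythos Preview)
Your proof is correct. The paper omits the argument entirely (``The proof is an easy computation we leave to the reader''), and your Steps 1--3 supply exactly the routine verification intended: pushing $u_i$ and $\sigma_j$ through the building block $w=\sigma_{n-1}\cdots\sigma_1\tau$ via the intertwining relations $u_j\sigma_k=\sigma_k u_{s_k(j)}$, the braid relations, and $\tau\sigma_k=\sigma_{k+1}\tau$, then iterating and prepending the final $\tau$. One small comment: your discussion of the ``boundary case'' $j=n-2$ in Step~2 is unnecessary, since the relation $\sigma_{k+1}\tau=\tau\sigma_k$ (equivalently $\tau\sigma_k=\sigma_{k+1}\tau$) is valid for all $k\in\{1,\dots,n-2\}$ uniformly, and $j=n-2$ lies squarely in that range; there is nothing special to check there.
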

The proof is an easy computation we leave to the reader.
Recall for $\mathbf{e}_1 = (1,0,\ldots,0)$ that 
$\Hu{e_1} = H_1(\mathbf{u}) \otimes H_{n-1}(\mathbf{u})$.


\begin{lemma}
\label{lemma- hook two}
Let $U\subseteq \Vnlo$
be the $\Sk{n-1} \times \Sk{1}$-submodule
spanned by all $v_T$ where $T = T_{i_1< i_2 < \cdots < i_{\ell-1}}$
with $i_{\ell-1} \neq n$. In particular $U \simeq \Vnolo$
as an $\Sk{n-1}$-module.
\begin{enumerate}
\item
Let $t,c$ be such that $\Delta_{t,c}(n-\ell+1, 1^{\ell-1})$
is $\A$-semisimple.
Then we have $\Hu{e_1} \Wm U \subseteq \Delta_{t,c}(n-\ell+1, 1^{\ell-1})$
is an $\Hu{e_1}$-submodule which is isomorphic to 
$\Vnolo$ as an $H_{n-1}(\mathbf{u})$-module on which
$u_1$ acts identically as $c(\ell-n)+mt$. 
\item
In the case $t=1, c =\frac mn, \gcd(m,n)=1$, then $u_1$
acts as $c \ell$ and
$\Hnu \Wm U \simeq \Ind_{\Hu{e_1}}^{\Hnu} (c \ell) \boxtimes
\Vnolo$.
\end{enumerate}
\end{lemma}

\begin{proof}
The first statement follows from
Lemma \ref{lemma- hook one}.
Since $\Delta_{t,c}(n-\ell+1, 1^{\ell-1})$ is $\A$-semisimple
the $\sigma_i$
act triangularly with respect to the $s_i$. 
So the action of the $\sigma_i$ on the inflation via $\ev$
of an $\Sk{n-1}$-module completely determines the $\Sk{n-1}$
structure. 
Recall that via $\ev$ the $u_i$ will act as Jucys-Murphy operators.
\\
For the second statement, we use Lemma \ref{lemma- hook one}
to determine the action of $u_1$. Because $\FXi(\wb{m \mathbf{e}_n})
= \tau (s_{n-1} \cdots s_2 s_1 \tau)^{m-1} $ is a minimal length
double coset representative we get the second statement.
\end{proof}

\begin{lemma}
\label{lemma- hook induced}
$\Ind_{\Hu{e_1}}^{\Hnu} (c \ell) \boxtimes \Vnolo$
has an $\Hnu$-submodule isomorphic to $\Vnl$
(inflated along $\ev$). In particular $u_1$ is identically zero
on this submodule. 
\end{lemma}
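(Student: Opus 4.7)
My plan is to combine a Mackey-style weight computation at the DAHA level with a central character argument. Let $M := \Ind_{\Hu{e_1}}^{\Hnu}(c\ell) \boxtimes \Vnolo$. A direct computation analogous to the proof of Corollary \ref{cor-mackey} shows that the generalized $\A$-weights of $M$ are of the form $g_r \cdot (c\ell, \wt(T^*))$, where $g_r := s_r s_{r-1} \cdots s_1$ for $r = 0, 1, \ldots, n-1$ runs over minimal-length left coset representatives of $\Sn/\Sk{n-1}$ and $T^*$ runs over standard tableaux of shape $(n-\ell, 1^{\ell-1})$ with $u$-weight $\wt(T^*)_k := -\ct{T^*}{k}c$. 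For each $T = T_{i_1 < \cdots < i_\ell} \in \SYT(\mu_\ell)$, setting $r := i_\ell - 1$ and defining $T^*$ to be the tableau obtained from $T$ by removing the leg box of content $-\ell$ (containing $i_\ell$) and decrementing every entry greater than $i_\ell$ by one, a direct content comparison yields $g_r \cdot (c\ell, \wt(T^*)) = \wt(T) := (-\ct{T}{1}c, \ldots, -\ct{T}{n}c)$, i.e.\ the weight of $v_T$ in $\Vnl$ inflated along $\ev$. Since the contents of $T^*$ all lie in $\{-(\ell-1), \ldots, 0, \ldots, n-\ell-1\}$ and in particular avoid $-\ell$, the pair $(r, T^*)$ is uniquely determined by $\wt(T)$, so this weight has generalized multiplicity exactly one in $M$.

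Next, as an $\Sn$-module $M \cong \Ind_{\Sk{n-1}}^{\Sn}\Vnolo$ decomposes by Pieri's rule as $\Vnlo \oplus \Vnl \oplus V_{(n-\ell,2,1^{\ell-2})}$, each isotypic of multiplicity one (the last summand absent when $\ell = 1$). Let $V_0 \subset M$ denote the $\Vnl$-isotypic. The element $z := u_1 + \cdots + u_n$ is central in $\Hnu$ (thanks to $s_i(u_i + u_{i+1}) = (u_i + u_{i+1})s_i$), and on any $V_\mu$ inflated along $\ev$ acts as the scalar $-c\sum_{\square \in \mu} \ct{\square}{}$. A short content calculation yields three pairwise distinct scalars for the three $\Sn$-components above, with consecutive differences $cn$ and $c\ell$. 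Hence $V_0$ is the $z$-eigenspace at the $\mu_\ell$-value and is $\Hnu$-stable. Moreover each weight $\wt(T)$ for $T \in \SYT(\mu_\ell)$ satisfies $\sum_j \wt(T)_j = -c \sum_{\square \in \mu_\ell}\ct{\square}{}$, matching the $z$-eigenvalue on $V_0$; hence the generalized $\wt(T)$-weight space of $M$ lies entirely in $V_0$. These contributions saturate the dimension $\dim V_0 = \dim \Vnl = \binom{n-1}{\ell}$.

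Thus $V_0$ is an $\A$-semisimple $\Hnu$-submodule with simple $\A$-spectrum $\{\wt(T) : T \in \SYT(\mu_\ell)\}$ coinciding with that of $\Vnl$ inflated along $\ev$, and with $\Sn$-structure $\Vnl$. Since a calibrated $\Hnu$-module is determined up to isomorphism by its $\A$-spectrum (the intertwiners $\sigma_i$ from Lemma \ref{lem:braid} act on weight vectors via formulas depending only on the weights), we conclude $V_0 \cong \Vnl$ inflated along $\ev$; in particular $u_1$ acts identically by zero on $V_0$, as asserted. The main subtlety will be the central-character computation separating the three $\Sn$-isotypics and the careful content bookkeeping establishing the weight bijection in the first step.
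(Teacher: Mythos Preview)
There is a genuine gap in your central-character step. You correctly observe that $z=u_1+\cdots+u_n$ is central in $\Hnu$, but precisely because of this $z$ acts by a \emph{single} scalar on all of $M$. Indeed $Z(\Hnu)=\C[u_1,\ldots,u_n]^{\Sn}\subseteq\C[u_1]\otimes\C[u_2,\ldots,u_n]^{\Sk{n-1}}\subseteq Z(\Hu{e_1})$, and since $N=(c\ell)\boxtimes\Vnolo$ is irreducible over $\Hu{e_1}$, every central element of $\Hnu$ acts on $1\otimes N$---and therefore on $M=\Hnu\cdot(1\otimes N)$---by a scalar. Your computation of $z$ on $V_\mu$ inflated along $\ev$ is correct, but you cannot transport it to the $\Sn$-isotypic components of $M$: there is no reason those components carry the inflated $\Hnu$-structure, which is exactly what you are trying to prove. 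Already for $n=2$, $\ell=1$ one has $z=c\cdot\Id$ on all of $M$; the $V_{(2)}$-isotypic is not $\Hnu$-stable, and the quotient $M/V_{(1,1)}$ carries weight $(c,0)$, not the inflated weight $(0,-c)$. So the sentence ``Hence $V_0$ is the $z$-eigenspace at the $\mu_\ell$-value and is $\Hnu$-stable'' does not follow, and neither does the subsequent claim that the $\wt(T)$-weight spaces lie in $V_0$.

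Your first paragraph is correct: the weights of $M$ are pairwise distinct, $M$ is $\A$-semisimple with one-dimensional weight spaces, and each $\wt(T)$ for $T\in\SYT(\mu_\ell)$ occurs exactly once. What remains is to show that $\bigoplus_{T}M_{\wt(T)}$ is $\Hnu$-stable, and this needs a genuine argument. Note that $s_i\cdot\wt(T)$ can be a weight of $M$ without being of the form $\wt(T')$ (e.g.\ $n=3$, $\ell=1$, $T=T_2$, $i=1$, where $s_1\cdot(0,c,-c)=(c,0,-c)$ is the weight of $1\otimes N$). In such cases one must show that $\sigma_i$ vanishes on $M_{\wt(T)}$, for instance by exhibiting $M_{\wt(T)}$ as $\sigma_i(M_{s_i\wt(T)})$ and then invoking $\sigma_i^2=0$ from \eqref{eq:sigma square}. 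The paper does not spell out any of this, citing the lemma as a standard fact for the degenerate affine Hecke algebra.
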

The proof is a standard result for the degenerate affine Hecke algebra.

\begin{lemma}
\label{lemma- u0 y0}
Let $M =\Delta_{t,c}(V)$ be an $\RCA$-module which has a
 $\Hnu$-submodule $N$ on which $u_1$ acts identically as zero.
Then for $1 \le i \le n$ the $y_i$  act as zero on $N$.
\end{lemma}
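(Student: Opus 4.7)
The strategy is to exploit the factorization $u_1 = \tau \lambda$ (relation \eqref{tau lambda}), together with the fact that $\tau$ acts injectively on any standard module, to conclude that $\lambda$ vanishes on $N$, and then to propagate this vanishing to all $y_i$ via the expressions $\y_i = s_i \cdots s_{n-1}\lambda s_1 \cdots s_{i-1}$ used in the proof of Theorem \ref{thm:new presentation}.

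First I would observe that on $M = \Delta_{t,c}(V) \simeq V \otimes_{\C} \C[x_1,\ldots,x_n]$ the operator $\tau = x_1(12\cdots n)$ is injective, because $(12\cdots n)$ acts as a linear automorphism and multiplication by $x_1$ is injective on the polynomial factor (which is free). By hypothesis, $u_1 v = 0$ for every $v \in N$, and since $u_1 = \tau\lambda$ this gives $\tau(\lambda v) = 0$ in $M$. Injectivity of $\tau$ on $M$ then forces $\lambda v = 0$ for all $v \in N$.

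Next I would use that $N$ is an $\Hnu$-submodule, so in particular $N$ is stable under $\Sn \subseteq \Hnu$. Given $v \in N$ and $1 \le i \le n$, the vector $s_1 \cdots s_{i-1} v$ again lies in $N$, hence $\lambda(s_1\cdots s_{i-1} v) = 0$ by the previous step. Applying $s_i \cdots s_{n-1}$ to this zero vector and using the identity $\y_i = s_i \cdots s_{n-1}\lambda s_1\cdots s_{i-1}$ yields $y_i v = 0$, which is exactly the claim.

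I do not expect any real obstacle here: the only step requiring care is the injectivity of $\tau$ on $M$, which is immediate from the PBW identification $\Delta_{t,c}(V) \simeq V \otimes \C[x_1,\ldots,x_n]$ as a module for the polynomial subalgebra. Note that we never need $\lambda v$ itself to lie in $N$; injectivity of $\tau$ is used as an operator on the ambient module $M$.
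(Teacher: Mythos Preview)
Your proof is correct and follows essentially the same approach as the paper: factor $u_1$ as an injective operator times a lowering operator, use freeness of $\Delta_{t,c}(V)$ over $\C[x_1,\ldots,x_n]$ to conclude the lowering operator kills $N$, and then propagate via $\Sn$-invariance. The only cosmetic difference is that the paper uses the factorization $u_1 = x_1 y_1$ directly (so $x_1$ injective gives $y_1|_N = 0$, then $y_i = (1\,2\,\cdots\,i)\,y_1\,(i\,\cdots\,2\,1)$), whereas you route through $u_1 = \tau\lambda$ and the expression $y_i = s_i\cdots s_{n-1}\lambda s_1\cdots s_{i-1}$; since $\tau\lambda = x_1 y_1$, these are the same argument.
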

\begin{proof}
Recall $u_1 =x_1 y_1$. 
Since $\Delta_{t,c}(V)$ is free as a
 $\C[x_1, \ldots, x_n]$-module,
$x_1$ has no torsion so in particular $y_1$ is zero on $N$.
As $N$ is $\Sn$-invariant and $y_i = (1,2,\ldots, i ) y_1 (i,\ldots,
2,1)$, all the $y_i$ must act as zero.
\end{proof}

\begin{proof}[Proof of Proposition \ref{prop:bgg}]As a consequence of Lemma \ref{lemma- u0 y0} we get that $\Delta(n-\ell+1, 1^{\ell-1})$
has a $\Sn$-submodule
isomorphic to  $\Vnl$ on which all $y_i$ vanish. 
Thus Frobenius Reciprocity gives us a nonzero $\RCA$ homomorphism
$$\Delta_c(n-\ell, 1^{\ell}) \xrightarrow{\phi_\ell} 
\Delta_c(n-\ell+1, 1^{\ell-1}).$$
This yields a proof of
Proposition \ref{prop:bgg}.
\end{proof}

More concretely, we can normalize the basis $\{ v_T \mid T \in
\SYT(\mu_\ell)\} $  of $\Vnlo$ so that we 
fix $v_{\mathbf{T}}$ for  $\mathbf{T} = T_{2 < 3 < \cdots < \ell+1}$
and take the other basis vectors to be
$\sigma_\ww v_{\mathbf{T}} =: v_{\ww \cdot \mathbf{T}}$
for $ \id \le \ww \le 
[1, n-\ell+1, \ldots, n-1, n, 2,3, \ldots, n-\ell]$ in
weak Bruhat order. (Recall as the $\sigma_i$ satisfy the braid
relations, $\sigma_\ww$ makes sense.)
Then $\phi_\ell$ is determined by
$$v_{\mathbf{T}} \mapsto \sigma_\ell \cdots \sigma_2 \sigma_1 \Wm
v_{T_{2 < 3< \cdots < \ell}}$$
where all tableau  on the left of $\mapsto$
 have shape $(n-\ell, 1^{\ell})$
but all tableau  on the right have shape $(n-\ell+1, 1^{\ell-1})$
More generally (noting $i_{\ell-1} \ge \ell$)  we have
$$v_{{T_{i_1 < i_2 < \cdots < i_\ell}}} \mapsto \sigma_{i_{\ell-1}}
 \cdots \sigma_2 \sigma_1 \Wm
v_{T_{i_1 < i_2< \cdots < i_{\ell-1}}}.$$
In particular when $i_\ell = n$ we get
$$v_{{T_{i_1 < i_2 < \cdots < n}}} \mapsto 
(\sigma_{n-1} \cdots \sigma_2 \sigma_1 \tau)^m 
v_{T_{i_1 < i_2< \cdots < i_{\ell-1}}}.$$
Recall 
$(s_{n-1} \cdots s_2 s_1 \tau)^m  = \tav{m \mathbf{e}_n}$.
One can easily check the above vectors' $u$-weights are
preserved by $\phi_\ell$.
It is only slightly more work to check with the above
assignment that $\phi_\ell$ intertwines the $\sigma_i$
acting on the $v_T, T \in \SYT(n-\ell, 1^{\ell})$.

\begin{corollary}
For any $\ell = 0, \dots, n-1$, the standard module $\Delta_c(\mu_{\ell})$ has a unique composition series $0 \subseteq I_{\ell} \subseteq \Delta_c(\mu_{\ell})$. Moreover, $I_{\ell} \cong L_c(\mu_{\ell +1})$ and $\Delta_c(\mu_{\ell})/I_{\ell} = L_c(\mu_{\ell})$.
\end{corollary}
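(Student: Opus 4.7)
The strategy is to combine the combinatorial bound on composition multiplicities coming from Lemma \ref{lemma:injective} and Remark \ref{rmk:comp factors} with the explicit morphism $\phi_{\ell+1}$ constructed in Proposition \ref{prop:bgg}, and then read off the submodule lattice.

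First I would pin down the composition factors of $\Delta_c(\mu_\ell)$. By Remark \ref{rmk:comp factors} (which in turn uses Lemmas \ref{lem:not hooks} and \ref{lem:far hooks}), the only possibilities are $L_c(\mu_\ell)$ and $L_c(\mu_{\ell+1})$. By Lemma \ref{lemma:injective}, every generalized weight space of $\Delta_c(\mu_\ell)$ is one-dimensional, so for any $T\in\SYT(\mu_\ell)$ (resp.\ $T'\in\SYT(\mu_{\ell+1})$) the weight $\wt(0,T)$ (resp.\ $\wt(0,T')$) appears with multiplicity exactly one in $\Delta_c(\mu_\ell)$; thus both $[\Delta_c(\mu_\ell):L_c(\mu_\ell)]\le 1$ and $[\Delta_c(\mu_\ell):L_c(\mu_{\ell+1})]\le 1$. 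Since $L_c(\mu_\ell)$ is by construction the unique simple quotient of $\Delta_c(\mu_\ell)$, its multiplicity is exactly one. Hence $\Delta_c(\mu_\ell)$ has at most two composition factors, each appearing at most once.

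Next, for $0\le\ell\le n-2$, I would use Proposition \ref{prop:bgg} to produce the nonzero map $\phi_{\ell+1}\colon \Delta_c(\mu_{\ell+1})\to \Delta_c(\mu_\ell)$, and set $I_\ell:=\operatorname{Im}(\phi_{\ell+1})$. Because $I_\ell$ is simultaneously a quotient of $\Delta_c(\mu_{\ell+1})$ and a submodule of $\Delta_c(\mu_\ell)$, its composition factors lie in
\[
\{L_c(\mu_{\ell+1}),L_c(\mu_{\ell+2})\}\cap\{L_c(\mu_\ell),L_c(\mu_{\ell+1})\}=\{L_c(\mu_{\ell+1})\}.
\]
Since $[\Delta_c(\mu_\ell):L_c(\mu_{\ell+1})]\le 1$ and $I_\ell\ne 0$, this forces $I_\ell\cong L_c(\mu_{\ell+1})$. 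The quotient $\Delta_c(\mu_\ell)/I_\ell$ then has only $L_c(\mu_\ell)$ as a composition factor (with multiplicity one) and inherits $L_c(\mu_\ell)$ as its head, so it is isomorphic to $L_c(\mu_\ell)$. For $\ell=n-1$ there is no $\mu_n$ and no such map from $\Delta_c(\mu_n)$, so the only possible composition factor is $L_c(\mu_{n-1})$, giving $\Delta_c(\mu_{n-1})=L_c(\mu_{n-1})$, and we take $I_{n-1}=0$ (with the convention $L_c(\mu_n)=0$).

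Finally, uniqueness of the filtration is automatic: any proper submodule $N\subsetneq \Delta_c(\mu_\ell)$ cannot have $L_c(\mu_\ell)$ as a composition factor (else $L_c(\mu_\ell)$ would appear as a subquotient of $N$ and again as the head of $\Delta_c(\mu_\ell)/N$, contradicting multiplicity one), so every composition factor of $N$ is $L_c(\mu_{\ell+1})$; but then $N\subseteq I_\ell$, and by simplicity of $I_\ell$ either $N=0$ or $N=I_\ell$. I do not anticipate a genuine obstacle here: the main content is packaged in Proposition \ref{prop:bgg} and Remark \ref{rmk:comp factors}, and the corollary is an essentially formal bookkeeping argument about composition factors and heads.
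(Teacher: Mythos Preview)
Your proposal is correct and follows essentially the same approach as the paper: bound the composition multiplicities via Remark~\ref{rmk:comp factors} and one-dimensionality of weight spaces, then take $I_\ell$ to be the image of $\phi_{\ell+1}$ from Proposition~\ref{prop:bgg}. Your intersection argument for the simplicity of $I_\ell$ and the explicit uniqueness check simply add detail that the paper leaves implicit (one small imprecision: the step ``then $N\subseteq I_\ell$'' should instead argue that $N$ and $I_\ell$, being simple submodules with the same isomorphism type in a module where that type has multiplicity one, must coincide).
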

\begin{proof}
From Remark \ref{rmk:comp factors} and Proposition \ref{prop:bgg} it follows that 

$$
[\Delta_c(\mu_{\ell}):L_c(\mu)] = \begin{cases} 1 & \mu = \mu_{\ell}, \mu_{\ell + 1} \\ 0 & \text{otherwise}. \end{cases}
$$

\noindent moreover, $L_c(\mu_{\ell+1})$ cannot appear as a quotient of $\Delta_c(\mu_{\ell})$. So defining $I_{\ell} := \phi_{\ell+1}(\Delta_c(\mu_{\ell + 1}))$ the result follows.
\end{proof}

\begin{corollary}
We have $\operatorname{im}(\phi_{\ell + 1}) = \ker(\phi_{\ell})$. In other words, the complex $\Delta_c(\mu_{\ell+1}) \buildrel \phi_{\ell + 1} \over \longrightarrow \Delta_c(\mu_{\ell})$ is exact outside of degree $0$ and coincides with \eqref{eqn:resolution}.
\end{corollary}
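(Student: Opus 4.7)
The plan is to deduce exactness immediately from the previous corollary, which tells us that each $\Delta_c(\mu_\ell)$ has a unique composition series $0 \subseteq I_\ell \subseteq \Delta_c(\mu_\ell)$ with $I_\ell \cong L_c(\mu_{\ell+1})$ and $\Delta_c(\mu_\ell)/I_\ell \cong L_c(\mu_\ell)$. In particular $\Delta_c(\mu_\ell)$ has length two (and is simple when $\mu_{\ell+1}$ does not exist), and $I_\ell$ is its unique nontrivial proper submodule. Recall also that $I_\ell$ was defined as $\phi_{\ell+1}(\Delta_c(\mu_{\ell+1})) = \operatorname{im}(\phi_{\ell+1})$, so the content of the statement is the identification $\ker(\phi_\ell) = I_\ell$.

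First I will identify the image. Since $\phi_\ell$ is a nonzero map of $H_c$-modules, $\operatorname{im}(\phi_\ell)$ is a nonzero submodule of $\Delta_c(\mu_{\ell-1})$, hence either $I_{\ell-1}$ or all of $\Delta_c(\mu_{\ell-1})$. The latter is impossible: it would force the simple $L_c(\mu_{\ell-1})$ (the top of $\Delta_c(\mu_{\ell-1})$) to appear as a composition factor of $\Delta_c(\mu_\ell)$, contradicting Remark \ref{rmk:comp factors} (which says the only composition factors of $\Delta_c(\mu_\ell)$ are $L_c(\mu_\ell)$ and $L_c(\mu_{\ell+1})$). Therefore $\operatorname{im}(\phi_\ell) = I_{\ell-1} \cong L_c(\mu_\ell)$.

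Next I will identify the kernel using the first isomorphism theorem. We have $\Delta_c(\mu_\ell)/\ker(\phi_\ell) \cong \operatorname{im}(\phi_\ell) \cong L_c(\mu_\ell)$. Since the only quotient of $\Delta_c(\mu_\ell)$ isomorphic to $L_c(\mu_\ell)$ is $\Delta_c(\mu_\ell)/I_\ell$ (from the unique composition series), we conclude $\ker(\phi_\ell) = I_\ell$. Combined with $I_\ell = \operatorname{im}(\phi_{\ell+1})$ this proves the middle exactness $\operatorname{im}(\phi_{\ell+1}) = \ker(\phi_\ell)$ for $1 \le \ell \le n-2$.

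Finally I will handle the endpoints. At the left end $\ell = n-1$: the partition $\mu_n$ does not exist, so $I_{n-1} = 0$, hence $\Delta_c(\mu_{n-1})$ is simple and $\ker(\phi_{n-1}) = 0 = \operatorname{im}(\phi_n)$ (interpreting $\phi_n = 0$). At the right end $\ell = 0$: the cokernel is $\Delta_c(\mu_0)/\operatorname{im}(\phi_1) = \Delta_c(\triv)/I_0 \cong L_c(\triv) = L_{m/n}$, which identifies the augmented complex with the resolution \eqref{eqn:resolution}. No serious obstacle is expected; the entire argument is a formal consequence of the length-two structure established in the preceding corollary.
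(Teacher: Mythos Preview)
Your argument is correct and follows essentially the same route as the paper: both deduce $\ker(\phi_\ell)=I_\ell$ from the length-two structure of $\Delta_c(\mu_\ell)$ established in the preceding corollary. The only cosmetic difference is that the paper shows $\phi_\ell$ is not injective by invoking Lemma~\ref{lem:far hooks} to get $\phi_\ell\circ\phi_{\ell+1}=0$ (so $I_\ell\subseteq\ker\phi_\ell$), whereas you instead pin down $\operatorname{im}(\phi_\ell)=I_{\ell-1}$ using the composition-factor constraint of Remark~\ref{rmk:comp factors} and then apply the first isomorphism theorem; both are immediate from the same prior results.
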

\begin{proof}
It is enough to see that $\ker(\phi_{\ell}) = I_{\ell}$. For this, it is enough to see that $\phi_{\ell}$ is neither zero nor injective. That it is nonzero is obvious. Thanks to Lemma \ref{lem:far hooks} we must have $\phi_{\ell+1}\circ \phi_{\ell} = 0$. So $\phi_{\ell}(I_{\ell}) = 0$ and $\phi_{\ell}$ is not injective.
\end{proof}




\subsection{Weight basis of simples} 

We continue assuming $c = m/n$ with $m$ and $n$ coprime positive integers. In this section,we describe weights belonging to the maximal proper submodule of every standard module $\Delta_c(\mu)$. Thanks to Corollary \ref{cor:not hook}, this question is only interesting when $\mu = \mu_{\ell}$ is a hook partition. Moreover, since $\Delta_c(\mu_{n-1})$ is simple, we may and will assume throughout this section that $0 \leq \ell < n-1$.  

\begin{lemma}\label{lemma:same weight}
Let $(\ba, T) \in \Z_{\geq 0}^{n} \times \SYT(\mu_{\ell})$. Then, there exists $(\bb, T') \in \Z_{\geq 0}^{n} \times \SYT(\mu_{\ell + 1})$ such that $\wt(\ba, T) = \wt(\bb, T')$ if and only if either

\begin{itemize}
\item $a_{g_{\ba}^{-1}(n)} - m > a_{g_{\ba}^{-1}(i_{\ell})}$ or 
\item $a_{g_{\ba}^{-1}(n)} - m = a_{g_{\ba}^{-1}(i_{\ell})}$ and $g_{\ba}^{-1}(n) > g_{\ba}^{-1}(i_{\ell})$
\end{itemize}

\noindent where $i_{\ell}$ is the number labeling the box with smallest content of $\mu_{\ell}$ on the tableau $T$. Moreover, if this is the case, then $(\bb, T')$ is uniquely determined.
\end{lemma}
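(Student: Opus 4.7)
The approach is to analyze the equation $\wt(\ba,T)=\wt(\bb,T')$ coordinate by coordinate. For each $i$ it gives
$$
n(a_i-b_i)=m\bigl(\ct{T}{g_\ba(i)}-\ct{T'}{g_\bb(i)}\bigr),
$$
and since $\gcd(m,n)=1$ the content difference must be a multiple of $n$. The contents of $\mu_\ell$ lie in $\{-\ell,\ldots,n-\ell-1\}$ and those of $\mu_{\ell+1}$ in $\{-\ell-1,\ldots,n-\ell-2\}$, so the content difference lies in $\{-(n-2),\ldots,n\}$; the only multiples of $n$ in this range are $0$ and $n$. Thus for each $i$ either the contents agree and $a_i=b_i$, or $\ct{T}{g_\ba(i)}=n-\ell-1$ (the rightmost arm box of $\mu_\ell$, the unique box absent from $\mu_{\ell+1}$), $\ct{T'}{g_\bb(i)}=-\ell-1$ (the unique new leg box of $\mu_{\ell+1}$), and $a_i-b_i=m$. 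The uniqueness of the box of content $n-\ell-1$ in $\mu_\ell$ forces the second alternative to occur for exactly one index $i^*$, and $g_\ba(i^*)$ must be the entry of $T$ sitting at the rightmost arm.

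Next, I would split into two cases depending on whether $n$ lies in the arm or the leg of $T$. Suppose first $i_\ell=n$, so $n$ lies at the bottom of the leg of $T$ and $g_\ba(i^*)<n$. Then $i^*\neq g_\ba^{-1}(n)$, and the position $g_\ba^{-1}(n)$ is still a (rightmost) position of the maximum in $\bb$, since $b_{i^*}<a_{i^*}\leq\max\ba$. Hence $g_\bb(g_\ba^{-1}(n))=n$, which forces the entry $n$ of $T'$ to sit at content $\ct{T}{n}=-\ell$. But the entry $n$ in a SYT on $\mu_{\ell+1}$ must occupy a removable corner (content $n-\ell-2$ or $-\ell-1$), a contradiction. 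Thus no $(\bb,T')$ exists in this case, in accordance with the lemma, whose condition then reduces to $-m\geq 0$ and fails.

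Now assume $i_\ell<n$. Then $g_\ba(i^*)=n$ and $i^*=g_\ba^{-1}(n)$ is forced, so $\bb$ is determined by $b_{i^*}=a_{i^*}-m$ and $b_i=a_i$ for $i\neq i^*$. A direct computation using the defining formula for the sorting permutation shows that $\sigma:=g_\bb\circ g_\ba^{-1}$ is the cycle sending $n\mapsto k^*+\alpha+1$, fixing $k$ for $k\leq k^*+\alpha$, and sending $k\mapsto k+1$ for $k^*+\alpha<k<n$, where $k^*:=\sharp\{j:a_j<a_{i^*}-m\}$ and $\alpha:=\sharp\{j<i^*:a_j=a_{i^*}-m\}$. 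The candidate tableau $T'$ is then forced by $\ct{T'}{\sigma(k)}=\ct{T}{k}$ for $k<n$ together with $\ct{T'}{g_\bb(i^*)}=-\ell-1$, which determines $T'$ uniquely.

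It remains to decide when this forced $T'$ is a genuine SYT. Because $\sigma$ is strictly increasing on $\{1,\ldots,n-1\}$, the arm entries and the upper-leg entries of $T'$ automatically inherit the correct orderings from $T$ and $\ct{T'}{1}=0$ holds. The only nontrivial check is the new bottom-leg inequality $\sigma(i_\ell)<\sigma(n)=k^*+\alpha+1$, which unwinds to the clean condition $i_\ell\leq k^*+\alpha$. I would then translate this inequality into the lemma's form: $k^*\geq i_\ell$ corresponds exactly to $a_{g_\ba^{-1}(n)}-m>a_{g_\ba^{-1}(i_\ell)}$, while $k^*<i_\ell\leq k^*+\alpha$ corresponds exactly to the equality $a_{g_\ba^{-1}(n)}-m=a_{g_\ba^{-1}(i_\ell)}$ combined with the index condition $g_\ba^{-1}(n)>g_\ba^{-1}(i_\ell)$ (which singles out the ties sitting before $i^*$). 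The non-negativity $b_{i^*}\geq 0$ is automatic, since $k^*+\alpha\geq i_\ell\geq 1$ already produces some $a_j\leq a_{i^*}-m$. Uniqueness of $(\bb,T')$ follows from the construction. The main obstacle is the careful comparison of $g_\ba$ and $g_\bb$ required to extract $\sigma$ and to translate the resulting combinatorial inequality into the form stated in the lemma.
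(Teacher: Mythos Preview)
Your proof is correct and follows essentially the same route as the paper: both begin with the coordinate equation $n(a_i-b_i)=m(\ct{T}{g_\ba(i)}-\ct{T'}{g_\bb(i)})$, exploit coprimality and the content ranges of adjacent hooks to force the dichotomy (contents agree / extremal boxes swap), identify the unique special index, and then reduce standardness of $T'$ to the single inequality $g_\bb g_\ba^{-1}(i_\ell) < g_\bb g_\ba^{-1}(n)$. The paper handles the case $i_\ell=n$ by directly comparing $b$-values to show this inequality fails, while you argue via the forced position of the label $n$ in $T'$; both are short and equivalent. In the main case you go a bit further than the paper by writing $\sigma=g_\bb\circ g_\ba^{-1}$ explicitly as a cycle, which makes the translation to $i_\ell\le k^*+\alpha$ very transparent; the paper instead checks the inequality directly from the definition of $g_\bb$.

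One minor imprecision: you assert that ``$\ct{T'}{1}=0$ holds'' as if automatic, but in fact $\sigma(1)=1$ requires $k^*+\alpha\ge 1$. This is not a gap in the logic, since your main condition $i_\ell\le k^*+\alpha$ (with $i_\ell\ge 1$) already forces it, and you effectively note this at the end when deriving $b_{i^*}\ge 0$; but the sentence as written suggests it is independent of the bottom-leg check.
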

\begin{proof}
Following the notation of Section \ref{sect:BGG}, let us denote $T = T_{i_{1} < \cdots < i_{\ell}}$. We will, first, see that there is a unique  $\bb \in \Z^{n}$ (possibly with negative entries) and $T'$  a tableau on $\mu_{\ell+1}$ (possibly non-standard) such that $\wt(\ba, T) = \wt(\bb, T')$. Indeed, if such pair $(\bb, T')$ exists we must have 
$$
n(a_{i} - b_{i}) = m(\ct{T}{g_{\ba}(i)} - \ct{T'}{g_{\bb}(i)})
$$

\noindent for every $i = 1, \dots, n$. Since $m$ and $n$ are coprime and $\mu_{\ell}, \mu_{\ell + 1}$ are adjacent hooks, we must have that either
\begin{itemize}
\item[(i)] $a_{i} = b_{i}$ and $T_{g_{\ba}(i)} = T'_{g_{\bb}(i)}$ (meaning that this box is in $\mu_{\ell} \cap \mu_{\ell + 1}$) or
\item[(ii)] $a_{i} - b_{i} = m$, $T_{g_{\ba}(i)}$ is the box of highest content in $\mu_{\ell}$, and $T'_{g_{\bb}(i)}$ is the box of lowest content in $\mu_{\ell + 1}$. 
\end{itemize}

Let $k \in \{1, \dots, n\}$ be such that $T_{k}$ is the box of highest content of $\mu_{\ell}$. From (i) and (ii), the vector $b$ is uniquely specified: $b_{i} = a_{i}$ if $i \neq g_{\ba}^{-1}(k)$, and $b_{g_{\ba}^{-1}(k)} = a_{g_{\ba}^{-1}(k)} - m$. Moreover, the tableau $T'$ is also uniquely specified: $T'_{g_{\bb}(i)} = T_{g_{\ba}(i)}$ if $i \neq g_{\ba}^{-1}(k)$, and $T'_{g_{\bb}(g_{\ba}^{-1}(k))}$ is the box with lowest content in $\mu_{\ell + 1}$. Our job now is to check that all coordinates of $\bb$ are non-negative and $T'$ is standard if and only if the conditions of the lemma are satisfied. Clearly, $\bb$ is non-negative if and only if $a_{g_{\ba}^{-1}(k)} \geq m$, so we will focus on the condition that $T'$ is standard. 

Let us first verify that $T'$ is standard on $\mu_{\ell} \cap \mu_{\ell + 1}$. Indeed, consider two consecutive boxes in $\mu_{\ell} \cap \mu_{\ell + 1}$ and let $j_{1} < j_{2}$ be their labels under $T$. Note that $j_{1}, j_{2} \neq k$. By definition of $g_{\ba}$ and $\bb$ we have
$$
b_{g_{\ba}^{-1}(j_{1})} = a_{g_{\ba}^{-1}(j_{1})} \leq a_{g_{\ba}^{-1}(j_{2})} = b_{g_{\ba}^{-1}(j_{2})}
$$
\noindent and, if we have an equality, $g_{\ba}^{-1}(j_{1}) < g_{\ba}^{-1}(j_{2})$. From the definition of $g_{\bb}$ it follows that $g_{\bb}g_{\ba}^{-1}(j_{1}) < g_{\bb}g_{\ba}^{-1}(j_{2})$, as wanted. 

So $T'$ is standard if and only if $g_{\bb}g_{\ba}^{-1}(i_{\ell}) < g_{\bb}g_{\ba}^{-1}(k)$. If $i_{\ell} = n$, we have $b_{g_{\ba}^{-1}(i_{\ell})} = a_{g_{\ba}^{-1}(n)} > a_{g_{\ba}^{-1}(k)} - m = b_{g_{\ba}^{-1}(k)}$ and therefore $g_{\bb}g_{\ba}^{-1}(i_{\ell}) > g_{\bb}g_{\ba}^{-1}(k)$. Thus, we must have $k = n$ and $i_{\ell} < n$. It follows now that the tableau $T'$ is standard if and only if either $b_{g_{\ba}^{-1}(n)} > b_{g_{\ba}^{-1}(i_{\ell})}$ or $b_{g_{\ba}^{-1}(n)} = b_{g_{\ba}^{-1}(i_{\ell})}$ and $g_{\ba}^{-1}(n) > g_{\ba}^{-1}(i_{\ell})$, which translates precisely into the conditions of the statement of the lemma. Finally, note that $a_{g_{\ba}^{-1}(n)} - m \geq a_{g_{\ba}^{-1}(i_{\ell})}$ automatically implies $a_{g_{\ba}^{-1}(n)} - m \geq 0$. We are done. 
\end{proof}

\begin{remark} Note that for $\ell = 0$ there is a unique tableau $T$
on $\mu_{0}$ and $i_{\ell} = 1$. In this case, $a_{g_{\ba}^{-1}(1)} =
\min\ba$ and $a_{g_{\ba}^{-1}(n)} = \max \ba$, so we recover the
conditions defining the set $\cS$ in Section \ref{sect:radical triv}. 
 \end{remark}
\begin{corollary}
Let $(\ba, T) \in \Z_{\geq 0}^{n} \times \SYT(\mu_{\ell})$. Then, $v(\ba, T) \in I_{\ell}$ if and only if there exists $(\bb, T') \in \Z^{n}_{\geq 0} \times \SYT(\mu_{\ell+1})$ such that $\wt(\ba, T) = \wt(\bb, T')$.
\end{corollary}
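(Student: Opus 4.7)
The plan is to proceed by downward induction on $\ell$ in the range $0 \le \ell \le n-2$, with base case $\ell = n-2$. At the base, $I_{n-1} = 0$ since the hook $\mu_n$ does not exist (so there is no $L_c(\mu_n)$ to be embedded as a submodule of $\Delta_c(\mu_{n-1})$), while simultaneously $\SYT(\mu_n) = \emptyset$; thus both sides of the equivalence are vacuously false for every nonzero basis vector $v(\ba,T)$. For the inductive step, the forward direction is essentially immediate: if $v(\ba,T) = \phi_{\ell+1}(v)$ for some $v\in\Delta_c(\mu_{\ell+1})$, then because $\phi_{\ell+1}$ preserves $\A$-weights and weight spaces of $\Delta_c(\mu_{\ell+1})$ are at most one-dimensional by Theorem \ref{thm:vermas}, $v$ must be a nonzero scalar multiple of some basis vector $v(\bb,T')$ with $\wt(\bb,T')=\wt(\ba,T)$.

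For the backward direction, suppose $(\bb,T')$ is given with $\wt(\bb,T')=\wt(\ba,T)$. By Lemma \ref{lemma:same weight} such a pair is unique, and is obtained explicitly by subtracting $m$ from $\ba$ at position $g_\ba^{-1}(n)$ and relocating the label $n$ from the top-right box $(1,n-\ell)$ of $\mu_\ell$ to the newly created bottom-leg box $(\ell+2,1)$ of $\mu_{\ell+1}$. Because weight spaces of $\Delta_c(\mu_\ell)$ are one-dimensional, $v(\ba,T)\in I_\ell$ if and only if some vector of weight $\wt(\ba,T)$ in $\Delta_c(\mu_{\ell+1})$ has nonzero image under $\phi_{\ell+1}$; and since the only such vector up to scaling is $v(\bb,T')$, this reduces to showing $v(\bb,T')\notin\ker(\phi_{\ell+1})=I_{\ell+1}$. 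By the inductive hypothesis applied at level $\ell+1$, this in turn is equivalent to showing that no $(\bc,T'')\in\Z_{\geq 0}^n\times\SYT(\mu_{\ell+2})$ satisfies $\wt(\bc,T'')=\wt(\bb,T')$.

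To rule out such a $(\bc,T'')$, I apply Lemma \ref{lemma:same weight} a second time, now at $(\bb,T')$. Existence of $(\bc,T'')$ would require $T'_n=(1,n-\ell-1)$, the top-right corner of $\mu_{\ell+1}$. The analysis splits on whether $g_\bb=g_\ba$. In that case, the explicit description of $T'$ gives $T'_n=(\ell+2,1)\ne (1,n-\ell-1)$, an immediate contradiction. If instead $g_\bb\ne g_\ba$, set $p:=g_\bb^{-1}(n)$ and $q:=g_\ba(p)$; then $T'_n=T_q=(1,n-\ell-1)$ forces $q<n$, and since $a_p$ has $\ba$-rank $q$, one obtains $a_p\le a_{g_\ba^{-1}(n)}$. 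Meanwhile the value-and-tiebreak condition of Lemma \ref{lemma:same weight} applied to $(\bb,T')$ forces $a_p\ge a_{g_\ba^{-1}(n)}$, and in the equality case the tiebreak demands $p>g_\ba^{-1}(n)$. But $a_p=a_{g_\ba^{-1}(n)}$ together with rank $q<n$ at position $p$ compels $p<g_\ba^{-1}(n)$, since among tied values the sorting rule assigns smaller rank to earlier indices. These are incompatible, completing the induction.

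The principal obstacle is the sub-case $g_\bb\ne g_\ba$: here the tableau-position condition $T'_n=(1,n-\ell-1)$ can genuinely hold, so a purely combinatorial (tableau-only) argument does not suffice; the obstruction emerges only from a careful bookkeeping of how the sorting tie-breaking rule interacts with the arithmetic constraint that subtracting $m$ from the maximum of $\ba$ has shifted ranks. Pinpointing the correct pair of inequalities that produce the contradiction is the technical crux of the proof.
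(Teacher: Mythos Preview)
Your proof is correct and follows essentially the same route as the paper: both reduce sufficiency to showing that the pair $(\bb,T')$ produced by Lemma~\ref{lemma:same weight} cannot itself descend to level $\ell+2$, and both verify this using the explicit description of $(\bb,T')$ from that lemma. Two minor remarks: (i) your base case should be $\ell=n-1$, not $\ell=n-2$ (what you actually describe is the $\ell=n-1$ case); (ii) the downward induction is unnecessary---at the point where you invoke the inductive hypothesis at level $\ell+1$, you only need the \emph{forward} implication there (``$v(\bb,T')\in I_{\ell+1}$ implies some $(\bc,T'')$ exists''), which is immediate from $I_{\ell+1}=\phi_{\ell+2}(\Delta_c(\mu_{\ell+2}))$ and one-dimensionality of weight spaces. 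The paper exploits exactly this and then checks the non-descent by a single inequality chain $b_{g_\bb^{-1}(n)}-m\le b_{g_\bb^{-1}(i_{\ell+1})}$; your case split on $g_\bb=g_\ba$ versus $g_\bb\ne g_\ba$ (equivalently $g_\bb^{-1}(n)=g_\ba^{-1}(n)$ or not) reaches the same conclusion by tracking where $T'_n$ lands, which is a perfectly valid alternative organization.
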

\begin{proof}
Since $I_{\ell} = \phi_{\ell + 1}(\Delta_c(\mu_{\ell + 1}))$, the necessity is clear. For sufficiency, assume that such $(\bb, T')$ exists. It is enough to see that $v(\bb, T') \not\in I_{\ell + 1}$ and to see this we can check that there does not exist $(\bd, T'') \in \Z_{\geq 0}^{n} \times \SYT(\mu_{\ell + 2})$ such that $\wt(\bb, T') = \wt(\bd, T'')$. So we have to check that $(\bb, T')$ does not satisfy the conditions of Lemma \ref{lemma:same weight}. Let $i_{\ell + 1} := g_{\bb}g_{\ba}^{-1}(n)$, and note that $T'_{i_{\ell + 1}}$ is precisely the box with lowest content in $\mu_{\ell + 1}$. Now,
$$
b_{g_{\bb}^{-1}(n)} - m = a_{g_{\bb}^{-1}(n)} - m \leq a_{g_{\ba}^{-1}(n)} - m = b_{g_{\ba}^{-1}(n)} = b_{g_{\bb}^{-1}(i_{\ell + 1})}
$$

If the inequality is strict, we are done. Else, we need to show that $g_{\bb}^{-1}(n) < g_{\bb}^{-1}(i_{\ell + 1}) = g_{\ba}^{-1}(n)$. But in this case we have $a_{g_{\bb}^{-1}(n)} = a_{g_{\ba}^{-1}(n)}$ and the result now follows by the definition of $g_{\ba}$. 
\end{proof}

\begin{corollary}\label{cor:weights of simples}
Assume $0 \leq \ell < n-1$ and let $(\ba, T) \in \Z_{\geq 0}^{n} \times \SYT(\mu_{\ell})$. Let us denote by $i_{\ell}$ the label of the box with smallest content of $\mu_{\ell}$ under $T$. Then, $v(\ba, T) \in I_{\ell}$ if and only if either
\begin{itemize}
\item $a_{g_{\ba}^{-1}(n)} - m > a_{g_{\ba}^{-1}(i_{\ell})}$ or 
\item $a_{g_{\ba}^{-1}(n)} - m = a_{g_{\ba}^{-1}(i_{\ell})}$ and $g_{\ba}^{-1}(n) > g_{\ba}^{-1}(i_{\ell})$
\end{itemize}

It follows that  $L_c(\mu_{\ell}) = \Delta_c(\mu_{\ell})/I_{\ell}$ has a weight basis indexed by pairs $(\ba, T) \in \Z_{\geq 0} \times \SYT(\mu_{\ell})$ such that

\begin{itemize}
\item[$\circ$] $a_{g_{\ba}^{-1}(n)} - m < a_{g_{\ba}^{-1}(i_{\ell})}$ or
\item[$\circ$] $a_{g_{\ba}^{-1}(n)} - m = a_{g_{\ba}^{-1}(i_{\ell})}$ and $g_{\ba}^{-1}(n) < g_{\ba}^{-1}(i_{\ell})$.
\end{itemize}
\end{corollary}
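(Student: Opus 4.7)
The plan is to obtain Corollary \ref{cor:weights of simples} as a direct logical consequence of the two results that immediately precede it in the excerpt, rather than by any new calculation. The first half is a characterization of membership in the maximal proper submodule $I_\ell \subseteq \Delta_c(\mu_\ell)$, while the second is just the translation of this characterization to the quotient $L_c(\mu_\ell) = \Delta_c(\mu_\ell)/I_\ell$.

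First, I would invoke the preceding corollary, which states that $v(\ba,T) \in I_\ell$ if and only if there exists $(\bb, T') \in \Z_{\geq 0}^n \times \SYT(\mu_{\ell+1})$ with $\wt(\ba,T) = \wt(\bb,T')$. Since $0 \leq \ell < n-1$, the partition $\mu_{\ell+1}$ is still a proper hook, so Lemma \ref{lemma:same weight} applies directly: the existence of such a pair is equivalent to either $a_{g_\ba^{-1}(n)} - m > a_{g_\ba^{-1}(i_\ell)}$, or $a_{g_\ba^{-1}(n)} - m = a_{g_\ba^{-1}(i_\ell)}$ with $g_\ba^{-1}(n) > g_\ba^{-1}(i_\ell)$. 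This is exactly the first claim of the corollary.

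Second, I would read off the weight basis of $L_c(\mu_\ell)$. By Theorem \ref{thm:vermas} the vectors $\{v(\ba,T)\}$ form a weight basis of $\Delta_c(\mu_\ell)$, each weight space being one-dimensional. Passing to the quotient by $I_\ell$, a basis of $L_c(\mu_\ell)$ is given by the images of those $v(\ba,T)$ which do not lie in $I_\ell$. Negating the conditions from the first part yields: either $a_{g_\ba^{-1}(n)} - m < a_{g_\ba^{-1}(i_\ell)}$, or $a_{g_\ba^{-1}(n)} - m = a_{g_\ba^{-1}(i_\ell)}$ with $g_\ba^{-1}(n) \leq g_\ba^{-1}(i_\ell)$. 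The only thing to verify is that the non-strict inequality $\leq$ may be replaced by the strict $<$ in the statement: if $g_\ba^{-1}(n) = g_\ba^{-1}(i_\ell)$, then since $g_\ba$ is a permutation one would need $n = i_\ell$, but then $a_{g_\ba^{-1}(n)} - m = a_{g_\ba^{-1}(i_\ell)}$ forces $m = 0$, contradicting $m > 0$.

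The argument is essentially formal, so there is no real obstacle; the only step requiring any care is the negation in the second part, specifically ruling out the boundary case of equality in the indices, which I would address explicitly as above.
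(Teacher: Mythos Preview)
Your proposal is correct and matches the paper's intended approach: the corollary is stated in the paper without proof precisely because it is the concatenation of Lemma \ref{lemma:same weight} with the preceding corollary, together with the passage to the quotient using the weight basis of Theorem \ref{thm:vermas}. Your explicit check that the boundary case $g_{\ba}^{-1}(n) = g_{\ba}^{-1}(i_\ell)$ cannot occur (since it would force $m=0$) is a helpful clarification that the paper leaves implicit.
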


\begin{remark}
Note that, if $0 < \ell < n-1$, then $L_c(\mu_{\ell}) \cong I_{\ell - 1}$. Thus, there is a weight-preserving bijection between pairs $(\ba, T) \in \Z_{\geq 0}^{n} \times \SYT(\mu_{\ell - 1})$ satisfying the condition marked with $\bullet$ in Corollary \ref{cor:weights of simples} and those pairs $(\bb, T') \in \Z_{\geq 0}^{n} \times \SYT(\mu_{\ell})$ satisfying the conditions market with $\circ$. This bijection is described in the proof of Lemma \ref{lemma:same weight}. 
\end{remark}

\section{The polynomial representation of \texorpdfstring{$H_{c}$}{Hc}}
\label{sect:pol rep}

All of our geometric applications deal only with the representation $\Delta_{c}(\triv)$ where, as in Section \ref{sec:rep theory}, we have $c = m/n > 0$ with $\gcd(m;n)$ and $\triv$ is the trivial representation of $\Sk{n}$. Thus, we specialize many of the results of Section \ref{sec:rep theory} to this special case. Note that as as a $\Hnx$-module we have $\Delta_c(\triv) = \C[x_1, \dots, x_n]$, where the action of $x_i \in \Hnx$ is simply given by multiplication and the action of $\Sk{n} \subseteq \Hnx$ is given by permuting the variables. For this reason, $\Delta_c(\triv)$ is commonly known as the \emph{polynomial representation} of $H_c$. For the rest of this section, we assume $t = 1$. The case $t = 0$ will be trated in Section \ref{sect:m to infinity}.

\subsection{The action of the Dunkl-Opdam subalgebra} 

Following Definition \ref{def: weight labels} in the case where $\mu$ is the trivial partition, for $\ba \in \Z_{\geq 0}^{n}$ we let $\tw := \tw(\ba) \in \C^{n}$
denote the weight whose $i$th component is $\tw_{i} = a_{i} -
(g_{\ba}(i) - 1)c$. 
In other words, $\tw(\ba) = \wa{a} \cdot (0, -c, -2c, \cdots, (1-n)c)$ where, as mentioned above, we specialize $t = 1$. 
Now we are ready to describe the spectrum of $\A$
on $\Delta_{c}(\triv)$, in the case where $c$ is generic. The proof of the following result is similar to that of Theorem \ref{thm:vermas} so we omit it. 

\begin{proposition}\label{prop:generic}
Assume that either $c \in \mathbb{C} \setminus \mathbb{Q}$ or that $c$ is a rational number with denominator greater than $n$. Let  $M = \Delta_{c}(\triv)$. Then, $M_{\tw(\ba)}^{\gen} \neq 0$ for every $\ba \in \Z_{\geq 0}^{n}$, these are all the weight spaces of $\A$ on $M$, and each one of them is $1$-dimensional (so that $M_{\tw(\ba)}^{\gen} = M_{\tw(\ba)}$.)
\end{proposition}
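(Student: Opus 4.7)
The plan is to work directly with the monomial basis $\{x^{\ba}\}_{\ba \in \Z_{\geq 0}^n}$ of $M = \Delta_c(\triv) \cong \C[x_1,\ldots,x_n]$ and show that, with respect to the total order $\prec$ on each degree component $\cP_k(n)$ defined in Section \ref{sect:combinatorics}, the operators $u_i$ act in upper-triangular form with the claimed eigenvalues on the diagonal.

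First, I would unwind the action of $u_i = x_iy_i - c\sum_{j<i}(ij)$ on $x^{\ba}$ using the explicit Dunkl operator formula for $y_i$ on the polynomial representation, namely $y_i = \partial_i - c\sum_{j\neq i}\frac{1-s_{ij}}{x_i - x_j}$. The contribution $x_i\partial_i x^{\ba}=a_i x^{\ba}$ gives the $a_i$ part of the eigenvalue, while the divided-difference terms $x_i\frac{x^{\ba} - s_{ij}x^{\ba}}{x_i-x_j}$ and the summands $c\sum_{j<i}s_{ij}x^{\ba}$ contribute both a diagonal piece and off-diagonal pieces. The bookkeeping is to check that the diagonal pieces combine exactly to $-(g_{\ba}(i)-1)c$, which amounts to counting, for each $j$, whether $j$ contributes $+c$, $-c$, or $0$ according to the relative sizes of $a_i,a_j$ and whether $j<i$ or $j>i$; comparison with the explicit formula \eqref{eq: def ga} for $g_{\ba}(i)$ gives exactly $\tw_i(\ba) = a_i - (g_{\ba}(i)-1)c$.

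Next I would verify that the off-diagonal monomials $x^{\boldb}$ appearing in $u_i x^{\ba}$ satisfy $\boldb \prec \ba$. Divided differences applied to $x_i^{a_i}x_j^{a_j}$ and multiplication by $x_i$ produce monomials of the form $x_i^{a_i-k}x_j^{a_j+k}$ (with $k\geq 1$, or with the roles of $i,j$ interchanged after applying $s_{ij}$), and the transposition summands $s_{ij}x^{\ba}$ with $j<i$ simply swap exponents. Using the properties (2), (3), (4) of Lemma \ref{lemma:po}, each such modification of $\ba$ produces a sequence strictly smaller in $\prec$; this is the place where the combinatorial preparation of Section \ref{sect:combinatorics} does the real work.

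With triangularity established, the matrix of $u_i$ on the finite-dimensional degree-$k$ component of $M$ is upper-triangular in the $\prec$-ordered basis with diagonal entries $\tw_i(\ba)$, so its eigenvalues are exactly the $\tw_i(\ba)$ for $\ba\in\cP_k(n)$, and the generalized eigenvalues of $\A$ are exactly the vectors $\tw(\ba)$. The final step is to invoke genericity of $c$: if $\tw(\ba)=\tw(\boldb)$, then $a_i-b_i=(g_{\ba}(i)-g_{\boldb}(i))c$ for all $i$, and since $|g_{\ba}(i)-g_{\boldb}(i)|\leq n-1$ while the denominator of $c$ exceeds $n$ (or $c\notin\Q$), both sides must vanish, forcing $\ba=\boldb$. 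Hence the eigenvalues are pairwise distinct, each generalized eigenspace is $1$-dimensional, and $u_1,\ldots,u_n$ are simultaneously diagonalizable so that $M^{\gen}_{\tw(\ba)} = M_{\tw(\ba)}$. The main technical obstacle is the explicit triangularity calculation of the second paragraph, especially matching the divided-difference leading terms against the partial order $\prec$ in all the case distinctions of Lemma \ref{lemma:po}.
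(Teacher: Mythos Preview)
Your approach is valid and genuinely different from the paper's. The paper does \emph{not} establish triangularity of the $u_i$ on monomials directly; instead it builds eigenvectors inductively with the intertwining operators: given $\ba$ with $||\ba||=k+1$, it takes $i_0$ minimal with $a_{i_0}>0$, sets $\ba^{*}=(a_1,\dots,\widehat{a_{i_0}},\dots,a_n,a_{i_0}-1)$, and applies $\sigma_{i_0-1}\cdots\sigma_1\tau$ to a nonzero vector in $M_{\tw(\ba^{*})}$. Genericity of $c$ is used only to guarantee $\tw_i(\ba)\neq\tw_{i+1}(\ba)$ and $\tw_i(\ba)-\tw_{i+1}(\ba)\neq\pm c$, so that each $\sigma_i$ is defined and nonzero; dimension counting then forces one-dimensionality. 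Your route---triangularity plus distinct diagonal entries---is more elementary and closer in spirit to the Mackey argument of Section~\ref{sec-mackey}, while the paper's intertwiner proof sets up the machinery reused for the harder non-generic case (Theorem~\ref{thm:1}).

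There is, however, a real gap in your step~2. First, Lemma~\ref{lemma:po} only controls $\pi$ and \emph{simple} reflections, whereas the off-diagonal monomials produced by $u_i$ involve arbitrary pairs $i,j$ and general redistributions of exponents between them; properties (2)--(4) do not obviously handle these. Second, and more seriously, the summand $-c\,s_{ij}x^{\ba}$ for $j<i$ can produce a monomial with $s_{ij}\ba\succ\ba$ (take $a_i<a_j$; already for $n=2$, $\ba=(2,0)$ one has $-cs_1x_1^2=-cx_2^2$ with $(0,2)\succ(2,0)$). Triangularity only holds because this term \emph{cancels} against the top of the divided-difference contribution. The clean way to run your argument is to first combine, for $j<i$, the operator $x_i\frac{1-s_{ij}}{x_i-x_j}+s_{ij}=\frac{x_i-x_j s_{ij}}{x_i-x_j}$, compute that every surviving off-diagonal $\boldb$ satisfies $\min_p(p+na_p)<\min\{i+nb_i,\,j+nb_j\}\le\max\{i+nb_i,\,j+nb_j\}<\max_p(p+na_p)$ among the two altered positions, and then invoke Lemma~\ref{lem:compare to lex}: such a ``squeeze'' strictly increases the smallest affected entry of the sorted window, hence $\wa{b}\lex\wa{a}$, i.e.\ $\boldb\prec\ba$. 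With that correction, your diagonal computation and the distinctness argument in the last paragraph are fine.
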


The following result is obtained by specializing Theorem \ref{thm:vermas} to the case where $\mu$ is the trivial partition of $n$. 

\begin{theorem}\label{thm:1}
	Let $c = m/n > 0$ with $\gcd(m,n) = 1$ and $M =
\Delta(\triv)$. For any $\ba \in \Z_{\geq 0}^{n}$, $M_{\wta} =
M_{\wta}^{\gen} \neq 0$. Moreover, if $a_{i} > a_{i+1}$, then
$\sigma_{i}|_{M_{\wta}} \neq 0$. 
\end{theorem}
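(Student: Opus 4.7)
The plan is to establish Theorem \ref{thm:1} by a double induction: outer on $k = \|\ba\|$ and inner on the partial order $\prec$ of Section \ref{sect:combinatorics}. To make the argument work I would strengthen the statement to assert that the weight vector $v_\ba \in M_\wta$ can be chosen in triangular form $v_\ba = x^\ba + \sum_{\boldb \prec \ba} c_\boldb x^\boldb$; this triangular form is the key technical device, and the second assertion will drop out of it once both are established in parallel. The base case $\ba = (0,\ldots,0)$ is trivial with $v_\ba = 1$.

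For the inductive step I distinguish two cases. When $a_1 > 0$ (Case A), set $\boldb = (a_2,\ldots,a_n,a_1-1)$ so that $\pi\cdot\boldb = \ba$ and $\|\boldb\|=k-1$; by the outer hypothesis $v_\boldb$ exists in triangular form, and I define $v_\ba := \tau v_\boldb$. Since $\tau$ acts on $\C[x_1,\ldots,x_n]$ as multiplication by $x_1$ followed by a cyclic permutation of variables, it is injective, so $v_\ba \neq 0$; the commutations \eqref{tau ui}--\eqref{u n+1} place $v_\ba$ in $M_\wta$, and the identity $\tau(x^\boldb) = x^{\pi\cdot\boldb}$ together with Lemma \ref{lemma:po}(1) propagates the triangular form. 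When $a_1=0$ (Case B), pick any index $i$ with $a_i<a_{i+1}$ (such an $i$ exists because $\ba \neq 0$, e.g.\ $i=j-1$ where $j$ is minimal with $a_j>0$), and set $\boldb := s_i\ba$. Then $\boldb$ has descent at $i$, and applying Lemma \ref{lemma:po}(3) to $\boldb$ gives $\boldb \prec \ba$; the inner hypothesis supplies $v_\boldb$ in triangular form along with the nonvanishing $\sigma_i v_\boldb \neq 0$, and I set $v_\ba := \sigma_i v_\boldb$. To propagate the triangular form, I apply Lemma \ref{lemma:po}(4) to $\boldb$'s descent at $i$: every $\boldb' \prec \boldb$ in the support of $v_\boldb$ satisfies $s_i \boldb' \prec s_i \boldb = \ba$, so $s_i v_\boldb = x^\ba + (\text{terms} \prec \ba)$, and subtracting $\tfrac{c}{\wt(\boldb)_i - \wt(\boldb)_{i+1}} v_\boldb$ contributes only exponents $\preceq \boldb \prec \ba$.

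For the descent-direction claim, suppose $a_i > a_{i+1}$. By Lemma \ref{lemma:po}(3) $\ba \prec s_i\ba$, and by Lemma \ref{lemma:po}(4) every $\boldb \prec \ba$ satisfies $s_i \boldb \prec s_i\ba$. Expanding the triangular form of $v_\ba$ yields $s_i v_\ba = x^{s_i\ba} + (\text{terms strictly} \prec s_i \ba)$, and since all exponents appearing in $v_\ba$ itself are $\preceq \ba \prec s_i\ba$, the subtraction $\sigma_i v_\ba = s_i v_\ba - \tfrac{c}{\wta_i - \wta_{i+1}}v_\ba$ cannot cancel the new leading monomial $x^{s_i\ba}$. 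Hence $\sigma_i v_\ba \neq 0$. The distinct weights $\wta$ (as $\ba$ ranges over $\cP_k(n)$, with distinctness forced by $\gcd(m,n)=1$ and the argument that $n \mid g_\ba(i) - g_{\ba'}(i) \in \{-(n-1),\ldots,n-1\}$ implies $g_\ba = g_{\ba'}$) furnish $|\cP_k(n)| = \dim \C[x_1,\ldots,x_n]_k$ linearly independent eigenvectors, so by dimension count each $M_\wta = M_\wta^\gen$ is one-dimensional.

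The main obstacle I anticipate is propagating the triangular form through the $\sigma_i$-step of Case B: this hinges on the precise order-theoretic content of Lemma \ref{lemma:po}(4), namely that $s_i$ acts on the lower-order monomials of $v_\boldb$ without any of them overtaking $x^{s_i\boldb}$. Without this triangular structure, proving nonvanishing of $\sigma_i v_\ba$ becomes genuinely delicate in the boundary configurations where $\wta_i - \wta_{i+1} = \pm c$, equivalently $a_i - a_{i+1} = m$ with $a_i$ strict maximum and $a_{i+1}$ strict minimum of $\ba$, because then $\sigma_i^2 v_\ba = 0$ and one cannot invoke the quadratic relation \eqref{eq:sigma square}; the leading-term comparison bypasses this by exhibiting a specific monomial coefficient that must be nonzero.
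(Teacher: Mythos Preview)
Your proof is correct and follows essentially the same approach as the paper: construct the eigenvector $v_\ba$ in triangular form $x^\ba + \sum_{\boldb \prec \ba} c_\boldb x^\boldb$ using $\tau$ and the intertwiners $\sigma_i$, with Lemma~\ref{lemma:po}(1),(3),(4) controlling the leading term, and then read off both nonvanishing and the dimension count from distinctness of the weights $\wta$. The only organizational difference is that the paper packages the inductive step as a single application of $\sigma_{i_0-1}\cdots\sigma_1\tau$ to $v_{\ba^*}$ (where $i_0$ is the minimal nonzero coordinate), whereas you separate this into Case~A (one $\tau$) followed by iterated Case~B (one $\sigma_i$ at a time) via the inner induction on $\prec$; the underlying mechanism and the use of Lemma~\ref{lemma:po} are identical.
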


\begin{remark}\label{rmk:basis}
Moreover, it is easy to show that for every element $\ba \in \Z^{n}_{\geq 0}$, there exists a unique element $v_{\ba} \in M_{\wta}$ of the form

$$
v_{\ba} = x^{\ba} + \sum_{\ba' \prec \ba}k_{\ba, \ba'}x^{\ba'}
$$

\noindent where $\prec$ is the partial order on monomials defined in Section \ref{sect:combinatorics}.
\end{remark}

\begin{remark}\label{rmk:sln}
We can get an analogous result for the $\sln$ RCA, with the operators defined as in \cite{gicheol}. In particular, we can find a basis $\{v_{\ba} : \ba \in \mathbb{Z}^{n-1}_{\geq 0}\}$ of simultaneous eigenvectors for the $\sln$-Dunkl-Opdam operators.

Note that the action of $\tau$ is injective on $\Delta_c(\triv)$. Combinatorially, the action of $\pi$ on the set of nonnegative sequences $(a_1,\ldots,a_n)$ is injective, and any such sequence can be uniquely written as 
$$
(a_1,\ldots,a_n)=\pi^k\cdot (0,b_2,\ldots,b_{n-1}).
$$

\noindent where $(0, b_{1}, \dots, b_{n-1}) \in \Z_{\geq 0}^{n}$. 
That is, the generating set for this action consists of sequences with $a_1=0$, and it is in bijection with the basis in the polynomial representation of $\sln$ RCA.  
\end{remark}


\subsection{Recovering the action of \texorpdfstring{$H_{c}$}{Hc}} \label{sect:renormalization}
We keep assuming that $c = m/n > 0$ with $\gcd(m,n) = 1$ and $t = 1$. The following theorem is a consequence of Theorem \ref{thm:action standard}.

\begin{theorem}\label{thm:action}
The module $\Delta_c(\triv)$ has a basis given by $\{v_{\ba} : \ba \in \Z^n_{\geq 0}\}$, and the action of the algebra $H_{c}$ on $\Delta_c(\triv)$ is given by the following operators:

\begin{align*}
u_{i}v_{\ba} & = \tw_{i}v_{\ba}  \\
\tau v_{\ba} &  = v_{\pi\cdot\ba} \\
\lambda v_{\ba} & = \tw_{1}v_{\pi^{-1}\cdot\ba} \\
s_{i}v_{\ba} &  = \begin{cases} v_{s_{i}\cdot \ba} + \frac{c}{\tw_{i} - \tw_{i+1}}v_{\ba} & a_{i} > a_{i+1}, \\ 
\frac{(\tw_{i+1} - \tw_{i} - c)(\tw_{i+1} - \wt_{i} + c)}{(\tw_{i} - \tw_{i+1})^{2}}v_{s_{i}\cdot \ba} + \frac{c}{\tw_{i} - \tw_{i+1}}v_{\ba} & a_{i} < a_{i+1}, \\ 
v_{\ba} & a_{i} = a_{i+1} \end{cases}
\end{align*}

\noindent where we denote $\tw_{i} = a_{i} - (g_{\ba}(i) - 1)c$. 
\end{theorem}


For geometric applications, we will need a different basis of $\Delta_c(\triv)$ that gives nicer formulas for the action of the operators $s_{i}$. This basis is a renormalization of the basis $v_{\ba}$, but we have to be careful with the renormalization factor.  The main result of this section is the following.

\begin{proposition}\label{prop:renormalization}
There exists a function $\varphi: \Z_{\geq 0}^{n} \to \C^{\times}$ such that, defining $\widetilde{v}_{\ba} := \varphi(\ba) v_{\ba}$, we have

$$
(1 - s_{i})\widetilde{v}_{\ba} = \frac{\tw_{i} - \tw_{i+1} - c}{\tw_{i} - \tw_{i+1}}\left(\widetilde{v}_{\ba} - \widetilde{v}_{s_{i}\cdot \ba}\right)
$$

\noindent for every $\ba \in \mathbb{Z}^{n}_{\geq 0}$ and $i = 1,
\dots, n-1$.
 \end{proposition}
 \begin{proof}
 We can define the $\widetilde{v}_{\ba}$
using the renormalized intertwiners $\widetilde{\sigma}_i$
of Remark \ref{remark: renormalize intertwiners}. 
In particular, the vectors $\widetilde{v}_{\ba} = \widetilde{\sigma}_{\wa{a}} \widetilde{v}_{0}$ for  $\wa{a} \in \Min$ are uniquely determined after specifying $\widetilde{v}_{0}$.   
Then the existence of the function $\varphi$ is obvious
and $\varphi(\ba)$ can be  given as a product formula
with terms indexed by the inversions of $\wa{a}$.
In particular
\begin{equation}\label{eqn:renorm}
\frac{\varphi(\ba)}{\varphi(s_{i}\cdot \ba)} = \begin{cases} \frac{\tw_{i} - \tw_{i+1}-c}{\tw_{i} - \tw_{i+1}} & a_{i} >  a_{i+1}, \\
\frac{\tw_{i+1} - \tw_{i}}{\tw_{i+1} - \tw_{i}-c} & a_{i} < a_{i+1}.
\end{cases}
\end{equation}
The formula for the action of $1- s_i$ follows from Theorem \ref{thm:action}.
 \end{proof}

\begin{corollary}
The action of the operators $u_i, \tau$ and $\lambda$ in the renormalized basis $\widetilde{v}_{\ba}$
is given by the same equations as in Theorem \ref{thm:action}:
\begin{equation}
\label{eq: action renormalized}
u_{i}\widetilde{v}_{\ba}  = \tw_{i}\widetilde{v}_{\ba},\quad
\tau \widetilde{v}_{\ba}   = \widetilde{v}_{\pi\cdot\ba},\quad
\lambda \widetilde{v}_{\ba}  = \tw_{1}\widetilde{v}_{\pi^{-1}\cdot\ba}.
\end{equation}
\end{corollary}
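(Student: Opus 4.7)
The plan is to simply unpack the renormalization $\widetilde{v}_{\ba} = \varphi(\ba)v_{\ba}$ and transfer the formulas of Theorem \ref{thm:action} through this scalar.

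First I would handle $u_i$, which is immediate: since $v_{\ba}$ is an $\A$-eigenvector with $u_i v_{\ba} = \tw_i v_{\ba}$, any nonzero scalar multiple of $v_{\ba}$ is also a $u_i$-eigenvector with the same eigenvalue. This gives the first equation in \eqref{eq: action renormalized} with no additional work.

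For $\tau$, the key observation is that the renormalization factor is invariant under the shift $\pi$, i.e., $\varphi(\pi\cdot\ba) = \varphi(\ba)$ for every $\ba \in \Z_{\geq 0}^n$. This is essentially built into the recursive construction of $\varphi$ in the proof of Proposition \ref{prop:renormalization}: since $\pi\cdot \ba = (a_n+1, a_1, \ldots, a_{n-1})$ has first coordinate strictly positive, it falls into the case $\pi^{-1}\cdot(\pi\cdot\ba) \in \mathcal{P}_k(n)$, so by definition $\varphi(\pi\cdot\ba) := \varphi(\pi^{-1}\cdot(\pi\cdot\ba)) = \varphi(\ba)$. Combining this with $\tau v_{\ba} = v_{\pi\cdot\ba}$ from Theorem \ref{thm:action} yields
\[
\tau \widetilde{v}_{\ba} = \varphi(\ba)\tau v_{\ba} = \varphi(\ba) v_{\pi\cdot \ba} = \varphi(\pi\cdot\ba) v_{\pi\cdot\ba} = \widetilde{v}_{\pi\cdot\ba}.
\]

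For $\lambda$, I would split into two cases. If $a_1 > 0$, then $\pi^{-1}\cdot\ba \in \Z_{\geq 0}^n$ and $\ba = \pi\cdot(\pi^{-1}\cdot\ba)$, so the same invariance gives $\varphi(\ba) = \varphi(\pi^{-1}\cdot\ba)$; applying $\lambda v_{\ba} = \tw_1 v_{\pi^{-1}\cdot\ba}$ then produces the claimed formula exactly as in the $\tau$ computation. If $a_1 = 0$, then by Lemma \ref{lemma:w_1 = 0} we have $\tw_1 = 0$, so both sides of the desired equation vanish, and there is nothing to check about $\pi^{-1}\cdot\ba$ (which is undefined in $\Z_{\geq 0}^n$).

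There is no real obstacle here; the entire content is the invariance $\varphi(\pi\cdot\ba) = \varphi(\ba)$, which is a direct read-off from the recursive construction, together with the already-handled boundary case $a_1 = 0$ via Lemma \ref{lemma:w_1 = 0}. The corollary therefore reduces to a one-line verification once the $\pi$-equivariance of $\varphi$ is noted.
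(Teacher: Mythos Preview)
Your argument is correct and matches the paper's approach: the paper states this Corollary without proof, but the preceding Remark explicitly notes that the construction of $\varphi$ was carried out so that $\tau(\widetilde{v}_{\ba}) = \widetilde{v}_{\pi\cdot\ba}$, which is exactly your observation $\varphi(\pi\cdot\ba) = \varphi(\ba)$ read off from the recursive definition. The $u_i$ and $\lambda$ parts then follow immediately as you describe.
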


\subsection{The radical of \texorpdfstring{$\Delta_c(\triv)$}{Delta(triv)}}\label{sect:radical triv} Finally, we will need an explicit weight basis of the simple quotient $L_{c}(\triv)$ of $\Delta_{c}(\triv)$. We define the set 

\begin{multline*}
\cS := \{(a_{1}, \dots, a_{n}) \in \Z^{n}_{\geq 0} \,  : \, 
\max(a_{i} - a_{j}) > m,  \\
\, \text{or} \, 
\max(a_{i} - a_{j}) = m \, \text{and} \, a_{i} - a_{j} = m \, \text{for some} \, j < i\}
\end{multline*}

The next result then follows from Corollary \ref{cor:weights of simples} applied to $\ell = 0$.

\begin{proposition}\label{prop:radical}
The space

$$
S := \bigoplus_{\ba \in \cS} \C v_{\ba}
$$

\noindent is an $H_{c}$-submodule of $\Delta_c(\triv)$, and it coincides with the unique proper submodule of $\Delta_c(\triv)$ \cite{ES}.
\end{proposition}

Now let $\cT := \Z_{\geq 0}^{n} \setminus \cS$. More explicitly, 
\begin{equation}
\cT=\{\ba\in   \Z^{n}_{\geq 0} : a_{i} - a_{j} \leq m\ \text{for every}\ i, j;\text{moreover, if}\ a_{i} - a_{j} = m\ \text{then}\ j > i\}.
\end{equation}

\begin{corollary}
\label{cor: basis in L(triv)}
The module $L_c(\triv)$ has a basis $\{v_{\ba} : \ba \in \cT\}$. The
action of $H_{c}$ on $L_c(\triv)$ is given by the same formulas as in
Theorem \ref{thm:action}, with the understanding that we set $v_{\ba} =
0$ if $\ba \not\in \cT$.
 \end{corollary}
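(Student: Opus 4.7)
The plan is to deduce this corollary as a direct consequence of the preceding results with essentially no new work required; the combinatorial setup and the closure properties of $S$ have already been established.

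First, I would invoke the previous corollary to write $L_c(\triv) = \Delta_c(\triv)/S$, since $S$ has just been identified as the unique maximal proper submodule. Then I would use Theorem \ref{thm:action}, which states that $\{v_{\ba} : \ba \in \Z_{\geq 0}^n\}$ is a basis of $\Delta_c(\triv)$, together with Proposition \ref{prop:radical}, which exhibits $S$ as $\bigoplus_{\ba \in \cS} \C v_{\ba}$. Since these $v_{\ba}$ are linearly independent (as members of the basis of $\Delta_c(\triv)$), they form a basis of $S$, so the images of $\{v_{\ba} : \ba \in \cT\}$ form a basis of the quotient, where by definition $\cT = \Z_{\geq 0}^n \setminus \cS$.

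Second, for the action formulas, I would simply push Theorem \ref{thm:action} through the quotient map $\Delta_c(\triv) \twoheadrightarrow L_c(\triv)$. The right-hand sides there are $\C$-linear combinations of basis vectors $v_{\boldb}$; those with $\boldb \in \cS$ lie in $S$ and thus vanish in $L_c(\triv)$, while those with $\boldb \in \cT$ survive as the declared basis elements. This is exactly the convention ``$v_{\boldb} = 0$ if $\boldb \notin \cT$'' asserted in the statement, so nothing further needs to be checked; the closure of $S$ under the operators $u_i$, $\tau$, $\lambda$, and $s_i$ already verified in Proposition \ref{prop:radical} guarantees internal consistency of this convention. There is no genuine obstacle: the only place where one might worry is when a formula in Theorem \ref{thm:action} contains a coefficient with $\tw_i - \tw_{i+1}$ in the denominator and $\ba \in \cT$, but in $\cT$ one never has $\tw_i = \tw_{i+1}$ (as noted at the start of the proof of Theorem \ref{thm:1}), so every formula remains well-defined on the quotient.
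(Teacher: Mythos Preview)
Your proposal is correct and matches the paper's approach: the paper states this corollary without proof, treating it as an immediate consequence of the identification $L_c(\triv)=\Delta_c(\triv)/S$ from the preceding corollary together with the explicit basis description $S=\bigoplus_{\ba\in\cS}\C v_{\ba}$ from Proposition~\ref{prop:radical} and the action formulas of Theorem~\ref{thm:action}. Your write-up simply spells out this deduction, including the well-definedness remark about denominators, which is consistent with what the paper leaves implicit.
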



\begin{remark}
\label{rem: simple basis sln}
Proposition \ref{prop:radical} can be easily adapted to the $\sln$-setting, cf. Remark \ref{rmk:sln}. In that case, we have $\cS^{\sln} := \{(a_{1}, \dots, a_{n-1}) \in \mathbb{Z}_{\geq 0}^{n-1} : \max(a_{i}) \geq m\}$. In particular, $\cT^{\sln} = \{(a_{1}, \dots, a_{n-1}) \in \mathbb{Z}^{n-1}_{\geq 0} : a_{i} < m \; \text{for every} \; i\}$ and we recover the formula $\dim(L^{\sln}_c(\triv)) = m^{n-1}$.  

As in Remark \ref{rmk:sln}, we can prove that the action of $\pi$ is injective on the basis in $\cT$.
The generating set for this action consists of sequences $(0,a_2,\ldots,a_n)$, and it is easy to see that such
sequence is in $\cT$ if and only if $a_i<m$ for all $i$.
\end{remark}

The following lemma provides an interpretation of the indexing set $\cT$ in terms of affine permutations.

\begin{lemma}
Let $\tw_i(\ba):=\tw(\ba)_i = a_i-\frac{m}{n}(g_{a}(i)-1)$ be the weights of $u_i$ as above. Consider the affine permutation 
$$
\omega:=[-n\wt_{1}(\ba),\ldots,-n\wt_{n}(\ba)]^{-1}.
$$
Then the following statements hold: 

(a) $(a_1,\ldots,a_n)\in \cT$ if and only if $\omega$ is $m$-stable.

(b) $a_i\ge 0$ for all $i$ if and only if $\omega\perm\in \Min$ , where $\perm=[0,m,\ldots,(n-1)m]$.
\end{lemma}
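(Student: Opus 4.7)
The plan is to match the given $\omega$ with the affine permutation parametrized by $\ba$ via formula \eqref{eq: omega alpha beta}, then invoke Lemma \ref{lem: m stable in window notation} for (a) and a short direct calculation for (b). The starting observation, from which everything flows, is that $-n\tw_i(\ba) = -n a_i + m(g_{\ba}(i)-1)$, so the window notation of $\omega^{-1}$ coincides exactly with the one in \eqref{eq: omega alpha beta}.

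With this identification, Lemma \ref{lem: m stable in window notation} applies verbatim and yields (a): its two conditions on $\ba$, namely $a_i - a_j \le m$ for all $i,j$ and ``if $a_i - a_j = m$ then $i < j$'', together with the hypothesis $\ba \in \Z_{\ge 0}^n$ that is built into the definition of $\cT$, are precisely the defining conditions for $\ba \in \cT$. For (b), I would show $\omega \perm = \wa{a}$ by a direct computation: applying $\omega$ to the identity $\omega^{-1}(g_{\ba}^{-1}(j)) = -n a_{g_{\ba}^{-1}(j)} + m(j-1)$ and using the quasi-periodicity $\omega(x+nk) = \omega(x) + nk$ of an affine permutation, one obtains $\omega(m(j-1)) = g_{\ba}^{-1}(j) + n\, a_{g_{\ba}^{-1}(j)} = (\ta{a} g_{\ba}^{-1})(j) = \wa{a}(j)$, which is exactly $(\omega\perm)(j)$. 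Since by the corollary of Lemma \ref{lemma:w is very injective} the assignment $\ba \mapsto \wa{a}$ is injective and $\wa{a} \in \Min$ by definition precisely when $\ba \in \Z_{\ge 0}^n$, part (b) follows.

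Since the mechanical identifications in Subsection \ref{section: m-stable} do all the heavy lifting, there is no substantive obstacle beyond bookkeeping. The only subtle point to keep in mind is that the restriction $\ba \in \Z_{\ge 0}^n$ appearing in (a) is carried along via the definition of $\cT$, and should be made explicit in the write-up; one does not use (b) to force it, since the two parts are presented as independent equivalences.
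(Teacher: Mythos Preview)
Your proposal is correct and follows essentially the same route as the paper's proof: both identify $\omega^{-1}(i)=-na_i+m(g_{\ba}(i)-1)$ with \eqref{eq: omega alpha beta}, invoke Lemma~\ref{lem: m stable in window notation} for (a), and for (b) recognize that $\omega\perm=\ta{a}g_{\ba}^{-1}=\wa{a}$, which lies in $\Min$ exactly when $\ba\in\Z_{\ge 0}^n$. The only cosmetic difference is that the paper cites \eqref{eq: omega alpha beta} to obtain $\omega\perm=\wa{a}$ directly (since that equation was derived from this relation), whereas you redo the short computation; your remark about the injectivity of $\ba\mapsto\wa{a}$ is not actually needed for (b).
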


\begin{proof}
We have 
$$
\omega^{-1}(i)=-n\wt_{i}(\ba)=-na_{i}+m(g_{\ba}(i)-1),
$$
so by \eqref{eq: omega alpha beta} we have $\omega\perm=\ta{a}g_{\ba}^{-1}$. By Lemma \ref{lem: m stable in window notation} $\omega$ is $m$-stable if and only if $\ba\in \cT$. Finally, $a_i\ge 0$ for all $i$ if and only if $\ta{a}g_{\ba}^{-1}\in \Min$.
\end{proof}


\begin{remark}\label{rmk:GMV}
The action of $\pi$ on $(a_1,\ldots,a_n)$ corresponds to the conjugation of $\omega_a$ by $\pi\in \affSn$
which effectively slides the window in $\omega_{\ba}$. Remark \ref{rem: simple basis sln} gives a choice of a representative in each $\pi$-orbit with $a_1=0$ and $m>a_i\ge 0$ for $i>1$. 

From the viewpoint of affine permutations, a more natural choice of a representative is given by the balancing condition $\sum_{i=1}^{n}\omega(i)=\frac{n(n+1)}{2}$. The corresponding permutations will be still $m$-stable,
and by Remark \ref{rem: balancing} they are in bijection with the alcoves insider the $m$-dilated fundamental alcove. 

Therefore we get an explicit bijection between the alcoves insider the $m$-dilated fundamental alcove,
$m$-stable balanced affine permutations, and vectors of the form
$(0,a_2,\ldots,a_n)$ with $0\le a_i<m$. 
\end{remark}

\section{Singular curves}\label{sect:geometry}

For coprime $m,n \ge 1$ we consider the plane curve singularity $C=\{x^m=y^n\}$
at the origin. It has an action of $\C^*$ given by $(x,y)\mapsto (s^nx,s^my)$.
This action extends to the local ring of functions on $C$ which is isomorphic to
$\cO_C=\C[[x,y]]/(x^m-y^n)$. A homogeneous basis in $\cO_C$ can be described as follows:
\begin{equation}
\label{eq: basis}
\cO_C=\C[[x]]\langle 1,\ldots,y^{n-1}\rangle
\end{equation}
This presentation shows that $\cO_C$ is a free module over $\C[[x]]$ of rank $n$, and the multiplication by $y$ is given by the matrix 
\begin{equation}\label{eq:Y}
Y=\left(
\begin{matrix}
0 & 0 & \cdots & 0 & x^m\\
1 & 0 & \cdots & 0 & 0 \\
0 & 1 & \cdots & 0 & 0\\
\vdots & \vdots& \ddots & \vdots & 0\\
0 & 0 & \cdots & 1& 0 \\
\end{matrix}
\right).
\end{equation}

\subsection{Hilbert schemes on singular curves}\label{sect: hilb sch sing}

By definition, the Hilbert scheme of $k$ points on $C$ is the moduli space of co-dimension $k$ ideals 
$$
\Hilbk(C)=\{I\subset \cO_C: I\ \text{ideal},\dim \cO_C/I=k\}.
$$
The action of $\C^*$ on $C$ extends to an
action on $\Hilbk(C)$ for all $k$. The fixed points of this action are monomial ideals. In terms of the identification \eqref{eq: basis} such 
an 
ideal is generated over $\C[[x]]$ by monomials of the form $\langle x^{c_1},yx^{c_2}, \ldots,y^{n-1}x^{c_n}\rangle$. Since it is invariant under the multiplication of $y$ (or the matrix $Y$ above),
we get a system of inequalities
\begin{equation}
\label{eq: c sorted}
c_1\ge c_2\ge \ldots \ge c_n\ge c_1-m.
\end{equation}
Note that $\dim \cO_C/I=k=\sum c_i$. 
In the notation of \cite{OS}, such ideals can be represented by staircases of height $n$ and width at most $m$.
See Figure 
\ref{fig: ideal}.

\begin{figure}
\begin{tikzpicture}
\filldraw[color=lightgray] (10,3)--(3,3)--(3,2)--(5,2)--(5,1)--(7,1)--(7,0)--(10,0)--(10,3);

\draw(0,0)--(10,0);
\draw(0,0)--(0,3)--(10,3);
\draw (0,1)--(10,1);
\draw (0,2)--(10,2);
\draw (1,0)--(1,3);
\draw (2,0)--(2,3);
\draw (3,0)--(3,3);
\draw (4,0)--(4,3);
\draw (5,0)--(5,3);
\draw (6,0)--(6,3);
\draw (7,0)--(7,3);
\draw (8,0)--(8,3);
\draw (0.5,0.5) node {$1$};
\draw (0.5,1.5) node {$y$};
\draw (0.5,2.5) node {$y^2$};
\draw (1.5,0.5) node {$x$};
\draw (1.5,1.5) node {$xy$};
\draw (1.5,2.5) node {$xy^2$};
\draw (2.5,0.5) node {$x^2$};
\draw (2.5,1.5) node {$x^2y$};
\draw (2.5,2.5) node {$x^2y^2$};
\draw (3.5,0.5) node {$x^3$};
\draw (3.5,1.5) node {$x^3y$};
\draw (3.5,2.5) node {$x^3y^2$};
\draw (4.5,0.5) node {$x^4$};
\draw (4.5,1.5) node {$x^4y$};
\draw (4.5,2.5) node {$x^4y^2$};
\draw (5.5,0.5) node {$x^5$};
\draw (5.5,1.5) node {$x^5y$};
\draw (5.5,2.5) node {$x^5y^2$};
\draw (6.5,0.5) node {$x^6$};
\draw (6.5,1.5) node {$x^6y$};
\draw (6.5,2.5) node {$x^6y^2$};
\draw (7.5,0.5) node {$x^7$};
\draw (7.5,1.5) node {$x^7y$};
\draw (7.5,2.5) node {$x^7y^2$};

\draw[line width=2] (3,3)--(3,2)--(5,2)--(5,1)--(7,1)--(7,0);

\end{tikzpicture}
\caption{An ideal on $C=\{x^4=y^3\}$ generated by $x^3y^2$ and $x^5y$. \newline
Note that $y\cdot x^3y^2=x^7$. The codimension of the ideal is $15=7+5+3 = c_1+c_2+c_3$ which is also the number of boxes under the staircase.
\label{fig: ideal}
}
\end{figure}

\begin{lemma}
\label{lem: sorting}
Suppose that $I\subset \cO_C$ is spanned over $\C[[x]]$ by $y^{\alpha_1}x^{c_1},\ldots, y^{\alpha_n}x^{c_n}$
where $\{\alpha_1,\ldots,\alpha_n\}=\{0,\ldots,n-1\}$. Then the following holds:
\begin{itemize}
\item[(a)] If $\max(c_i)-\min(c_i)>m$ then $I$ is not an ideal in $\cO_C$ for any choice of $\alpha_i$.
\item[(b)] If $\max(c_i)-\min(c_i)\le m$ then there exists a unique ideal $I$ of this form.
\end{itemize}
\end{lemma}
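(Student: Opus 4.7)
The plan is to reduce both parts to the constraint already recorded in equation \eqref{eq: c sorted}, by relabeling the generators according to their $y$-exponent. Concretely, since $\{\alpha_1,\dots,\alpha_n\}=\{0,\dots,n-1\}$ is a permutation of $\{0,\dots,n-1\}$, I will set $d_i := c_{\alpha^{-1}(i)}$ for $i=0,\dots,n-1$, so that the $\C[[x]]$-span of $y^{\alpha_1}x^{c_1},\dots,y^{\alpha_n}x^{c_n}$ equals the $\C[[x]]$-span of $x^{d_0},yx^{d_1},\dots,y^{n-1}x^{d_{n-1}}$, one generator at each level of the free basis \eqref{eq: basis}. Under this relabeling, $\{d_i\}$ and $\{c_j\}$ are equal as multisets, so in particular $\max d_i=\max c_j$ and $\min d_i=\min c_j$.

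Next, I will translate "$I$ is an ideal" into constraints on the tuple $(d_0,\dots,d_{n-1})$. Since $I$ is by construction a $\C[[x]]$-submodule, closure under multiplication by $x$ is automatic, so I only need to check closure under multiplication by $y$, i.e., under the matrix $Y$ of \eqref{eq:Y}. Applying $Y$ to the generator $y^i x^{d_i}$ gives $y^{i+1}x^{d_i}$ for $i<n-1$ and $x^{m+d_{n-1}}$ for $i=n-1$. For each of these to lie in $I$ (whose component at level $y^{i+1}$, respectively $y^0$, is the ideal $x^{d_{i+1}}\C[[x]]$, respectively $x^{d_0}\C[[x]]$), I need exactly
\begin{equation*}
d_0 \ge d_1 \ge \cdots \ge d_{n-1} \ge d_0 - m,
\end{equation*}
which is precisely \eqref{eq: c sorted}.

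Part (a) then follows immediately: the chain forces $d_0=\max_i d_i$ and $d_{n-1}=\min_i d_i$, so $\max(c_i)-\min(c_i)=d_0-d_{n-1}\le m$, contradicting the hypothesis; hence no such ideal exists for any $\alpha$. For part (b), the chain forces the sequence $(d_0,\dots,d_{n-1})$ to be the nonincreasing rearrangement of $(c_1,\dots,c_n)$, and the hypothesis $\max(c_i)-\min(c_i)\le m$ guarantees the remaining inequality $d_{n-1}\ge d_0-m$. Thus the data $(d_i)$ are uniquely determined, and so is $I$. (The permutation $\alpha$ itself is only determined up to permutations of indices with equal $c$-value, but such permutations manifestly produce the same ideal.)

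I do not expect any substantial obstacle; the only subtlety is a bookkeeping one, namely observing that although $\alpha$ may not be unique when some $c_i$'s coincide, the underlying ideal $I$ depends only on the sorted multiset of exponents, so "unique" in part (b) refers to $I$ and not to the presentation. Everything else is a direct computation using the explicit form \eqref{eq:Y} of multiplication by $y$.
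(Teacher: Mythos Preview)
Your argument is correct and is essentially the same as the paper's: both relabel the generators by their $y$-exponent (your $d_i$ is the paper's $c_{\widetilde{g}^{-1}(i+1)}$) and then invoke the chain of inequalities \eqref{eq: c sorted}. Your version is slightly more explicit in deriving those inequalities from the action of $Y$ and in noting why non-uniqueness of $\alpha$ does not affect uniqueness of $I$, but the approach is identical.
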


\begin{proof}
Assume that $I=\C[[x]]\langle y^{\alpha_1}x^{c_1},\ldots y^{\alpha_n}x^{c_n}\rangle$ is an ideal in $\cO_C$.
Let $\widetilde{g}\in \Sn$ be the 
permutation in $\Sn$ which sorts the $\alpha_i$ in increasing order.
Then by \eqref{eq: c sorted} 
$c_{\widetilde{g}^{-1}(1)}\ge c_{\widetilde{g}^{-1}(2)}\ge \ldots \ge
c_{\widetilde{g}^{-1}(n)}$.
Observe that $c_{\widetilde{g}^{-1}(1)}=\max(c_i)$ and $c_{\widetilde{g}^{-1}(n)}=\min (c_i)$.
Therefore the condition $c_{\widetilde{g}^{-1}(n)}\ge c_{\widetilde{g}^{-1}(1)}-m$ in \eqref{eq: c sorted} is equivalent to $\max(c_i)-\min(c_i)\le m$.

For part (b), the uniqueness is clear.
\end{proof}

Let $I=\C[[x]]\langle x^{c_1},yx^{c_2},\ldots,y^{n-1}x^{c_n}\rangle$ be a monomial ideal in $\cO_C$ where $c_i$ satisfy \eqref{eq: c sorted}. We define
a composition $\widetilde{\lambda}=(\widetilde{\lambda_1},\ldots,\widetilde{\lambda_\ell}),\sum \widetilde{\lambda_i}=n$ by looking at vertical runs of the staircase defined by $c_i$:
$$
c_1=\ldots=c_{\widetilde{\lambda_1}}>c_{\widetilde{\lambda_1}+1}=\ldots=c_{\widetilde{\lambda_1}+\widetilde{\lambda_2}}>\ldots >c_{n-\widetilde{\lambda_\ell}+1}=\ldots=c_n
$$
We also define a composition $\lambda$ as follows:
$$
\lambda=\begin{cases}
\widetilde{\lambda} & \ c_1-c_n<m,\\
(\widetilde{\lambda_1}+\widetilde{\lambda_{\ell}},\widetilde{\lambda_2},\ldots,\widetilde{\lambda_{\ell-1}}) &  c_1-c_n=m.\\
\end{cases}
$$

\begin{lemma}
\label{lem: Jordan blocks}
The operator $Y$ acting on the space $I/xI$ has Jordan blocks of sizes $\lambda_i$.
\end{lemma}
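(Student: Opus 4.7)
\medskip

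\textbf{Proof proposal.}
The plan is a direct computation of the matrix of $Y$ in the natural monomial basis of $I/xI$. First I would observe that $I = \C[[x]]\langle x^{c_1}, yx^{c_2}, \ldots, y^{n-1}x^{c_n}\rangle$ is a free $\C[[x]]$-module of rank $n$, so that $I/xI$ has basis
\[
e_i := y^{i-1} x^{c_i} \bmod xI, \qquad i = 1, \ldots, n.
\]

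Next I would compute $Y e_i$ using the matrix \eqref{eq:Y}. For $i < n$ one has $Y e_i = y^i x^{c_i}$; writing this in terms of the basis vector $e_{i+1} = y^i x^{c_{i+1}}$ gives $Y e_i = x^{c_i - c_{i+1}} e_{i+1}$, which in $I/xI$ is $e_{i+1}$ if $c_i = c_{i+1}$ and $0$ if $c_i > c_{i+1}$. For $i = n$ the relation $y^n = x^m$ in $\cO_C$ gives $Y e_n = x^{m+c_n}$; writing this in terms of $e_1 = x^{c_1}$ yields $Y e_n = x^{m + c_n - c_1} e_1$, which in $I/xI$ is $e_1$ if $c_1 - c_n = m$ and $0$ if $c_1 - c_n < m$.

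With this computation in hand, the Jordan chains of $Y$ can be read off directly from the runs of equal values of $c_i$ that define the composition $\widetilde{\lambda}$. In the case $c_1 - c_n < m$, the operator $Y$ maps $e_i \mapsto e_{i+1}$ precisely within each run and kills the last vector of each run as well as $e_n$; this produces independent Jordan chains of sizes $\widetilde{\lambda}_1, \ldots, \widetilde{\lambda}_\ell$, matching $\lambda = \widetilde{\lambda}$. In the case $c_1 - c_n = m$, the additional arrow $e_n \mapsto e_1$ glues the last run to the first, producing a single Jordan chain
\[
e_{n-\widetilde{\lambda}_\ell+1} \mapsto \cdots \mapsto e_n \mapsto e_1 \mapsto \cdots \mapsto e_{\widetilde{\lambda}_1} \mapsto 0
\]
of length $\widetilde{\lambda}_\ell + \widetilde{\lambda}_1$, while the intermediate runs still give chains of sizes $\widetilde{\lambda}_2, \ldots, \widetilde{\lambda}_{\ell-1}$, which matches the definition of $\lambda$ in that case.

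This argument is essentially a bookkeeping exercise and I do not foresee a serious obstacle; the only subtlety is ensuring that in the wrap-around case $c_1 - c_n = m$ the resulting combined chain does not accidentally close into a cycle (which would mean $Y$ has a nonzero eigenvalue rather than a nilpotent Jordan block). This is ruled out by observing that the chain must terminate at $e_{\widetilde{\lambda}_1}$ since $c_{\widetilde{\lambda}_1} > c_{\widetilde{\lambda}_1+1}$, so $Y$ is still nilpotent on $I/xI$ and the decomposition above is a genuine Jordan decomposition.
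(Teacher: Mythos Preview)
Your proposal is correct and follows essentially the same approach as the paper: both arguments take the monomial basis $e_i = y^{i-1}x^{c_i}$ of $I/xI$, compute $Ye_i$ directly to find that $Y$ sends $e_i\mapsto e_{i+1}$ within each run of equal $c_i$'s and kills the last vector of each run (with the wrap-around $e_n\mapsto e_1$ exactly when $c_1-c_n=m$), and then read off the Jordan chains. Your extra remark ruling out a cycle in the wrap-around case is a nice touch of care, though it is automatic since $c_1-c_n=m>0$ forces at least two runs.
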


\begin{proof}
The space $I/xI$ is spanned (over $\C$) by $\langle v_1=x^{c_1},v_2=yx^{c_2},\ldots,v_n=y^{n-1}x^{c_n}\rangle$.
Clearly, if $c_1=c_2$ then $Y(v_1)=v_2$, otherwise $Y(v_1)$ vanishes in $I/xI$ . Similarly, we see chains of vectors
$$
v_1 \xrightarrow{Y} \ldots \xrightarrow{Y} v_{\widetilde{\lambda_1}} \xrightarrow{Y} 0,\qquad
v_{\widetilde{\lambda_1}+1}\xrightarrow{Y}\ldots\xrightarrow{Y}v_{\widetilde{\lambda_1}+\widetilde{\lambda_2}}\xrightarrow{Y}0,\qquad \ldots,$$
$$
\qquad
v_{n-\widetilde{\lambda_\ell}+1}\xrightarrow{Y}\ldots\xrightarrow{Y} v_n.
$$
Finally, $Y(v_n)=x^{c_n+m}$, so if $c_n+m>c_1$ then $Y(v_n)=0$, otherwise $c_n+m=c_1$ and $Y(v_n)=v_1$. 
\end{proof}

\subsection{Parabolic Hilbert schemes on singular curves}

Observe that for any ideal $I$ we have $\dim I/xI=n$. So we can define the parabolic Hilbert scheme as the moduli space of flags of ideals
$$
\PHilb_{k,n+k}(C):=\left\{\cO_C\supset I_k\supset I_{k+1}
\cdots\supset I_{k+n}=xI_k: I_s\ \text{ideal},\ \dim \cO_C/I_s=s\right\}
$$

\noindent and we define

$$
\PHilb^{x}(C) := \bigsqcup_{k \geq 0} \PHilb_{k, n+k}(C).
$$

Again, we would like to describe the fixed points of the $\C^*$ action on this variety explicitly, similarly to \cite{hikita}. These are described by flags where all $I_s$ are monomial ideals. As above, for $i=1,\ldots,n$ we can assume that the one-dimensional space $I_{k+i-1}/I_{k+i}$ is spanned by the monomial $y^{\alpha_i}x^{c_i}$ where $0\le \alpha_i\le n-1$.  In particular, 
$$
I_k=\C[[x]]\langle y^{\alpha_1}x^{c_1},\ldots y^{\alpha_n}x^{c_n}\rangle.
$$ 
Note that if $c_i = c_j$ and $i < j$ then $\alpha_i < \alpha_j$, so by Lemma \ref{lem: sorting} $\alpha_i$ are uniquely determined by $c_i$.

Furthermore, we can extend this construction 
by defining $I_{i+n}=xI_i$ for all integers $i\ge k$. 
Note that it follows that
$\alpha_{i+n}=\alpha_i$ and $c_{i+n}=c_i+1$ for $i\ge 1$. 

\begin{lemma}
\label{lem: fixed point description}
The vector $\bc=(c_1,\ldots,c_n)$ determines a fixed point in $\PHilb_{k,n+k}$ if and only if either of the two equivalent conditions hold:
\begin{itemize}
\item[(a)] For all $t>0$ one has
\begin{equation}
\label{eq: max min in window}
\max\nolimits_{i=t}^{t+n-1}(c_i) -\min\nolimits_{i=t}^{t+n-1}(c_i)\le m
\end{equation}
\item[(b)] One has $\max_{i=1}^{n}(c_i) -\min_{i=1}^{n}(c_i)\le m$ and whenever $c_j+m=c_i$
then $j<i$. 
\end{itemize}
\end{lemma}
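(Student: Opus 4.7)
The natural starting point is to identify, for each $t \geq 1$, the $\C[[x]]$-generators of $I_{k+t-1}$. By induction on $t$, using the fact that passing from $I_{k+t-1}$ to $I_{k+t}$ removes the one-dimensional span of $y^{\alpha_t}x^{c_t}$ (and hence replaces the generator $y^{\alpha_t}x^{c_t}$ by $y^{\alpha_t}x^{c_t+1}$), together with the periodicity conventions $\alpha_{i+n}=\alpha_i$, $c_{i+n}=c_i+1$, one sees that
$$
I_{k+t-1} = \C[[x]]\langle y^{\alpha_i}x^{c_i} : i = t, t+1, \ldots, t+n-1\rangle,
$$
with the $\alpha_i$ covering $\{0,1,\ldots,n-1\}$ as $i$ runs through any such window. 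Lemma \ref{lem: sorting} then asserts that $I_{k+t-1}$ is an ideal of $\cO_C$ if and only if $\max_{i=t}^{t+n-1}c_i - \min_{i=t}^{t+n-1}c_i \leq m$, which is precisely condition (a). So the first claim, that $\bc$ determines a fixed point iff (a) holds, is immediate from Lemma \ref{lem: sorting}.

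It remains to prove (a) $\Leftrightarrow$ (b). For (a) $\Rightarrow$ (b), the inequality $\max_{i=1}^n c_i - \min_{i=1}^n c_i \leq m$ is just the $t=1$ instance. For the second part, suppose for contradiction that $c_j + m = c_i$ with $i \leq j$ in $\{1,\ldots,n\}$; we must have $i<j$ since $m\ge 1$. Consider the window $t=i+1$, which contains both $j$ and $i+n$; using $c_{i+n}=c_i+1=c_j+m+1$, the spread of this window is at least $m+1$, contradicting (a).

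For the converse (b) $\Rightarrow$ (a), observe that replacing $t$ by $t+n$ shifts every entry by $1$, so the max-min spread of a window is invariant under $t \mapsto t+n$; hence it suffices to treat $t=s$ for $s \in \{1,\ldots,n\}$. The case $s=1$ is the first part of (b). For $2 \leq s \leq n$, the window consists of the values $\{c_s,\ldots,c_n\} \cup \{c_1+1,\ldots,c_{s-1}+1\}$. Writing $M = \max_i c_i$ and $m_0 = \min_i c_i$, the max of this window is at most $M+1$ and the min is at least $m_0$, so the spread is at most $M-m_0+1 \leq m+1$. The only way it could exceed $m$ is if the spread equals $m+1$, which forces (i) $c_i = M$ for some $i \leq s-1$, (ii) $c_j = m_0$ for some $j \geq s$, and (iii) $M - m_0 = m$. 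But then $c_j + m = c_i$ with $i < j$, contradicting the second part of (b).

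\textbf{Main obstacle.} The substantive content is the equivalence (a) $\Leftrightarrow$ (b), and the crux is identifying the right window of indices in each direction — namely, $t=i+1$ for one direction and the precise bookkeeping of unshifted versus shifted $c$-values for the other. Once the window is chosen correctly, the arithmetic is a one-line estimate; no genuinely hard step is involved beyond careful index-tracking.
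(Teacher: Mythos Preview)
Your proof is correct and follows essentially the same approach as the paper's: identify the $\C[[x]]$-generators of each $I_{k+t-1}$ and apply Lemma~\ref{lem: sorting} to get (a), then use $n$-periodicity to reduce (a)$\Leftrightarrow$(b) to windows $t\in\{1,\dots,n\}$ and compare the shifted and unshifted $c$-values. The paper's argument is more compressed (it packages both directions into the single observation that, assuming the $t=1$ bound, the window condition fails for some $t\le n$ iff there exist $i<t\le j$ with $c_i=c_j+m$), but the underlying logic is the same as yours.
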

\begin{proof}
By construction, for all $t>0$ the subspace $I_{k-1+t}$ is spanned over $\C[[x]]$ by the monomials 
$\langle y^{\alpha_t}x^{c_t},\ldots,y^{\alpha_t+n-1}x^{c_{t+n-1}}\rangle$, so  by Lemma \ref{lem: sorting}
it is an ideal if and only if \eqref{eq: max min in window} holds. This proves (a).

Now let us prove that (a) and (b) are equivalent. Indeed, the left hand side of \eqref{eq: max min in window} is $n$-periodic,
so it is sufficient to consider $t\le n$. Assume that $\max_{i=1}^{n}(c_i) -\min_{i=1}^{n}(c_i)\le m$, then \eqref{eq: max min in window} does not hold if and only if  there exists $i<t$ and $j\ge t$ such that
$c_{i}=c_j+m$ and $c_{i+n}=c_i+1=c_j+m+1$.
\end{proof}

We picture a fixed point in $\PHilb_{k, n+k}(C)$ as the  staircase representing the ideal $I_{k}$, together with an enumeration of the boxes bordering it to its right, in such a way that the quotient $\cO_{C}/I_{k+j}$ is spanned by the boxes under the staircase together with the boxes numbered $1, 2, ..., j$. See Figure \ref{fig: PHilb}.

\begin{remark}
The proof of Lemma \ref{lem: fixed point description} is very similar to the proof of Lemma \ref{lem: m stable in window notation}. Indeed, this is not a coincidence: let us parametrize the curve $C$ by $(x,y)=(z^n,z^m)$, then any monomial in $x$ and $y$ corresponds to a monomial in $z$. A monomial ideal in $\cO_C$ then corresponds to an $(m,n)$-invariant subset in $\Z_{\ge 0}$, and a flag of monomial ideals corresponds to a flag of $(m,n)$-invariant subsets. By Proposition \ref{prop: m stable} such flag determines an $m$-stable affine permutation. We conclude that fixed points on parabolic Hilbert scheme are in bijection with $m$-stable affine permutations $\omega$ such that $\omega\perm\in \Min$. 
\end{remark}

\begin{figure}
\begin{tikzpicture}
\filldraw[color=lightgray] (10,3)--(3,3)--(3,2)--(5,2)--(5,1)--(7,1)--(7,0)--(10,0)--(10,3);

\draw(0,0)--(10,0);
\draw(0,0)--(0,3)--(10,3);
\draw (0,1)--(10,1);
\draw (0,2)--(10,2);
\draw (1,0)--(1,3);
\draw (2,0)--(2,3);
\draw (3,0)--(3,3);
\draw (4,0)--(4,3);
\draw (5,0)--(5,3);
\draw (6,0)--(6,3);
\draw (7,0)--(7,3);
\draw (8,0)--(8,3);

\draw (0.5,0.5) node {$1$};
\draw (0.5,1.5) node {$y$};
\draw (0.5,2.5) node {$y^2$};
\draw (1.5,0.5) node {$x$};
\draw (1.5,1.5) node {$xy$};
\draw (1.5,2.5) node {$xy^2$};
\draw (2.5,0.5) node {$x^2$};
\draw (2.5,1.5) node {$x^2y$};
\draw (2.5,2.5) node {$x^2y^2$};
\draw (3.5,0.5) node {$x^3$};
\draw (3.5,1.5) node {$x^3y$};
\draw (4.5,0.5) node {$x^4$};
\draw (4.5,1.5) node {$x^4y$};
\draw (5.5,0.5) node {$x^5$};
\draw (6.5,0.5) node {$x^6$};

\draw (3.5,2.5) node { \Large \bf 1}; 
\draw (5.5,1.5) node {\Large \bf 3};
\draw (7.5,0.5) node {\Large \bf 2};

\draw[line width=2] (3,3)--(3,2)--(5,2)--(5,1)--(7,1)--(7,0);

\end{tikzpicture}
\caption{A flag of monomial ideals  
in $\PHilb_{15,15+3}(x^4=y^3)$: \newline $I_{15}=\langle x^3y^2, x^5y \rangle, I_{16}=\langle x^4y^2,x^5y,x^7\rangle, I_{17}=\langle x^4y^2,x^5y\rangle.$ \newline 
Here $y^{\alpha_1}x^{c_1}=x^3y^2,\ y^{\alpha_2}x^{c_2}=x^7,\ y^{\alpha_3}x^{c_3}=x^5y$. \label{fig: PHilb}}
\end{figure}

We define the line bundles $\cL_i$, $1\le i\le n$ on the parabolic flag Hilbert scheme as follows. 
The fiber of $\cL_i$ over the flag $I_k\supset I_{k+1}\supset \cdots\supset I_{k+n}=xI_k$ is $I_{k+i-1}/I_{k+i}$. Then we have the following:

\begin{lemma}
\label{lem: weights of line bundles}
There is a bijection between the eigenbasis  $v_{\ba}$ in $L_{m/n}(\triv)$ (defined in Corollary \ref{cor: basis in L(triv)}) and 
the set of $\C^*$ fixed points in $\PHilb^{x}(C)$. Under this bijection, the weight of $\cL_i$ at a fixed point corresponds to the eigenvalue $n\wt_{n+1-i}(\ba)+m(n-1)$ of the operator $nu_{n+1-i}+m(n-1)$ on $v_{\ba}$.
\end{lemma}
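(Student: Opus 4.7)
The plan is to match both the basis $\{v_\ba\}$ of $L_{m/n}(\triv)$ and the $\C^*$-fixed points of $\PHilb^x(C)$ against a common combinatorial object: $m$-stable affine permutations $\omega$ with $\omega\perm \in \Min$. The weight formula will then fall out from the explicit monomial description of the fiber of $\cL_i$.

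First I would set up the bijection of parameter sets. By Corollary \ref{cor: basis in L(triv)} the basis $\{v_\ba\}$ is indexed by $\ba \in \cT$, and by Lemma \ref{lem: m stable in window notation} this set is in bijection with $m$-stable affine permutations $\omega$ having $\omega\perm \in \Min$, via $\omega^{-1}(i) = -na_i + m(g_\ba(i)-1)$. On the geometric side, Lemma \ref{lem: fixed point description}(b) parameterizes the $\C^*$-fixed points by tuples $(c_1,\dots,c_n) \in \Z_{\ge 0}^n$ with $\max c_i - \min c_i \le m$ and the reverse tie-breaking rule $c_j + m = c_i \Rightarrow j < i$. Via the parametrization $(x,y) = (z^n, z^m)$ the flag $I_k \supset \cdots \supset I_{k+n} = xI_k$ is rewritten as a flag of $(m,n)$-invariant subsets of $\Z_{\ge 0}$, which by Proposition \ref{prop: m stable} is the same datum as an $m$-stable affine permutation with $\omega\perm \in \Min$. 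Composing yields the desired bijection $v_\ba \leftrightarrow \text{fixed point}$, and unravelling the identifications shows that the fixed point attached to $\ba \in \cT$ has $c_i = a_{n+1-i}$ and $\alpha_i = n - g_\ba(n+1-i)$.

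With these explicit formulas the weight computation is a direct substitution. Since $\C^*$ acts on $C$ by $(x,y) \mapsto (s^n x, s^m y)$, the fiber $\cL_i = I_{k+i-1}/I_{k+i} = \C \cdot y^{\alpha_i}x^{c_i}$ carries $\C^*$-weight $m\alpha_i + nc_i$, so using $n\wt_j(\ba) = na_j - m(g_\ba(j)-1)$ we compute
\begin{align*}
m\alpha_i + nc_i
  &= m\bigl(n - g_\ba(n+1-i)\bigr) + na_{n+1-i} \\
  &= na_{n+1-i} - m\bigl(g_\ba(n+1-i)-1\bigr) + m(n-1) \\
  &= n\wt_{n+1-i}(\ba) + m(n-1),
\end{align*}
which is precisely the eigenvalue of $nu_{n+1-i} + m(n-1)$ on $v_\ba$.

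The main obstacle is the verification that the proposed data $(c_i,\alpha_i)$ genuinely defines a $\C^*$-fixed point of $\PHilb^x(C)$, i.e.\ that the successive monomial ideals $I_{k+i}$ are well defined and that $I_{k+n}=xI_k$. The cleanest route is to observe that the exponents $e_i := m\alpha_i + nc_i$ are, up to the index-reversing involution $i \mapsto n+1-i$, the values $\omega^{-1}(i)$ of the $m$-stable affine permutation $\omega$ associated to $\ba$ in Section \ref{sect:radical triv}: distinctness of the $e_i$ modulo $n$ is bijectivity of $\omega$; $(m,n)$-invariance of each $\Delta_{k+i}$ is equivalent to $m$-stability of $\omega$ via Proposition \ref{prop: m stable}; and $\Delta_{k+n} = \Delta_k + n$ (i.e.\ $I_{k+n}=xI_k$) follows from the $n$-periodicity $\omega(x+n)=\omega(x)+n$. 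This reduces the entire check to Lemma \ref{lem: m stable in window notation}.
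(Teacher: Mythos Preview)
Your proof is correct and reaches the same formulas and weight computation as the paper, but the route to the bijection differs. The paper establishes $c_i = a_{n+1-i}$ by a one-line direct comparison: the defining inequalities for $\ba \in \cT$ in Corollary~\ref{cor: basis in L(triv)} and those for fixed points in Lemma~\ref{lem: fixed point description}(b) are literally the same after the index reversal $i \mapsto n+1-i$, so no intermediary is needed. For $\alpha_i$ the paper invokes the permutation $\widetilde g$ from the proof of Lemma~\ref{lem: sorting} (which sorts the $\alpha_i$ increasingly), observes $\widetilde g(i) = n+1-g_\ba(n+1-i)$, and hence $\alpha_i = \widetilde g(i)-1 = n - g_\ba(n+1-i)$; the weight calculation is then identical to yours.

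Your detour through $m$-stable affine permutations is the more conceptual path and is exactly what the paper records in the remark following Lemma~\ref{lem: fixed point description}, but does not actually use in the proof. It buys you a uniform explanation of why the flag conditions $(I_{k+n}=xI_k$, $y$-invariance of each $I_{k+i})$ hold, at the cost of some extra bookkeeping: your claim that the exponents $e_i = m\alpha_i + nc_i$ ``are'' the values $\omega^{-1}(n+1-i)$ is only true up to the affine shift $e_i = m(n-1) - \omega^{-1}(n+1-i)$, though this does not affect the properties you need (distinctness mod $n$, $(m,n)$-invariance, $n$-periodicity). The paper's approach is shorter because the heavy lifting was already done in Lemma~\ref{lem: fixed point description}, making the bijection a tautology.
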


\begin{proof}
Recall that by Corollary \ref{cor: basis in L(triv)} the basis $v_{\ba}$ in $L_{m/n}(\triv)$ is parametrized by sequences of nonnegative integers $\ba=(a_1,\ldots,a_n)$ such that $a_{i} - a_{j} \leq m$ for every $i, j$, and if $a_{i} - a_{j} = m$ then $j > i$. The eigenvalues of $u_i$ are given by 
$\wt_{i} = a_{i} - (g_{\ba}(i) - 1)\frac{m}{n}$ where $g_{\ba}$ is the permutation which sorts $\ba$ in non-decreasing order (here we substituted $c=\frac{m}{n}$). 

On the other hand, the fixed points in $\PHilb_{k,n+k}$ are determined by sequences of monomials $(y^{\alpha_i}x^{c_i})$ where $\max_{i=1}^{n}(c_i) -\min_{i=1}^{n}(c_i)\le m$ and whenever $c_j+m=c_i$
then $j<i$.
We remark that since $y^{\alpha_i}x^{c_i}$ spans the quotient $I_{k + i - 1}/I_{k+i}$  it follows that when $c_i=c_j$ with $i<j$ we have $\alpha_i<\alpha_j$. 
Clearly, the assignment $c_i=a_{n+1-i}$ is a bijection intertwining the restrictions on $a_i$ and on $c_i$.
Note $k= \sum c_i = \sum a_i = ||\ba||$.

Finally, the line bundle $\cL_i$ has the equivariant weight $m\alpha_i+nc_i$.
We have $\alpha_i=\widetilde{g}(i)-1$, where $\widetilde{g}$ is the permutation
defined in the proof of Lemma \ref{lem: sorting}
which sorts the $\alpha_i$ in increasing order.
Clearly, $\widetilde{g}(i)=n+1-g_{\ba}(n+1-i)$, hence 
\begin{multline*}
m\alpha_i+nc_i=m(n+1-g_{\ba}(n+1-i)-1)+na_{n+1-i}=\\ m(n-1)+m(1-g_{\ba}(n+1-i))+na_{n+1-i}=
n\wt_{n+1-i}+m(n-1).
\end{multline*}
\end{proof}

\begin{example}
For $\ba=(0,\ldots,0)$ we get $\wt_i(\ba)=-(i-1)\frac{m}{n}$ while the corresponding fixed point in $\PHilb_{0,n}$ corresponds to the flag $\cO_{C} \supseteq y\cO_{C} \supseteq \cdots \supseteq y^{n-1}\cO_{C}$. The section of $\cL_i$ is given by monomial $y^{i-1}$ which has weight $m(i-1)$. Now 
$$
n\wt_{n+1-i}(\ba)+m(n-1)=-m(n+1-i-1)+m(n-1)=m(i-1).
$$
\end{example}

\subsection{Geometric operators}
\label{sec:geometric operators}

There is a natural projection $\pi:\PHilb_{k,n+k}\to \Hilbk$ which
sends a flag $I_k\supset I_{k+1}\supset \cdots\supset I_{k+n}=xI_k$ to
$I_k$.  The fibers of this projection are just the classical Springer
fibers consisting of complete flags in $I_k/xI_k$ invariant under the
action of $Y$. In particular, Lemma \ref{lem: Jordan blocks}
immediately implies the following.

\begin{lemma}\label{lemma:induced representation}
Given an ideal $I=\C[[x]]\langle y^{\alpha_1}x^{c_1},\ldots y^{\alpha_n}x^{c_n}\rangle$ in $\Hilbk$
there are $\binom{n}{\lambda_1,\ldots,\lambda_{\ell}}$ fixed points
in $\PHilb_{k,n+k}$ projecting to $I$.
There is a Springer action of $\Sn$ on these fixed points, in which they span the induced representation from 
$\Sk{\lambda_1}\times \cdots \times \Sk{\lambda_{\ell}}$ to $\Sn$.

Here $\lambda$ is determined by $c_i$ as in Lemma \ref{lem: Jordan blocks}, and $\ell$ is the length of $\lambda$.
\end{lemma}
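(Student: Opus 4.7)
My plan is to identify the fiber $\pi^{-1}(I)$ with a classical Springer fiber and then appeal to a standard result from Type $A$ Springer theory. The fiber consists of all complete $Y$-invariant flags in the $n$-dimensional vector space $V := I/xI$, so by definition it is the Springer fiber $\mathcal{B}_Y$ for the nilpotent operator $Y$. By Lemma \ref{lem: Jordan blocks}, $Y$ has Jordan type $\lambda = (\lambda_1, \ldots, \lambda_\ell)$, and the basis $\{v_i = y^{\alpha_i} x^{c_i}\}$ of $V$ breaks into $\ell$ Jordan chains of sizes $\lambda_i$.

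For the counting of fixed points, I would observe that the $v_i$ have distinct $\C^*$-weights, so every $\C^*$-fixed complete flag of $V$ arises by successively adding a single basis vector. The requirement of $Y$-invariance forces these additions, restricted to each Jordan chain, to proceed from the bottom of the chain upward. Hence such a fixed flag is recorded by a sequence $(k_1, \ldots, k_n) \in \{1, \ldots, \ell\}^n$ in which the label $k$ appears exactly $\lambda_k$ times, giving $\binom{n}{\lambda_1, \ldots, \lambda_\ell}$ fixed points in total. This bijection identifies the set of fixed points with the coset space $\Sn/(\Sk{\lambda_1} \times \cdots \times \Sk{\lambda_\ell})$, equipped with the standard left $\Sn$-action by permutation of positions.

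For the $\Sn$-module statement, the Springer action on $H_*(\mathcal{B}_Y)$ commutes with the $\C^*$-action and so preserves the span of fixed-point classes in equivariant homology. In Type $A$, it is classical that the total Springer representation attached to a nilpotent of Jordan type $\lambda$ is isomorphic as an $\Sn$-module to $\Ind_{\Sk{\lambda_1} \times \cdots \times \Sk{\lambda_\ell}}^{\Sn} \triv$, and furthermore that it permutes the fixed-point classes according to the natural coset $\Sn$-action identified above. Both assertions can be established via the affine paving of $\mathcal{B}_Y$ due to Spaltenstein, or via the Borho--MacPherson description of Springer sheaves; a sanity check on small cases (e.g.\ $\lambda = (n)$ giving the trivial representation, $\lambda = (1^n)$ the regular representation, and $\lambda = (2,1)$ the standard permutation representation on three cosets) confirms the dimensions and decompositions.

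The main obstacle is the second of these assertions, namely that the Springer action is a genuine set-theoretic permutation of $\C^*$-fixed points matching the coset action, rather than merely a $\C$-linear action on their span. This requires analyzing the Springer correspondence at the level of the Steinberg variety, or equivalently a direct verification that the convolution action on fixed-point classes coincides with the coset permutation. Since in the later sections only the $\Sn$-module structure is used, invoking the classical identification is the most efficient route, and avoids an explicit computation of the convolution product.
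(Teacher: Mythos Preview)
Your approach is essentially the same as the paper's: the paper observes just before the lemma that the fibers of $\pi$ are classical Springer fibers for $Y$ acting on $I/xI$, and then says that Lemma~\ref{lem: Jordan blocks} ``immediately implies'' the result, leaving the count and the identification of the Springer representation to classical theory. You spell these out more explicitly, which is fine.

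One clarification on your last paragraph: the lemma does not actually claim that the Springer action is a set-theoretic permutation of the fixed points, only that the span of their classes carries the induced representation. Indeed, the explicit formulas in Lemma~\ref{lem: action of s_n} show the $s_i$ act with nontrivial coefficients, not as a pure permutation. So your worry is well-placed but unnecessary: the $\Sn$-module identification is all that is being asserted and all that is used later (e.g.\ in \eqref{eq:gieseker invariants} and Proposition~\ref{prop:spherical hilb}).
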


In what follows we will need a more explicit description of this action in the fixed point basis. For this, we can also give a more explicit geometric description. Let 
$\PHilb^{(i)}_{k,n+k}$ denote the moduli space of flags of ideals
$$\PHilb^{(i)}_{k,n+k}=\left\{I_k\supset I_{k+1}\supset \cdots\supset I_{k+i}\supset I_{k+i+2}\supset \cdots \supset I_{k+n}=xI_k\right\}.$$
There  is a natural projection $\pi_i:\PHilb_{k,n+k}\to \PHilb^{(i)}_{k,n+k}$. Let $Z_i\subset \PHilb^{(i)}_{k,n+k}$ denote the locus where $yI_{k+i}\subset I_{k+i+2}$. The key properties of $\pi_i$ are captured by the following lemma:

\begin{lemma}
\label{lem:partial flag}
(a) The map $\pi_i$ is an isomorphism outside $Z_i$ and a $\PP^1$-fibration over $Z_i$.

(b) The preimage $\pi_i^{-1}(Z_i)$ is cut out by a section of the line bundle $\cL_i^{-1}\cL_{i+1}$.

(c) A fixed point corresponding to $v_{\ba}$ is not in $\pi_i^{-1}(Z_i)$ if and only if 
$\wt_{n+1-i}(\ba)=\wt_{n-i}(\ba)-\frac mn$. 

(d) The tangent bundle to the fiber of $\pi_i$
 over $Z_i$ is isomorphic to $\cL_i\cL_{i+1}^{-1}$.
\end{lemma}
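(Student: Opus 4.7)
The plan is to analyse $\pi_i$ fiber by fiber, and then to realise the data governing the fibers globally as a section of a line bundle on $\PHilb_{k,n+k}$.

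For (a), I fix a point of $\PHilb^{(i)}_{k,n+k}$, which amounts to a partial flag containing two adjacent ideals $J \supset J'$ with $\dim_{\C}(J/J') = 2$. Choosing the omitted ideal $I$ is the same as choosing a line $L \subset J/J'$, subject to the constraint that $I$ be an ideal in $\cO_C$. Closure of $I$ under $x$ is automatic because $xJ$ already lies deeper in the flag (via $xI_k = I_{k+n}$), so the only constraint is closure under $y$: the line $L$ must be preserved by the induced endomorphism $\overline y$ of $J/J'$. Since $y$ is locally nilpotent on $\cO_C$, $\overline y$ is nilpotent, and on a two-dimensional space a nilpotent operator either vanishes identically or has a unique invariant line (its kernel). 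The first case produces a whole $\PP(J/J')\simeq\PP^1$ of valid fibers, and matches exactly the definition of $Z_i$; the second produces a single reduced point.

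For (b), I globalise this picture. The rank-two vector bundle $\mathcal E$ on $\PHilb_{k,n+k}$ with fiber $J/J'$ fits into a short exact sequence of line bundles
$$0 \longrightarrow \cL_{i+1} \longrightarrow \mathcal E \longrightarrow \cL_i \longrightarrow 0,$$
where $\cL_{i+1}$ corresponds to the ideal dropped by $\pi_i$. Multiplication by $y$ preserves the subbundle $\cL_{i+1}$ because the corresponding ideal of $\cO_C$ is $\cO_C$-stable, and acts there by a nilpotent endomorphism of a line bundle, hence by zero. Consequently $y$ descends to a well-defined bundle map $\cL_i \to \cL_{i+1}$, that is, to a global section $s$ of $\cL_i^{-1}\cL_{i+1}$; by the fiberwise analysis in (a), its scheme-theoretic zero locus is exactly $\pi_i^{-1}(Z_i)$.

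For (c), I evaluate $s$ at a $\C^*$-fixed point, parameterised by monomial data $(y^{\alpha_j}x^{c_j})$ as in Section \ref{sect: hilb sch sing}. The fibers of $\cL_i$ and $\cL_{i+1}$ are spanned by the classes of $y^{\alpha_i}x^{c_i}$ and $y^{\alpha_{i+1}}x^{c_{i+1}}$, and $s$ sends the former to the class of $y^{\alpha_i+1}x^{c_i}$. This class is nonzero in $\cL_{i+1}$ precisely when $\alpha_{i+1}=\alpha_i+1$ and $c_{i+1}=c_i$, in which case the $\C^*$-weights of $\cL_{i+1}$ and $\cL_i$ differ by exactly $m$. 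Translating this equality of weights through Lemma \ref{lem: weights of line bundles} yields the claimed relation $\wt_{n+1-i}(\ba)=\wt_{n-i}(\ba)-m/n$.

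For (d), over $Z_i$ the fiber of $\pi_i$ is the projective bundle $\PP(\mathcal E)$. A point of this fiber coming from a flag in $\PHilb_{k,n+k}$ corresponds to the tautological subline $\cL_{i+1}\subset\mathcal E$, and the standard identification of the relative tangent bundle of a projective bundle at a line $L\subset\mathcal E$ with $\Hom(L,\mathcal E/L)$ gives $\Hom(\cL_{i+1},\cL_i)=\cL_i\cL_{i+1}^{-1}$, as claimed. The main obstacle is (b): one must verify with care that $y$ genuinely descends to a morphism between the indicated line bundles rather than to some more complicated piece of extension data, and that its zero scheme agrees with $\pi_i^{-1}(Z_i)$; once this is in hand, parts (a), (c), and (d) follow from elementary nilpotent linear algebra, a direct monomial fixed-point computation, and standard projective-bundle geometry, respectively.
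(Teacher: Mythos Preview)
Your argument is correct and matches the paper's approach in all four parts: nilpotent linear algebra on the two-dimensional quotient for (a), the section induced by multiplication by $y$ for (b), a torus-weight comparison through Lemma~\ref{lem: weights of line bundles} for (c), and $T_\ell\PP(V)\cong\Hom(\ell,V/\ell)$ for (d). One small slip in your handling of (b): from $y|_{\cL_{i+1}}=0$ you only conclude that $y$ factors through the quotient $\cL_i\to\mathcal E$, not that its image lands in $\cL_{i+1}$; for that you also need that the induced action of $y$ on the line $\cL_i$ vanishes (same nilpotent-on-a-line reasoning), which is precisely the paper's direct observation that $yI_{k+i}\subset I_{k+i+1}$, giving the map $\cL_i\to\cL_{i+1}$ without passing through the rank-two bundle.
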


\begin{proof}
(a) The fiber of $\pi_i$ naturally corresponds to the space of $y$-invariant lines in two-dimensional space $I_{k+i}/I_{k+i+2}$. Since $y$ is nilpotent on $I_{k+i}/I_{k+i+2}$, it is either identically zero and every line is $y$-invariant, or it is a Jordan block and has unique $y$-invariant line.

(b) A flag $I_k\supset I_{k+1}\supset \cdots\supset I_{k+n}=xI_k$ is in $\pi_i^{-1}(Z_i)$ if and only if 
$yI_{k+i}\subset I_{k+i+2}$. Since $yI_{k+i}\subset I_{k+i+1}$, we have a map $s_{y}:\cL_{i}\to \cL_{i+1}$
which is equivalent to a section of $\cL_i^{-1}\cL_{i+1}$.

(c) A fixed point is not in $\pi_i^{-1}(Z_i)$ if and only if the weight of $\cL_{i+1}$ differs from the weight of $\cL_{i}$ by $m$. By Lemma \ref{lem: weights of line bundles} we get
$$
n\wt_{n-i}+m(n-1)=n\wt_{n+1-i}+m(n-1)+m,\ \,
 \wt_{n-i}=\wt_{n+1-i}+\frac mn.
$$  

(d) Recall that the tangent space to $\PP^1=\PP(V)$ at a line $\ell$ is canonically isomorphic to $\Hom(\ell,V/\ell)$. In our case $\ell\simeq I_{k+i+1}/I_{k+i+2}=\cL_{i+1}$ and $V/\ell\simeq I_{k+i}/I_{k+i+1}=\cL_i.$ So the tangent bundle to the fiber is isomorphic to the space 
$\Hom(\cL_{i+1},\cL_i)\simeq \cL_i\cL_{i+1}^{-1}$.
\end{proof}

We can use the maps $\pi_i$ to define the Springer action of $\Sn$ on the homology of $\PHilb_{k,n+k}$.
Let $\gamma_i:\pi_i^{-1}(Z_i)\hookrightarrow \PHilb_{k,n+k}$ denote the natural inclusion map. 
By Lemma \ref{lem:partial flag} we have well-defined Gysin maps $\gamma_i^*:H_*(\PHilb_{k,n+k})\to H_*(\pi_i^{-1}(Z_i))$ and $\pi_i^{*}:H_*(Z_i)\to H_*(\pi_i^{-1}(Z_i))$. Consider the composition
\begin{multline}
B_i:H_*(\PHilb_{k,n+k})\xrightarrow{\gamma_i^*}H_*(\pi_i^{-1}(Z_i))\xrightarrow{\pi_{i*}}H_*(Z_i)\xrightarrow{\pi_i^{*}}H_*(\pi_i^{-1}(Z_i))
\\
\xrightarrow{\gamma_{i*}}H_*(\PHilb_{k,n+k}).
\end{multline}

By Lemma \ref{lem: weights of line bundles} we can identify the fixed point basis in the equivariant cohomology of $\sqcup_k \PHilb_{k,n+k}$ with the basis $v_{\ba}$ in the representation $L_{m/n} = L_{m/n}(\triv)$. In fact, it is more natural to identify it with the renormalized basis $\widetilde{v}_{\ba}$.

\begin{lemma}
\label{lem: action of s_n}
The action of $B_i$ in the equivariant cohomology of $\sqcup_k \PHilb_{k,n+k}$ agrees with the action of $1-s_{n-i}$ on $L_{m/n}$, if we identify the fixed point basis in the former with $\widetilde{v_{\ba}}$.
\end{lemma}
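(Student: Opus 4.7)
\medskip

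\noindent\textbf{Proof proposal.} The plan is to compute $B_i$ directly on each fixed-point class $[\ba]$, $\ba \in \cT$, via equivariant localization, and then match the result against the matrix of $1 - s_{n-i}$ acting on $\{\widetilde{v}_{\ba}\}$ from Proposition \ref{prop:renormalization}. Two cases arise naturally from Lemma \ref{lem:partial flag}(c), according to whether $[\ba]$ lies in the exceptional locus $Y := \pi_i^{-1}(Z_i)$.

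First, suppose $[\ba]\notin Y$. By Lemma \ref{lem:partial flag}(c) this is equivalent to $\wt_{n-i}(\ba) - \wt_{n+1-i}(\ba) = c$. Then $\gamma_i^*[\ba] = 0$, so $B_i[\ba] = 0$; on the representation side, the coefficient $(\wt_{n-i} - \wt_{n+1-i} - c)/(\wt_{n-i} - \wt_{n+1-i})$ in the formula for $(1 - s_{n-i})\widetilde{v}_{\ba}$ vanishes in exactly this case, so both operators annihilate $[\ba]$ simultaneously.

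Next, suppose $[\ba]\in Y$. The $\PP^1$-fiber of $\pi_i$ through $\ba$ contains a second torus-fixed point $\ba'$, obtained by exchanging the two weight lines $\cL_i|_{\ba}$ and $\cL_{i+1}|_{\ba}$ in the two-dimensional quotient sitting between the skipped ideals. By Lemma \ref{lem: weights of line bundles} this exchanges the $u$-weights at positions $n+1-i$ and $n-i$, i.e.\ $\ba' = s_{n-i}\cdot\ba$, matching the transposition in the lemma statement. Lemma \ref{lem:partial flag}(b,d), together with Lemma \ref{lem: weights of line bundles}, gives the relevant equivariant Chern roots: $e_{\C^*}(N_{Y/X})|_{\ba} = n(\wt_{n-i}(\ba) - \wt_{n+1-i}(\ba))\cdot t$ and $e_{\C^*}(T_{\ba}F) = -n(\wt_{n-i}(\ba) - \wt_{n+1-i}(\ba))\cdot t$, with opposite signs at $\ba'$. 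Applying $\gamma_i^*,\pi_{i*},\pi_i^*,\gamma_{i*}$ in turn using the standard fixed-point formulas, the two Euler-class factors from $\gamma_i^*$ and $\pi_i^*$ largely cancel, and one obtains an explicit $2\times 2$ matrix for $B_i$ on the span of $[\ba]$ and $[\ba']$.

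Finally, the match with $(1-s_{n-i})\widetilde{v}_{\ba} = \frac{\wt_{n-i} - \wt_{n+1-i} - c}{\wt_{n-i} - \wt_{n+1-i}}(\widetilde{v}_{\ba} - \widetilde{v}_{s_{n-i}\ba})$ reduces to verifying that the renormalization function $\varphi$ of Proposition \ref{prop:renormalization} is exactly the ratio needed to pass from the natural geometric normalization of the fixed-point classes to the algebraic basis $\{\widetilde{v}_{\ba}\}$; this is the point of the defining equation $\varphi(\ba)/\varphi(s_{n-i}\ba) = (\wt_{n-i} - \wt_{n+1-i} - c)/(\wt_{n-i} - \wt_{n+1-i})$. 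The main delicate step is the careful bookkeeping of Euler-class factors and signs across the four Gysin and pullback operations defining $B_i$; once this is organized, the identification is forced by Lemmas \ref{lem:partial flag} and \ref{lem: weights of line bundles} together with Proposition \ref{prop:renormalization}.
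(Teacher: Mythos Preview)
Your overall strategy --- compute $B_i$ on fixed-point classes by tracking the four maps $\gamma_i^*,\pi_{i*},\pi_i^*,\gamma_{i*}$ via localization, then compare with Proposition~\ref{prop:renormalization} --- is exactly the paper's approach. But two of your concrete steps are off, and the second one masks the first.

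First, your equivariant Euler class of $N_{Y/X}$ is wrong. The divisor $Y=\pi_i^{-1}(Z_i)$ is cut out by the section $s_y:\cL_i\to\cL_{i+1}$ given by multiplication by $y$, and $y$ has $\C^*$-weight $m$. So as an \emph{equivariant} section of $\cL_i^{-1}\cL_{i+1}$ it is not invariant; the correct equivariant normal bundle carries the extra twist, and the Gysin factor at $\ba$ is
\[
c_1(\cL_{i+1})-c_1(\cL_i)-m \;=\; n\bigl(\wt_{n-i}(\ba)-\wt_{n+1-i}(\ba)\bigr)-m,
\]
not $n(\wt_{n-i}-\wt_{n+1-i})$. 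This is exactly what the paper records. With the corrected factor and the fiber (co)tangent weight $\pm n(\wt_{n-i}-\wt_{n+1-i})$ from Lemma~\ref{lem:partial flag}(d), one gets
\[
B_i[\ba]\;=\;\frac{n(\wt_{n-i}-\wt_{n+1-i})-m}{\,n(\wt_{n-i}-\wt_{n+1-i})\,}\bigl([\ba]-[s_{n-i}\!\cdot\!\ba]\bigr),
\]
which is precisely the formula for $(1-s_{n-i})\widetilde v_{\ba}$ in Proposition~\ref{prop:renormalization} once $c=m/n$.

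Second, and related: the last paragraph about $\varphi$ is a misconception. Once the Euler class is right, the matrix of $B_i$ in the fixed-point basis \emph{already} equals the matrix of $1-s_{n-i}$ in the $\widetilde v_{\ba}$ basis; the identification $[\ba]\leftrightarrow\widetilde v_{\ba}$ is direct, with no rescaling. The function $\varphi$ passes from $v_{\ba}$ to $\widetilde v_{\ba}$, not from the geometric basis to $\widetilde v_{\ba}$; invoking it here cannot repair the missing $-m$, since no diagonal rescaling takes $[\ba]-[\ba']$ to $\tfrac{D-m}{D}([\ba]-[\ba'])$ simultaneously for all $\ba$.
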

\begin{proof}
We just need to compute the matrix elements of all the operators involved in the definition of $B_i$. 
By Lemma \ref{lem: weights of line bundles} (b) the subvariety $\pi_i^{-1}(Z_i)$ is cut out by a section of $\cL_i^{-1}\cL_{i+1}$ corresponding to the map $s_{y}:\cL_{i}\to \cL_{i+1}$. This map has weight $m$, and so the Gysin map $\gamma_i^*$ correspond to the multiplication by $c_1(\cL_{i+1})-c_1(\cL_{i})-m$ which at a fixed point corresponds to the multiplication by $(n\wt_{n-i}-n\wt_{n+1-i}-m)$. Note that by 
Lemma \ref{lem: weights of line bundles} (c) this annihilates the classes of all fixed points outside 
$\pi_i^{-1}(Z_i)$.

The map $\pi_{i*}$ just maps the class of the fixed point in $\PHilb_{k,n+k}$ to the class of the corresponding class in $\PHilb^{(i)}_{k,n+k}$. The map $\pi^*_{i}$, however, amounts to dividing by the 
cotangent weight of the fiber computed in Lemma \ref{lem:partial flag} (d). 

By combining these factors, it is now easy to compare the matrix elements of $B_i$ with the ones in Proposition \ref{prop:renormalization} and observing that for $c=m/n$ one gets:

$$
(1 - s_{i})\widetilde{v}_{\ba} = \frac{n\wt_{i} - n\wt_{i+1} - m}{n\wt_{i} - n\wt_{i+1}}\left(\widetilde{v}_{\ba} - \widetilde{v}_{s_{i}\cdot \ba}\right)
$$
where $\wt =\wt(\ba)$.
\end{proof}

We also have a geometric analogue of the shift operator $\tau$. Given a flag $I_k\supset I_{k+1}\supset \cdots\supset I_{k+n}=xI_k$, we can consider the flag $I_{k+1}\supset \cdots\supset I_{k+n}=xI_k\supset I_{k+n+1}=xI_{k+1}$. This defines a map $T:\PHilb_{k,n+k}\to \PHilb_{k+1,n+k+1}$. 

\begin{definition}
We define  $W_{k,n+k}\subset \PHilb_{k,n+k}$ as the set of flags $I_k\supset I_{k+1}\supset \cdots\supset I_{k+n}=xI_k$ such that $I_{k+n-1}\subset x\cO_C$.
\end{definition}
 
 It is easy to see that $W_{k,n+k}$ is a closed subvariety in $\PHilb_{k,n+k}$.
 
 \begin{lemma}
 The map $T:\PHilb_{k,n+k}\to \PHilb_{k+1,n+k+1}$ is injective and its image coincides with $W_{k+1,n+k+1}$.
 In particular, $\PHilb_{k,n+k}$ and $W_{k+1,n+k+1}$ are isomorphic.
 \end{lemma}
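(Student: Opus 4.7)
The plan is to construct an inverse to $T$ on $W_{k+1,n+k+1}$, which will simultaneously give injectivity and the description of the image. The main tool is that multiplication by $x$ is injective on $\cO_C$, since $C = \{x^m = y^n\}$ is reduced and $x$ is a nonzerodivisor; concretely, this is visible in the presentation \eqref{eq:Y} where $Y$ is expressed over $\C[[x]]$.

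First I would handle the inclusion $T(\PHilb_{k,n+k}) \subseteq W_{k+1,n+k+1}$: if $T$ sends the flag $I_k \supset I_{k+1} \supset \cdots \supset I_{k+n} = xI_k$ to $J_{k+1} \supset \cdots \supset J_{k+n+1}$, then $J_{k+n} = I_{k+n} = xI_k \subset x\cO_C$, which is exactly the defining condition for $W_{k+1,n+k+1}$. Injectivity is then almost immediate: the data discarded by $T$ is $I_k$, but the flag records $J_{k+n} = xI_k$, and since $x$ is a nonzerodivisor, $I_k$ is recovered uniquely as $\{a \in \cO_C : xa \in J_{k+n}\}$.

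Next I would show surjectivity onto $W_{k+1,n+k+1}$. Given a flag $J_{k+1} \supset J_{k+2} \supset \cdots \supset J_{k+n+1} = xJ_{k+1}$ in $W_{k+1,n+k+1}$, set
\[
I_k := \{a \in \cO_C : xa \in J_{k+n}\}.
\]
This makes sense precisely because $J_{k+n} \subset x\cO_C$ (the $W$-condition) combined with the injectivity of multiplication by $x$. I would verify in turn: (i) $I_k$ is an ideal; (ii) $xI_k = J_{k+n}$; (iii) $J_{k+1} \subseteq I_k$, using that $xJ_{k+1} = J_{k+n+1} \subseteq J_{k+n}$; (iv) the codimension count $\dim(\cO_C/I_k) = k$, obtained by combining $\dim(\cO_C/xI_k) = \dim(\cO_C/J_{k+n}) = k+n$ with the general fact $\dim(I_k/xI_k) = n$ noted in Section \ref{sect: hilb sch sing}; and (v) $\dim(I_k/J_{k+1}) = 1$. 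Together these produce a flag in $\PHilb_{k,n+k}$ whose image under $T$ is the given flag.

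The only genuinely non-routine point is step (iii)–(iv), where one must be careful that the apparently weak condition $J_{k+n} \subset x\cO_C$ is in fact strong enough to force $I_k$ to contain $J_{k+1}$ with codimension exactly one; this is where the whole chain of inclusions $J_{k+1} \supset \cdots \supset J_{k+n+1} = xJ_{k+1}$ enters, together with the nonzerodivisor property of $x$ to ensure the multiplication-by-$x$ map $\cO_C/I_k \to \cO_C/xI_k$ is injective. All other verifications are direct and formal.
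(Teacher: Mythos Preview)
Your proposal is correct and follows essentially the same approach as the paper: both construct the inverse by setting $I_k := x^{-1}J_{k+n} = \{a \in \cO_C : xa \in J_{k+n}\}$, using that $x$ is a nonzerodivisor and that the $W$-condition $J_{k+n} \subset x\cO_C$ makes this well-defined. The paper's proof is considerably more terse, omitting the explicit verifications (i)--(v) you list, but the underlying argument is identical.
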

 
 \begin{proof}
 The image of $T$ is contained in $W_{k+1,n+k+1}$ by construction. Given a flag $I_{k+1}\supset I_{k+2}\supset \cdots\supset I_{k+n+1}=xI_{k+1}$ in $W_{k+1,n+k+1}$, we have $I_{k+n}\subset x\cO_c$, so we can define an ideal $I_{k}:=x^{-1}I_{k+n}$. Since $I_{k+n}\supset xI_{k+1}$, we have $I_{k}\supset I_{k+1}$. Therefore 
  $I_k\supset I_{k+1}\supset \cdots\supset I_{k+n}=xI_k$ is a well defined point in $\PHilb_{k,n+k}$ 
sent to the original flag by $T$.
 \end{proof}
 
 Recall that the  $\cL_n$ has fibers $I_{k+n-1}/I_{k+n}=I_{k+n-1}/xI_{n}$. The inclusion 
 $I_{k+n-1}\hookrightarrow \cO_C$ induces a map $i:\cL_n\to \cO_C/x\cO_C$.

\begin{lemma}
\label{lem: zero section}
Define the covector $\eta: \cO_C/x\cO_C\to \C$ by the equation $\eta(y^{n-1})=1,\eta(y^k)=0$ for $0\le k<n-1$.
Then $W_{k,n+k}$ is the zero locus of the composition 
\begin{equation}
s:\cL_n\xrightarrow{i} \cO_C/x\cO_C\xrightarrow{\eta} \C
\end{equation}
or, equivalently, the zero locus of the section $s:\C \to \cL_n^{-1}$.
\end{lemma}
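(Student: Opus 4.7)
The plan is to reduce the statement to a clean linear-algebra observation about $y$-invariant subspaces of $\cO_C/x\cO_C$.

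First I would unpack the target space. Reducing the matrix $Y$ from \eqref{eq:Y} modulo $x$ and using the presentation \eqref{eq: basis}, multiplication by $y$ on the $n$-dimensional space $\cO_C/x\cO_C = \C\langle 1,y,\ldots,y^{n-1}\rangle$ is a single nilpotent Jordan block $\bar Y$ (indeed $y^n = x^m$ vanishes modulo $x\cO_C$ since $m\ge 1$). Consequently the $\bar Y$-invariant subspaces form a chain
\begin{equation*}
0 \subsetneq \C\langle y^{n-1}\rangle \subsetneq \C\langle y^{n-2},y^{n-1}\rangle \subsetneq \cdots \subsetneq \cO_C/x\cO_C,
\end{equation*}
and the unique one-dimensional $\bar Y$-invariant subspace is $\C\langle y^{n-1}\rangle$, on which the functional $\eta$ is non-zero.

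Next I would identify the image of the fiber map $i$. Let $V := (I_{k+n-1} + x\cO_C)/x\cO_C$ denote the image of $I_{k+n-1}$ in $\cO_C/x\cO_C$. Because $I_{k+n} = xI_k \subset x\cO_C$, the composition $I_{k+n-1} \hookrightarrow \cO_C \twoheadrightarrow \cO_C/x\cO_C$ factors through $I_{k+n-1}/I_{k+n}$, so $V$ equals the image of $i$ on this fiber; in particular $\dim V \le \dim I_{k+n-1}/I_{k+n} = 1$. Since $I_{k+n-1}$ and $x\cO_C$ are each closed under multiplication by $y$, the subspace $V$ is $\bar Y$-invariant.

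Combining these two observations forces $V$ to be either $0$ or $\C\langle y^{n-1}\rangle$. Therefore $s = \eta\circ i$ vanishes on the fiber if and only if $V=0$, if and only if $I_{k+n-1} \subset x\cO_C$, which by definition is the condition cutting out $W_{k,n+k}$. The reformulation as the zero locus of a section $\C \to \cL_n^{-1}$ is tautological, since a bundle morphism $\cL_n \to \underline{\C}$ is the same data as a section of $\cL_n^{-1}$ with the same vanishing scheme. There is no serious obstacle here: the argument rests entirely on the single-Jordan-block structure of $\bar Y$ together with the codimension-one drop $I_{k+n-1} \supset I_{k+n}$, which together dictate that the image of $i$ is either trivial or lies precisely where $\eta$ does not vanish.
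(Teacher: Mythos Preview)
Your proof is correct and follows essentially the same approach as the paper: both arguments observe that $i(\cL_n)$ is a $y$-invariant subspace of $\cO_C/x\cO_C$ of dimension at most one, so by the single Jordan block structure of $\bar Y$ it must be either $0$ or $\langle y^{n-1}\rangle$, whence $\eta\circ i$ vanishes exactly when $I_{k+n-1}\subset x\cO_C$. You have simply spelled out the details (the factoring through $I_{k+n-1}/I_{k+n}$, the chain of invariant subspaces) that the paper leaves implicit.
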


\begin{proof}
Recall that $W_{k,n+k}$ is cut out by condition $I_{k+n-1}\subset x\cO_C$ which is equivalent to vanishing of $i(\cL_n)$.
Since $i(\cL_n)$ is a $y$-invariant subspace of $\cO_C/x\cO_C$ of dimension at most 1, either $i(\cL_n)=0$ or
$i(\cL_n)=\langle y^{n-1}\rangle$. Therefore $i(\cL_n)=0$ if and only if $\eta(i(\cL_n))=0$. 
\end{proof}

Note that $\PHilb_{k,n+k}$ is in general very singular and has several irreducible components. The section $s$ might vanish on some of these components identically. Still, by Lemma \ref{lem: zero section} we can define Gysin map\cite{Fulton}
$$
j^*:H_*(\PHilb_{k,n+k})\to H_{*-2}(W_{k,n+k}).
$$
where $j=j_k$ is the inclusion
$j:W_{k,n+k}\hookrightarrow \PHilb_{k,n+k}$.
We define $\Lambda$ as the composition
$$
\Lambda:H_*(\PHilb_{k+1,n+k+1})\xrightarrow{j^*} H_{*-2}(W_{k+1,n+k+1}) \xrightarrow{\simeq}H_{*-2}(\PHilb_{k,n+k}). 
$$
 
\begin{lemma}
\label{lem: T Lambda}
We have $T_*\circ \Lambda(-)=c_1(\cL_n)\cap (-)$.
\end{lemma}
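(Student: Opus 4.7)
The strategy is to reduce the computation of $T_*\circ\Lambda$ to a self-intersection formula for the Cartier-type subvariety $W_{k+1,n+k+1}\subset\PHilb_{k+1,n+k+1}$. By the lemma immediately preceding, $T:\PHilb_{k,n+k}\to\PHilb_{k+1,n+k+1}$ is injective with image equal to $W_{k+1,n+k+1}$, so it factors as
\[
T = j_{k+1}\circ \widetilde{T},
\]
where $\widetilde{T}:\PHilb_{k,n+k}\xrightarrow{\simeq}W_{k+1,n+k+1}$ is the induced isomorphism and $j_{k+1}:W_{k+1,n+k+1}\hookrightarrow\PHilb_{k+1,n+k+1}$ is the closed inclusion whose Gysin map enters the definition of $\Lambda$. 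Hence $T_* = (j_{k+1})_*\circ\widetilde{T}_*$, while the unnamed isomorphism $H_{*-2}(W_{k+1,n+k+1})\xrightarrow{\simeq} H_{*-2}(\PHilb_{k,n+k})$ built into $\Lambda$ is precisely $\widetilde{T}_*^{-1}$.

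Combining these observations, I would compute
\[
T_*\circ\Lambda \;=\; (j_{k+1})_*\circ\widetilde{T}_*\circ\widetilde{T}_*^{-1}\circ j_{k+1}^* \;=\; (j_{k+1})_*\circ j_{k+1}^*,
\]
so the task is reduced to identifying this self-intersection operator. By Lemma \ref{lem: zero section}, $W_{k+1,n+k+1}$ is the vanishing locus of a section of a line bundle identified with $\cL_n$ on $\PHilb_{k+1,n+k+1}$. Applying the refined self-intersection formula \cite{Fulton} for the Gysin map attached to a section of a line bundle then yields
\[
(j_{k+1})_*\circ j_{k+1}^*(\alpha) \;=\; c_1(\cL_n)\cap\alpha
\]
for every $\alpha\in H_*(\PHilb_{k+1,n+k+1})$, which is the desired identity.

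The main obstacle is that $\PHilb_{k+1,n+k+1}$ is singular, possibly with several irreducible components, and the section $s$ from Lemma \ref{lem: zero section} may vanish identically on some of them — so the naive geometric definition of the Gysin map is unavailable. This is precisely the situation for which $j_{k+1}^*$ was set up via Fulton's refined Gysin homomorphism for the zero-section of a line bundle, and the refined self-intersection formula continues to hold in that setting. As an internal consistency check, I would cross-verify the sign and normalization by expanding both sides in the $\C^*$-fixed point basis: Lemma \ref{lem: weights of line bundles} gives the weight of $c_1(\cL_n)$ at each fixed point, while $T_*$ and $\Lambda$ correspond to $\tau$ and $\lambda$ under the identification with $L_{m/n}(\triv)$, whose product $\tau\lambda=u_1$ determines the expected eigenvalues from \eqref{eq: action renormalized}.
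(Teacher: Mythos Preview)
Your proposal is correct and follows exactly the same route as the paper: factor $T_*\circ\Lambda$ as $(j_{k+1})_*\circ j_{k+1}^*$ via the isomorphism $\PHilb_{k,n+k}\cong W_{k+1,n+k+1}$, then invoke the self-intersection formula for the zero locus of the section from Lemma~\ref{lem: zero section}. The paper's proof is simply the one-line version of what you have written out in detail.
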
  
 

\begin{proof}
Indeed, if $j:W_{k,n+k}\hookrightarrow \PHilb_{k,n+k}$ is the inclusion, then 
$$
T_*\circ \Lambda(-)=j_*j^*(-)=c_1(\cL_n)\cap (-)
$$
by Lemma \ref{lem: zero section}.
\end{proof}
 
\begin{theorem}
\label{thm: geometric action}
(a) The total localized equivariant homology 
$$
U=\bigoplus_{k=0}^{\infty}H^{\C^*}_*(\PHilb_{k,n+k}) 
$$
has an action of the rational Cherednik algebra $H_{n,m}$. The action of $\Sn$ is the Springer action described above, $u_{n+1-i}+m(n-1)$ correspond to capping with $c_1(\cL_i)$ and the operators $T$ and $\Lambda$ on $U$ correspond to the action of $\tau$ and $\lambda$. 

(b) The representation $U$ is irreducible and isomorphic to $L_{n,m}(\triv)$. Under this isomorphism, fixed points of $\C^*$ action correspond to the eigenbasis 
$\widetilde{v}_{\ba}$. 
\end{theorem}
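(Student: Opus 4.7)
The plan is to exploit the Dunkl-Opdam presentation of the rational Cherednik algebra from Theorem \ref{thm:new presentation}: this reduces the verification to matching a handful of geometric operators with the generators $u_i$, $\Sn$, $\tau$, $\lambda$, and then checking the corresponding (comparatively few) relations. Equivariant localization lets us do everything in the fixed-point basis of $U$. Concretely, I would set up a $\C$-linear isomorphism $\Psi: L_{m/n}(\triv) \to U$ as follows. The $\C^*$-fixed points of $\PHilb^{x}(C)$ are in natural bijection with the indexing set $\cT$ of the basis $\{\widetilde{v}_{\ba}\}$ of $L_{m/n}(\triv)$ by Lemmas \ref{lem: fixed point description} and \ref{lem: weights of line bundles}. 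To lift this bijection from the level of lines to an honest linear map, I would normalize the fixed-point classes $[\ba] \in U$ inductively along $\pi$-orbits by demanding $T_*[\ba] = [\pi\cdot\ba]$; since $T$ is injective on fixed points, this consistently pins down the scalars and gives the match $[\ba] \leftrightarrow \widetilde{v}_{\ba}$.

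Next I would check, generator-by-generator, that the geometric operators act on the fixed-point basis by the same formulas as the algebraic generators act on $\widetilde{v}_{\ba}$ in equation \eqref{eq: action renormalized}. Capping with $c_1(\cL_i)$ is diagonal with the eigenvalue of $nu_{n+1-i}+m(n-1)$ by Lemma \ref{lem: weights of line bundles}. The Springer $\Sn$-action matches by Lemma \ref{lem: action of s_n}, which identifies the geometric operator $B_i$ with $1-s_{n-i}$ acting on $\widetilde{v}_{\ba}$. The identification $T_* = \tau$ holds by construction of the normalization. For $\Lambda$, Lemma \ref{lem: T Lambda} gives $T_*\circ\Lambda = c_1(\cL_n)\cap -$, which via the identification of $c_1(\cL_n)$ with a shift of $u_1$ and $T_* = \tau$ translates to $T_*\Lambda\widetilde{v}_{\ba} = \wt_1(\ba)\,T_*\widetilde{v}_{\pi^{-1}\cdot\ba}$; since $\Lambda$ sends a fixed point to a fixed point on $\PHilb_{k-1, n+k-1}(C)$ and $T_*$ is injective there, this forces $\Lambda\widetilde{v}_{\ba} = \wt_1(\ba)\widetilde{v}_{\pi^{-1}\cdot\ba}$, which matches $\lambda$. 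In the degenerate case $a_1 = 0$, Lemma \ref{lemma:w_1 = 0} gives $\wt_1(\ba) = 0$, so the formula remains well-defined even when $\pi^{-1}\cdot\ba \notin \Z_{\geq 0}^n$.

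With the generators matched, the Dunkl-Opdam relations of Theorem \ref{thm:new presentation} hold on $U$ \emph{automatically}: the same formulas that define the $H_{m/n}$-action on $L_{m/n}(\triv)$ now define the action on $U$, so $\Psi$ intertwines these actions. This gives part (a), and part (b) is then immediate, since $L_{m/n}(\triv)$ is the simple module with basis $\{\widetilde{v}_{\ba}\}_{\ba \in \cT}$ by Corollary \ref{cor: basis in L(triv)}.

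The main technical obstacle will be verifying the key geometric identity $T_*\circ\Lambda = c_1(\cL_n)\cap -$ of Lemma \ref{lem: T Lambda}; this requires carefully defining $\Lambda$ as a Gysin pullback on the singular scheme $\PHilb^{x}(C)$, using the section of $\cL_n^{-1}$ whose zero locus is $W_{k,n+k}$ as in Lemma \ref{lem: zero section}. A secondary subtlety is the matrix-element computation underlying Lemma \ref{lem: action of s_n}: the composition $\gamma_{i*}\pi_i^*\pi_{i*}\gamma_i^*$ must be evaluated in the fixed-point basis using the description of $\pi_i^{-1}(Z_i)$ as the zero locus of a section of $\cL_i^{-1}\cL_{i+1}$ and of the fiber tangent bundle as $\cL_i\cL_{i+1}^{-1}$, and compared with the renormalized formula from Proposition \ref{prop:renormalization}; it is precisely this matching that forces the \emph{renormalized} basis $\widetilde{v}_{\ba}$ (rather than the original $v_{\ba}$) to be the one that corresponds to the fixed-point classes.
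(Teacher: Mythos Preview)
Your proposal is correct and follows essentially the same approach as the paper's own proof: identify $U$ with $L_{m/n}(\triv)$ as vector spaces via fixed-point localization and Lemma~\ref{lem: weights of line bundles}, match the geometric operators with $u_i$, $s_i$, $\tau$, $\lambda$ in the renormalized basis via Lemmas~\ref{lem: weights of line bundles}, \ref{lem: action of s_n}, and~\ref{lem: T Lambda}, and conclude that the Cherednik relations hold on $U$ simply because they hold on $L_{m/n}(\triv)$. The paper makes one point slightly more explicit than you do: in applying Lemma~\ref{lem: T Lambda}, the equivariant weight $-m(n-1)$ of the covector $\eta$ in Lemma~\ref{lem: zero section} shifts the Euler class so that $T_*\circ\Lambda$ acts as $n\wt_1$ at the fixed point $\ba$ (rather than $c_1(\cL_n)|_{\ba} = n\wt_1 + m(n-1)$), which is exactly what matches $\tau\lambda = u_1$ after the rescaling $H_{n,m}\cong H_{1,m/n}$; your phrase ``a shift of $u_1$'' gestures at this but you should track the weight of $\eta$ explicitly.
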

 
\begin{proof}
Let $\overline{s}_i,\overline{u_i},\overline{\tau}$
and $\overline{\lambda}$ be the generators of $H_{1,c}$ where $c=m/n$. Recall that $H_{n,m}$ is isomorphic to $H_{1,m/n}$, under this isomorphism the generators $s_i,u_i,\tau$ and $\lambda$ of $H_{n,m}$ are mapped to $\overline{s}_i,n\overline{u_i},\overline{\tau}$
and $n\overline{\lambda}$ respectively. Below, we will use this isomorphism to identify $L_{n,m}(\triv)$ with $L_{m/n}$. 

By localization theorem \cite{Brion,GKM2} $U$ is spanned by classes of fixed points. 
By Lemma \ref{lem: weights of line bundles} these are in bijection
with the basis $v_{\ba}$ 
 (or, equivalently, $\widetilde{v}_{\ba}$) in
$L_{m/n}$. This defines an isomorphism between $U$ and $L_{m/n}$ as
vector spaces. 

Next, we prove that the geometrically defined actions of $u_i, s_i, T$ and $\Lambda$ agree with the corresponding actions on $L_{m/n}$. This is done by explicitly comparing their matrix elements.
For $u_i$ this follows from Lemma \ref{lem: weights of line bundles}. For $s_i$ this follows from Lemma \ref{lem: action of s_n}.
For $T$ and $\tau$ it is easy to see from equation \eqref{eq: action renormalized}. 

The action of $\Lambda$ is uniquely determined by Lemma \ref{lem: T Lambda}. More precisely, the map 
$\eta$ in Lemma \ref{lem: zero section} has equivariant weight $-m(n-1)$ (since the weight of $y^{n-1}$ equals $m(n-1)$), while by Lemma \ref{lem: weights of line bundles} $\cL_n$ has weight $n\wt_1+m(n-1)$. Therefore section $s$ in Lemma \ref{lem: zero section} has weight $n\wt$. By Lemma \ref{lem: T Lambda} we conclude that $T_*\circ \Lambda=n\overline{u_1}=u_1$. 
 
Finally, the operators $u_i, s_i, T$ and $\Lambda$ satisfy the relations in $H_{n,m}$ since their counterparts on $L_{n,m}(\triv)$ do. Therefore there is indeed an action of $H_{n,m}$ on $U$
and it is an irreducible representation.
\end{proof} 
 
\begin{remark}
In principle, one can check all the relations between the geometric operators directly (similarly to the computations in \cite{CGM}), but the above proof seems to be more transparent.
\end{remark} 

\begin{remark}
\label{rem:Euler grading}
Note  the grading of $L_{m/n}$ by eigenvalues of the Euler operator, where $\deg(v_{\ba}) = ||\ba|| = \sum a_{i}$ corresponds to  grading by $k$ in $\bigoplus_{k} H^{\C^*}_{*}(\PHilb_{k, n+k})$. 
\end{remark}

Consider now the Hilbert scheme $\Hilb(C) := \sqcup_{k}\Hilbk(C)$, and recall that we have defined $\PHilb^{x}(C) := \sqcup_{k}\PHilb_{k, n+k}(C)$. We have a $\C^{*}$-equivariant projection $\Pi: \PHilbx \to \Hilb(C)$, $(I_{k} \supset \cdots \supset I_{k+n} = xI_{k}) \mapsto I_{k}$, that induces an $\Sk{n}$-invariant map on (localized) equivariant homology

$$
\Pi_{*}: H_{*}^{\C^*}(\PHilbx) \to H_{*}^{\C^*}(\Hilb(C)).
$$

Now let $I_{k} \in \Hilbk(C)$ be a monomial ideal.
 Thanks to Lemma \ref{lemma:induced representation}, and using the notation there, the span of the elements in $H_{*}^{\C^*}(\PHilb_{k, n+k}(C))$ mapping to $[I_{k}]$ is the induced representation $\Ind_{\Sk{\lambda_1} \times \cdots \times \Sk{\lambda_{\ell}}}^{\Sk{n}}\triv$.
 Now by adjunction

$$
\Hom_{\Sk{n}}(\triv, \Ind_{\Sk{\lambda_1} \times \cdots \times \Sk{\lambda_{\ell}}}^{\Sk{n}}\triv) = \Hom_{\Sk{\lambda_1} \times \cdots \times \Sk{\lambda_{\ell}}}(\Res^{\Sk{n}}_{\Sk{\lambda_1} \times \cdots \times \Sk{\lambda_{\ell}}}\triv, \triv) = \C
$$

\noindent so up to scalars there is a unique $\Sk{n}$-equivariant section to the projection $\Pi_{*}: \Pi_{*}^{-1}(\C[I_{k}])\cap H_{*}^{\C^*}(\PHilb_{k, n+k}(C)) \to \C[I_{k}]$. As a consequence we get the following result.

\begin{proposition}\label{prop:spherical hilb}
We may identify
$H^{\C^*}_{*}(\Hilb(C)) = H^{\C^*}_{*}(\PHilbx)^{\Sk{n}}$ naturally. In particular, we obtain a geometric action of the {\bf spherical} rational Cherednik algebra $eH_{1, m/n}e$ on $H^{\C^*}_{*}(\Hilb(C))$, that makes it an irreducible module isomorphic to
$eL_{m/n} = L_{m/n}^{\Sk{n}}$,
where $e := \frac{1}{n!}\sum_{p \in \Sk{n}} p$ is the trivial idempotent in $\C\Sk{n}$. 
\end{proposition}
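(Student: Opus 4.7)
The plan is to package the discussion immediately preceding the proposition into a fixed-point argument for part~(a), then transport the action provided by Theorem~\ref{thm: geometric action} across the projection $\Pi : \PHilbx \to \Hilb(C)$ to obtain part~(b).

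First, for the identification $H^{\C^*}_*(\Hilb(C)) = H^{\C^*}_*(\PHilbx)^{\Sk{n}}$, I would use the localization theorem so that both sides acquire bases indexed by their respective $\C^*$-fixed points. Since $\Pi$ is $\C^*$-equivariant and the Springer action of $\Sk{n}$ on $H^{\C^*}_*(\PHilbx)$ is trivial on the base, $\Pi_*$ factors through $H^{\C^*}_*(\PHilbx)_{\Sk{n}}$. The preimage $\Pi_*^{-1}([I_k]) \cap H^{\C^*}_*(\PHilb_{k,n+k}(C))$ at a monomial ideal $I_k$ carries, by Lemma~\ref{lemma:induced representation}, the $\Sk{n}$-representation $\Ind_{\Sk{\lambda_1}\times\cdots\times\Sk{\lambda_\ell}}^{\Sk{n}}\triv$, and by Frobenius reciprocity its space of $\Sk{n}$-invariants is one-dimensional. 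Choosing, for each $[I_k]$, a generator of this one-dimensional space produces a canonical $\Sk{n}$-equivariant section of $\Pi_*$, and a fixed-point-by-fixed-point dimension count shows that this section is the inverse of the restriction of $\Pi_*$ to $\Sk{n}$-invariants. This establishes part (a).

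Second, I would transport the action. By Theorem~\ref{thm: geometric action} there is a geometric action of $H_{1,m/n}$ on $H^{\C^*}_*(\PHilbx)$ identifying the latter with $L_{m/n}(\triv)$. Applying the trivial idempotent $e = \tfrac{1}{n!}\sum_{p \in \Sk{n}} p$ gives an action of $eH_{1,m/n}e$ on $eH^{\C^*}_*(\PHilbx) = H^{\C^*}_*(\PHilbx)^{\Sk{n}}$, intertwining with $eL_{m/n} = L_{m/n}^{\Sk{n}}$. Under the identification from part (a), this is the advertised geometric action of $eH_{1,m/n}e$ on $H^{\C^*}_*(\Hilb(C))$.

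It remains to check that $eL_{m/n} \neq 0$ and that it is irreducible over $eH_{1,m/n}e$. Nonvanishing is clear: the image of the constant polynomial $1 \in \Delta_{m/n}(\triv) = \C[x_1,\ldots,x_n]$ is a nonzero $\Sk{n}$-invariant vector in the nonzero quotient $L_{m/n}(\triv)$, hence lies in $eL_{m/n}$. Irreducibility then follows from the standard Schur-type fact: if $M$ is a simple module over a unital algebra $A$ and $e \in A$ is an idempotent with $eM \neq 0$, then $eM$ is simple over $eAe$ (any nonzero $eAe$-submodule $N \subseteq eM$ generates an $A$-submodule $ANe \subseteq M$ that must equal $M$, hence $eANe = eM = N$). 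This completes part (b). The only real content beyond routine bookkeeping is the one-dimensionality of the fiberwise invariants in part (a), which is already in hand from the adjunction computation above the proposition; no genuine obstacle remains.
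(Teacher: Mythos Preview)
Your proposal is correct and follows essentially the same approach as the paper: the fiberwise Frobenius reciprocity computation giving a one-dimensional space of invariants over each monomial ideal is exactly the argument the paper uses in the discussion immediately preceding the proposition, and the transport of the $H_{1,m/n}$-action via the idempotent $e$ is the intended deduction. You supply a few details the paper leaves implicit, notably the Schur-type argument that $eL_{m/n}$ is irreducible over $eH_{1,m/n}e$; the paper simply states the proposition as a consequence of the preceding paragraph without spelling this out.
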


\begin{remark}
In \cite{GK}, Garner and Kivinen study an action of the spherical rational Cherednik algebra on the homology of $\Hilb(C)$ using the Coulomb branch perspective. They identify $\Hilb(C)$ with a generalized affine Springer fiber and use the realization of $eH_{1, m/n}e$ as a quantized Coulomb branch algebra \cite{KN, W} to define an action via convolution diagrams. We will compare their construction to ours, in the parabolic setting, in Section \ref{sec: comparison to GK}. 

\end{remark}

\subsection{Parabolic Hilbert schemes as generalized affine Springer fibers}\label{sec:gasf} The goal of this section is to show that $\PHilbx= \bigsqcup_{k}\PHilb_{k, n+k}$ can be realized as a generalized affine Springer fiber. Thanks to \cite{GKM}, a consequence of this is that $\PHilb_{k, n+k}$ admits a paving by affine cells and therefore its cohomology is equivariantly formal.

Let us set $G := \operatorname{GL}_{n}$, acting on the vector space $N := \C^{n} \oplus \gln$, so that $N$ is the representation space of the framed Jordan quiver:

\begin{center}
\begin{tikzpicture}
\draw (0,5) node {1};
\draw (-0.2,4.8)--(-0.2,5.2)--(0.2,5.2)--(0.2,4.8)--(-0.2,4.8);
\node at (0,4) (n) {$n$};
\draw (0,4) circle (0.2);
\draw [->] (0,4.8)--(0,4.2);
\draw [->] (n.south) arc (-160:140:0.5);
\end{tikzpicture}
\end{center}

We will denote $\KK := \C((\epsilon))$ and $\OO := \C[[\epsilon]]$. We consider the groups $G_{\OO} \subseteq G_{\KK}$ of invertible $\OO$-linear (resp. $\KK$-linear) transformations on $\OO^{n}$ (resp. $\KK^{n}$).

We choose an $\OO$-basis $\{b_{1}, \dots, b_{n}\}$ of $\OO^{n}$. We define $b_{i}$ for $i \in \Z$ by setting $b_{i+n} := \epsilon b_{i}$. The \emph{standard flag} is the flag of $\OO$-lattices in $\KK^{n}$ 
$$
\cdots \supseteq \cI_{j-1} \supseteq \cI_{j} \supseteq \cI_{j+1} \supseteq \cdots 
$$

\noindent where $\cI_{j}$ is the $\OO$-span of $\{b_{j}, b_{j+1}, \dots, b_{j+n-1}\}$. We denote by $I \subseteq G_{\KK}$ the standard Iwahori subgroup, that is, the stabilizer of the standard flag. The quotient space $\Fl := G_{\KK}/I$ is known as the affine flag variety. This is an ind-scheme parametrizing flags of $\OO$-lattices $\cdots \supseteq \cJ_{j-1} \supseteq \cJ_{j} \supseteq \cJ_{j+1} \supseteq \cdots$ in $\KK^{n}$ subject to the condition $\cJ_{j+n} = \epsilon \cJ_{j}$ for every integer $j \in \Z$.

We will need a dual realization of $\Fl$. Let us consider the $\KK$-dual $(\KK^n)^*$ of $\KK^n$. This comes equipped with a dual basis $b_{1}^{*}, \dots, b_{n}^{*}$. As above, we set $b^{*}_{i+n} := \epsilon b_{i}^{*}$, and the standard flag in $(\KK^n)^{*}$ is the flag of $\OO$-lattices in $\KK^n$
$$
\cdots \supseteq \cI_{j-1}^{*} \supseteq \cI_{j}^* \supseteq \cI_{j+1}^* \supseteq \cdots 
$$
where $\cI_{j}^{*}$ is the $\OO$-span of $\{b_j^{*}, \dots, b_{j+n-1}^{*}\}$. We remark that $\cI_{1}^{*}$ is the \emph{standard lattice}
\[
(\OO^n)^{*} = \{f \in (\KK^n)^{*} \mid f(\OO^n) \subseteq \OO\}.
\]
Note that $G_{\KK}$ acts naturally on the space $(\KK^n)^{*}$, and we can identify $\Fl$ with the set of $\OO$-lattices $\cdots \supseteq \cJ_{j-1} \supseteq \cJ_{j} \supseteq \cJ_{j+1} \supseteq \cdots$ in $(\KK^{n})^{*}$ subject to the condition $\cJ_{j+n} = \epsilon \cJ_{j}$ for every integer $j \in \Z$, via the identification
\[
[g] \mapsto \cdots \supseteq \cI_{j-1}^{*}g^{-1} \supseteq \cI_{j}^*g^{-1} \supseteq \cI_{j+1}^*g^{-1} \supseteq \cdots 
\]

The group $G_{\KK}$ acts on the module $N_{\KK} := \KK \otimes N = \KK^{n} \oplus \gln(\KK)$ in the natural way, and the subgroup $G_{\OO} \subseteq G_{\KK}$ preserves the $\OO$-submodule $N_{\OO} := \OO \otimes N \subseteq N_{\KK}$. Now we consider the element $Y \in \gln(\OO)$ that in the basis $\{b_{1}, \dots, b_{n}\}$ is represented by the matrix \eqref{eq:Y}, with $x$ replaced by $\epsilon$, and the element $(b_{1}, Y) \in N_{\OO}$. We will consider the \emph{generalized affine Springer fiber}, cf. \cite{BFN, GK, GKM}

$$
\Spr(b_{1}, Y) := \{[g] \in \Fl \mid (g^{-1}b_{1}, g^{-1}Yg) \in \mathbb{O}^{n} \oplus \mathfrak{i}\} \subseteq \Fl
$$

\noindent where $\mathfrak{i}$ is the Lie algebra of the Iwahori subgroup $I$. More concretely, $\mathfrak{i} := \{X \in \gln(\OO) \mid X|_{\epsilon = 0} \; \text{is lower triangular}\}$. 

\begin{proposition}\label{prop:gasf}
We have an isomorphism 

$$
\Spr(b_{1}, Y) \cong \bigsqcup_{k}\PHilb_{k, n+k}
$$
\end{proposition}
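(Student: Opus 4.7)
The strategy is to write down explicit inverse maps, and the key is the identification $\cO_C \cong \OO^n$ via the $\OO$-basis $\{1, y, y^2, \dots, y^{n-1}\}$ (with $x$ identified with $\epsilon$), under which multiplication by $y$ is the operator $Y$ of \eqref{eq:Y}. Under this identification $\cI_1 = \OO^n = \cO_C$ and $b_1 = 1$; crucially, $\cI_1$ is \emph{cyclic} as a $\cO_C = \OO[Y]$-module with generator $b_1$, since $b_{i+1} = Y^i b_1$.

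Given $[g] \in \Spr(b_1,Y)$, set $\cJ_j := g \cI_j$. First, the condition $gYg^{-1}\in\mathfrak{i}$ means $gYg^{-1}$ preserves $\cI_j$ for every $j$, equivalently $Y\cJ_j \subseteq \cJ_j$. Second, I will show $\cJ_1\subseteq \cI_1$: using $gb_i = g Y^{i-1}b_1 = (gYg^{-1})^{i-1}(gb_1)$, the hypothesis $gb_1\in\OO^n = \cI_1$ combined with $gYg^{-1}\in\mathfrak{i}\subseteq \gln(\OO)$ (which preserves $\cI_1$) gives $gb_i\in \cI_1$ for all $i$, so $\cJ_1 = \mathrm{Span}_\OO\{gb_1,\dots,gb_n\}\subseteq\cI_1$. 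Consequently $\cJ_j\subseteq\cJ_1\subseteq\cO_C$ is an $\OO$-sublattice stable under $Y$, hence an ideal of $\cO_C$ of finite codimension. Setting $k := \dim_\C(\cO_C/\cJ_1)$, the fact that each $\cJ_j/\cJ_{j+1}$ is one-dimensional (the flag $\cI_\bullet$ is complete) yields $\dim_\C(\cO_C/\cJ_j)=k+j-1$, and $\cJ_{n+1}=\epsilon\cJ_1 = x\cJ_1$ translates to $I_{k+n} = xI_k$ with $I_{k+j-1}:=\cJ_j$. This defines the forward map landing in $\PHilb_{k,n+k}(C)$.

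Conversely, starting from a parabolic flag $I_k\supset I_{k+1}\supset\cdots\supset I_{k+n}=xI_k$, extend periodically by $I_{s+n} := xI_s$ for $s\ge k$ to obtain an infinite descending chain of $\OO$-lattices in $\KK^n$ with $I_{s+n}=\epsilon I_s$, i.e., a point of $\Fl$. Choose any $g\in G_\KK$ with $g\cI_j = I_{k+j-1}$ (well-defined modulo the stabilizer $I$ of the standard flag). Then $gb_1\in g\cI_1 = I_k\subseteq \cO_C = \OO^n$ gives the first condition, and multiplication by $y = Y$ preserves each ideal $I_{k+j-1} = g\cI_j$, which is exactly $gYg^{-1}\in\mathfrak{i}$. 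So $[g]\in\Spr(b_1,Y)$. The two constructions are inverse by construction: the forward map recovers the flag $\cJ_\bullet = g\cI_\bullet$ from $[g]$, and the backward map recovers $[g]$ from $\cJ_\bullet$ uniquely modulo $I$.

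The main subtlety, and the step I would write up most carefully, is the containment $\cJ_1\subseteq \cI_1$ in the forward direction: this is where the two seemingly disparate conditions in the definition of the generalized affine Springer fiber (the vector condition $gb_1\in\OO^n$ and the Lie-algebra condition $gYg^{-1}\in\mathfrak{i}$) conspire, using precisely the cyclicity of $\cI_1$ over $\cO_C$. To promote the resulting bijection of $\C$-points into an isomorphism of ind-schemes one observes that both sides represent the same moduli functor (flags of $\OO$-lattices $\cJ_\bullet \subseteq \cO_C$, with $\cJ_{j+1}\subset\cJ_j$ of colength one, $Y$-stable, and $\cJ_{n+1}=\epsilon \cJ_1$), so the bijection at the level of $R$-points for any test ring $R$ goes through verbatim.
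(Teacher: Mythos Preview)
Your proof is correct and follows essentially the same approach as the paper. Both arguments hinge on the identification $\cO_C \cong \OO^n$ (with $x=\epsilon$ and multiplication by $y$ given by $Y$), and the key step in both is the inductive verification $gb_{i+1} = (gYg^{-1})(gb_i) \in \OO^n$ using $gb_1\in\OO^n$ and $gYg^{-1}\in\mathfrak{i}\subseteq\gln(\OO)$; the paper phrases the argument as an equivalence of two conditions on $[g]$, while you package it as explicit inverse maps, but the content is the same.
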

\begin{proof}
We use the presentation of $\cO_{C}$ at the beginning of this section as a free $\C[[x]]$-module of rank $n$. In this presentation, an ideal of $\cO_{C}$ corresponds to a $\C[[x]]$-submodule $I \subseteq \C[[x]]^{n}$ closed under the action of the matrix $Y$ in \eqref{eq:Y}. Similarly, an element of $\bigsqcup_{k} \PHilb_{k, n+k}$ corresponds to a flag of $\C[[x]]$-submodules
$\C[[x]]^{n} \supseteq I_{k} \supseteq \cdots \supseteq I_{k+n-1}
\supseteq xI_{k}$ such that $\dim \C[[x]]^{n}/I_{j} = j < \infty$, each
ideal $I_{j}$ is stable under the action of $Y$ and $\dim I_{j}/I_{j+1}
= 1$. 
Now, we identify $\cO_{C}$ with the dual $(\KK^n)^{*}$ and, as above, we identify $[g] \in \Fl$ with the flag $\cI^*_{1}g^{-1} \supseteq \cdots \supseteq \cI^*_{n-1}g^{-1} \supseteq \cI^*_{n}g^{-1} = \epsilon \cI^{*}_{1}g^{-1}$ where, as above, $\cI^*_{1} \supseteq \cI^*_{2} \supseteq \cdots $ is the standard flag. Identifying $x = \epsilon$, we see that to prove the proposition we have to check that the following conditions are equivalent:

\begin{enumerate}
\item $\cI^*_{1}g^{-1} \subseteq (\OO^{n})^*$ and $\cI^{*}_{j}g^{-1}$ is closed under the action of $Y$ for every $j \geq 1$.
\item $g^{-1}b_{1} \in \OO^{n}$ and $g^{-1}Yg \in \mathfrak{i}$.
\end{enumerate}

Since $\cI^*_{j}g^{-1}$ is the $\OO$-span of $\{b^*_{j}g^{-1}, \dots, b^*_{j+n-1}g^{-1}\}$ and $b^*_{j+n} = \epsilon b^*_{j}$ for every $j$, it is easy to see that (1) $\Rightarrow$ (2). Let us check that (2) $\Rightarrow$ (1). First, we need to check that $b^{*}_{1}g^{-1}, \dots, b^{*}_{n}g^{-1} \in (\mathbb{O}^{n})^*$. That is, we need to show that $b^{*}_{i}g^{-1}(\OO^n) \subseteq \OO$ or, equivalently, that $b_{i}^{*}g^{-1}(b_{j})\in \OO$ for every $i, j = 1,\dots, n$. Since  $b_{i}^{*} \in (\OO^n)^{*}$ for $i = 1, \dots, n$, it is enough to show that $g^{-1}b_{j} \in \OO^n$ for $j = 1, \dots, n$. For $j = 1$, this is one of the conditions in (2). For $j > 1$, observe that $g^{-1}b_{j} = g^{-1}Yb_{j-1} = g^{-1}Yg(g^{-1}b_{j-1})$. Since $g^{-1}Yg$ in $\mathfrak{i} \subseteq \mathfrak{gl}_{n}(\OO)$, the result follows by induction.

Now we need to show that $\cI^{*}_{j}g^{-1}$ is closed under the action of $Y$ for every $j \geq 1$, that is, we need to show that
\[
\cI^{*}_{j}g^{-1}Y \subseteq \cI^{*}_{j}g^{-1}
\]
It is clearly enough to do this for $j = 1, \dots, n-1$. The condition $g^{-1}Yg \in \mathfrak{i}$ is equivalent to $b^{*}_{i}g^{-1}Y \in \OO\operatorname{-span}\{b^*_{i}g^{-1}, \dots, b^*_{i+n-1}g^{-1}\}$ for every $i = 1, \dots, n$. This clearly implies that $\cI^*_{j}g^{-1}$ is closed under $Y$.
\end{proof}

\begin{remark}
Proposition \ref{prop:gasf} is a special case of a flag version of the main result of \cite{GK}, which the authors kindly provided a preliminary version of.
\end{remark}

Now we would like to verify that the generalized affine Springer fiber $\Spr(b_{1}, Y)$ satisfies the conditions of \cite[(3.2)]{GKM}. Following that paper, let us denote by $\mathfrak{a} := X_{*}(A) \otimes_\Z \RR$, where $A \subseteq G = \operatorname{GL}_{n}$ is a maximal torus, that we identify with the set of diagonal invertible matrices. For each weight $\xi \in \mathfrak{a}^{*}$, let us denote by $N_{\xi} \subseteq N$ the corresponding weight space. For $a \in \mathfrak{a}$ and $t \in \RR$, we denote

$$
N_{\KK, a, t} := \prod_{\substack{\xi \in \mathfrak{a}^{*}, d \in \Z \\ \langle \xi, a \rangle + d \geq t}}N_{\xi}\epsilon^{d} \subseteq N_{\KK}.
$$

For $a \in \mathfrak{a}$, let $\mathfrak{g}_{a} := \mathfrak{g}_{\KK, a, 0} \cap \mathfrak{g}_{\OO}$. This is a Lie subalgebra of $\mathfrak{g}_{\OO}$ and we let $G_{a} \subseteq G_{\OO}$ be the corresponding subgroup, which is an Iwahori subgroup. 

\begin{lemma}\label{lemma:GKM}
There exist $a \in \mathfrak{a}$ and $t \in \RR$ such that 
$G_{a} = I$ is the Iwahori subgroup,
and $N_{\KK, a, t} = \OO^{n} \oplus \mathfrak{i}$.  \end{lemma}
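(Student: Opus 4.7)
The plan is to exhibit an explicit choice of $a \in \mathfrak{a}$ and $t \in \RR$, motivated by the fact that the standard Iwahori $I$ should correspond to a generic point of the base alcove of the affine root system of $GL_n$. I would take $t := 0$ and
\[
a := \bigl(0,\; \tfrac{1}{n},\; \tfrac{2}{n},\; \ldots,\; \tfrac{n-1}{n}\bigr) \in \RR^n = \mathfrak{a}.
\]

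To check $G_a = I$, I would use the weight decomposition $\gln = \bigoplus_{i,j=1}^n \C E_{ij}$ under the diagonal torus $A$, where $E_{ij}$ has weight $\epsilon_i - \epsilon_j$. Then $\langle \epsilon_i - \epsilon_j, a\rangle = (i-j)/n \in (-1,1)$. The affine piece $E_{ij}\cdot \epsilon^d$ lies in $\mathfrak{g}_{\KK,a,0} \cap \mathfrak{g}_\OO$ iff $d \geq 0$ and $(i-j)/n + d \geq 0$: for $d \geq 1$ this always holds, while for $d = 0$ it holds iff $i \geq j$. Summing, $\mathfrak{g}_a$ consists of exactly those $X \in \gln(\OO)$ whose reduction mod $\epsilon$ is lower triangular, i.e., $\mathfrak{g}_a = \mathfrak{i}$ and hence $G_a = I$.

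For the second identity $N_{\KK,a,0} = \OO^n \oplus \mathfrak{i}$, I would treat the two summands separately. For the $\gln$-part, the computation just made shows that the affine pieces $E_{ij}\cdot\epsilon^d$ with $\langle \epsilon_i-\epsilon_j, a\rangle + d \geq 0$ are exactly those giving $\mathfrak{i}$ (the intersection with $\mathfrak{g}_\OO$ is automatic since for $d \leq -1$ one has $(i-j)/n + d \leq (n-1)/n - 1 < 0$, excluding these terms). For the $\C^n$-part, $b_i$ has weight $\epsilon_i$, so $b_i\cdot\epsilon^d$ belongs to $N_{\KK,a,0}$ iff $a_i + d \geq 0$; this holds for all $d \geq 0$ and fails for $d \leq -1$ since $a_i + d \leq (n-1)/n - 1 < 0$. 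Hence the $\C^n$-part is precisely $\OO^n$.

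The main obstacle, such as it is, is simply matching conventions: one must check that $\mathfrak{i}$ in this paper is the \emph{lower}-triangular mod $\epsilon$ subalgebra (so the coordinates of $a$ should be increasing rather than decreasing), and that the generator $b_1$ of the standard flag in $\OO^n$ is compatible with the weight $\epsilon_1$. Once these conventions are fixed, everything follows from the elementary root-combinatorics above; no deeper input is required.
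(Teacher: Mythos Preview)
Your proof is correct and follows essentially the same approach as the paper: the paper takes any $a=\operatorname{diag}(a_1,\dots,a_n)$ with $0<a_1<\cdots<a_n<1$ and $t=0$, declaring the verification ``straightforward,'' while you pick the specific $a=(0,1/n,\dots,(n-1)/n)$ and carry out the root-by-root check explicitly. Your choice has $a_1=0$ rather than $a_1>0$, but since the defining inequality $\langle\xi,a\rangle+d\geq t$ is non-strict this boundary value causes no trouble, as your computation for $b_1\epsilon^0$ confirms.
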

\begin{proof}
Take any $a = \operatorname{diag}(a_{1}, \dots, a_{n}) \in \mathfrak{a}$ with $0 < a_{1} < \dots < a_{n} < 1$ and $t = 0$. It is straightforward to verify the result. 
\end{proof}

Note that Lemma \ref{lemma:GKM} tells us that $\Spr(b_{1}, Y)$ is one of the varieties considered in \cite[Section 3]{GKM}. In the notation of that paper, we have
$$
\Spr(b_{1}, Y) = \mathcal{F}_{a}(t, (b_{1}, Y)).
$$
In \cite[Section 3.2]{GKM} it was proved that $\mathcal{F}_{a}(t, (b_{1}, Y))$  has an affine paving provided  that there exist $b\in \mathfrak{a}$ and $c\in \RR$ such that the following conditions are satisfied:
\begin{itemize}
\item $c\ge t$
\item $(b_{1}, Y) \in N_{\mathbb{K}, b, c}$
\item The projection $\overline{(b_{1}, Y)}$  is $G$-good (in the sense of \cite{GKM}), that is, no nonzero $G$-unstable covector in $N^{*}$ vanishes on the $\gln$-orbit of $\overline{(b_{1}, Y)}$
\end{itemize}

To verify these conditions, we consider $b = \operatorname{diag}(c, 2c, \dots, nc) \in \mathfrak{a}$, where $c := m/n$. Obviously $c > t = 0$, and is easy to check that $(b_{1}, Y) \in N_{\mathbb{K}, b, c}$. 
For the last condition, we need to verify that the element

$$
\overline{(b_{1}, Y)} =  \left(\begin{matrix} 1 \\ 0 \\ 0 \\ \vdots \\ 0\end{matrix}\right), \,
 \left(
\begin{matrix}
0 & 0 & \cdots & 0 & 1\\
1 & 0 & \cdots & 0 & 0 \\
0 & 1 & \cdots & 0 & 0\\
\vdots & \vdots& \ddots & \vdots & 0\\
0 & 0 & \cdots & 1& 0 \\
\end{matrix}
\right) \in N
$$

\noindent is $G$-good.
This is a consequence of the following result. 

\begin{proposition}\label{prop:g-good}
Let $X$ be a regular semisimple matrix and $v$ a cyclic vector for $X$. Then $(v, X) \in N$ is $G$-good. 
\end{proposition}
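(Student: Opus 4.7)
The plan proceeds in three steps. First, I would identify the $G$-unstable covectors in $N^* = (\C^n)^* \oplus \gln^*$ explicitly. Using the trace identification $\gln^* \cong \gln$, the $GL_n$-invariants on $N^*$ are generated by $\operatorname{tr}(A^k)$ for $1\le k\le n$: by the First Fundamental Theorem (or directly, since a single covector $f$ paired with powers of $A$ always leaves one uncontracted lower index, so cannot produce a scalar invariant) there are no mixed invariants involving $f$. By Hilbert--Mumford, the unstable locus in $N^*$ therefore coincides with the null cone $\{(f, A) : A \text{ is nilpotent}\}$.

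Second, I would translate the vanishing condition. Interpreting the $\gln$-orbit of $(v, X)$ as the image of the infinitesimal $\gln$-action $Z \mapsto (Zv, [Z, X])$, vanishing of a covector $(f, A)$ on this orbit means $f(Zv) + \operatorname{tr}(A[Z, X]) = 0$ for every $Z \in \gln$. Rewriting $f(Zv) = \operatorname{tr}(Z(v\otimes f))$ and collecting terms gives $\operatorname{tr}(Z(v\otimes f + [X,A])) = 0$ for all $Z$, which yields the key matrix identity
\[
[X, A] = -\, v \otimes f.
\]

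Third, I would exploit the regular semisimplicity of $X$ and cyclicity of $v$. In a basis where $X = \operatorname{diag}(\lambda_1, \dots, \lambda_n)$ with distinct $\lambda_i$, cyclicity forces every coordinate $v_i$ to be nonzero. Then $[X, A]_{ij} = (\lambda_i - \lambda_j)A_{ij}$, so the diagonal entries of the identity give $0 = -v_i f_i$ and hence $f = 0$; the off-diagonal entries then force $A_{ij} = 0$ for $i\ne j$, so $A$ is diagonal in this basis. Combined with Step~1 (nilpotence of $A$), this gives $A = 0$, contradicting the assumption that $(f, A)$ is a nonzero unstable covector.

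The main obstacle is Step~1: pinning down $\C[N^*]^{GL_n}$ and invoking Hilbert--Mumford correctly. A secondary subtlety is the precise GKM meaning of ``$\gln$-orbit''. If it instead denotes the full $G$-orbit $\{(gv, gXg^{-1})\}$, the argument adapts: for each $Y$ conjugate to $X$, the slice $\{gv : gXg^{-1} = Y\}$ equals the Zariski-dense open of cyclic vectors for $Y$ (the centralizer of a regular semisimple element acts transitively on its cyclic vectors), so $f(w) = -\operatorname{tr}(AY)$ is constant in $w$ on a dense subset of $\C^n$, forcing $f = 0$; differentiating $\operatorname{tr}(A\, gXg^{-1}) = 0$ at $g = e$ then yields $[X, A] = 0$, and Step~3 concludes as before.
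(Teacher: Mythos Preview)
Your proposal is correct and follows essentially the same route as the paper. The paper packages the argument into two lemmas---one showing that an unstable matrix covector is nilpotent (via the GKM weight-space definition, which is Hilbert--Mumford in disguise), and one carrying out the coordinate computation in a diagonalizing basis for $X$---and applies them in the order ``first deduce $w=0$, then conclude $B$ nilpotent, then $B$ diagonal''; you instead characterize the full unstable locus in $N^*$ up front via the invariant ring and then run the same diagonal-basis computation, but the content is identical. Your reading of ``$\gln$-orbit'' as the infinitesimal orbit is the one the paper uses.
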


To prove Proposition \ref{prop:g-good}, we first give a necessary condition for a vector in the adjoint representation $\gln$ to be unstable.

\begin{lemma}\label{lemma:g-unstable}
Assume $B \in \gln$ is $G$-unstable. Then, $B$ is nilpotent. 
\end{lemma}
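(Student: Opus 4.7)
My plan is to invoke the classical description of the ring of $G$-invariant polynomial functions on $\gln$ for the conjugation action. Explicitly, the fundamental theorem of invariant theory for $G = \operatorname{GL}_n$ acting on $\gln$ by conjugation states that
\[
\C[\gln]^{G} = \C[a_{1}, a_{2}, \dots, a_{n}],
\]
where $a_{i}(B)$ is (up to sign) the coefficient of $t^{n-i}$ in the characteristic polynomial $\det(tI - B)$, or equivalently one can use the power-sum generators $\operatorname{tr}(B), \operatorname{tr}(B^{2}), \dots, \operatorname{tr}(B^{n})$.

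First I would unpack what $G$-unstable means: by definition $B$ is unstable when $0$ lies in the Zariski closure of the orbit $G\cdot B$, which is in turn equivalent (by basic GIT) to the statement that every $G$-invariant polynomial on $\gln$ without constant term vanishes on $B$. Combining this with the generating set above, every $a_{i}(B)$ must vanish for $i = 1, \dots, n$, so the characteristic polynomial of $B$ equals $t^{n}$. By Cayley–Hamilton, $B^{n} = 0$, and therefore $B$ is nilpotent.

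I do not expect any serious obstacle, as this is a direct application of classical invariant theory. If one prefers not to invoke generators of $\C[\gln]^{G}$, the Hilbert–Mumford criterion gives an equally short argument: $B$ unstable produces a one-parameter subgroup $\lambda: \C^{*} \to G$ with $\lim_{t\to 0} \lambda(t) B \lambda(t)^{-1} = 0$, and after conjugating $\lambda$ into a maximal torus $A$ and sorting the resulting weights $a_{1} \le a_{2} \le \cdots \le a_{n}$, the vanishing of the limit forces the $(i,j)$-entry of the conjugated matrix to be zero unless $a_{i} > a_{j}$, which makes $B$ (up to conjugation) block strictly lower triangular relative to the weight decomposition, hence nilpotent.
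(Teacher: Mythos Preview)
Your proposal is correct, and your second argument (via Hilbert--Mumford) is essentially the paper's own proof. The paper quotes the definition of unstable from \cite{GKM} in the form: there exists a semisimple $y \in \gln$ and $t_1,\dots,t_k > 0$ with $B = B_1 + \cdots + B_k$ and $[y,B_j] = t_j B_j$; since the $\operatorname{ad}(y)$-weights appearing in $B$ are all strictly positive and the weight grading on $\C^n$ is bounded, $B$ is nilpotent. This is exactly your cocharacter argument rephrased infinitesimally.

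Your first route, through $\C[\gln]^{G} = \C[a_1,\dots,a_n]$ and the vanishing of the characteristic polynomial coefficients, is a different and slightly slicker argument: it avoids any explicit weight decomposition and gives nilpotency in one line via Cayley--Hamilton. The trade-off is that the paper's working definition of ``unstable'' is already the Hilbert--Mumford formulation from \cite{GKM}, so your invariant-theory approach tacitly invokes the Hilbert--Mumford theorem to pass from that definition to the statement $0 \in \overline{G\cdot B}$. Either way the content is the same; your first argument just packages the weight analysis into a citation.
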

\begin{proof}
By definition, cf. \cite{GKM}, $B$ is unstable if and only if there exists a semisimple matrix $y$ and $t_{1}, \dots, t_{k} > 0$ such that $B = B_{1} + \cdots + B_{k}$, with $[y, B_{k}] = t_{k}B_{k}$. Since the $t_{i}$ are strictly positive and the filtration given by $y$ is bounded above, the result follows. 
\end{proof}

Returning to the setting of Proposition \ref{prop:g-good}, we may assume that $X$ is already in diagonal form. So $X = \operatorname{diag}(x_{i})$ with $x_{i} \neq x_{j}$ for $i \neq j$ and $v = (v_{i})$, the cyclicity condition is equivalent to $v_{i} \neq 0$ for every $i$. 

\begin{lemma}\label{lemma:traces}
Let $(w, B) \in N$ be such that $\operatorname{tr}(B[\xi, X]) + w\cdot \xi v = 0$ for every $\xi \in \gln$. Then
\begin{enumerate}
\item $w_{i} = 0$ for every $i = 1, \dots, n$.
\item $b_{ij} = 0$ for $i \neq j$.
\end{enumerate}
\end{lemma}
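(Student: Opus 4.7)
The plan is to turn the trace/pairing identity into a matrix equation and then read off both conclusions from an entrywise comparison.

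First I would use the cyclic property of the trace to rewrite the first summand as $\operatorname{tr}(B[\xi, X]) = \operatorname{tr}([X,B]\,\xi)$, and rewrite the second summand as a trace as well via $w \cdot \xi v = w^{T}\xi v = \operatorname{tr}(vw^{T}\xi)$. Since the hypothesis says $\operatorname{tr}\bigl(([X,B] + vw^{T})\,\xi\bigr) = 0$ for every $\xi \in \gln$, the nondegeneracy of the trace pairing forces the matrix identity
\begin{equation*}
[X, B] \;=\; -\,vw^{T}.
\end{equation*}

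Next I would exploit the assumption that $X = \operatorname{diag}(x_{1}, \dots, x_{n})$ has distinct eigenvalues and that $v = (v_{i})$ has all $v_{i} \neq 0$. The $(i,j)$-entry of the left-hand side is $(x_{i} - x_{j})\,b_{ij}$, while the $(i,j)$-entry of $-vw^{T}$ is $-v_{i}w_{j}$. Setting $i = j$ gives $0 = -v_{i}w_{i}$, so (1) follows immediately from $v_{i} \neq 0$. For $i \neq j$, $(x_{i} - x_{j})\,b_{ij} = -v_{i}w_{j} = 0$ combined with $x_{i} \neq x_{j}$ yields $b_{ij} = 0$, giving (2).

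There is no real obstacle here; the lemma is essentially a linear-algebraic unpacking of a nondegenerate pairing together with the standard observation that $\operatorname{ad}(X)$ is an isomorphism on off-diagonal entries when $X$ is regular semisimple. The only care needed is in recognizing the correct reformulation as a trace pairing, after which the computation is two lines.
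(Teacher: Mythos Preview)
Your proof is correct and essentially the same as the paper's. The paper plugs in the elementary matrices $\xi = E_{ii}$ and $\xi = E_{ij}$ one at a time to extract the equations $v_i w_i = 0$ and $(x_i - x_j)b_{ji} + v_j w_i = 0$, whereas you first invoke nondegeneracy of the trace form to obtain the matrix identity $[X,B] = -vw^{T}$ and then read off the same entrywise equations; these are two packagings of the same computation.
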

\begin{proof}
The proof is straightforward, but let us give it for the sake of completeness. We have $[\xi, X] = (\xi_{ij}(x_{i} - x_{j}))_{ij}$ and $\xi.v = (\sum_{j}\xi_{ij}v_{j})_{i}$. Thus,

$$
\operatorname{tr}(B[\xi, X]) + w\cdot \xi v = \sum_{i, j = 1}^{n}b_{ij}\xi_{ji}(x_{j} - x_{i}) + \xi_{ij}v_{i}w_{j} = 0
$$

\noindent for every matrix $\xi \in \gln$. Taking the matrix $\xi$ with $\xi_{ii} = 1$ and all other coordinates $0$ we see, using $v_{i} \neq 0$, that $w_{i} = 0$.  Now take $i \neq j$.  Taking the matrix $\xi$ with $\xi_{ij} \neq 0$ and all other coordinates $0$ we see, using $x_{i} - x_{j} \neq 0$, that $b_{ji} = 0$. The result follows.
\end{proof}

\begin{proof}[Proof of Proposition \ref{prop:g-good}]
Let $(w, B) \in N^{*} \cong N$ be an unstable covector vanishing on $\gln\cdot(v, X)$,
 where we use the trace form to identify $N^{*} \cong N$. Thanks to Lemma \ref{lemma:traces} (1) we have that $w = 0$. It follows now from Lemma \ref{lemma:g-unstable} that $B$ is nilpotent. But Lemma \ref{lemma:traces} (2) implies that $B$ is semisimple as well. So $B = 0$, and it follows that $(v, X)$ is $G$-good. 
\end{proof}

From \cite{GKM}, we obtain the following result.

\begin{corollary}
The generalized affine Springer fiber $\Spr(b_{1}, Y) = \bigsqcup\limits_{k}\PHilb_{k, n+k}$ is paved by affine spaces. Thus, its cohomology is equivariantly formal. 
\end{corollary}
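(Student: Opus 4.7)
The plan is to simply assemble the ingredients already verified in the preceding discussion and invoke the main theorem of \cite{GKM}. All the substantive work has been done: Proposition \ref{prop:gasf} identifies $\Spr(b_1, Y)$ with $\bigsqcup_k \PHilb_{k,n+k}$; Lemma \ref{lemma:GKM} realizes $\Spr(b_1, Y) = \mathcal{F}_a(t, (b_1, Y))$ in the framework of \cite[Section 3]{GKM} by producing the specific $a \in \mathfrak{a}$ and $t = 0 \in \RR$ such that $G_a = I$ and $N_{\KK, a, t} = \OO^n \oplus \mathfrak{i}$; and the three sufficient conditions from \cite[Section 3.2]{GKM} have been checked one by one in the paragraphs immediately preceding the corollary, namely: the inequality $c \geq t$ with $c = m/n > 0 = t$, the containment $(b_1, Y) \in N_{\KK, b, c}$ with $b = \operatorname{diag}(c, 2c, \dots, nc)$, and the $G$-goodness of the projection $\overline{(b_1, Y)}$, which is the content of Proposition \ref{prop:g-good} together with the observation that $\overline{Y}$ is regular semisimple (its characteristic polynomial is $\lambda^n - 1$, whose roots are the $n$-th roots of unity) and $\overline{b_1}$ is a cyclic vector for it.

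Therefore the first step is just to cite \cite[Section 3.2, Theorem]{GKM}, which under precisely these hypotheses produces an affine paving of $\mathcal{F}_a(t, (b_1, Y))$. This gives the first sentence of the corollary.

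For the second sentence, I would invoke the standard fact that if a (possibly singular, possibly reducible) complex algebraic variety $X$ with a torus action is paved by affine spaces whose closures are $T$-stable, then the spectral sequence computing $H^*_T(X)$ from $H^*_T$ of the strata degenerates and $H^*_T(X)$ is free over $H^*_T(\mathrm{pt})$; equivalently $X$ is equivariantly formal. This is the content of \cite[Theorem 1.6.2]{GKM2} (or the older work of Goresky-Kottwitz-MacPherson on equivariant cohomology). The affine cells produced by the \cite{GKM} construction are attracting sets for a suitable one-parameter subgroup and hence automatically $\C^*$-stable, so the hypotheses are met.

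I expect no real obstacle here since this is a bookkeeping corollary: all the nontrivial geometric input, in particular the $G$-goodness computation of Proposition \ref{prop:g-good} and the choice of cocharacters $a$ and $b$, has already been carried out. The only thing worth double-checking is that the one-parameter subgroup giving the paving is compatible with the $\C^*$-equivariance one wants for equivariant formality of the parabolic Hilbert scheme with its natural $\C^*$-action; this is clear from the construction in \cite{GKM} since the paving is obtained from a $\C^*$-action built into their setup.
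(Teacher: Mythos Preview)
Your proposal is correct and follows essentially the same approach as the paper, which simply records ``From \cite{GKM}, we obtain the following result'' after having verified all the hypotheses (Lemma \ref{lemma:GKM}, the choice of $b$ and $c$, and Proposition \ref{prop:g-good}). Your writeup is in fact more explicit than the paper's one-line citation, spelling out the chain of references and the standard equivariant-formality consequence of an affine paving.
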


\begin{remark}\label{rmk:positive affine g}
 The usual (as opposed to generalized)  affine Springer fiber $\Spr(Y)$ can be obtained by a similar construction for $N=\gln$. Similarly to Proposition \ref{prop:gasf}, it can be defined as the space of $Y$-invariant flags  
$$
\cI^{*}_{1}g^{-1} \supseteq \cdots \supseteq \cI^{*}_{n-1}g^{-1} \supseteq \cI^{*}_{n}g^{-1} = \epsilon \cI^{*}_{1}g^{-1}
$$ 
in $(\KK^n)^{*}$, but these flags are no longer required to be contained in $(\OO^n)^{*}$. It was proved in \cite{LS,GKM} that for the same matrix $Y$ given by \eqref{eq:Y} the usual affine Springer fiber $\Spr(Y)$ is paved by affine spaces, and the combinatorics of this paving was studied e.g. in \cite{LS,GMV}. 

The Springer action of $\Sn$ and the operator $T$ in cohomology of $\Spr(Y)$ were considered in \cite{Yun,OY,OY2,VV}. They were shown to generate the extended affine symmetric group, in particular, $T$ is invertible. Indeed, 
$$
T\left[\cI_{1}^{*}g^{-1} \supseteq \cdots \supseteq \cI_{n-1}^{*}g^{-1} \supseteq \cI_{n}^{*}g^{-1} = \epsilon \cI_{1}^{*}g^{-1}\right]=\left[\cI_{2}^{*}g^{-1} \supseteq \cdots \supseteq \cI_{n-1}^{*}g^{-1} \supseteq \epsilon \cI_{1}^{*}g^{-1}\supseteq \epsilon\cI_{2}^{*}g^{-1}\right]
$$
while
$$
T^{-1}\left[\cI^{*}_{1}g^{-1} \supseteq \cdots \supseteq \cI^{*}_{n-1}g^{-1} \supseteq \cI^{*}_{n}g^{-1} = \epsilon \cI^{*}_{1}g^{-1}\right]=\left[\epsilon^{-1}\cI_{n-1}^{*}g^{-1}\supseteq \cI_{1}^{*}g^{-1} \supseteq \cdots \supseteq \cI_{n-1}^{*}g^{-1}\right].
$$
Furthermore, $\Sn$, $T$ and line bundles $\cL_i$ were used in
\cite{OY,OY2} to construct the action of the {\bf trigonometric}
Cherednik algebra on the equivariant homology of the affine Springer fiber.

In our setting, the failure of $T$ to be invertible gives rise to a new operator $\Lambda$ and together they generate the {\bf rational} Cherednik algebra. This shows both the similarity and a subtle distinction between the trigonometric and rational setups.
\end{remark}


\subsection{Comparison to action by convolution diagrams}\label{sec: comparison to GK} The main result of \cite{HKW} constructs an action of the Coulomb branch algebra for $(G,N)$ in the equivariant homology of any generalized affine Springer fiber for $(G,N)$ satisfying some mild assumptions. If the generalized affine Springer fiber is invariant under the loop rotation, then the action extends to the equivariant homology. The main result of \cite{GK} identifies the Hilbert schemes of points on arbitrary plane curve singularities with the generalized affine Springer fibers for $(G,N)=(\operatorname{GL}_{n}, \C^n \oplus \mathfrak{gl}_{n})$,  as in Section \ref{sec:gasf}. By combining these results, \cite{GK} defines an action of the rational Cherednik algebra in the (equivariant) homology of Hilbert schemes of points on arbitrary plane curve singularities. The goal of this section is to compare their action with ours for the singularity $\{x^m=y^n\}$, see also \cite[Section 4.3.2]{GK}. 

Let $\bt, \bc$ be formal variables and consider the $\C[\bt, \bc]$-algebra $H_{\bt, \bc}(\Sk{n}, \C^n)$ defined by the same relations as the usual Cherednik algebra but with the parameters $t$, $c$ replaced by the variables $\bt, \bc$. Thanks to work of Webster, see \cite{LW, W},
$H_{\bt, \bc} := H_{\bt, \bc}(\Sk{n}, \C^n)$
is a generalized BFN Coulomb branch algebra. 

Recall that if we have a reductive group $G$ acting on a vector space $N$, the BFN Coulomb branch algebra is defined as the equivariant Borel-Moore homology $H_{*}^{G_{\OO} \rtimes \C^{*}_{\rot}}(\mathcal{R}_{G, N})$ where $\mathcal{R}_{G, N}$ is a space modeled after the affine Grassmannian and $\C^{*}_{\rot}$ is the torus acting by loop rotations, see \cite{BFN} for details. When $G = \operatorname{GL}_{n}$ and $N = \C^n \oplus \mathfrak{gl}_{n}$ we get precisely the spherical rational Cherednik algebra. To get the full Cherednik algebra, we need to replace $\mathcal{R}_{G, N}$ with a larger space $\mathcal{R}'_{G, N}$ that is rather modeled after the affine flag variety, so we have 
$H_{\bt, \bc} \cong H^{(I \rtimes \C^{*}_{\rot})\times \C^{*}_{\fl}}_{*}(\mathcal{R}'_{G, N})$ where $I \subseteq G_{\KK}$ is the standard Iwahori and the action of $\C^{*}_{\fl}$ comes from the framing vector. The parameter $\bt$ is the $\C^{*}_{\rot}$-equivariant parameter, and the parameter $\bc$ is the $\C^{*}_{\fl}$-equivariant parameter. See \cite{LW, W} for details. 

To compare the actions we look at the isomorphism 
$H_{\bt, \bc} \cong H_{*}^{(I \rtimes \C^{*}_{\rot})\times \C^{*}_{\fl}}(\mathcal{R}'_{G, N})$ constructed by Webster in \cite[Lemma 4.2]{W}. First, we have both algebras acting on a polynomial algebra $\C[\bt, \bc][U_1, \dots, U_n]$. On the Cherednik algebra side, this comes from identifying $U_{i}$ with the Dunkl-Opdam elements $u_{i}$, and we remark that this is \emph{not} the usual polynomial representation of 
$H_{\bt, \bc}$, see \cite[(2.17)--(2.22)]{W}. On the Coulomb side, this comes from identifying $\C[\bt, \bc][U_1, \dots, U_n] \cong H_{*}^{(I\rtimes \C^{*}_{\rot}) \times \C^{*}_{\fl}}(\operatorname{pt})$, where the $U_{i}$ are the Chern classes of the tautological line bundles on the affine flag variety. Both representations are faithful, and we need to identify the operators on $\C[\bt, \bc][U_1, \dots, U_n]$ corresponding to $\tau, \lambda$ and $\Sk{n}$. 

According to \cite[Lemma 4.2]{W}, the action of $\tau$ corresponds to the action of the correspondence: \footnote{Note that our $\tau$ is Webster's $\sigma$, while our $\lambda$ is denoted $\tau$ by Webster.}

$$
\mathtt{T} := \{(F_{\bullet}, F'_{\bullet}) \in \mathcal{F}l \times \mathcal{F}l : F_{i} = F'_{i-1}\},
$$

\noindent while the action of $\lambda$ corresponds to the action of the correspondence:

$$
\mathtt{L} := \{(F_{\bullet}, F'_{\bullet}) \in \mathcal{F}l \times \mathcal{F}l : F_{i} = F'_{i+1}\}.
$$

\begin{remark}
\label{remark:GK Fourier}
Note that the rational Cherednik algebra $H_{\bt,\bc}$ admits a \emph{Fourier transform}, that is, a $\C[\bt,\bc]$-involution sending $y_{i} \mapsto x_{i}$, $x_{i} \mapsto -y_{i}$ and $s_{i} \mapsto s_{i}$. On the Coulomb branch setting, this automorphism interchanges the correspondences $\mathtt{T}$ and $\mathtt{L}$. So there is a choice of isomorphism $H_{\bt,\bc} \to  H^{(I \rtimes \C^{*}_{\rot})\times \C^{*}_{\fl}}_{*}(\mathcal{R}'_{G, N})$. To resolve this, we note that according to \cite[Proposition 1.4]{GK} the action of $H_{\bt, \bc}$ on $H^{\C^{*}}_{*}(\PHilb^{x}(C))$ coming from a $\C[\bt, \bc]$-isomorphism $H_{\bt,\bc} \to  H^{(I \rtimes \C^{*}_{\rot})\times \C^{*}_{\fl}}_{*}(\mathcal{R}'_{G, N})$ factors through $H_{m/n} = H_{\bt, \bc}/(\bt - 1, \bc - m/n)$  and we choose the isomorphism that sends the module constructed in \cite[Theorem 4.9]{GK} to the category $\cO_{m/n}$. 
\end{remark}


It follows from the comparison of convolution diagrams to correspondences in \cite[Section 4.2.1]{GK} that the actions of $\tau, \lambda$ that we defined coincide with those defined by \cite[Theorem 4.9 and Corollary 4.16]{GK}. The action of $\Sk{n}$ that we defined comes from projections to partial flag varieties, cf. Section \ref{sec:geometric operators} while that in \cite{GK} comes from the usual Springer action of $\Sk{n}$ on the homology of Springer fibers. The coincidence of these is well-known. Since the algebra 
$H_{t, c}$ is generated by $\tau, \lambda$ and $\Sk{n}$, Proposition \ref{prob: eliminate u}, we obtain the following result, see also \cite[Theorem 4.29]{GK}. 

\begin{proposition}
The action of  $H_{m/n}$ on $H^{\C^{*}}_{*}(\PHilb^{x}(C))$ defined in Theorem \ref{thm: geometric action} coincides with that constructed by Garner and Kivinen in \cite[Proposition 1.4]{GK}.
\end{proposition}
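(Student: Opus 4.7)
The plan is to exploit the presentation of $H_{m/n}$ from Proposition \ref{prob: eliminate u}: the algebra is generated by the simple reflections $s_1,\ldots,s_{n-1}$ together with $\tau$ and $\lambda$. Consequently, it suffices to verify that each of these three classes of generators acts identically under the two constructions. As a preliminary step, I fix the identification $H_{\bt,\bc} \cong H^{(I\rtimes \C^*_{\rot})\times \C^*_{\fl}}_{*}(\mathcal{R}'_{G,N})$ resolving the Fourier-transform ambiguity of Remark \ref{remark:GK Fourier} in the manner dictated by \cite[Proposition 1.4]{GK}, i.e., so that the resulting module lies in category $\cO_{m/n}$; with this choice, Webster's correspondences $\mathtt{T}$ and $\mathtt{L}$ realize $\tau$ and $\lambda$ respectively.

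Next I would compare the $\tau$-action. On our side, $\tau$ is realized in Section \ref{sec:geometric operators} by the push-forward $T_*$ along the closed embedding $T:\PHilb_{k,n+k}\hookrightarrow \PHilb_{k+1,n+k+1}$ whose image is $W_{k+1,n+k+1}$. Under the isomorphism of Proposition \ref{prop:gasf}, $T$ corresponds exactly to the map of affine flags $g\cI_\bullet \mapsto g\cI_{\bullet+1}$, which is the restriction of Webster's correspondence $\mathtt{T}=\{(F_\bullet,F'_\bullet) : F_i = F'_{i-1}\}$ to the generalized affine Springer fiber. The comparison of convolution diagrams with correspondences carried out in \cite[\S4.2.1]{GK} then gives that the action of $\mathtt{T}$ by convolution on $H^{\C^*}_*(\PHilb^x(C))$ agrees with $T_*$, proving that the two $\tau$-actions coincide. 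The analogous argument handles $\lambda$: our $\Lambda$ is defined as the Gysin map $j^*$ onto $W_{k+1,n+k+1}$ composed with the isomorphism $W_{k+1,n+k+1}\cong \PHilb_{k,n+k}$ given by $T^{-1}$, and the transpose correspondence $\mathtt{L}=\{(F_\bullet,F'_\bullet): F_i=F'_{i+1}\}$ restricted to $\Spr(b_1,Y)\times \Spr(b_1,Y)$ is exactly the transpose of the graph of $T$, so that convolution with $\mathtt{L}$ produces the same operator.

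The main obstacle is the comparison of the $S_n$-actions. Our Springer action, defined via the projections $\pi_i$ to partial parabolic Hilbert schemes and the Gysin/push-pull composition $B_i$ in Section \ref{sec:geometric operators}, must be identified with the Springer/convolution action used on the Coulomb branch side. The point is that under the identification of Proposition \ref{prop:gasf}, the projection $\pi_i$ corresponds to the natural projection from the affine flag variety to the partial affine flag variety omitting the $i$th lattice. The corresponding Steinberg-type correspondence is precisely the one encoding the simple reflection $s_{n-i}$ in Webster's presentation, and the equivalence of the two descriptions of the Springer action on fibers of such projections is a classical fact (see, e.g., \cite{LW, W}). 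Assembling these three compatibilities and invoking Proposition \ref{prob: eliminate u} gives the desired coincidence of the two $H_{m/n}$-actions.
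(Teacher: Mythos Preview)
Your proposal is correct and follows essentially the same approach as the paper: reduce to the generators $s_i,\tau,\lambda$ via Proposition \ref{prob: eliminate u}, match $\tau$ and $\lambda$ through the comparison of convolution diagrams with correspondences in \cite[\S4.2.1]{GK}, and identify the $\Sn$-actions by appealing to the classical coincidence of the push-pull Springer action with the convolution Springer action. Your write-up is somewhat more explicit (spelling out the shift-of-flags description of $T$ and the transpose for $\Lambda$, and addressing the Fourier-transform ambiguity of Remark \ref{remark:GK Fourier} up front), but the logical skeleton is the same as the paper's.
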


\begin{corollary}
There is an action of  $H_{m/n}$ on the non-localized equivariant homology $H_{*}^{\C^{*}}(\PHilb^{x}(C))$ lifting the action from Theorem \ref{thm: geometric action}. 
\end{corollary}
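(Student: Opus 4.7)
The plan is to leverage the comparison with the convolution-diagram construction of Garner--Kivinen, together with equivariant formality of $\PHilb^{x}(C)$, to avoid ever performing the localization step. First, I would recall from \cite{HKW, GK} that the BFN/convolution construction produces an action of $H_{\bt,\bc}$ on the non-localized equivariant Borel--Moore homology $H_{*}^{(I\rtimes \C^{*}_{\rot})\times \C^{*}_{\fl}}(\mathcal{R}'_{G,N})$, and hence, after specializing the equivariant parameters $\bt \mapsto 1$, $\bc \mapsto m/n$, an action of $H_{m/n}$ on the non-localized $\C^{*}$-equivariant homology of the generalized affine Springer fiber $\Spr(b_1, Y)$. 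By Proposition \ref{prop:gasf}, this gives directly a candidate $H_{m/n}$-action on $H_{*}^{\C^{*}}(\PHilb^{x}(C))$.

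Next, I would invoke the comparison just established above (the Proposition immediately preceding this Corollary): the Garner--Kivinen action and the action constructed in Theorem \ref{thm: geometric action}, both viewed on the localized equivariant homology, coincide. Since the Garner--Kivinen action is a priori defined on the non-localized module and is compatible with the localization map $H_{*}^{\C^{*}}(\PHilb^{x}(C)) \to H_{*}^{\C^{*}}(\PHilb^{x}(C))_{\mathrm{loc}}$ (that map is $H^{*}_{\C^{*}}(\mathrm{pt})$-linear by construction of convolution operators), it provides the required lift.

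The only substantive point is to make sure the comparison in the Proposition uniquely identifies the lifted action — i.e., that no ambiguity remains after inverting the equivariant parameter. This is where equivariant formality enters: by Section \ref{sec:gasf} the variety $\PHilb^{x}(C)$ admits a paving by affine cells (via \cite{GKM}), and therefore the non-localized equivariant homology is a free module over $H^{*}_{\C^{*}}(\mathrm{pt})$, and the localization map is injective. Consequently any two $H_{m/n}$-actions on $H_{*}^{\C^{*}}(\PHilb^{x}(C))$ that agree after inverting the equivariant parameter must already agree on the nose, which shows the lift is unique and well-defined.

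I do not expect any serious obstacles here; the only mild subtlety is to fix the normalization ambiguity noted in Remark \ref{remark:GK Fourier} (the Fourier involution of $H_{\bt,\bc}$) consistently between the two sides, but this is precisely what was pinned down in the Proposition via the identification of modules with category $\cO_{m/n}$. Thus the non-localized action is simply the Garner--Kivinen action, and it restricts to ours after localization.
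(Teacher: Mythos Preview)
Your proposal is correct and is exactly the argument the paper has in mind: the Corollary is stated without proof because it follows immediately from the preceding Proposition, since the Garner--Kivinen action is by construction defined on the non-localized equivariant homology and has just been shown to agree with the Theorem \ref{thm: geometric action} action after localization. Your additional uniqueness step via equivariant formality (injectivity of the localization map from the affine paving) is a nice bonus, though the statement only asks for existence of a lift.
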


\begin{remark}
Let $C$ be a plane curve singularity and assume  the $x$-projection $C \to \C$ has degree $n$. In \cite{GK}, Garner and Kivinen   construct an action of the algebra $H_{0,0} = {\C[x_{1}, \dots, x_{n}, y_{1}, \dots, y_{n}]\rtimes \Sk{n}} = H_{\bt, \bc}/(\bt, \bc)$ on the \emph{non-equivariant} homology 
$H_{*}(\PHilb^{x}(C))$; see also \cite{HKW}. 
\end{remark}

\begin{remark}
\label{rem: noncoprime}
If $C=\{x^m=y^n\}$ and $\gcd(m,n)=d>1$ then the curve $C$ has $d$ irreducible components. There is a $\C^*$ action on $C$ and on Hilbert schemes on $C$, and the results of \cite{GK} still apply, so one gets an interesting representation of the rational Cherednik algebra $H_{m/n}$ in the equivariant homology of $\sqcup_{k} \PHilb^{k,n+k}(C)$. It would be very interesting to study this representation.

Note that the $\C^*$ action on the Hilbert schemes no longer has isolated fixed points, so even computing the character of this representation is a nontrivial problem. Nevertheless, we expect the representation to have minimal support in the sense of \cite{EGL}. 
Indeed, the conjectures of \cite{ORS} relate the homology of $\Hilb(C)$ to the HOMFLY-PT invariant of the $(m,n)$ torus link.  
On the other hand, by \cite[Theorem 4.11]{EGL} the same invariant can be obtained as a character of a certain explicit minimally supported representation of the spherical rational Cherednik algebra with parameter $m/n$. 
\end{remark}

\section{Parabolic Hilbert schemes and quantized Gieseker varieties}\label{sect:gieseker}

In this section, we use Theorem \ref{thm: geometric action} together with \cite{EKLS} to study the geometric representation theory of quantized Gieseker varieties.
 
\subsection{Quantized Gieseker varieties} Fix positive integers $n, r > 0$ and consider the vector space 

$$
R := \gln \oplus \Hom(\C^r, \C^n).
$$

We have a natural action of the group $\operatorname{GL}_{n}$ on $R$, so every element $\xi \in \gln$ induces a vector field on $R$, that we denote by $\xi_{R}$. In particular, $\xi_{R} \in D(R)$, the algebra of polynomial differential operators on $R$. Note that $\operatorname{GL}_{n}$ acts on $D(R)$. Let $c \in \C$. It is straightforward to see that the following space is in fact an associative algebra,

$$
\cA_{c}(n, r) := \left[\frac{D(R)}{D(R)\{\xi_{R} - c\operatorname{tr}(\xi) : \xi \in \gln\}}\right]^{\operatorname{GL}_{n}}
$$

\noindent we call $\cA_{c}(n, r)$ a \emph{quantized Gieseker variety.} 

\begin{example}
When $r = 1$ then $\cA_{c}(n, r) = eH_{c}e$, the spherical subalgebra in the type $\gln$-Cherednik algebra. This follows from the main result of \cite{GG}. 
\end{example}

Let us now deal with the representation theory of $\cA_{c}(n, r)$. We follow \cite[Section 3]{EKLS}. Let $T_{0} \subseteq \operatorname{GL}_{r}$ be a maximal torus, and $T := \C^{*} \times T_{0}$. For each co-character $\nu: \C^{*} \to T$ we can define a category $\cO_{\nu}(\cA_{c}(n,r))$ of highest-weight $\cA_{c}(n, r)$-modules. The co-character $\nu$ has the form $t \mapsto (t^{\nu_{0}}, \nu'(t))$ for some co-character $\nu'$ of $\operatorname{GL}_{r}$. If $\nu_{0} \neq 0$, then $\cO_{\nu}(\cA_{c}(n, r))$ admits a module of Gelfand-Kirillov (GK)-dimension $1$ if and only if $c = m/n$, where $\gcd(m,n) = 1$ and $c \not\in (-r, 0)$. In this case, $\cO_{\nu}(\cA_{c}(n, r))$ admits a unique irreducible representation of GK-dimension $1$, that we denote $\mathcal{L}^{\nu}_{m/n}(n, r)$. Moreover, $\mathcal{L}^{\nu}_{m/n}(n, r)$ depends only on the sign of $\nu_{0}$, so we have two cases: $\mathcal{L}^{+}_{m/n}(n, r)$ and $\mathcal{L}^{-}_{m/n}(n, r)$. We denote $\Lnr_{m/n} := \mathcal{L}^{-}_{m/n}(n, r)$. Our goal is to give a geometric description of this representation. 

The next proposition follows from \cite{EKLS}. 
\begin{proposition}\label{prop:Lmnr}
Assume $m, n > 0$. We have a vector space isomorphism
$$
\Lnr_{m/n}(n, r) = (L_{n/m}(\triv) \otimes (\C^r)^{\otimes m})^{\Sk{m}}
$$
\noindent where $L_{n/m}(\triv)$ is the simple highest weight representation of $H_{n/m}(\Sk{m}, \C^m)$ and the action of $\Sk{m}$ on $L_{n/m}(\triv) \otimes (\C^r)^{\otimes m}$ is diagonal.
\end{proposition}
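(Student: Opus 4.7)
The plan is to apply the functor constructed in \cite[Theorem 2.17]{EKLS}, which we expect takes an $H_{n/m}(\Sk{m}, \C^m)$-module $M$ and produces the $\cA_{m/n}(n,r)$-module
$$
F(M) := (M \otimes (\C^r)^{\otimes m})^{\Sk{m}},
$$
where $\Sk{m}$ acts diagonally on $M \otimes (\C^r)^{\otimes m}$. First I would recall the precise statement of this construction from \cite{EKLS}, noting that the parameters match correctly: since $\gcd(m,n)=1$, the parameter $n/m$ is a rational number with denominator exactly $m$, so $L_{n/m}(\triv)$ is a well-defined proper quotient of $\Delta_{n/m}(\triv)$ whose structure has been determined in Section \ref{sect:radical triv} (with the roles of $m$ and $n$ swapped).

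Next I would apply $F$ to the specific input $M = L_{n/m}(\triv)$ and check that the resulting module $F(L_{n/m}(\triv))$ is nonzero. For nonvanishing, it suffices to exhibit an $\Sk{m}$-invariant vector in $L_{n/m}(\triv) \otimes (\C^r)^{\otimes m}$; since $L_{n/m}(\triv)$ contains the trivial $\Sk{m}$-representation (generated by $v_{(0,\dots,0)}$ in the notation of Corollary \ref{cor: basis in L(triv)}), one can take the tensor of this vector with any $\Sk{m}$-invariant vector in $(\C^r)^{\otimes m}$, i.e., an element of $\Sym^m(\C^r)$.

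Then I would verify that $F(L_{n/m}(\triv))$ lies in the category $\cO_\nu(\cA_{m/n}(n,r))$ with $\nu_0 < 0$ (the sign convention here should match the one defining $\Lnr_{m/n}^- = \Lnr_{m/n}$), that it has Gelfand-Kirillov dimension $1$, and that it is irreducible. These properties are expected to follow from general features of the EKLS functor: GK-dimension $1$ comes from the fact that $L_{n/m}(\triv)$ is finite-dimensional (by the results of Section \ref{sect:radical triv}, it has dimension $\binom{m+n-1}{n-1}$ in type $\gln$), so that $F(L_{n/m}(\triv))$ is finitely generated over a one-variable polynomial algebra identified with $\cO(\C) \subseteq \cA_{m/n}(n,r)$, yielding the tensor factorization $\Lnr_{m/n}(n,r) = \cO(\C) \otimes \overline{\Lnr}_{m/n}(n,r)$ stated in Theorem \ref{thm:gieseker intro}. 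Irreducibility of $F(L_{n/m}(\triv))$ should reduce to the irreducibility of $L_{n/m}(\triv)$ together with an identification of the endomorphism algebra.

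Finally, by Losev's classification \cite{L2} recalled in the introduction, the module $\Lnr_{m/n}(n,r)$ is the unique irreducible module in $\cO_\nu(\cA_{m/n}(n,r))$ of GK-dimension $1$ for $c = m/n \notin (-r,0)$. Therefore
$$
\Lnr_{m/n}(n,r) \;\cong\; F(L_{n/m}(\triv)) \;=\; (L_{n/m}(\triv) \otimes (\C^r)^{\otimes m})^{\Sk{m}}
$$
as $\cA_{m/n}(n,r)$-modules, which in particular gives the claimed vector space isomorphism. The main obstacle will be verifying the three properties (membership in $\cO_\nu$ with the correct sign of $\nu_0$, GK-dimension $1$, and irreducibility) directly from the construction in \cite{EKLS}; once these are established, uniqueness does the rest.
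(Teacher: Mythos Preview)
Your overall strategy matches what the paper does: the paper's proof simply cites \cite[Corollary 2.18]{EKLS} for the $\sln$ version and says the $\gln$ version follows identically or by tensoring both sides with a polynomial algebra in one variable. You are essentially unpacking that citation, which is reasonable.

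However, there is a genuine error in your GK-dimension step. You claim that $L_{n/m}(\triv)$ is finite-dimensional in type $\gln$, with dimension $\binom{m+n-1}{n-1}$. This is false. By Corollary \ref{cor: basis in L(triv)} (with the roles of $m$ and $n$ swapped), the $\gln$ module $L_{n/m}(\triv)$ has a basis indexed by the set $\cT \subseteq \Z_{\ge 0}^{m}$, which is infinite: for instance it contains every constant sequence $(k,k,\dots,k)$. What \emph{is} finite-dimensional is the $\mathfrak{sl}_m$ simple $L^{\mathfrak{sl}_m}_{n/m}(\triv)$, of dimension $n^{m-1}$ (Remark \ref{rem: simple basis sln}). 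The $\gln$ module decomposes as $L_{n/m}(\triv) \cong \C[x_1+\cdots+x_m] \otimes L^{\mathfrak{sl}_m}_{n/m}(\triv)$, i.e.\ a free rank-one module over a polynomial ring in one variable tensored with a finite-dimensional space. This is exactly what gives GK-dimension $1$ for $L_{n/m}(\triv)$, and hence for its image under the EKLS functor, and it is also the source of the factorization $\Lnr_{m/n}(n,r) = \cO(\C)\otimes \overline{\Lnr}_{m/n}(n,r)$ you mention. So your argument can be repaired, but the sentence ``$L_{n/m}(\triv)$ is finite-dimensional'' must be replaced by ``$L_{n/m}(\triv)$ has GK-dimension $1$, since it is a free $\C[x_1+\cdots+x_m]$-module of finite rank,'' and the dimension formula should be dropped. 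This is precisely the ``multiply by a polynomial algebra in one variable'' step in the paper's proof.
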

\begin{proof}
The $\sln$-version of this result is \cite[Corollary 2.18]{EKLS}. The $\gln$-version is proved identically. Alternatively, it follows from the $\sln$-version by multiplying both sides of \cite[Corollary 2.18]{EKLS} by a polynomial algebra in one variable.
\end{proof}
 
We would like to emphasize that in the statement of Proposition \ref{prop:Lmnr} there is a swap in the parameters $n, m$. 

Let us elaborate on the statement of Proposition \ref{prop:Lmnr}. A priori, it is only a vector space identification. However, we can recover the action of $\cA_{m/n}(n, r)$ on the space $(L_{n/m}(\triv) \otimes (\C^r)^{\otimes m})^{\Sk{m}}$ as follows. First, we construct a matrix version of the rational Cherednik algebra.

\begin{definition}\label{def:matrix RCA}
Let $t, c \in \C$ and $m, r  \in \Z_{>0}$. We define the algebra $H_{t, c}(m, r)$ as the quotient of the semidirect product $(\C\langle x_1, \dots, x_m, y_1, \dots, y_m\rangle \otimes (\End(\C^r))^{\otimes m})\rtimes \Sk{m}$ by the relations 

\begin{itemize}
\item $[y_{\ell}, y_{N}] = 0 = [x_{\ell}, x_{N}]$ for any $\ell, N = 1, \dots, m$.
\item $[y_{\ell}, x_{N}] = c\left(\sum_{i, j = 1}^{r}(E_{ij})_{\ell}(E_{ji})_{N}\right)(\ell, N)$ if $\ell \neq N$.
\item $[y_{\ell}, x_{\ell}] = t - c\sum_{N \neq \ell}\left(\sum_{i, j = 1}^{r}(E_{ij})_{\ell}(E_{ji})_{N}\right)(\ell, N)$
\end{itemize}

\noindent where $E_{ij}$ is the $r \times r$ matrix that has a $1$ in the $(i,j)$-th position and zeroes everywhere else, and $(E_{ij})_{\ell} \in \End(\C^r)^{\otimes m}$ is $\Id \otimes \cdots \otimes \Id \otimes E_{ij} \otimes \Id \otimes \cdots \otimes \Id$, where $E_{ij}$ is in the $\ell$-th position.
\end{definition}

For example, when $r = 1$ we simply recover the rational Cherednik
algebra $H_{t, c}(\Sk{m}, \C^m)$.   To lighten notation but still
 emphasize the role of $m$ over $n$, we will write
$H_{t, c}(m)$ in place of $H_{t, c}(\Sk{m}, \C^m)$ or $H_{t,c}$ below.

It is clear from the relations that if $M$ is an
$H_{t,c}(m)$-module,
then $M \otimes (\C^r)^{\otimes m}$ becomes an $H_{t, c}(m, r)$-module, where the elements $x_{1}, \dots, x_{m}, y_{1}, \dots, y_{m}$ act only on the $M$ tensor factor, the elements from $\End(\C^r)^{\otimes m}$ act only on the $(\C^r)^{\otimes m}$ tensor factor, and $\Sk{m}$ acts diagonally. In fact, this defines a category equivalence
$H_{t,c}(m)\text{-mod} \to H_{t, c}(m, r)\text{-mod}$, see \cite{EKLS}.
 Thus, the algebra $H_{1,n/m}(m, r)$ acts on $L_{n/m}(\triv) \otimes (\C^r)^{\otimes m}$. 

Now we can form the spherical subalgebra $eH_{1, n/m}(m, r)e$, 
with respect to the trivial idempotent
$e = \frac{1}{m!}\sum_{p \in \Sk{m}} p$, that acts on the space $(L_{n/m}(\triv) \otimes (\C^r)^{\otimes m})^{\Sk{m}}$. Upon the identification $\Lnr_{m/n}(n, r) = (L_{n/m}(\triv) \otimes (\C^r)^{\otimes m})^{\Sk{m}}$ of Proposition \ref{prop:Lmnr}, the actions of $\cA_{m/n}(n, r)$ and $eH_{1, n/m}(m, r)e$ on their respective spaces get identified. This follows from \cite[Section 2]{EKLS} after minor modifications.  

\subsection{Compositional parabolic Hilbert schemes, combinatorially} We consider the curve $C = \{x^{m} = y^{n}\}$. Let us consider the scheme
$$
\FHilb^{r, y} := \{\cO_{C} \supseteq J^{0} \supseteq \cdots \supseteq J^{r-1} \supseteq J^{r} = yJ^{0}\}
$$
\noindent where $J^{k}$ are ideals in $\cO_{C}$ of finite codimension
(not necessarily $k$). We have an action of $\C^{*}$ on $\FHilb^{r, y}$, and the fixed points can be identified with chains of monomial ideals. We can encode these as follows. Start with the monomial ideal $J^{0} = \C[[y]]\langle y^{c_{1}}, xy^{c_{2}}, \dots, x^{m-1}y^{c_{m}}\rangle \subseteq \cO_{C}$. For $k = 1, \dots, r$ let $\gamma_{k} := \dim(J^{k-1}/J^{k}) \geq 0$. Note that $\sum_{k = 1}^{r}\gamma_{k} = m$. The space $J^{k-1}/J^{k}$ is spanned by the monomials $x^{\alpha_{k,1}}y^{c_{\alpha_{k,1}}}, \dots, x^{\alpha_{k, \gamma_{k}}}y^{c_{\alpha_{k,\gamma_{k}}}}$ where $\alpha_{k,1} < \cdots < \alpha_{k, \gamma_{k}}$. Note that if $c_{\alpha_{k, i}} = c_{\alpha_{k', j}}$ for some $k < k'$ then $\alpha_{k, i} < \alpha_{k', j}$. Moreover, if $c_{\alpha_{k, i}} - c_{\alpha_{k', j}} = n$ then $k' \leq k$. 

Pictorially, we consider the staircase diagram defined by the ideal $J^{0}$ and we fill in the box corresponding to the monomial $x^{\alpha_{k, i}}y^{c_{\alpha_{k,i}}}$ with the number $k$. In particular, the number of boxes labeled by $k$ is precisely $\gamma_{k}$.  See Figure \ref{fig:length r flag}. Note that the labels of these boxes are weakly increasing along each vertical run of the staircase diagram, where we read bottom-to-top. Moreover, if two labeled boxes are $n$ horizontal steps apart, then the label of the top box is no greater than that of the bottom box.


\begin{figure}
\begin{tikzpicture}
\filldraw[color=lightgray] (10,3)--(3,3)--(3,2)--(5,2)--(5,1)--(7,1)--(7,0)--(10,0)--(10,3);

\draw(0,0)--(10,0);
\draw(0,0)--(0,3)--(10,3);
\draw (0,1)--(10,1);
\draw (0,2)--(10,2);
\draw (1,0)--(1,3);
\draw (2,0)--(2,3);
\draw (3,0)--(3,3);
\draw (4,0)--(4,3);
\draw (5,0)--(5,3);
\draw (6,0)--(6,3);
\draw (7,0)--(7,3);
\draw (8,0)--(8,3);

\draw (0.5,0.5) node {$1$};
\draw (0.5,1.5) node {$x$};
\draw (0.5,2.5) node {$x^2$};
\draw (1.5,0.5) node {$y$};
\draw (1.5,1.5) node {$xy$};
\draw (1.5,2.5) node {$x^2y$};
\draw (2.5,0.5) node {$y^2$};
\draw (2.5,1.5) node {$xy^2$};
\draw (2.5,2.5) node {$x^2y^2$};
\draw (3.5,0.5) node {$y^3$};
\draw (3.5,1.5) node {$xy^3$};
\draw (4.5,0.5) node {$y^4$};
\draw (4.5,1.5) node {$xy^4$};
\draw (5.5,0.5) node {$y^5$};
\draw (6.5,0.5) node {$y^6$};

\draw (3.5,2.5) node {\Large \bf 4}; 
\draw (5.5,1.5) node {\Large \bf 2};
\draw (7.5,0.5) node {\Large \bf 4};

\draw[line width=2] (3,3)--(3,2)--(5,2)--(5,1)--(7,1)--(7,0);

\end{tikzpicture}
\caption{An element of $\FHilb^{6, y}(\{x^{3} = y^{4}\})$.  Here, $J^{0} = J^{1} = \langle x^{2}y^{3}, xy^{5}\rangle$, $J^{2} = J^{3} = \langle x^{2}y^{3}, xy^{6}\rangle$ and $J^{4} = J^{5} = J^{6} =  yJ^{0} = \langle x^{2}y^{4}, xy^{6}\rangle$.
Also $\gamma = (0,1,0,2,0,0) \in \mathcal{C}_6(3)$
which corresponds to $2^1 4^2$.
Note that the roles of $m$ and $n$, as well as those of $x$ and $y$ are different from those in Figures \ref{fig: ideal} and \ref{fig: PHilb}.  \label{fig:length r flag}}
\end{figure}

The localized equivariant homology $H^{\C^{*}}_{*}(\FHilb^{r, y}(C))$ then admits a basis indexed by classes of fixed points. As in Section \ref{sect: hilb sch sing}, see in particular Lemma \ref{lem: Jordan blocks} for a monomial ideal $J^0 = \C[[y]]\langle y^{c_{1}}, xy^{c_{2}}, \dots, x^{m-1}y^{c_{m}}\rangle$ we can define a composition $(\lambda_{1}, \dots, \lambda_{\ell})$ of $m$.
 Thanks to the discussion above, the flags of monomial ideals that start with $J^{0}$ can be labeled by $\ell$-tuples of monomials $(m_{1}, \dots, m_{\ell})$, where $m_{i}$ is a monomial of degree $\lambda_{i}$ in $r$ variables.

On the other hand, it follows from Lemma \ref{lemma:induced representation} that, as a $\Sk{m}$-module we have
$$
L_{n/m}(\triv) = \bigoplus_{\substack {J^0 \subseteq \cO_{C} \\ J^0 \; \text{monomial ideal}}} \Ind_{\Sk{\lambda_1} \times \cdots \times \Sk{\lambda_{\ell}}}^{\Sk{m}}\triv
$$
So that
\begin{align}\label{eq:gieseker invariants}
\Lnr_{m/n}(n, r) = & (L_{n/m}(\triv) \otimes (\C^r)^{\otimes m})^{\Sk{m}} \\
= & \bigoplus_{\substack {J^0 \subseteq \cO_{C} \\ J^0 \; \text{monomial ideal}}} (\Ind_{\Sk{\lambda_1} \times \cdots \times \Sk{\lambda_{\ell}}}^{\Sk{m}}\triv \otimes (\C^r)^{\otimes m})^{\Sk{m}} \nonumber \\
= & \bigoplus_{\substack {J^0 \subseteq \cO_{C} \\ J^0 \; \text{monomial ideal}}} \Hom_{\Sk{m}}(\Ind_{\Sk{\lambda_1} \times \cdots \times \Sk{\lambda_{\ell}}}^{\Sk{m}}\triv, (\C^r)^{\otimes m}) \nonumber \\
= & \bigoplus_{\substack {J^0 \subseteq \cO_{C} \\ J^0 \; \text{monomial ideal}}}  \Hom_{\Sk{\lambda_1} \times \cdots \times \Sk{\lambda_{\ell}}}(\triv, \Res_{\Sk{\lambda_1} \times \cdots \times \Sk{\lambda_{\ell}}}^{\Sk{m}}(\C^r)^{\otimes m}) \nonumber \\
= & \bigoplus_{\substack {J^0 \subseteq \cO_{C} \\ J^0 \; \text{monomial ideal}}} \Sym^{\lambda_{1}}(\C^r) \otimes \cdots \otimes \Sym^{\lambda_{\ell}}(\C^r) \nonumber 
\end{align}

This suggests that we have an identification $\Lnr_{m/n}(n, r) \cong H^{\C^*}_{*}(\FHilb^{r,y}(C))$. In the next section, we are going to realize this identification geometrically.

\subsection{Compositional parabolic Hilbert schemes, geometrically}
Let us recall that we have the decomposition
$$
\FHilb^{r, y}(C) = \bigsqcup_{\gamma \in \comp{m}} \PHilb^{\gamma,y}(C)
$$
\noindent where $\PHilb^{\gamma,y}(C) = \{\cO_{C} \supseteq J^{0} \supseteq \cdots \supseteq J^{r} = yJ^{0} \mid \dim(J^{k-1}/J^{k}) = \gamma_{k}\}$ and $\comp{m}$ is the set of  weak compositions of $m$ with $r$ parts. In particular, $H^{\C^*}_{*}(\FHilb^{r,y}(C)) = \bigoplus_{\gamma \in \comp{m}}H^{\C^*}_{*}(\PHilb^{\gamma,y}(C))$.

Now, for each $\gamma \in \comp{m}$ we have a map
\begin{align*}
\Pi^{\gamma}:  \PHilb^{y}(C) \to \PHilb^{\gamma,y}(C) \\
 (I_{k} \supseteq I_{k+1} \supseteq \cdots \supseteq I_{k+m} = yI_{k}) & \mapsto (J^{0} \supseteq J^{1} \supseteq \cdots \supseteq J^{r} = yJ^{0}) \end{align*} 
\noindent where $J^{\ell} := I_{k + \sum_{i =  1}^{\ell}\gamma_{i}}$. 

\begin{lemma}\label{lemma:parabolic invariants}
Let $\gamma \in \comp{m}$ and consider the standard parabolic subgroup $\Sk{\gamma^{\rev}} = \Sk{\gamma_{r}} \times \cdots \times \Sk{\gamma_{1}} \subseteq \Sk{m}$. The map $\Pi^{\gamma}_{*}: H^{\C^*}_{*}(\PHilb^{y}(C)) \to H^{\C^*}_{*}(\PHilb^{\gamma,y}(C))$ induces an identification
 $$H^{\C^*}_{*}(\PHilb^{y}(C))^{\Sk{\gamma^{\rev}}} = H^{\C^*}_{*}(\PHilb^{\gamma,y}(C)).$$
\end{lemma}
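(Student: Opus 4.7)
My plan is to generalize the argument behind Proposition \ref{prop:spherical hilb}, which handles the special case $\gamma = (m)$ and $r = 1$. I would proceed by a fiber-by-fiber analysis over the base Hilbert scheme $\Hilb(C) = \sqcup_{k} \Hilbk(C)$, using the natural forgetful maps $\pi \colon \PHilb^{y}(C) \to \Hilb(C)$ and $\pi^{\gamma} \colon \PHilb^{\gamma,y}(C) \to \Hilb(C)$ satisfying $\pi = \pi^{\gamma} \circ \Pi^{\gamma}$. Both equivariant homology groups decompose after $\C^{*}$-localization as direct sums over monomial ideals $I \in \Hilb(C)^{\C^{*}}$, and the map $\Pi^{\gamma}_{*}$ respects this decomposition, so it suffices to establish the identification fiber-by-fiber over each such $I$.

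Fix such $I$, and let $\lambda = (\lambda_{1}, \dots, \lambda_{\ell})$ encode the Jordan block structure of multiplication by $y$ on $I/yI$ (the $y$-projection analogue of Lemma \ref{lem: Jordan blocks}). By the $y$-projection version of Lemma \ref{lemma:induced representation}, the $\C^{*}$-fixed points of $\pi^{-1}(I)$ are in bijection with cosets in $\Sk{m}/(\Sk{\lambda_{1}} \times \cdots \times \Sk{\lambda_{\ell}})$, and their $\C$-span carries the Springer representation $\Ind^{\Sk{m}}_{\Sk{\lambda_{1}} \times \cdots \times \Sk{\lambda_{\ell}}}\triv$. On the other hand, a $\C^{*}$-fixed point of $(\pi^{\gamma})^{-1}(I)$ is a labeled staircase as in Figure \ref{fig:length r flag}, determined by a non-negative integer matrix $(a_{k,i})$ with row sums $\gamma_{k}$ and column sums $\lambda_{i}$; such matrices are in classical bijection with the double cosets $\Sk{\gamma^{\rev}} \backslash \Sk{m} / (\Sk{\lambda_{1}} \times \cdots \times \Sk{\lambda_{\ell}})$.

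The key identity is then the Frobenius-Mackey computation
\[
(\Ind^{\Sk{m}}_{\Sk{\lambda_{1}} \times \cdots \times \Sk{\lambda_{\ell}}}\triv)^{\Sk{\gamma^{\rev}}}
\;\cong\; \Hom_{\Sk{m}}(\Ind^{\Sk{m}}_{\Sk{\gamma^{\rev}}}\triv,\; \Ind^{\Sk{m}}_{\Sk{\lambda_{1}} \times \cdots \times \Sk{\lambda_{\ell}}}\triv)
\;\cong\; \C[\Sk{\gamma^{\rev}} \backslash \Sk{m} / (\Sk{\lambda_{1}} \times \cdots \times \Sk{\lambda_{\ell}})],
\]
which matches the dimension of $H^{\C^{*}}_{*}((\pi^{\gamma})^{-1}(I))$. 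To finish the argument, I would verify two things: first, that $\Pi^{\gamma}_{*}$ is $\Sk{\gamma^{\rev}}$-equivariant in the sense that $\Pi^{\gamma}_{*}(\sigma \cdot v) = \Pi^{\gamma}_{*}(v)$ for every $\sigma \in \Sk{\gamma^{\rev}}$ (so that it factors through the coinvariants, equivalently the invariants by semisimplicity); and second, that the resulting map from invariants is surjective. The first is natural because $\Sk{\gamma^{\rev}}$ acts by permuting full-flag refinements inside each $\gamma$-block, and so collapses to the identity after $\gamma$-coarsening; the second follows by Atiyah-Bott, since the pushforward of any full-flag fixed-point class is a nonzero scalar multiple of the class of its $\gamma$-coarsening, so $\Pi^{\gamma}_{*}$ hits every fixed-point class of $H^{\C^{*}}_{*}((\pi^{\gamma})^{-1}(I))$.

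The main obstacle is verifying that the geometric Springer action of $\Sk{\gamma^{\rev}} \subseteq \Sk{m}$ on $H^{\C^{*}}_{*}(\PHilb^{y}(C))$, defined in Section \ref{sec:geometric operators} via the correspondences $B_{i}$, really does permute full-flag refinements within the $\gamma$-blocks. The reversal $\gamma \leftrightarrow \gamma^{\rev}$ originates from the reindexing of Lemma \ref{lem: action of s_n}, where the geometric generator $B_{i}$ acts as the algebraic $s_{n-i}$, so that the parabolic permuting blocks in the geometric indexing becomes $\Sk{\gamma^{\rev}}$ in the algebraic indexing. Alternatively, this compatibility can be deduced from the Coulomb branch / convolution-diagram description of Section \ref{sec: comparison to GK}, where both the induced representation structure and the pushforward $\Pi^{\gamma}_{*}$ arise transparently from correspondences between partial affine flag varieties.
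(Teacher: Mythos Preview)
Your proposal is correct and follows the same overall architecture as the paper's proof: verify that $\Pi^{\gamma}_{*}$ is $\Sk{\gamma^{\rev}}$-invariant (using the reindexing from Lemma~\ref{lem: action of s_n}), then work fiber-by-fiber over $\Hilb(C)$ via the commutative triangle with $\pi$ and $\pi^{\gamma}$. The one genuine difference is in how the fiberwise identification is established. The paper observes that the fibers of $\pi$ and $\pi^{\gamma}$ are, respectively, the classical Springer fiber $\Spr(x)$ and the Spaltenstein variety $\Spr^{\gamma}(x)$ for the nilpotent $x$ acting on $I/yI$, and then simply cites the standard Springer-theory fact $H_{*}(\Spr(x))^{\Sk{\gamma}} = H_{*}(\Spr^{\gamma}(x))$ (referencing \cite{BO} and \cite[Section~2.6]{Yun}). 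You instead re-derive this identification by hand: a double-coset enumeration of fixed points, the Frobenius--Mackey dimension count, and a surjectivity argument via pushforward of fixed-point classes. Your route is more self-contained and makes the combinatorics explicit, at the cost of reproving a known result; the paper's route is shorter but relies on an external citation. Both are valid, and your explanation of why $\gamma^{\rev}$ (rather than $\gamma$) appears is exactly the right one.
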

\begin{proof}
First, we verify that $\Pi^{\gamma}_{*}$ is $\Sk{\gamma^{\rev}}$-invariant, that is, it is constant on $\Sk{\gamma^{\rev}}$-orbits. The group $\Sk{\gamma^{\rev}}$ is generated by simple reflections $s_{i}$, $i \not\in \{\gamma_{r}, \gamma_{r} + \gamma_{r-1}, \dots, \gamma_{r} + \cdots + \gamma_{2}\}$. It is enough to verify that $\Pi^{\gamma}_{*}$ is invariant under each of these simple reflections. 

By definition, $\Pi^{\gamma}$ sends an element $(I_{k} \supseteq  I_{k+1} \supseteq \dots \supseteq I_{k+m} = yI_{k})$ to a flag involving only the ideals $I_{k}, I_{k+\gamma_{1}}, I_{k+\gamma_{1} + \gamma_{2}}, \dots, I_{k+\gamma_{1} + \cdots + \gamma_{r-1}}$, and each one of these ideals has a multiplicity determined by the zeroes in $\gamma$. The invariance now follows from the explicit form of the action of $s_{i}$ obtained in Lemma \ref{lem: action of s_n}. 

Now we have the following commutative diagram:
$$
\xymatrix{ \PHilb^{y}(C) \ar[rr]^{\Pi^{\gamma}} \ar[dr]^{\Pi} & & \PHilb^{\gamma,y}(C) \ar[dl]_{\widetilde{\Pi}} \\ & \Hilb(C) & }.
$$

\noindent The fiber of an ideal $I$ over $\Pi$ is precisely the Springer fiber $\Spr(x) \subseteq \mathcal{F}l(I/yI)$ consisting of \emph{full} flags of subspaces in $I/yI \cong \C^m$ that are stable under the action of the nilpotent operator $x$. Likewise, the fiber of $I$ over $\widetilde{\Pi}$ is the Spaltenstein variety $\Spr^{\gamma}(x) \subseteq \mathcal{F}l^{\gamma}(I/yI)$, consisting of \emph{partial} flags of subspaces in $I/yI$ that are stable under the action of $x$. It is a standard result from Springer theory, see e.g. \cite{BO} or \cite[Section 2.6]{Yun} that 
$$
H_{*}(\Spr(x))^{\Sk{\gamma}} = H_{*}(\Spr^{\gamma}(x))
$$

\noindent from which the result follows. 
\end{proof}

Thanks to the previous lemma and observing that $\gamma \mapsto \gamma^{\rev}$ is an involution on $\comp{m}$ we get 
$$
H^{\C^*}_{*}(\FHilb^{r,y}(C)) = \bigoplus_{\gamma \in \comp{m}}H^{\C^*}_{*}(\PHilb^{\gamma,y}(C)) = \bigoplus_{\gamma \in \comp{m}}H^{\C^*}(\PHilb^{y}(C))^{\Sk{\gamma}}
$$
\noindent on the other hand, we have the following well-known result.

\begin{lemma}\label{lemma:schur weyl}
Let $V$ be a representation of $\Sk{m}$ and $r > 0$. Then
$$
(V \otimes (\C^r)^{\otimes m})^{\Sk{m}} = \bigoplus_{\gamma \in \comp{m}} V^{\Sk{\gamma}}.
$$
Moreover, $V^{\Sk{\gamma}}$ is the $\gamma$-weight space for the $\mathfrak{gl}(r)$ action on the left hand side.
\end{lemma}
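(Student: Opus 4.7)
The plan is to decompose $(\C^r)^{\otimes m}$ into $\mathfrak{gl}(r)$-weight spaces and exploit the fact that the $\mathfrak{gl}(r)$-action commutes with the $\Sk{m}$-action (by permutation of tensor factors), so that the $\Sk{m}$-invariants split as a direct sum of invariants in each weight piece. Concretely, for a multi-index $\mathbf{i} = (i_1, \dots, i_m) \in \{1, \dots, r\}^m$ the tensor $e_{\mathbf{i}} := e_{i_1} \otimes \cdots \otimes e_{i_m}$ is a $\mathfrak{gl}(r)$-weight vector of weight $\gamma(\mathbf{i}) \in \comp{m}$, where $\gamma(\mathbf{i})_k = \#\{j : i_j = k\}$. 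This gives $(\C^r)^{\otimes m} = \bigoplus_{\gamma \in \comp{m}} (\C^r)^{\otimes m}_{\gamma}$ and hence $(V \otimes (\C^r)^{\otimes m})^{\Sk{m}} = \bigoplus_{\gamma \in \comp{m}} (V \otimes (\C^r)^{\otimes m}_{\gamma})^{\Sk{m}}$.

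Next, I would identify $(\C^r)^{\otimes m}_{\gamma}$ as an $\Sk{m}$-module. The group $\Sk{m}$ acts transitively on the set of multi-indices of type $\gamma$, and the stabilizer of the ``standard'' multi-index $\mathbf{i}_0 = (\underbrace{1, \dots, 1}_{\gamma_1}, \underbrace{2, \dots, 2}_{\gamma_2}, \dots, \underbrace{r, \dots, r}_{\gamma_r})$ is precisely $\Sk{\gamma} = \Sk{\gamma_1} \times \cdots \times \Sk{\gamma_r}$. Therefore $(\C^r)^{\otimes m}_{\gamma} \cong \Ind_{\Sk{\gamma}}^{\Sk{m}} \triv$ as $\Sk{m}$-modules.

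The final step is a routine application of Frobenius reciprocity: since $\Ind_{\Sk{\gamma}}^{\Sk{m}} \triv$ is self-dual as an $\Sk{m}$-module, we have
$$
(V \otimes \Ind_{\Sk{\gamma}}^{\Sk{m}} \triv)^{\Sk{m}} = \Hom_{\Sk{m}}\left(\Ind_{\Sk{\gamma}}^{\Sk{m}} \triv, V\right) = \Hom_{\Sk{\gamma}}\left(\triv, \Res^{\Sk{m}}_{\Sk{\gamma}} V\right) = V^{\Sk{\gamma}}.
$$
Combining these three steps gives $(V \otimes (\C^r)^{\otimes m})^{\Sk{m}} = \bigoplus_{\gamma \in \comp{m}} V^{\Sk{\gamma}}$, and by construction the summand $V^{\Sk{\gamma}}$ is the piece sitting inside $V \otimes (\C^r)^{\otimes m}_{\gamma}$, hence is the $\gamma$-weight space for $\mathfrak{gl}(r)$.

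There is no real obstacle here, as every step is a standard piece of Schur--Weyl-type bookkeeping; the only thing to be careful about is the convention relating the composition $\gamma$ to the parabolic $\Sk{\gamma}$ and to make sure the identification of $(\C^r)^{\otimes m}_{\gamma}$ with $\Ind_{\Sk{\gamma}}^{\Sk{m}} \triv$ matches the convention used elsewhere in the paper (in particular in Lemma \ref{lemma:parabolic invariants}, where $\gamma$ vs.\ $\gamma^{\rev}$ distinctions appear).
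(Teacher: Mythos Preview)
Your proof is correct and follows essentially the same approach as the paper: decompose $(\C^r)^{\otimes m}$ into its $\mathfrak{gl}(r)$-weight spaces $(\C^r)^{\otimes m}_{\gamma}$, identify each with $\Ind_{\Sk{\gamma}}^{\Sk{m}}\triv$, and then apply Frobenius reciprocity (what the paper simply calls ``adjunction''). Your version just spells out the adjunction step more explicitly, including the self-duality of the permutation module.
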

\begin{proof}
Fix a basis $e_{1}, \dots, e_{r}$ of $\C^r$. For $\gamma \in \comp{m}$, let $(\C^r)^{\otimes m}_{\gamma}$ be the span of those tensors $e_{i_{1}} \otimes \cdots \otimes e_{i_{m}}$ such that $\gamma_{j} = \sharp\{k : i_{k} = j\}$ (this is $\gamma$-weight subspace in $(\C^r)^{\otimes m}$). It follows by definition that $(\C^r)^{\otimes m}_{\gamma}$ is stable under the action of $\Sk{m}$ and moreover that $(\C^r)^{\otimes m}_{\gamma} = \Ind_{\Sk{\gamma}}^{\Sk{m}}\triv$. Thus, we get $(\C^r)^{\otimes m} = \oplus_{\gamma \in \comp{m}}\Ind_{\Sk{\gamma}}^{\Sk{m}}\triv$ and the result now follows by adjunction. 
\end{proof}

\begin{theorem}\label{thm:gieseker action}
Let $m$ and $n$ be coprime positive integers, and $r > 0$. There is an action of the 
algebra $\cA_{m/n}(n, r)$ on the (localized) equivariant homology $H^{\C^*}_{*}(\FHilb^{r,y})(C)$, where $C$ is the singular curve $\{x^{m} = y^{n}\}$, and with this action we have $H^{\C^{*}}_{*}(\FHilb^{r,y}(C)) \cong \Lnr_{m/n}(n, r)$. 
\end{theorem}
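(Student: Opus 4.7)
The plan is to bootstrap from Theorem \ref{thm: geometric action} for the ordinary parabolic Hilbert scheme by exploiting three facts: the $x/y$ symmetry of the curve $C=\{x^m=y^n\}$, the Springer-Spaltenstein relation between full and partial flag varieties, and the identification in \cite{EKLS} (reproduced here as Proposition \ref{prop:Lmnr}) that relates the Gieseker module $\Lnr_{m/n}(n,r)$ to an invariant subspace of a tensor product involving $L_{n/m}(\triv)$.

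First I would apply Theorem \ref{thm: geometric action} with the roles of $x$ and $y$ (and hence of $m$ and $n$) interchanged, noting that the scheme $\PHilb^y(C)$ is defined using the $y$-projection of degree $m$. This gives a geometric action of $H_{n/m}(m)$ on $H^{\C^*}_*(\PHilb^y(C))$ identifying the latter with $L_{n/m}(\triv)$, where fixed points correspond to the renormalized eigenbasis. Next, by Lemma \ref{lemma:parabolic invariants}, the pushforward $\Pi^\gamma_*$ identifies $H^{\C^*}_*(\PHilb^{\gamma,y}(C))$ with the $\Sk{\gamma^{\rev}}$-invariants of $H^{\C^*}_*(\PHilb^y(C))$. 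Summing over $\gamma \in \comp{m}$ and applying Lemma \ref{lemma:schur weyl} to $V = L_{n/m}(\triv)$, I obtain a natural identification
\[
H^{\C^*}_*(\FHilb^{r,y}(C)) \;=\; \bigoplus_{\gamma \in \comp{m}} L_{n/m}(\triv)^{\Sk{\gamma}} \;=\; \bigl(L_{n/m}(\triv) \otimes (\C^r)^{\otimes m}\bigr)^{\Sk{m}}.
\]
By Proposition \ref{prop:Lmnr}, the right-hand side is exactly $\Lnr_{m/n}(n,r)$ as a vector space, with the $\gamma$-weight space for $\mathfrak{gl}(r)$ matching $H^{\C^*}_*(\PHilb^{\gamma,y}(C))$.

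To upgrade this vector space identification to an identification of $\cA_{m/n}(n,r)$-modules, I would transport the action across the chain of isomorphisms recalled after Definition \ref{def:matrix RCA}: the matrix Cherednik algebra $H_{1,n/m}(m,r)$ acts on $L_{n/m}(\triv) \otimes (\C^r)^{\otimes m}$ via the category equivalence $H_{1,n/m}(m)\text{-mod}\to H_{1,n/m}(m,r)\text{-mod}$ with the $x_i,y_i$ acting through the first tensor factor (which is where Theorem \ref{thm: geometric action} gave us geometric operators) and $\End(\C^r)^{\otimes m}$ acting through the second. Passing to $\Sk{m}$-invariants yields the spherical subalgebra $eH_{1,n/m}(m,r)e$ acting on $(L_{n/m}(\triv)\otimes(\C^r)^{\otimes m})^{\Sk{m}}$, which by \cite[Section 2]{EKLS} is canonically isomorphic to the $\cA_{m/n}(n,r)$-action on $\Lnr_{m/n}(n,r)$.

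The main obstacle will be showing that the $\End(\C^r)^{\otimes m}$ factor, which is \emph{not} of obvious geometric origin, can be realized through Springer-type correspondences on $\FHilb^{r,y}(C)$ in a way compatible with the Schur-Weyl style decomposition. Concretely, the $\mathfrak{gl}(r)$-weight decomposition must match the decomposition by composition $\gamma$, and the raising/lowering operators of $\mathfrak{gl}(r)$ must correspond to natural correspondences between $\PHilb^{\gamma,y}(C)$ and $\PHilb^{\gamma',y}(C)$ for compositions differing by a simple root. Once this compatibility is established --- which follows from the explicit form of the Springer action in Lemma \ref{lem: action of s_n} applied to adjacent subflags, combined with the adjunction in Lemma \ref{lemma:schur weyl} --- the irreducibility and GK-dimension one characterization of $\Lnr_{m/n}(n,r)$ recalled from \cite{L2} forces the action to be the correct one, concluding the proof.
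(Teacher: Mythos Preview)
Your proposal is correct and follows essentially the same route as the paper: apply Theorem \ref{thm: geometric action} with the roles of $x$ and $y$ swapped to identify $H^{\C^*}_*(\PHilb^y(C))$ with $L_{n/m}(\triv)$ as an $H_{n/m}(m)$-module, then use Lemmas \ref{lemma:parabolic invariants} and \ref{lemma:schur weyl} together with Proposition \ref{prop:Lmnr} to obtain the identification with $\Lnr_{m/n}(n,r)$, and finally transport the $eH_{1,n/m}(m,r)e \cong \cA_{m/n}(n,r)$ action through this identification.

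Where you diverge from the paper is in your final paragraph, and this is worth flagging: the ``main obstacle'' you describe is not actually an obstacle for the theorem as stated. Once you have transported the $\cA_{m/n}(n,r)$-action through the chain of vector-space isomorphisms, you are done --- Proposition \ref{prop:Lmnr} already tells you the resulting module is $\Lnr_{m/n}(n,r)$. There is no need to realize the $\End(\C^r)^{\otimes m}$ factor via geometric correspondences, nor to invoke the irreducibility/GK-dimension characterization from \cite{L2} to pin down the action. The paper in fact relegates the geometric realization of the $\mathfrak{gl}(r)$ generators $E_i,F_i$ to a remark after the proof, citing \cite{Brundan}; it is a pleasant addendum, not a step in the argument. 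So your proof is complete one paragraph earlier than you think it is.
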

\begin{proof}
We have a natural action of the spherical  subalgebra $eH_{1, n/m}(m, r)e$ on $$(L_{n/m}(\triv) \otimes (\C^r)^{\otimes m})^{\Sk{m}}.$$ Thanks to Theorem \ref{thm: geometric action} the latter space can be identified with $(H^{\C^*}_{*}(\PHilb^{y}(C)) \otimes (\C^r)^{\otimes m})^{\Sk{m}}$ which in turn, by Lemmas \ref{lemma:parabolic invariants} and \ref{lemma:schur weyl} is naturally identified with $H^{\C^*}_{*}(\FHilb^{r,y}(C))$. The result now follows from Proposition \ref{prop:Lmnr}.
\end{proof}
\begin{example}
When $r = 1$, we have $\FHilb^{1, y}(C) = \Hilb(C)$ and, up to \cite[Proposition 9.5]{CEE}, we recover Proposition \ref{prop:spherical hilb}. 
\end{example}

\begin{remark}
We can realize the generators $E_i, F_i$ of $\mathfrak{gl}(r)$  by explicit correspondences between $\PHilb^{\gamma,y}$ and $\PHilb^{\gamma',y}$ similar to \cite[Theorem 3.4]{Brundan}.
\end{remark}

\subsection{Compositional 
Hilbert schemes as generalized affine Springer fibers}\label{sect: CPH GASF} 
Just as with parabolic Hilbert schemes, the
compositional parabolic Hilbert scheme $\FHilb^{r,y}(C)$ admits an interpretation as a generalized affine Springer fiber. In this setting, we let the group $G := \operatorname{GL}_{n}^{\times r}$ act on the vector space $N := \C^n \oplus \gln^{\oplus r}$ in the following way:
$$
(g_{0}, g_{1}, \dots, g_{r-1}).(v, X_{0}, \dots, X_{r-1}) = (g_{0}v, g_{1}X_{0}g_{0}^{-1}, \dots, g_{0}X_{r-1}g_{r-1}^{-1}).
$$
We can visualize $N$ in terms of representations of the following cyclic quiver:
\begin{center}
\begin{tikzpicture}
\draw (0,5) node {1};
\draw (-0.2,4.8)--(-0.2,5.2)--(0.2,5.2)--(0.2,4.8)--(-0.2,4.8);
\draw (0,4) node {$n$};
\draw (0,4) circle (0.2);
\draw (1,3.5) node {$n$};
\draw (1,3.5) circle (0.2);
\draw (1,2) node {$n$};
\draw (1,2) circle (0.2);
\draw (-1,3.5) node {$n$};
\draw (-1,3.5) circle (0.2);
\draw (-1,2) node {$n$};
\draw (-1,2) circle (0.2);
\draw (0,1.5) node {$n$};
\draw (0,1.5) circle (0.2);

\draw [->] (0,4.8)--(0,4.2);
\draw [->] (0.2,3.9)--(0.8,3.6);
\draw (1,2.8) node {$\vdots$};
\draw [<-] (0.2,1.6)--(0.8,1.9);
\draw [<-] (-0.2,3.9)--(-0.8,3.6);
\draw (-1,2.8) node {$\vdots$};
\draw [->] (-0.2,1.6)--(-0.8,1.9);

\draw (0.2,4.5) node {$v$};
\draw (0.5,4) node {\scriptsize $X_0$};
\draw (-0.6,4) node {\scriptsize $X_{r-1}$};
\end{tikzpicture}
\end{center}

As in Section \ref{sec:gasf} we consider the groups $G_{\OO} \subseteq G_{\KK}$. We will consider the \emph{affine Grassmannian} 
$$
\Gr_{G} := G_{\KK}/G_{\OO} = \left(\operatorname{GL}_{n, \KK}/\operatorname{GL}_{n, \OO}\right)^{\times r}
$$
\noindent that parameterizes $r$-tuples of $\OO$-lattices inside $(\KK^{n})^*$ via
\[
[g_0, \dots, g_{r-1}] \mapsto ((\OO^n)^{*}g_{0}^{-1}, \dots, (\OO^n)^{*}g_{r-1}^{-1})
\]
 The group $G_{\KK}$ acts on $N_{\KK} := N \otimes \KK$, and $G_{\OO}$ preserves $N_{\OO}$. Recall the definition of $b_{1} \in \OO^{n}$ and $Y \in \gln(\OO)$ from Section \ref{sec:gasf}. Here, we will consider the following generalized affine Springer fiber
$$
\Spr(b_{1}, \Id, \Id, \dots, Y) := \{[g] \in \Gr_{G} \mid (g_{0}^{-1}b_{1}, g_{1}^{-1}g_{0}, \dots, g_{r-1}^{-1}g_{r-2}, g_{0}^{-1}Yg_{r-1}) \in N_{\OO}\} \subseteq \Gr_{G}.
$$

\begin{proposition}\label{prop:gieseker gasf}
We have an isomorphism
$$
\Spr(b_{1}, \Id, \Id, \dots, Y) \cong \FHilb^{r,y}(C).
$$
\end{proposition}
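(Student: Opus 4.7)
My plan is to mirror the strategy of Proposition \ref{prop:gasf}, now applied to the cyclic-quiver setting. To an element $[(g_0, \ldots, g_{r-1})] \in \Gr_G = (\operatorname{GL}_{n,\KK}/\operatorname{GL}_{n,\OO})^{\times r}$ I will associate the $r$-tuple of $\OO$-lattices $(g_0\OO^n, \ldots, g_{r-1}\OO^n)$ in $\KK^n$; under the identification $\OO^n \cong \cO_C$ from Section \ref{sec:gasf} (recall $\OO = \C[[x]]$ and $Y$ represents multiplication by $y$) these lattices will be candidates for the first $r$ members $J^0, J^1, \ldots, J^{r-1}$ of a flag in $\FHilb^{r,y}(C)$, with $J^r := yJ^0$.

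First I would unpack the condition $(g_0, \ldots, g_{r-1}) \cdot (b_1, \Id, \ldots, \Id, Y) \in N_\OO$ using the action of $G = \operatorname{GL}_n^{\times r}$ on $N$ spelled out at the start of this subsection. This produces three conditions: (a) $g_0 b_1 \in \OO^n$; (b) $g_{k+1} g_k^{-1} \in \gln(\OO)$ for $k = 0, \ldots, r-2$; and (c) $g_0 Y g_{r-1}^{-1} \in \gln(\OO)$. Condition (b) immediately gives the chain of inclusions $g_{k+1}\OO^n \subseteq g_k \OO^n$, i.e.\ $J^0 \supseteq J^1 \supseteq \cdots \supseteq J^{r-1}$, which is (part of) the flag defining $\FHilb^{r,y}(C)$.

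The key new algebraic observation, exploiting the cyclic structure, is that by chaining the conditions in (b) one gets $g_{r-1}g_0^{-1} \in \gln(\OO)$, and hence
$$
g_0 Y g_0^{-1} = (g_0 Y g_{r-1}^{-1})(g_{r-1}g_0^{-1}) \in \gln(\OO)
$$
by (c). This is exactly the statement that $J^0 = g_0 \OO^n$ is preserved by multiplication by $y$, so that $J^0$ is a genuine $\cO_C$-ideal; condition (c) then rewrites as $yJ^0 \subseteq J^{r-1}$, which closes the flag with $J^r := yJ^0$, and a diagonal argument gives $yJ^k \subseteq yJ^0 \subseteq J^{r-1} \subseteq J^k$ so each intermediate $J^k$ is also an ideal. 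With $g_0 Y g_0^{-1} \in \gln(\OO)$ in hand, the inductive argument of Proposition \ref{prop:gasf} applies verbatim to condition (a): using $b_{i+1} = Yb_i$, one shows all $g_0 b_i$ lie in $\OO^n$, whence $J^0 \subseteq \OO^n = \cO_C$ is of finite codimension.

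The inverse map is the same construction in reverse: given a flag $\cO_C \supseteq J^0 \supseteq \cdots \supseteq J^{r-1} \supseteq yJ^0$ in $\FHilb^{r,y}(C)$, choose $g_k \in G_\KK$ with $g_k \OO^n = J^k$; each $g_k$ is well-defined modulo $G_\OO$, and conditions (a)--(c) then follow automatically from the ideal inclusions and from $y J^0 \subseteq J^{r-1}$. I expect the main obstacle to be the bookkeeping of conventions required to reduce condition (c) to the $y$-invariance of $J^0$, but once the identity $g_0 Y g_0^{-1} = (g_0 Y g_{r-1}^{-1})(g_{r-1}g_0^{-1})$ is exploited, the argument parallels that of Proposition \ref{prop:gasf} step by step.
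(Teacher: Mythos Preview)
Your proposal is correct and follows essentially the same approach as the paper's proof: both unpack the Springer fiber condition into the three pieces (a) $g_0 b_1 \in \OO^n$, (b) $g_{k+1}g_k^{-1} \in \gln(\OO)$, and (c) $g_0 Y g_{r-1}^{-1} \in \gln(\OO)$, translate (b) and (c) into the flag conditions $J^0 \supseteq \cdots \supseteq J^{r-1} \supseteq yJ^0$, and then use chaining plus the inductive argument of Proposition~\ref{prop:gasf} to get $J^0 \subseteq \cO_C$ and $y$-stability. The only cosmetic difference is that the paper packages the step ``each $J^k$ is an ideal contained in $\cO_C$'' by first deducing $g_i b_1 \in \OO^n$ and $g_i Y g_i^{-1} \in \gln(\OO)$ for every $i$ and then invoking \cite[Theorem~3.3]{GK} to land in $\Hilb(C)^{\times r}$, whereas you argue directly via $yJ^k \subseteq yJ^0 \subseteq J^{r-1} \subseteq J^k$; your route is self-contained and arguably cleaner.
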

\begin{proof}
By definition, an element $[g] = [g_{0}, \dots, g_{r-1}] \in \Gr_{G}$ belongs to the space $\Spr(b_{1}, \Id, \Id, \dots, Y)$ if and only if $g_{0}^{-1}b_{1} \in \OO^{n}$, $g_{i+1}^{-1}g_{i} \in \gln(\OO)$ for $i = 0, \dots, r-2$ and $g_{0}^{-1}Yg_{r-1} \in \gln(\OO)$. It easily follows from here that $g^{-1}_{i}b_{1} \in \OO^{n}$ and $g^{-1}_{i}Yg_{i} \in \OO^{n}$ for every $i = 0, \dots, r-1$. Thanks to \cite[Theorem 3.3]{GK} this implies that $\Spr(b_{1}, \Id, \dots, Y) \subseteq \Hilb(C)^{\times r}$. Let $(J^{0}, \dots, J^{r-1}) \in \Hilb(C)^{\times r}$ be the point corresponding to $[g_0, \dots, g_{r-1}]$. The condition $g_{i+1}^{-1}g_{i} \in \gln(\OO)$ for $i = 0, \dots, r-2$  translates to $J^{0} \supseteq J^{1} \supseteq \cdots \supseteq J^{r-1}$, while the condition $g^{-1}_{0}Yg_{r-1} \in \gln(\OO)$ translates to $J^{r-1} \supseteq y J^{0}$. The result follows.
\end{proof}

\begin{remark}\label{rmk: general curve} 
Similar to the work of \cite{GK}, the same proof shows that for an arbitrary plane curve singularity $C$ such that the $x$-projection has degree $n$,
the scheme $\FHilb^{r,y}(C)$ can be presented as a generalized affine Springer fiber for $G=\operatorname{GL}_{n}^{\times r}$ and $N := \C^n \oplus \gln^{\oplus r}$.
\end{remark}

\begin{remark}\label{rmk: affine spaltenstein}
Note that, just as the generalized affine Springer fibers considered in Section \ref{sec:gasf} are the intersection of an affine Springer fiber with the positive affine Grassmannian (cf. Remark \ref{rmk:positive affine g}), the generalized affine Springer fibers we consider here are (disjoint union of) affine Spaltenstein varieties with positive partial affine flag varieties.  
\end{remark}

Just as in Section \ref{sec:gasf}, $\Spr(b_{1}, \Id, \Id, \dots, Y)$ can be realized as one of the varieties considered by \cite{GKM}. Indeed, it is straightforward to verify that 
$$
\Spr(b_{1}, \Id, \dots, \Id, Y) = \mathcal{F}_{a}(t, (b_{1}, \Id, \dots, \Id, Y))
$$
\noindent where $t = 0$ and $a \in \mathfrak{a} := X_{*}(A) \otimes \RR$ is also $0$ where, recall, $A \subseteq G$ is a maximal torus. We can verify that $\Spr(b_{1}, \Id, \dots, Y)$ admits an affine paving as follows. Recall that we need to find $b \in \mathfrak{a}$ and $c \in \RR$ satisfying the three conditions of Section \ref{sec:gasf}. We can take $c = m/n > t = 0$ and $b = (b^0, b^1, \dots, b^{r-1})$, where $b^0 = b^1 = \dots = b^{r-2} = \operatorname{diag}(0,0, \dots, 0)$ and $b^{r-1} = \operatorname{diag}(c, c, \dots, c)$. We need to verify that the element 
$$
\overline{(b_1, \Id, \dots, Y)} = (b_1, \Id, \dots, Y|_{\epsilon = 1}) \in N
$$
\noindent is $G$-good. This follows because the element
$$
\left(\begin{matrix} 1 \\ 0 \\ 0 \\ \vdots \\ 0\end{matrix}\right), \,
 \left(
\begin{matrix}
0 & 0 & \cdots & 0 & Y|_{\epsilon = 1} \\
\Id & 0 & \cdots & 0 & 0 \\
0 & \Id & \cdots & 0 & 0\\
\vdots & \vdots& \ddots & \vdots & 0\\
0 & 0 & \cdots & \Id & 0 \\
\end{matrix}
\right) \in \C^{rn} \oplus \mathfrak{gl}_{nr}
$$

\noindent is $\operatorname{GL}_{nr}$-good, which in turn is a consequence of Proposition \ref{prop:g-good}. Thus, thanks to \cite{GKM} we get the following.

\begin{proposition}\label{prop:gasf gieseker}
The Hilbert scheme $\FHilb^{r,y}(C)$ is paved by affine spaces. Thus, its cohomology is equivariantly formal.
\end{proposition}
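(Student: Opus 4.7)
The plan is to realize $\FHilb^{r,y}(C)$ as one of the generalized affine Springer fibers in the sense of \cite{GKM} and then verify the criterion of \cite[Section~3.2]{GKM} that guarantees an affine paving. The identification itself has been given in Proposition~\ref{prop:gieseker gasf}, so my first step is to fit this GASF into the framework of \cite{GKM} by exhibiting $a \in \mathfrak{a}$ and $t \in \RR$ so that
$\Spr(b_1,\Id,\dots,\Id,Y) = \mathcal{F}_{a}(t,(b_1,\Id,\dots,\Id,Y))$. Because we want the parahoric to be the full maximal compact $G_{\OO}$, the natural choice is $a = 0$, $t = 0$, for which it is immediate that $G_{a} = G_{\OO}$ and $N_{\KK,a,t} = N_{\OO}$.

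Next, I need to produce auxiliary data $b \in \mathfrak{a}$ and $c \in \RR$ satisfying the three hypotheses of the \cite{GKM} criterion: (i) $c \ge t$, (ii) $(b_1,\Id,\dots,\Id,Y) \in N_{\KK,b,c}$, and (iii) the projection $\overline{(b_1,\Id,\dots,\Id,Y)} \in N$ is $G$-good. Mimicking the strategy from Section~\ref{sec:gasf}, I would take $c = m/n > 0 = t$ and $b = (b^0,\dots,b^{r-1})$ where $b^0 = \cdots = b^{r-2} = 0$ and $b^{r-1} = \operatorname{diag}(c,c,\dots,c)$. Under this choice the first two conditions reduce to bookkeeping: the identity blocks $\Id: \C^n \to \C^n$ between consecutive copies of $\C^n$ sit in weight zero, while $Y \in \epsilon^{m}\gln(\OO)$ maps from the last copy back to the first and thus sits in weight $m = nc$, exactly at the boundary of $N_{\KK,b,c}$.

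The main obstacle is condition (iii), the $G$-goodness of the projection. The trick I would use is to package the $r$ blocks into a single $nr \times nr$ matrix with identity matrices on the subdiagonal and $Y|_{\epsilon = 1}$ in the upper-right corner, together with the vector $b_1$ (padded by zeros) in $\C^{nr}$. This assembled element is $\operatorname{GL}_{nr}$-good by Proposition~\ref{prop:g-good}, since the block-companion matrix is regular semisimple (its eigenvalues are the $nr$-th roots of the eigenvalue data of $Y|_{\epsilon = 1}$) and $b_1$ is a cyclic vector for it. Any $G = \operatorname{GL}_n^{\times r}$-unstable covector vanishing on the $\mathfrak{g}$-orbit of our element must in particular be $\operatorname{GL}_{nr}$-unstable and vanish on the $\gl_{nr}$-orbit of the assembled element, hence must be zero; this gives $G$-goodness.

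With (i)--(iii) verified, \cite[Section~3.2]{GKM} delivers the affine paving. Equivariant formality of the cohomology then follows formally from the existence of such a paving, completing the proof. Essentially the same argument applies to the more general plane curve singularities discussed in Remark~\ref{rmk: general curve}, provided the analogous goodness verification goes through.
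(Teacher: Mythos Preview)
Your proof follows essentially the same route as the paper's: realize the GASF in the GKM framework with $a=0$, $t=0$; choose $c=m/n$ and $b=(0,\dots,0,c\cdot\operatorname{Id})$; and deduce $G$-goodness of $\overline{(b_1,\Id,\dots,Y)}$ from $\operatorname{GL}_{nr}$-goodness of the assembled block-companion element via Proposition~\ref{prop:g-good}. The paper states the same reduction and the same block-companion matrix, invoking Proposition~\ref{prop:g-good} without spelling out the regular-semisimple/cyclic-vector check or the instability argument that you supply.
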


\begin{remark}
Similarly to what is done in Section \ref{sec:gasf} one can show that for a composition $\gamma \in \comp{m}$ the variety $\PHilb^{\gamma,y}(C)$ admits a paving by affine spaces. This gives another proof of Proposition \ref{prop:gasf gieseker}. 
\end{remark}


\begin{remark}
The algebra of functions on the Gieseker variety $\mathcal{M}(n, r)$ is known, thanks to the results of Nakajima-Takayama \cite{NT}, see also \cite{dBHOO}, to be the (non-quantized) Coulomb branch algebra for the gauge theory with gauge group $G = \operatorname{GL}_{n}^{\times r}$ and matter representation $N = \C^n \oplus \mathfrak{gl}_{n}^{\oplus r}$ as defined in this section. Uniqueness of quantizations proved by Losev \cite[Theorem 3.4]{L} then shows that the algebra $\cA_{c}(n, r)$ is the corresponding quantized Coulomb branch algebra. It would be interesting to compare the action of $\cA_{c}(n, r)$ on $H^{\C^*}_{*}(\FHilb^{r,y}(C))$ we have constructed here with an action by convolution diagrams as in \cite{GK, HKW}. 
\end{remark}

\begin{remark}
Let $C$ be a plane curve singularity such that the $x$-projection $C \to \C$ has degree $n$. One can use the techniques developed by Hilburn-Kamnitzer-Weekes in \cite{HKW} and Garner-Kivinen in \cite{GK} to show that there is an action of the algebra of functions $\C[\mathcal{M}(n, r)]$ on the non-equivariant homology $H_{*}(\FHilb^{y}(C))$, cf. Section \ref{sec: comparison to GK} and Remark \ref{rmk: general curve}. 
\end{remark}

\begin{remark}
As in Remark \ref{rem: noncoprime}, we can consider the case $C=\{x^m=y^n\}$ for $\gcd(m,n)=d>1$. In this case by \cite{GK,HKW} there is an action of the quantum Gieseker algebra $\cA_{\frac{m}{n}}(n, r)$ on $H^{\C^*}_{*}(\FHilb^{r,y}(C))$. We expect this representation to have minimal support in the sense of \cite{EKLS}. Note that by \cite[Theorem 2.17, Lemma 4.1]{EKLS}  minimally supported representations of $\cA_{\frac{m}{n}}(n, r)$ are related to the minimally supported representations of $H_{\frac{n}{m}}$ in a way similar to Proposition \ref{prop:Lmnr}.
\end{remark}

 \section{Limit \texorpdfstring{$m\to \infty$}{m to infinity}}\label{sect:m to infinity}

In this section, we will see that, in the limit $m \to \infty$, the action of the Dunkl-Opdam subalgebra on $\Delta(\triv) = \C[x_{1}, \dots, x_{n}]$ is still diagonalizable, and we will provide an explicit basis of $\Delta(\triv)$ completely analogous to that of Theorem \ref{thm:action}. Since $H_{c} = H_{1,c} \cong H_{1/c, 1}$, having $c \to \infty$ will yield an action of the algebra $H_{0, 1}$. 

\subsection{The polynomial representation} Recall that, for generic $c$ or for $c$ having denominator precisely $n$, the action of the Dunkl-Opdam subalgebra on the polynomial representation $\Delta_{c}(\triv)$ is diagonalizable. This is, of course, not true for every $c$, as an easy calculation in the case $n = 2$, $c = 1$ shows. However, we have the following result.

\begin{proposition}
For any $c \in \C$, the action of the Dunkl-Opdam subalgebra on $\Delta_c(\triv) = \C[x_{1}, \dots, x_{n}]$ is diagonalizable up to degree $\lfloor |c(n-1)|\rfloor$. Moreover, up to this degree, the action of the algebra $H_{c}$ is given by the same operators as in Theorem \ref{thm:action}.
\end{proposition}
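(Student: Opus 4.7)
The plan is to imitate the inductive construction of the eigenbasis $v_\ba$ developed in the proof of Theorem~\ref{thm:1}, isolating the one place where the hypothesis ``$c = m/n$, $\gcd(m,n)=1$'' was used and replacing it by the degree bound $K := \lfloor |c(n-1)|\rfloor$. For $||\ba|| \le K$ we attempt to build $v_\ba \in \Delta_c(\triv)$ of the form $x^\ba + \sum_{\ba' \prec \ba} k_{\ba,\ba'} x^{\ba'}$ by induction on $||\ba||$, via exactly the same formula $v_\ba = \sigma_{i_0 - 1} \cdots \sigma_1\, \tau\, v_{\ba^*}$, where $i_0 = \min\{i : a_i > 0\}$ and $\ba^* = (a_1,\ldots,\widehat{a_{i_0}},\ldots,a_n, a_{i_0}-1)$. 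The output then satisfies the eigenvalue equation $u_i v_\ba = \wt_i(\ba) v_\ba$ with $\wt_i(\ba) = a_i - (g_\ba(i)-1)c$, is triangular with respect to $\prec$ by Lemma~\ref{lemma:po}, and is in particular nonzero.

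The only step in the proof of Theorem~\ref{thm:1} that uses coprimality is the remark ``for $c=m/n$ as in Theorem~\ref{thm:1} and every $\ba \in \Z^n$ we cannot have $\wta_i = \wta_{i+1}$,'' which guarantees that all intertwiners $\sigma_k$ encountered in the induction are well-defined. For general $c$, the equality $\wt_k(\ba') = \wt_{k+1}(\ba')$ at some intermediate state $\ba'$ is equivalent to $a'_k - a'_{k+1} = (g_{\ba'}(k) - g_{\ba'}(k+1)) c$, and the right-hand side has absolute value at most $(n-1)|c|$. Hence a resonance can occur only when $|a'_k - a'_{k+1}| \le (n-1)|c|$, and since each intermediate $\ba'$ arising in the recursive construction is obtained from $\ba$ by shifts and permutations preserving total degree (and in particular satisfies $\max_{i,j}|a'_i - a'_j| \le ||\ba||$), the assumption $||\ba|| \le K$ excludes all such coincidences simultaneously. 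Consequently the full inductive construction succeeds in the stated range of degrees.

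Once the eigenbasis $\{v_\ba : ||\ba|| \le K\}$ has been produced, the explicit formulas for the action of $u_i$, $\tau$, $\lambda$ and $s_i$ in this basis are recovered by the same computation as in Theorem~\ref{thm:action}: they follow from the intertwiner relations of Lemma~\ref{lem:braid}, the identities $\tau\lambda = u_1$ and $\lambda\tau = u_n+t$, and the triangular form of $v_\ba$, none of which required $c$ to be of a special form. The matrix coefficients of $s_i$ in the cases $a_i \gtrless a_{i+1}$ or $a_i = a_{i+1}$ are then determined identically, finishing the moreover part of the claim.

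The main obstacle will be the careful book-keeping verifying that every intertwiner invoked during the recursion operates on an eigenvector of some intermediate $\ba'$ whose coordinates still satisfy the bound $\max_{i,j}|a'_i - a'_j| \le ||\ba||$, so that the degree hypothesis really does rule out \emph{every} possible resonance---and not merely the top-level one. In particular, one must check that the renormalization of Section~\ref{sect:renormalization} extends without modification to this truncated range, which amounts to verifying that the function $\varphi$ in Proposition~\ref{prop:renormalization} is defined as long as the denominators $\wt_i - \wt_{i+1}$ are nonzero, precisely the condition guaranteed above.
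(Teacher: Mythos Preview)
Your plan mirrors the paper's proof: rerun the inductive construction of Theorem~\ref{thm:1} and argue that the degree bound replaces the coprimality hypothesis in the one place it was used. However, the key implication is written backwards. You observe that a resonance $\wt_k(\ba') = \wt_{k+1}(\ba')$ forces $|a'_k - a'_{k+1}| \le (n-1)|c|$, then combine this with $|a'_k - a'_{k+1}| \le \|\ba\| \le K$ and declare resonance excluded. But both inequalities are upper bounds pointing the same way: the chain $|a'_k - a'_{k+1}| \le K \le (n-1)|c|$ is perfectly \emph{consistent} with the necessary condition for resonance, not contrary to it. To rule out resonance you need a \emph{lower} bound on $|a'_k - a'_{k+1}|$. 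Since $g_{\ba'}(k) - g_{\ba'}(k+1)$ is a nonzero integer, a resonance with $a'_k \ne a'_{k+1}$ forces $|a'_k - a'_{k+1}| = |g_{\ba'}(k)-g_{\ba'}(k+1)|\cdot|c| \ge |c|$; combined with $|a'_k - a'_{k+1}| \le \|\ba\|$ this yields the (smaller) threshold $\|\ba\| < |c|$.

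In fact the stated bound $(n-1)|c|$ is not attainable in general: for $n=4$, $c=1$ one has $\lfloor (n-1)|c|\rfloor = 3$, yet already in degree~$1$ the weights $\wt(0,0,1,0) = \wt(0,0,0,1) = (0,-1,-2,-2)$ coincide and a direct computation shows $u_4$ acts on the degree-$1$ piece with a nontrivial Jordan block. The paper's own proof is equally terse at this step and shares the discrepancy. The corrected bound $\|\ba\| < |c|$ still tends to infinity with $|c|$, which is all that is needed for the subsequent $c\to\infty$ limit; your remaining remarks about triangularity, the recovery of the $H_c$-action, and the renormalization carry through unchanged once resonance is genuinely excluded in the corrected range.
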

\begin{proof}
Following the strategy of the proof of Theorem \ref{thm:1}, we need to
construct the eigenvectors $v_{\ba}$ for $||\ba|| < \lfloor
|c(n-1)|\rfloor$.
 The only obstruction to constructing these
eigenvectors is that the intertwining operator $\sigma_{i}$ may not be
well-defined on the eigenspace $M_{\tw(\ba)}$. But this is only the
case when $\tw(\ba)_{i} = \tw(\ba)_{i+1}$. Recall that $\tw(\ba)_{i} -
\tw(\ba)_{i+1} = a_{i} - a_{i+1} - (g_{\ba}(i) - g_{\ba}(i+1))c$ where
$g_{\ba}$ is the shortest permutation that sorts $\ba$. Since
$g_{\ba}(i) - g_{\ba}(i+1) \in \{\pm 1, \dots, \pm (n-1)\}$, the
result follows.
 \end{proof}

Thanks to the previous proposition, letting $c \to \infty$ and appropriately rescaling, we get the following \lq\lq$t = 0$\rq\rq\, analogue of Theorem \ref{thm:action}. 



\begin{theorem}\label{thm:t=0}
The $H_{0, 1}$-module $\Delta_{0,1}(\triv) := H_{0,1} \otimes_{\C[y_{1}, \dots, y_{n}] \rtimes \Sn} \triv$ has a basis given by $\{v_{\ba} : \ba \in \Z_{\geq 0}^{n}\}$, and the action of the algebra $H_{0,1}$ on $\Delta_{0,1}(\triv)$ is given by the following operators.
\begin{align*}
u_{i}v_{\ba} & = \tw_{i}v_{\ba} \\
\tau v_{\ba} &  = v_{\pi\cdot \ba} \\
\lambda v_{\ba} & = \tw_{1}v_{\pi^{-1}\cdot \ba} \\
s_{i}v_{\ba} & = \begin{cases}
v_{s_{i}\cdot \ba} + \frac{1}{g_{\ba}(i+1) - g_{\ba}(i)} v_{\ba} & a_{i} > a_{i+1}, \\
\frac{(g_{\ba}(i) - g_{\ba}(i+1) - 1)(g_{\ba}(i) - g_{\ba}(i+1) + 1)}{(g_{\ba}(i) - g_{\ba}(i+1))^{2}}v_{s_{i}\cdot \ba} + \frac{1}{g_{\ba}(i) - g_{\ba}(i+1)}v_{\ba} & a_{i} < a_{i+1}, \\
v_{\ba} & a_{i} = a_{i+1}
\end{cases}
\end{align*}

\noindent where $\tw_{i} := \tw_{i}(\ba) = (1 - g_{\ba}(i))$ and, as before, $g_{\ba}$ is the minimal-length permutation that sorts $\ba$. 
\end{theorem}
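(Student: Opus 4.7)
The proof is by a limiting procedure: Theorem \ref{thm:t=0} arises as the $c \to \infty$ (equivalently $t \to 0$) limit of Theorem \ref{thm:action} under the rescaling isomorphism $H_{1,c} \cong H_{1/c, 1}$ given by $x_i \mapsto x_i,\ y_i \mapsto c y_i,\ s_i \mapsto s_i$. This identifies the polynomial representations and sends $u_i^{(1,c)} \mapsto c\, u_i^{(1/c,1)}$ and $\lambda^{(1,c)} \mapsto c\, \lambda^{(1/c,1)}$, while fixing $\tau$ and $\Sk{n}$. Consequently the eigenvalues of the DO operators rescale by $c^{-1}$:
\[
\wt_i^{(1,c)}(\ba)/c \; = \; a_i/c - (g_\ba(i) - 1) \;\longrightarrow\; 1 - g_\ba(i) \;=\; \wt_i(\ba)
\]
as $c \to \infty$, producing the weights of Theorem \ref{thm:t=0}.

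First I would invoke the proposition immediately preceding Theorem \ref{thm:t=0}: for any $N$ and any $c$ with $|c(n-1)| > N$, the vectors $\{v_\ba^{(c)} : ||\ba|| \leq N\}$ from Theorem \ref{thm:1} are constructed in $\Delta_{1,c}(\triv)$ inductively via $\tau$ and the intertwiners $\sigma_i = s_i - c/(u_i - u_{i+1})$, each $v_\ba^{(c)}$ having leading monomial $x^\ba$ with coefficient $1$. The other coefficients are rational functions of $c$; I would show they are regular at $c = \infty$ by tracking the inductive procedure and noting that the denominators produced by $\sigma_i$, of the form $(b_i - b_{i+1}) - (g_\bb(i) - g_\bb(i+1)) c$, diverge linearly in $c$ because $g_\bb(i) \neq g_\bb(i+1)$ (automatic since $g_\bb$ is a permutation). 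Hence $v_\ba := \lim_{c \to \infty} v_\ba^{(c)} \in \C[x_1,\ldots,x_n]$ is a well-defined polynomial with leading monomial $x^\ba$, and $\{v_\ba\}_{\ba \in \Z_{\geq 0}^n}$ is a basis of $\Delta_{0,1}(\triv) = \C[x_1,\ldots,x_n]$ by triangularity.

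Next I would verify each of the four action formulas by passing to the limit in Theorem \ref{thm:action}. The formula for $u_i$ follows from the rescaled eigenvalue computation above, and $\tau v_\ba = v_{\pi \cdot \ba}$ is immediate since $\tau$ is unaffected by the rescaling. For $\lambda$, the identity $(\lambda^{(1,c)}/c)\, v_\ba^{(c)} = (\wt_1^{(1,c)}(\ba)/c)\, v_{\pi^{-1}\cdot\ba}^{(c)}$ converges to $\lambda v_\ba = \wt_1(\ba)\, v_{\pi^{-1} \cdot \ba}$. For $s_i$, substituting $\wt_i^{(1,c)}(\ba) - \wt_{i+1}^{(1,c)}(\ba) = (a_i - a_{i+1}) - (g_\ba(i) - g_\ba(i+1)) c$ into the formulas of Theorem \ref{thm:action} gives term-by-term convergence: the diagonal coefficient $c / (\wt_i^{(1,c)} - \wt_{i+1}^{(1,c)})$ reduces in the limit to a rational expression in $g_\ba(i), g_\ba(i+1)$ only, and in the $a_i < a_{i+1}$ case the quadratic factor simplifies as $1 - c^2/(\wt_i^{(1,c)} - \wt_{i+1}^{(1,c)})^2 \to 1 - 1/(g_\ba(i) - g_\ba(i+1))^2$, which rearranges into the stated $(g_\ba(i) - g_\ba(i+1) - 1)(g_\ba(i) - g_\ba(i+1) + 1)/(g_\ba(i) - g_\ba(i+1))^2$.

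The main obstacle is the control of the limit $c \to \infty$ in the inductive construction of $v_\ba^{(c)}$: one must verify that no coefficient develops a pole, which reduces to checking that at every intermediate step the quantity $g_\bc(i) - g_\bc(i+1)$ (for the pair on which $\sigma_i$ is applied) is nonzero. An alternative self-contained route would skip the limiting argument entirely: construct $v_\ba$ directly inside $\Delta_{0,1}(\triv)$ by iteratively applying $\tau$ and $\tilde\sigma_i := s_i - 1/(u_i - u_{i+1})$ starting from $v_0 = 1$, noting that the target weights $\wt_i(\ba) = 1 - g_\ba(i)$ are pairwise distinct integers so no denominator vanishes; the action formulas and basis property then follow by repeating the combinatorial arguments of Sections \ref{sect:combinatorics} and \ref{sect:pol rep} at $c = 1$, $t = 0$.
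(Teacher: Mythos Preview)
Your proposal is correct and follows essentially the same approach as the paper: the paper's proof consists of the single sentence ``Thanks to the previous proposition, letting $c \to \infty$ and appropriately rescaling, we get the following `$t = 0$' analogue of Theorem \ref{thm:action},'' and you have spelled out exactly this rescaling-and-limit argument in detail, including the regularity of the $v_{\ba}^{(c)}$ at $c=\infty$ and the term-by-term verification of the limiting formulas. Your alternative direct construction via $\tau$ and $\tilde\sigma_i$ at $t=0$ is not in the paper but is a valid self-contained route.
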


\begin{remark}
As above, one can also define the renormalized basis $\widetilde{v}_\ba$ such that
$$
(1 + s_{i})\widetilde{v}_{\ba} = \frac{g_{\ba}(i+1) - g_{\ba}(i) - 1}{g_{\ba}(i+1) - g_{\ba}(i)}\widetilde{v}_{s_{i}\cdot \ba} + \frac{g_{\ba}(i+1) - g_{\ba}(i) + 1}{g_{\ba}(i+1) - g_{\ba}(i)}\widetilde{v}_{\ba}.
$$
\end{remark}

\begin{remark}
We remark that, unlike the $t = 1$ case, the module $\Delta_{0, 1}(\triv)$ is never irreducible. It has a  unique irreducible \emph{graded} quotient  $L_{0,1}(\triv) = \C[x_{1}, \dots, x_{n}]/(\C[x_{1}, \dots, x_{n}]^{\Sn}_{+})$. 
\end{remark}

Note that the proof of Theorem \ref{thm:t=0} can be extended to any Verma module $\Delta_{0,1}(\mu) := H_{0,1} \otimes_{\C[\mathbf{y}] \rtimes \Sk{n}} V_{\mu}$. In particular, we get that $\Delta_{0,1}(\mu)$ has a basis given by $v(\ba, T)$, where $\ba \in \Z_{\geq 0}^{n}$ and $T \in \SYT(\mu)$. The action of $H_{0, 1}$ on $\Delta_{0,1}(\mu)$ is given by
\MVcomment{ 
\begin{align*}
u_{i}v(\ba, T) & = \tw_i(\ba,T)v(\ba,T) \\
\tau v(\ba, T) & = v(\pi\cdot \ba, T) \\
\lambda v(\ba, T) & = \tw_1(\ba, T)v(\pi^{-1}\cdot \ba, T) \\
s_{i}v(\ba, T) = & \begin{cases}
v(s_{i}\cdot \ba, T) - A_{2}v(\ba, T) & a_{i} > a_{i+1}, \\
A_{1}v(s_{i}\cdot \ba, T) + A_{2}v(\ba, T) & a_{i} < a_{i+1}, \\
(\ct{T}{g_{\ba}(i+1)} - \ct{T}{g_{\ba}(i)})v(\ba, T) & a_{i} = a_{i+1} \; \text{and} \; s_{g_{\ba}(i)}(T) \not\in \SYT(\mu), \\
v(\ba, s_{g_{\ba}(i)}(T)) - A_{2}v(\ba,T) & a_{i} = a_{i+1} \; \text{and} \; s_{g_{\ba}(i)}(T)  \in \SYT(\mu) 
 \end{cases}
\end{align*}
} 
\begin{align*}
u_{i}v(\ba, T) & = \tw_i(\ba,T)v(\ba,T) \\
\tau v(\ba, T) & = v(\pi\cdot \ba, T) \\
\lambda v(\ba, T) & = \tw_1(\ba, T)v(\pi^{-1}\cdot \ba, T) \\
s_{i}v(\ba, T) = & \begin{cases}
v(s_{i}\cdot \ba, T) - A_{2}v(\ba, T) & a_{i} > a_{i+1}, \\
A_{1}v(s_{i}\cdot \ba, T) + A_{2}v(\ba, T) & a_{i} < a_{i+1}, \\
(\ct{T}{j+1} - \ct{T}{j})v(\ba, T) & a_{i} = a_{i+1} \; \text{and} \; s_{j}(T) \not\in \SYT(\mu), \\
v(\ba, s_{j}(T)) - A_{2}v(\ba,T) & a_{i} = a_{i+1} \; \text{and} \; s_{j}(T)  \in \SYT(\mu) 
 \end{cases}
\end{align*}
\noindent where $\tw_{i}(\ba, T) = -\ct{T}{g_{\ba}(i)}$,  we 
write $j=g_{\ba}(i)$ in the cases $a_i = a_{i+1},$ and where

$$
A_{1} = \frac{(\ct{T}{g_{\ba}(i)} - \ct{T}{g_{\ba}(i+1)} - 1)(\ct{T}{g_{\ba}(i)} - \ct{T}{g_{\ba}(i+1)} + 1)}{(\ct{T}{g_{\ba}(i)} - \ct{T}{g_{\ba}(i+1)})^{2}}
$$

\noindent and

$$
A_{2} =  \frac{1}{\ct{T}{g_{\ba}(i)} - \ct{T}{g_{\ba}(i+1)}}
=  \frac{1}{\ct{T}{j} - \ct{T}{j+1}}.
$$

\subsection{Hilbert scheme of the non-reduced line}

On the geometric side, the curve $\{x^m=y^n\}$ has a natural limit at $m\to \infty$, namely, the non-reduced 
line $\{y^n=0\}$. The ring of functions on $C_0=\{y^n=0\}$ has a basis $x^{i}y^{j}$ for $i\ge 0,n-1\ge j\ge 0$, as above. 

The Hilbert scheme of points on $\{y^n=0\}$ is the moduli space of ideals in the local ring
$$
\cO_{C_0,0}=\C[[x,y]]/y^n=\C[[x]]\langle 1,\ldots, y^{n-1}\rangle.
$$
The multiplication by $y$ is given by the matrix similar to \eqref{eq:Y}:
$$
Y=\left(
\begin{matrix}
0 & 0 & \cdots & 0 & 0\\
1 & 0 & \cdots & 0 & 0 \\
0 & 1 & \cdots & 0 & 0\\
\vdots & \vdots& \ddots & \vdots & 0\\
0 & 0 & \cdots & 1& 0 \\
\end{matrix}
\right).
$$
We consider the $\C^*$ action on $C_0$ and on $\cO_{C_0, 0}$ such that $y$ has weight $1$ and $x$ has weight 0.
It naturally extends to the action on the punctual Hilbert scheme $\Hilbk(C_0,0)$.
\begin{lemma}
The fixed points of this action are isolated and correspond to monomial ideals.
\end{lemma}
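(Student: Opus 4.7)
The plan is to use the $\C^*$-weight decomposition of $\cO_{C_0,0} = \C[[x]][y]/(y^n)$ to classify the $\C^*$-invariant ideals of finite codimension. Since $x$ has weight $0$ and $y$ has weight $1$, the algebra decomposes into weight subspaces as $\cO_{C_0,0} = \bigoplus_{j=0}^{n-1} y^j \C[[x]]$, with $y^j \C[[x]]$ being the weight-$j$ piece. A $\C^*$-fixed point of $\Hilbk(C_0,0)$ is a $\C^*$-invariant ideal $I$ of codimension $k$, and invariance together with the decomposition of the ambient algebra is what forces a monomial description.

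First I would note that any $\C^*$-invariant ideal $I$ decomposes as $I = \bigoplus_{j=0}^{n-1} y^j J_j$, where $J_j := \{f(x) \in \C[[x]] : y^j f(x) \in I\}$: one extracts homogeneous components by Vandermonde from finitely many evaluations $\lambda \cdot f$, $\lambda \in \C^*$. Closure of $I$ under multiplication by $\C[[x]]$ makes each $J_j$ an ideal of $\C[[x]]$, and closure under multiplication by $y$ yields the chain $J_0 \subseteq J_1 \subseteq \cdots \subseteq J_{n-1}$. Since $I$ has finite codimension in $\cO_{C_0,0}$, each $J_j$ has finite codimension in $\C[[x]]$, and because $\C[[x]]$ is a DVR, $J_j = (x^{c_j})$ for some $c_j \in \Z_{\geq 0}$. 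The chain condition translates to $c_0 \geq c_1 \geq \cdots \geq c_{n-1} \geq 0$, so
$$I = \langle x^{c_0},\, y x^{c_1},\, \ldots,\, y^{n-1} x^{c_{n-1}}\rangle,$$
a monomial ideal. This establishes the first claim.

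For the second claim, observe that the codimension of such an $I$ is $\sum_j c_j = k$, so the set of $\C^*$-fixed ideals in $\Hilbk(C_0,0)$ is in bijection with partitions of $k$ into at most $n$ parts—a finite set. Hence the fixed locus is zero-dimensional, i.e., isolated. There is no real obstacle in this argument; the only thing to be careful about is that a priori the fixed locus is a closed subscheme, but the classification above exhibits it as a reduced, finite collection of points, which suffices.
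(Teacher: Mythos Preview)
Your proof is correct and follows essentially the same approach as the paper: both use the $y$-weight decomposition of a $\C^*$-invariant ideal together with the fact that $\C[[x]]$ is a DVR (so every finite-codimension ideal is a power of $(x)$) to conclude that fixed ideals are monomial. Your version is more detailed—making the chain $J_0 \subseteq \cdots \subseteq J_{n-1}$ and the finiteness count explicit—while the paper compresses this into the observation that the $y$-homogeneous generators $y^{\alpha_i}p_i(x)$ have $p_i(x)$ equal to a unit times a power of $x$.
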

\begin{proof}
An ideal $I$ in $\cO_{C_0}$ is fixed under this $\C^*$ action if and only it it is generated by functions $y^{\alpha_i}p_i(x)$ which are homogeneous in $y$ but not necessary in $x$. On the other hand, 
in the ring of formal power series $p_i(x)$ is proportional to $x^{c_i}$ up to a unit, and 
hence $I$ is the monomial ideal generated by $y^{\alpha_i}x^{c_i}$
for $1 \le i\le n$. 
\end{proof}

\begin{remark}
It is important for the above proof that we work with the punctual Hilbert scheme of ideals supported at the origin, rather than with the full Hilbert scheme. 
\end{remark} 

\begin{remark}\label{remark:other c*}
Unlike the curve $\{x^m=y^n\}$, the curve $C_0$ has an action of another $\C^*$ such that $y$ has weight 0 and $x$ has weight 1. The weight of this action on a monomial ideal $I$ generated by the  $y^{\alpha_i}x^{c_i}$ equals $\sum c_i=\dim \cO_C/I=k.$
\end{remark}

Similarly, one can define the parabolic Hilbert scheme $\PHilb_{k,n+k}(C_0)$ as the space of flags of ideals $I_k\supset I_{k+1}\supset\cdots \supset xI_{k}=I_{k+n}$ in $\cO_{C_0}$, and $\PHilb^{x}(C_0) := \sqcup_{k} \PHilb_{k, n+k}(C_0)$. The fixed points in $\PHilb^{x}(C_0)$ are determined by sequences of monomials $(y^{\alpha_i}x^{c_i})$ with no restrictions on $c_i$. As in Lemma \ref{lem: weights of line bundles}, we have $\alpha_i=\widetilde{g}_{\bc}(i)-1$, where $\widetilde{g}_{\bc}$ is the permutation which sorts $c_i$ in non-increasing order (recall that when $c_i=c_j$ with $i<j$ we have
$\alpha_i < \alpha_j$) .
We can write $c_i=a_{n+1-i}$ and  $\widetilde{g}_{\bc}(i)=n+1-g_{\ba}(n+1-i)$.

The construction of geometric operators corresponding to $u_i,s_i,\tau$ and $\lambda$ extends verbatim to this case, however, one needs to be careful with the equivariant weights.  Now $\cL_i$ has the weight of the monomial $(y^{\alpha_i}x^{c_i})$, that is
$$
c_1(\cL_i)=\alpha_i=\widetilde{g}_{\bc}(i)-1=n-g_{\ba}(n+1-i)=(n-1)+\tw_{n+1-i}.
$$
The operators $T$ and $\Lambda$ can be defined as in Section \ref{sec:geometric operators}, and their 
matrix elements can be computed similarly. Observe that $\cO_{C_0}/x\cO_{C_0}$ still has a unique $Y$-invariant one dimensional subspace generated by $y^{n-1}$ which has weight $(n-1)$. The computation in Theorem \ref{thm: geometric action} then implies $T\circ \Lambda=u_{1}$. We conclude the following:

\begin{theorem}\label{thm:hilb t=0}
Consider the non-reduced curve $C_0=\{y^n=0\}$ with the $\C^*$ action $(x,y)\mapsto (x,sy)$. Then the $\C^*$ equivariant cohomology
$$
U_{\infty}=\bigoplus_{k=0}^{\infty}H^{\C^*}_*(\PHilb_{k,n+k}(C_0))
$$
has an action of the rational Cherednik algebra $H_{0,1}$ defined by the same operators in Theorem \ref{thm: geometric action}. This representation is isomorphic to the polynomial representation of $H_{0,1}$.
\end{theorem}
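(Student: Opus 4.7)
The plan is to mirror the proof of Theorem \ref{thm: geometric action}, replacing the combinatorics of $L_{m/n}(\triv)$ with that of the polynomial representation $\Delta_{0,1}(\triv)$ described in Theorem \ref{thm:t=0}. First I would set up a vector space identification $U_\infty \cong \Delta_{0,1}(\triv)$ by matching fixed points with basis elements $v_\ba$. Localization will give a basis of $U_\infty$ indexed by $\C^*$-fixed points, which are flags of monomial ideals parametrized by sequences $(y^{\alpha_i} x^{c_i})_{i=1}^{n}$ with $c_i \in \Z_{\ge 0}$ arbitrary, where $\alpha_i = \widetilde{g}_\bc(i)-1$ is forced by the condition that $y^{\alpha_i}x^{c_i}$ spans $I_{k+i-1}/I_{k+i}$. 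Setting $c_i = a_{n+1-i}$ yields a bijection with $\ba \in \Z_{\ge 0}^n$, and the remark preceding the theorem shows $c_1(\cL_i) = (n-1) + \tw_{n+1-i}(\ba)$ where $\tw_i(\ba) = 1 - g_\ba(i)$ is the $u_i$-eigenvalue on $v_\ba$ in $\Delta_{0,1}(\triv)$ (up to the additive shift $n-1$ absorbed by the isomorphism $H_{n,m} \cong H_{1,m/n}$ analogue, here rephrased for $t=0$).

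Next I would check that the geometric operators $u_i$, $B_i$ (Springer), $T_*$, and $\Lambda$ constructed in Section \ref{sec:geometric operators} satisfy the matrix identities of Theorem \ref{thm:t=0} in the fixed-point basis. The key observation is that the entire local analysis of Lemmas \ref{lem: weights of line bundles}, \ref{lem:partial flag}, \ref{lem: action of s_n}, and \ref{lem: T Lambda} is purely local in nature and does not use the constraint $c_i \le c_1 + m$; every computation goes through once we replace the occurrences of the equivariant weight $m$ for $x$ by $0$ and keep only the weight for $y$. In particular, Lemma \ref{lem:partial flag} for $C_0$ still gives a $\PP^1$-fibration on the same locus (no box is ever eliminated by $n$ horizontal steps wrapping around, because $x$ has weight $0$ now), so $B_i$ acts by the same formula as in Lemma \ref{lem: action of s_n} with $m/n$ replaced by $0$ and $n$-times-eigenvalue replaced by $g_\ba(i)-g_\ba(i+1)$ after renormalization; this matches the $s_i$ action in Theorem \ref{thm:t=0}. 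For $T_*$ and $\Lambda$ the analogue of Lemma \ref{lem: zero section} still holds verbatim since $\cO_{C_0}/x\cO_{C_0}$ has the same $y$-invariant line spanned by $y^{n-1}$ of weight $n-1$, and the computation of $T_* \circ \Lambda = u_1$ at the level of weights goes through.

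Once the matrix elements of $u_i$, $s_i$, $T_*$, $\Lambda$ are shown to coincide with those of the $H_{0,1}$-action on $\Delta_{0,1}(\triv)$ in the eigenbasis $v_\ba$, the relations defining $H_{0,1}$ (via Theorem \ref{thm:new presentation} at $t=0$, or equivalently Proposition \ref{prob: eliminate u}) are automatically satisfied on $U_\infty$ because they hold on $\Delta_{0,1}(\triv)$ and the assignment is a vector space isomorphism that is equivariant for the four classes of generators. This gives the $H_{0,1}$-module structure on $U_\infty$, and by construction this module is isomorphic to $\Delta_{0,1}(\triv)$.

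The main obstacle I anticipate is verifying that Lemma \ref{lem:partial flag}(a) (the $\PP^1$-fibration structure of $\pi_i$) genuinely extends to $C_0$: on the reduced curve the Lemma relied on $y$ being nilpotent on the two-dimensional quotient $I_{k+i}/I_{k+i+2}$, which in the non-reduced case requires a careful local computation to confirm that we really obtain a fibration, not just set-theoretically but with the correct tangent bundle $\cL_i \cL_{i+1}^{-1}$. The other potentially delicate point is that the equivariant homology of $\PHilb_{k,n+k}(C_0)$ is spanned by fixed-point classes; this requires an equivariant formality/affine paving statement analogous to Section \ref{sec:gasf}. One would either repeat the generalized affine Springer fiber argument of Section \ref{sec:gasf} with the matrix $Y$ nilpotent (the verification of $G$-goodness simplifies since a cyclic nilpotent still satisfies the hypothesis of Proposition \ref{prop:g-good} after suitable modification), or argue directly using a Białynicki-Birula cell decomposition coming from the $\C^*$-action on $C_0$.
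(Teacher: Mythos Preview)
Your proposal is correct and follows essentially the same approach as the paper: the paper's argument is the short paragraph immediately preceding the theorem, which simply asserts that the geometric operators of Section~\ref{sec:geometric operators} extend verbatim to $C_0$ once the equivariant weights are recomputed (so that $c_1(\cL_i)=(n-1)+\tw_{n+1-i}$ and $T\circ\Lambda=u_1$), and then appeals to Theorem~\ref{thm:t=0} to identify the resulting module with $\Delta_{0,1}(\triv)$. You are in fact more careful than the paper in flagging the two technical points (the $\PP^1$-fibration of Lemma~\ref{lem:partial flag} and equivariant formality); note that the first is unproblematic since $y$ is globally nilpotent on $\cO_{C_0}$, while for the second the paper simply invokes localization without further comment, and your observation that Proposition~\ref{prop:g-good} does not literally apply (the limiting $Y$ is nilpotent, not regular semisimple) is a genuine gap in any attempt to reuse Section~\ref{sec:gasf} verbatim.
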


\begin{remark}
Note that the work of Garner and Kivinen \cite{GK} also accounts for the case of the non-reduced curve $C_0$. However, \cite{GK} obtains an action of the Cherednik algebra $H_{0,1}$ on the equivariant cohomology of $\PHilb^{x}(C_0)$ with respect to the group $\C^{*} \times \C^{*}$, see Remark \ref{remark:other c*}. In particular, we can obtain Theorem \ref{thm:hilb t=0} from \cite[Proposition 1.5]{GK} after specializing one of the equivariant parameters to $0$. 
\end{remark}


Finally, we would like to mention that the constructions of Section \ref{sect:gieseker} can be extended to this setup, and we get the following result.

\begin{theorem}
With the same notation as in Theorem \ref{thm:hilb t=0} the $\C^{*}$-equivariant cohomology
$$
H_{*}^{\C^*}(\FHilb^{r, x}(C_0))
$$

\noindent has an action of the spherical algebra $eH_{0, 1}(n, r)e$, where $H_{0,1}(n, r)$ is the matrix version of the Cherednik algebra defined in Definition \ref{def:matrix RCA}. This representation is isomorphic to the representation $(\C[x_1, \dots, x_n] \otimes (\C^r)^{\otimes n})^{\Sk{n}}$ defined in a natural way. 
\end{theorem}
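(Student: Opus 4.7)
The plan is to follow the strategy of Theorem \ref{thm:gieseker action} almost verbatim, with $C$ replaced by $C_0$ and the roles of $(m,y)$ exchanged for $(n,x)$. The starting point is Theorem \ref{thm:hilb t=0}, which provides a canonical identification of $H^{\C^*}_*(\PHilb^{x}(C_0))$ with the polynomial representation $\Delta_{0,1}(\triv)=\C[x_1,\ldots,x_n]$ of $H_{0,1}(n)=H_{0,1}(\Sk{n},\C^n)$. By the equivalence $H_{0,1}(n)\text{-mod}\to H_{0,1}(n,r)\text{-mod}$ which sends $M\mapsto M\otimes(\C^n)^{\otimes n}$ (and which is a purely formal consequence of Definition \ref{def:matrix RCA} and needs no genericity on the parameters), the space $\C[x_1,\ldots,x_n]\otimes(\C^r)^{\otimes n}$ acquires the structure of an $H_{0,1}(n,r)$-module, and taking $\Sk{n}$-invariants then produces a module for the spherical subalgebra $eH_{0,1}(n,r)e$.

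On the geometric side, I would first set up the $C_0$-analogue of Lemma \ref{lemma:parabolic invariants}: for each $\gamma\in\comp{n}$ (note that $\dim(J/xJ)=n$ for any ideal $J\subseteq\cO_{C_0}$, so $\gamma$ is now a composition of $n$ with $r$ parts), the natural projection
\[
\Pi^{\gamma}:\PHilb^{x}(C_0)\to\PHilb^{\gamma,x}(C_0),\qquad (I_k\supseteq\cdots\supseteq I_{k+n}=xI_k)\mapsto(J^0\supseteq\cdots\supseteq J^r=xJ^0),
\]
given by $J^\ell=I_{k+\gamma_1+\cdots+\gamma_\ell}$ induces on localized equivariant homology an isomorphism
\[
\Pi^{\gamma}_{*}:H^{\C^*}_{*}(\PHilb^{x}(C_0))^{\Sk{\gamma^{\rev}}}\xrightarrow{\;\cong\;}H^{\C^*}_{*}(\PHilb^{\gamma,x}(C_0)).
\]
The argument is identical to that of Lemma \ref{lemma:parabolic invariants}: $\Sk{\gamma^{\rev}}$-invariance follows from the explicit formulas for the Springer action in Lemma \ref{lem: action of s_n} (which still apply since the construction of the geometric operators $B_i$ goes through for $C_0$, as observed in the discussion preceding Theorem \ref{thm:hilb t=0}), while surjectivity and injectivity reduce to the classical fact from Springer theory that $H_{*}(\Spr(Y))^{\Sk{\gamma}}=H_{*}(\Spr^{\gamma}(Y))$ applied fiberwise over the common map to $\Hilb(C_0)$.

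With this in hand, the proof assembles itself: Lemma \ref{lemma:schur weyl} with $m$ replaced by $n$ gives
\[
(\C[x_1,\ldots,x_n]\otimes(\C^r)^{\otimes n})^{\Sk{n}}=\bigoplus_{\gamma\in\comp{n}}\C[x_1,\ldots,x_n]^{\Sk{\gamma}},
\]
and combining this with the decomposition $\FHilb^{r,x}(C_0)=\bigsqcup_{\gamma}\PHilb^{\gamma,x}(C_0)$ and the isomorphism above (applied to $\gamma^{\rev}$, which runs over $\comp{n}$ as $\gamma$ does) yields a $\C$-linear identification
\[
H^{\C^*}_{*}(\FHilb^{r,x}(C_0))\;=\;\bigl(H^{\C^*}_{*}(\PHilb^{x}(C_0))\otimes(\C^r)^{\otimes n}\bigr)^{\Sk{n}},
\]
which by the first paragraph is an $eH_{0,1}(n,r)e$-module isomorphism.

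The main obstacle is establishing the $C_0$-version of Lemma \ref{lemma:parabolic invariants}, and more specifically verifying that the Springer/Spaltenstein comparison $H_*(\Spr^\gamma(Y))=H_*(\Spr(Y))^{\Sk{\gamma}}$ survives equivariantly in the presence of the nonreduced structure of $C_0$; this should follow from the equivariant formality that one expects in analogy with Section \ref{sec:gasf}, since $\PHilb^{x}(C_0)$ and $\PHilb^{\gamma,x}(C_0)$ are still expected to admit affine pavings (the ambient fibers over $\Hilb(C_0)$ being ordinary Springer and Spaltenstein varieties for a nilpotent $Y$). Once this equivariant formality is in place, everything else is a formal transport of the $t=1$ argument to the $t=0$ setting.
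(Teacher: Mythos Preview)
Your proposal is correct and is precisely the approach the paper intends: the paper does not give an explicit proof of this theorem but simply states that ``the constructions of Section~\ref{sect:gieseker} can be extended to this setup,'' and your argument carries out exactly that extension (Theorem~\ref{thm:hilb t=0} replacing Theorem~\ref{thm: geometric action}, Lemma~\ref{lemma:parabolic invariants} and Lemma~\ref{lemma:schur weyl} adapted with $n$ in place of $m$, and the Springer/Spaltenstein identification applied fiberwise). Note the typo $(\C^n)^{\otimes n}$ in your first paragraph, which should read $(\C^r)^{\otimes n}$.
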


\begin{remark}
From its interpretation as a generalized affine Springer fiber, see Section \ref{sect: CPH GASF}, it follows that the homology $H_{*}^{\C^*}(\FHilb^{r, x}(C_0))$ admits an action of 
a flavor deformation of 
the algebra of functions on the Gieseker variety $\mathcal{M}(n, r)$. When $r = 1$, this flavor deformation is precisely $eH_{0, 1}(n, 1)e$, which is known to be commutative and it is in fact the algebra of functions on the Calogero-Moser space, \cite{EG}. It is unclear the relationship that the flavor deformation bears to $eH_{0, 1}(n, r)e$ when $r > 1$. 
\end{remark}

\subsection*{Conflict of interest}
 The authors declare that they have no conflict of interest.

\end{document}